\pdfoutput=1
\documentclass[11pt]{article}

\usepackage[margin=1in]{geometry}
\usepackage[T1]{fontenc}
\usepackage{lmodern}
\usepackage{microtype}
\usepackage{setspace}
\usepackage{natbib}
\usepackage{amsmath,amssymb,amsthm}
\allowdisplaybreaks
\usepackage{booktabs}
\usepackage{graphicx}
\usepackage{float}
\usepackage{tikz}
\usetikzlibrary{patterns}
\usepackage{hyperref}

\newtheorem{theorem}{Theorem}
\newtheorem{assumption}{Assumption}
\newtheorem{lemma}{Lemma}

\makeatletter
\newcounter{proofstep}
\AddToHook{env/proof/begin}{\setcounter{proofstep}{0}}
\newcommand{\proofstep}[1]{\par\if@newlist\else\medskip\fi\noindent
  \stepcounter{proofstep}{\emph{Step~\theproofstep. #1.}}}
\makeatother

\newcommand{\E}{\mathbb{E}}

\newcommand{\Var}{\operatorname{Var}}
\newcommand{\Cov}{\operatorname{Cov}}
\newcommand{\Pois}{\operatorname{Pois}}
\newcommand{\R}{\mathbb{R}}

\title{{Stationary Unit Roots and Inference in Softplus INGARCH Models}}
\author{Yang Lu\thanks{Department of Mathematics and Statistics, Concordia University, Montreal, Canada. Email: yang.lu@concordia.ca}, Fukang Zhu\thanks{School of Mathematics, Jilin University, Changchun, China. Email: zfk8010@163.com}}
\date{\today}
\begin{document}
\maketitle

\begin{abstract}
{A unit root makes linear count time series models such as the Poisson
INGARCH(1,1) nonstationary. We show that a softplus link changes this.
Wei{\ss}, Zhu and Hoshiyar (2022) introduced softplus INGARCH models and
gave a sufficient condition for stationarity. For Poisson, negative
binomial, gamma and related versions of these models, we obtain the
necessary and sufficient condition, which allows a unit root. At a
stationary unit root, the conditional mean has a power-law tail, and the
observations can have infinite variance or long memory. To our knowledge,
this is the first stationary unit root established for an INGARCH-type
count model. For a fixed softplus scale, the Poisson quasi-maximum
likelihood estimator is asymptotically normal both below and at the unit
root, including cases with infinite variance. Two applications show that
unit-root conclusions can depend on the softplus scale.
}
\end{abstract}

\begingroup
\small
\setstretch{1}
\noindent\textbf{{Acknowledgments}.} Lu's work is supported by NSERC
(RGPIN-2021-04144) and SSHRC (430-2024-01325).
\par\smallskip
\noindent Zhu's work is supported by the National Natural Science Foundation
of China (No.~12671320) and the Science and Technology Development Plan
Project of Jilin Province of China (No.~20250102003JC).
\par
\endgroup
\medskip

\begingroup
\small
\setstretch{1}
\noindent\textbf{Keywords:} Softplus affine processes; observation-driven models;
INGARCH; stationarity; long memory; quasi-maximum likelihood; unit-root testing.
\par\smallskip
\noindent\textbf{2020 Mathematics Subject Classification:}
Primary 62M10; Secondary 60J05, 62F12.
\par
\endgroup
\medskip

\section{Introduction}
{Count time series often show persistence close to the boundary of
stationarity. A standard model for these series is the Poisson
integer-valued generalized autoregressive conditional heteroskedasticity
(INGARCH) model of order \((1,1)\). Given the past, \(X_t\) is Poisson with
mean \(M_t=\omega+\alpha X_{t-1}+\beta M_{t-1}\). Writing
\(U_t=X_t-M_t\) gives the ARMA\((1,1)\) representation
\[
 X_t=\omega+\phi X_{t-1}+U_t-\beta U_{t-1},
 \qquad \phi=\alpha+\beta.
\]
When \(\phi=1\), the autoregressive polynomial \(1-\phi z\) has a unit
root, and we call this the \emph{unit-root case}. The standard
restrictions \(\omega>0\) and \(\alpha,\beta\geq0\) ensure positivity of
\(M_t\). With positive parameters, the model admits no stationary
solution at \(\phi=1\) \citep{michel2020limiting}.

Wei{\ss}, Zhu and Hoshiyar \citeyearpar{weiss2022softplus} introduced
softplus INGARCH models, which keep the conditional mean positive while
allowing coefficients of either sign, and hence negative
autocorrelation, as in log-linear Poisson autoregression
\citep{fokianos2011loglinear}. See also
\citet{jahn2023approximately,jahn2024artificial}. Their softplus
INGARCH\((1,1)\) model replaces the linear recursion by
\begin{equation}
\label{mrecursion}
 M_t=s_c(\omega+\alpha X_{t-1}+\beta M_{t-1}),
\end{equation}
where \(\omega,\alpha,\beta\) are real parameters, \(c>0\) is a fixed
softplus scale, and \(s_c(x)=c\log(1+e^{x/c})\), \(x\in\mathbb R\). They
show that a stationary solution exists if
\begin{equation}
\label{existing}
 \max(0,\alpha)+\max(0,\beta)<1,\qquad |\beta|<1.
\end{equation}
Because softplus is positive for every real input, the intercept may now
be negative. We keep the name unit-root case for \(\phi=1\) in this
nonlinear model, a case that condition~\eqref{existing} excludes. Can a
negative intercept make a unit root stationary?

The difficulty is that a unit root leaves only a weak restoring force.
For large \(M_t\), the softplus recursion at \(\phi=1\) behaves like a
random walk whose drift tends to \(\omega\) and whose increments have
conditional variance proportional to \(M_t\). A negative intercept
therefore pulls the process back with a force that stays bounded, while
its fluctuations grow with the level. The contraction criteria of
\citet{neumann2011absolute} and \citet{davis2016theory}, and the
geometric drift condition used by \citet{doukhan2019absolute}, do not
cover this boundary. The balance between the bounded pull and the
growing fluctuations also governs the tail of any stationary solution,
which can have infinite variance or long memory. Inference must then
cope with possibly infinite variance and, when coefficients may be
negative, with conditional means that can come arbitrarily close to
zero.

Count observations add a difficulty of their own, present at every
parameter value. Given the current conditional mean, the next one can
take only countably many values, so the transition laws from two
different states can be mutually singular. The density-based arguments
that give mixing for real-valued models therefore do not apply to the
counts.

In this paper, we show that the answer is affirmative. We study softplus
{affine\((1,1)\) processes, which extend the softplus
INGARCH\((1,1)\) model beyond the Poisson case. They keep
recursion~\eqref{mrecursion} but}
allow infinitely divisible conditional families indexed by \(M_t\), with
mean \(M_t\) and variance proportional to \(M_t\), including the Poisson,
negative binomial and gamma families. Under our assumptions on the
conditional family and the invertibility condition \(|\beta|<1\), we
obtain
\[
\text{a stationary solution exists}
\quad\Longleftrightarrow\quad
\phi<1
\quad\text{or}\quad
(\phi=1,\ \omega<0).
\]
The region \(\phi<1\) strictly contains the region defined by
condition~\eqref{existing}, and throughout it the observations are
geometrically \(\beta\)-mixing, whereas earlier geometric mixing results
for softplus models required \(|\alpha|+|\beta|<1\)
\citep{doukhan2022}. We call the case \(\phi=1\), \(\omega<0\) a
\emph{stationary unit root}. There, the stationary distribution of
\(M_t\) has a power-law tail, and the observations can have infinite
variance or long memory. To our knowledge, this is the first stationary
unit root established for an INGARCH-type count model. For a fixed
softplus scale and under our inference assumptions, the Poisson
quasi-maximum likelihood estimator (QMLE) of \((\omega,\alpha,\beta)\) is
consistent and asymptotically normal at the usual \(\sqrt n\) rate, both
below the unit root and at a stationary unit root, whenever the
observations have a finite mean. The one-sided Wald test of
\(H_0:\phi=1\) against \(H_1:\phi<1\) therefore has a standard normal
null limit, even in the infinite-variance and long-memory cases. For
Poisson observations, we also derive its local power.

Section~\ref{sec:affine-extension} establishes the stationarity, tail
and dependence results and relates them to other models.
Section~\ref{sec:likelihood-consequences} develops estimation and
unit-root inference, and Section~\ref{sec:inference-simulation}
examines their finite-sample performance by simulation.
Section~\ref{sec:empirical-applications} applies the methods to monthly
recorded-crime counts in New South Wales (NSW) and one-second TRX/USDT
transaction counts on Binance, and shows that unit-root conclusions can
depend on the softplus scale. Section~\ref{sec:conclusion} concludes.
Proofs and additional simulation results are in the appendices.
}

\section[Softplus affine(1,1) processes]{Softplus affine\((1,1)\) processes}
\label{sec:affine-extension}
\subsection{The setting}
{
Let \(G_1\) be an infinitely divisible distribution on \([0,\infty)\)
with Laplace transform \(e^{-a(u)}\), \(u\geq0\), where \(a(0)=0\).
The function \(a\) is called its \emph{Laplace exponent}.
Infinite divisibility ensures that, for every \(m\geq0\), there is a
probability distribution \(G_m\) with Laplace transform \(e^{-ma(u)}\);
in particular, \(G_0=\delta_0\). Independent variables with laws
\(G_m\) and \(G_{m'}\) have a sum with law \(G_{m+m'}\).

For a nonnegative observation process \((X_t)\) and a positive process
\((M_t)\), let \(\mathcal F_t:=\sigma(M_s,X_s:s\leq t)\).
The softplus affine\((1,1)\) model is defined by
{recursion~\eqref{mrecursion} and}
\begin{equation}
 \E(e^{-uX_t}\mid\mathcal F_{t-1})
 =\exp(-M_t a(u)),\qquad u\geq0,
 \label{eq:affine-softplus-model}
\end{equation}
{that is, \(X_t\mid\mathcal F_{t-1}\sim G_{M_t}\).}
We call \(\eta_t=\omega+\alpha X_{t-1}+\beta M_{t-1}\) the
\emph{linear index}, the input to the softplus function, so that
\(M_t=s_c(\eta_t)\).
{The softplus function is increasing and one-Lipschitz, and
\begin{equation}
 0\leq s_c(x)-\max(x,0)\leq c\log 2,
 \qquad x\in\mathbb R.
 \label{eq:softplus-positive-part-bound}
\end{equation}}%
We impose the mean-one normalization \(a'(0)=1\).\footnote{%
For an unnormalized seed with finite positive mean \(\mu=a'(0)\),
set \(\widetilde a=a/\mu\) and \(M_t^\sharp=\mu M_t\).
This leaves the conditional Laplace transform unchanged and gives
\(\widetilde a'(0)=1\). The rescaled state satisfies
\[
 M_{t+1}^\sharp
 =s_{\mu c}(\mu\omega+\mu\alpha X_t+\beta M_t^\sharp).
\]
Thus the normalization rescales the state, softplus scale, intercept,
and observation coefficient.}
Then \(M_t\) is the conditional mean of \(X_t\) given
\(\mathcal F_{t-1}\); we also call it the state.
The term \emph{affine} refers to the linear
dependence of the conditional Laplace exponent \(M_ta(u)\) on \(M_t\),
and \((1,1)\) denotes one lag of \(X_t\) and one lag of \(M_t\) in
{recursion~\eqref{mrecursion}}.

Under this model, \((M_t)\) is a time-homogeneous Markov chain on
\((0,\infty)\).
A stationary solution means a jointly strictly stationary process
\(((X_t,M_t))_{t\in\mathbb Z}\) satisfying
\eqref{mrecursion} and~\eqref{eq:affine-softplus-model}.
{Under this stationary solution, the marginal law of \(M_0\)
is an invariant probability measure for \((M_t)\).}
Uniqueness of a stationary solution always refers to its joint law.

We impose the following conditions on the conditional family
\((G_m)_{m\geq0}\).

\begin{assumption}
\phantomsection
\label{ass:affine-seed}
\begin{enumerate}
\item[(i)] \(G_1\) has finite moments of all orders and variance
\mbox{\(\sigma^2:=-a''(0)>0\)}.
\item[(ii)] One of the following two conditions holds:
  \begin{enumerate}
  \item[(a)] \(G_1(\{0\})>0\).
  \item[(b)]
  \(G_1(\{0\})=0\), and for every \(m>0\), \(G_m\) has a
  density \(g(m,x)\) such that \((m,x)\mapsto g(m,x)\) is jointly
  continuous and strictly positive on \((0,\infty)^2\).
  \end{enumerate}
\end{enumerate}
\end{assumption}

Under this assumption and the normalization, differentiating the
conditional Laplace transform at zero gives
\begin{equation}
 \E(X_t\mid\mathcal F_{t-1})=M_t,
 \qquad
 \Var(X_t\mid\mathcal F_{t-1})=\sigma^2M_t.
 \label{eq:affine-conditional-moments}
\end{equation}
Thus \(\sigma^2\) is the
\emph{conditional dispersion parameter}, the conditional variance-to-mean
ratio.
{The conditional-mean innovation
\begin{equation}
 U_t:=X_t-M_t
 \label{eq:affine-innovation}
\end{equation}
therefore satisfies \(\E(U_t\mid\mathcal F_{t-1})=0\) and
\(\E(U_t^2\mid\mathcal F_{t-1})=\sigma^2M_t\).}

Case~(a) covers compound Poisson distributions, including Poisson
and negative binomial counts; case~(b) covers continuous
distributions with jointly continuous, strictly positive densities.
The proofs of uniqueness and mixing in
Theorem~\ref{thm:affine-mixing} use observation coupling in case~(a)
and density-based minorization in case~(b).
}
Table~\ref{tab:affine-seeds} lists several standard conditional distributions satisfying Assumption~\ref{ass:affine-seed}.
\begin{table}[H]
\centering
\small
\caption{{Examples of conditional distributions with mean \(m\) and variance \(\sigma^2m\).}}
\label{tab:affine-seeds}
\begin{tabular}{lll}
\toprule
Conditional family & Laplace exponent \(a(u)\) & \(\sigma^2\)\\
\midrule
\multicolumn{3}{l}{{\textit{Case (a): positive mass at zero}}}\\
Poisson & \(1-e^{-u}\) & \(1\)\\
Negative binomial, fixed \(0<p<1\)
 & \(\dfrac{p}{1-p}\log\!\left(1+\dfrac{1-p}{p}(1-e^{-u})\right)\)
 & \(1/p\)\\
{Compound Poisson--exponential, \(\tau>0\)}
 & \(u/(1+\tau u)\) & \(2\tau\)\\
{Compound Poisson--gamma} Tweedie, \(\nu,\tau>0\)
 & \((1-(1+\tau u)^{-\nu})/(\nu\tau)\) & \((\nu+1)\tau\)\\
\addlinespace
\multicolumn{3}{l}{{\textit{Case (b): no mass at zero and a positive continuous density}}}\\
Gamma, scale \(\tau>0\)
 & \(\tau^{-1}\log(1+\tau u)\) & \(\tau\)\\
Inverse Gaussian, \(\tau>0\)
 & \((\sqrt{1+2\tau u}-1)/\tau\) & \(\tau\)\\
\bottomrule
\end{tabular}
\par\smallskip
\begin{minipage}{\textwidth}
\footnotesize
{
\emph{Notes:} At conditional mean \(m\), the negative binomial row uses
\(\operatorname{NB}(\text{size}=mp/(1-p),\ \text{probability}=p)\).
Thus its size varies with \(m\) and its variance is \(m/p\); a fixed
size \(r\) would instead give variance \(m+m^2/r\).
The compound Poisson--exponential row is the \(\nu=1\) Tweedie special
case \citep{monfort2016staying}.
}
\end{minipage}
\end{table}

\subsection{Below the unit root}
{
{Given \(M_t=m\), the observation \(X_t\) has mean \(m\) and
variance \(\sigma^2m\) by \eqref{eq:affine-conditional-moments}, so the
relative deviation \((X_t-m)/m\) has variance \(\sigma^2/m\), which
vanishes as \(m\) grows. Since
\(\omega+\alpha X_t+\beta m=\phi m+\omega+\alpha(X_t-m)\),
bound~\eqref{eq:softplus-positive-part-bound} implies
that}\footnote{This is the case \(\chi=1\) of
Lemma~\ref{lem:affine-cancellation} in Appendix~\ref{app:affine-extension}.}
\[
 \lim_{m\to\infty}
 \frac{\E(M_{t+1}\mid M_t=m)}{m}
 =\max(\phi,0).
\]
When \(\phi<1\), this limit is strictly less than one, so large values
of \(M_t\) are pulled down on average. This drift estimate underlies
the following theorem.

}

\begin{theorem}
\label{thm:affine-classification}
Suppose Assumption~\ref{ass:affine-seed} holds and \(\phi<1\).
{
A stationary solution exists. Under every stationary solution,
\(M_t\) and \(X_t\) have moments of all positive orders.
}
\end{theorem}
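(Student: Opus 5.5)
We would prove Theorem~\ref{thm:affine-classification} with a Foster--Lyapunov drift argument for the Markov chain \((M_t)\), using power functions \(V_p(m)=1+m^p\), \(p\geq1\). Existence then follows from a Feller plus tightness argument (Krylov--Bogoliubov), and the moment claim for an arbitrary stationary solution follows by iterating the drift inequality under stationarity.

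The first step is the drift bound. Fix \(p\geq1\). Given \(M_t=m\), we write
\[
\eta_{t+1}=\phi m+\omega+\alpha U_t.
\]
By \eqref{eq:softplus-positive-part-bound},
\[
M_{t+1}\leq \max(\eta_{t+1},0)+c\log2\leq \max(\phi,0)\,m+|\omega|+c\log2+|\alpha||U_t|.
\]
Let \(\rho=\max(\phi,0)<1\). By Minkowski's inequality,
\[
\bigl(\E(M_{t+1}^p\mid M_t=m)\bigr)^{1/p}\leq \rho m+C+|\alpha|\,\bigl(\E(|U_t|^p\mid M_t=m)\bigr)^{1/p}.
\]
The key input is a conditional moment bound for the centered observation: \(\E(|X-m|^p)\leq C_p(m^{p/2}+m)\) when \(X\sim G_m\). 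We would obtain it as follows. For integer \(m\), \(X\) is a sum of \(m\) i.i.d.\ copies with law \(G_1\), so Rosenthal's inequality applies, using that \(G_1\) has moments of all orders by Assumption~\ref{ass:affine-seed}(i). For general \(m\), we write \(m=\lfloor m\rfloor+r\) and bound the moments of \(G_r\), \(r\in[0,1)\), uniformly via cumulants, since \(\kappa_j(G_r)=r\kappa_j(G_1)\). This gives \((\E|U_t|^p)^{1/p}=O(m^{1/2})+O(1)\). Hence
\[
\E(M_{t+1}^p\mid M_t=m)\leq(\rho m+C'm^{1/2}+C')^p\leq \lambda m^p+K_p
\]
for any \(\lambda\in(\rho^p,1)\). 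This is the drift condition \(PV_p\leq\lambda V_p+K_p\), valid for every \(p\).

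The second step is existence. The kernel \(m\mapsto \mathrm{Law}(M_{t+1}\mid M_t=m)\) is weakly continuous, because \(m\mapsto G_m\) is continuous in Laplace transform and \(s_c\) is continuous. We start the chain at \(M_0=1\). The drift bound gives \(\sup_t\E M_t^p<\infty\), so the Cesàro averages of the laws of \(M_t\) are tight on \([0,\infty)\). We still need to rule out escape of mass to \(0\). This is handled by the fact that \(M_{t+1}\geq s_c(\omega+\alpha X_t+\beta M_t)\) is bounded below by \(s_c(-L)>0\) on the event \(\{M_t\leq L', X_t\leq L''\}\), and these events have high probability uniformly in \(t\). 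Any weak limit is therefore a probability on \((0,\infty)\), and it is invariant by the Feller property. Given an invariant \(\pi\), we construct the two-sided stationary \(((X_t,M_t))\) from the stationary Markov chain of pairs by Kolmogorov extension.

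The third step covers every stationary solution. If \(\pi\) is invariant, we apply the drift bound with the truncation \(V_p\wedge N\): from \(\pi(V_p\wedge N)=\pi P(V_p\wedge N)\leq \pi\bigl((PV_p)\wedge N\bigr)\) we cannot directly conclude. Instead we iterate \(P^nV_p\leq\lambda^nV_p+K_p/(1-\lambda)\) pointwise and use
\[
\pi(V_p\wedge N)=\pi P^n(V_p\wedge N)\leq \pi\bigl(\min(N,\lambda^nV_p+K_p/(1-\lambda))\bigr).
\]
Letting \(n\to\infty\) by dominated convergence, and then \(N\to\infty\), gives \(\pi(V_p)\leq K_p/(1-\lambda)\). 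Moments of \(X_t\) then follow from \(\E X_t^p\leq C(\E M_t^p+\E M_t^{p/2}+\E M_t)\), by the conditional moment bound from the first step.

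The main obstacle is the uniform conditional moment bound \(\E|X-m|^p\lesssim m^{p/2}+m\) for non-integer \(m\), together with the lower-boundary tightness in the existence step. For the former we would first try the cumulant route: moments are polynomials in the cumulants \(m\kappa_j\), and centered moments of order \(p\) involve products of at most \(p/2\) cumulants. That gives exactly the degree bound \(\lfloor p/2\rfloor\) in \(m\) for even \(p\), and Lyapunov's inequality handles general \(p\).
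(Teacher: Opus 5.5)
Your proposal is correct and is essentially the paper's proof. It uses the same drift $PV_p\leq\lambda V_p+K_p$ for $V_p=1+m^p$, and the same Feller continuity, Ces\`aro tightness and Kolmogorov extension for existence. It also gives an a priori bound $\pi(V_p)\leq K_p/(1-\lambda)$ for every invariant law; your truncation argument proves directly what the paper takes from Lemma~7 of Douc et al. Finally, it transfers the moments to $X_t$ through the conditional moment bound.

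The differences are minor. You get the drift from Minkowski's inequality rather than from the exact limit in Lemma~\ref{lem:affine-cancellation}. You keep mass away from zero with a uniform-in-time lower bound on $M_{t+1}$, where the paper adjoins the state $0$ and uses $P(m,\{0\})=0$. For fractional $m$, the paper's domination $\E W_r^k\leq\E W_1^k$, from $G_r*G_{1-r}=G_1$, is a simpler way to obtain the finite moments that your cumulant argument presupposes.
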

{For Poisson observations, \citet{weiss2022softplus} prove
stationarity under~\eqref{existing} and finite moments of all positive
orders under \(|\alpha|+|\beta|<1\). Both sufficient regions are strictly
contained in \(\{\phi<1\}\). For example,
Theorem~\ref{thm:affine-classification} covers
\((\alpha,\beta)=(1.2,-0.3)\), for which \(\phi=0.9\), \(\alpha>1\),
and \(|\alpha|+|\beta|=1.5\).}

{
{No restriction on \(\beta\) is needed for
Theorem~\ref{thm:affine-classification}. The remaining results also
require recursion~\eqref{mrecursion} to forget its starting value. If
\((M_t^{(1)})\) and \((M_t^{(2)})\) are computed from the same
observations and parameter values but from different positive starting
values, then, since \(s_c\) is one-Lipschitz,}
\begin{equation}
  |M_t^{(1)}-M_t^{(2)}|
  \leq |\beta|^t|M_0^{(1)}-M_0^{(2)}|,
  \qquad t\geq0.
  \label{eq:filter-contraction}
\end{equation}
{When \(|\beta|<1\), the filter therefore forgets its
starting value geometrically fast. We call \(|\beta|<1\) the
invertibility condition and assume it from now on.}
}

\begin{assumption}
\label{ass:invertibility}
\(|\beta|<1\).
\end{assumption}

Figure~\ref{fig:region-inclusions} compares the coefficient regions.
{
Within the invertibility strip \(|\beta|<1\), we call the part of the
{region \(\phi<1\)} not covered by the stationarity condition
\eqref{existing} of \citet{weiss2022softplus} the \emph{enlarged region}.
}

\begin{figure}[H]
\centering
{
\begin{tikzpicture}[scale=1.6]
  \def\A{3}

  \fill[blue!32]
    (-\A,-1) -- (2,-1) -- (0,1) -- (-\A,1) -- cycle;
  \fill[green!25]
    (-\A,-1) -- (1,-1) -- (1,0) -- (0,1) -- (-\A,1) -- cycle;
  \fill[red!25]
    (0,-1) -- (1,0) -- (0,1) -- (-1,0) -- cycle;

  \path[pattern=north east lines, pattern color=blue!70!black]
    (1,0) -- (2,-1) -- (1,-1) -- cycle;

  \draw[->] (-3.35,0) -- (2.65,0) node[right] {$\alpha$};
  \draw[->] (0,-1.25) -- (0,1.3) node[above] {$\beta$};
  \draw[dashed, gray] (-\A,-1.08) -- (-\A,1.08);
  \draw[blue!75!black, very thick] (2,-1) -- (0,1);
  \node[anchor=west] at (2.05,-0.62) {$\alpha+\beta=1$};
  \draw[blue!75!black] (2.02,-0.64) -- (1.76,-0.76);

  \draw[green!45!black, thick]
    (-\A,-1) -- (1,-1) -- (1,0) -- (0,1) -- (-\A,1);
  \node at (-1.9,0.55) {Condition~\eqref{existing}};
  \draw[red!70!black, thick]
    (0,-1) -- (1,0) -- (0,1) -- (-1,0) -- cycle;
  \node[anchor=east] (redlab) at (-0.95,-0.58) {$|\alpha|+|\beta|<1$};
  \draw (redlab.east) -- (-0.32,-0.32);

  \fill[blue!75!black] (1.2,-0.3) circle (0.04);
  \node[anchor=west] at (1.20,0.50) {$(1.2,-0.3)$};
  \draw[blue!75!black] (1.45,0.34) -- (1.2,-0.3);

  \node[below left] at (-1,0) {$-1$};
  \node[above right] at (1,0) {$1$};
  \node[above] at (2,0) {$2$};
  \node[above left] at (0,1) {$1$};
  \node[below left] at (0,-1) {$-1$};

  \fill[blue!32] (-2.8,-1.65) rectangle (-2.3,-1.4);
  \path[pattern=north east lines, pattern color=blue!70!black]
    (-2.8,-1.65) rectangle (-2.3,-1.4);
  \node[anchor=west] at (-2.15,-1.525) {Enlarged region};
\end{tikzpicture}
}
\caption{{Coefficient regions with \(|\beta|<1\).
Theorem~\ref{thm:affine-classification} covers the whole shaded area
\(\phi<1\). Green and red together form condition~\eqref{existing} of
\citet{weiss2022softplus}, and red alone is \(|\alpha|+|\beta|<1\).
Hatching marks the enlarged region. The plotting window is truncated,
and all defining inequalities are strict.}}
\label{fig:region-inclusions}
\end{figure}

Theorem~\ref{thm:affine-classification} establishes existence and moments.
We next consider {uniqueness in law of the stationary solution} and the rate at which
dependence between observations decays.

For a stationary observation process \((X_t)\), let
\(\mathcal F_0^X=\sigma(X_i:i\leq0)\) and
\(\mathcal F_j^{X,+}=\sigma(X_i:i\geq j)\) denote the sigma-fields generated
by the observations up to time zero and from time \(j\) onward,
respectively. Define the \(\beta\)-mixing coefficient by
\[
\beta_X^{\mathrm{mix}}(j)=
\E\left\|
\mathcal L((X_i)_{i\geq j}\mid\mathcal F_0^X)
-\mathcal L((X_i)_{i\geq j})
\right\|_{\mathrm{TV}},
\]
where $ \mathcal L((X_i)_{i\geq j}\mid\mathcal F_0^X)$ is the conditional distribution of the future given the observation past through time zero, $\mathcal L((X_i)_{i\geq j})$ is the unconditional distribution of the future,  and $\left\|\cdot \right\|_{\mathrm{TV}}$ is the total variation distance between two probability measures.
We use the convention
\[
 \|\mu-\nu\|_{\mathrm{TV}}:=\sup_A|\mu(A)-\nu(A)|,
\]
where the supremum is over measurable sets in their common space.
We say that the process $(X_t)$ is geometrically \(\beta\)-mixing if \(\beta_X^{\mathrm{mix}}(j)\) is bounded by a geometrically decreasing sequence.

\begin{theorem}
\label{thm:affine-mixing}
{Under the assumptions of Theorem~\ref{thm:affine-classification}
and Assumption~\ref{ass:invertibility}, the stationary solution is unique
in law and ergodic.} For some \(C<\infty\) and
\(\varrho_{\mathrm{mix}}\in(0,1)\),
\(\beta_X^{\mathrm{mix}}(j)\leq C\varrho_{\mathrm{mix}}^j\).
\end{theorem}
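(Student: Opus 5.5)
The plan is to treat \((M_t)\) as a Markov chain on \((0,\infty)\) and establish geometric ergodicity through a Foster--Lyapunov drift condition combined with a small-set minorization. Mixing for \((X_t)\) will then be transferred from the state chain, and uniqueness in law of the joint process will follow from uniqueness of the invariant measure.

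\emph{Step 1 (drift).} Take \(V(m)=1+m^p\) for some \(p\geq1\); \(p=1\) may already suffice. The cancellation estimate preceding Theorem~\ref{thm:affine-classification} (Lemma~\ref{lem:affine-cancellation}) gives \(\E(M_{t+1}\mid M_t=m)/m\to\max(\phi,0)<1\). Hence there exist \(\lambda<1\), \(b<\infty\) and \(K\) such that \(PV\leq\lambda V+b\mathbf 1_{(0,K]}\). This is the same drift that yields existence and moments in Theorem~\ref{thm:affine-classification}, so I would reuse it directly.

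\emph{Step 2 (minorization on \((0,K]\), the main obstacle).} In case~(b), the law of \(M_1\) given \(M_0=m\) is the image of \(g(m,x)\,dx\) under \(x\mapsto s_c(\omega+\alpha x+\beta m)\). This map is a diffeomorphism when \(\alpha\neq0\); the case \(\alpha=0\) is deterministic and can be handled separately. Joint continuity and positivity of \(g\) on compacts then give a common lower density on an interval, uniformly over \(m\in[\epsilon,K]\). Small \(m\) near \(0\) needs care. I would first show that the chain enters \([\epsilon,K]\) with controlled return times, using \(M_{t+1}\geq s_c(\omega-|\beta|K-|\alpha|x)\), which is bounded below on events of positive probability.

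Case~(a) is harder because the transition laws are mutually singular. Here I would use observation coupling instead. With probability at least \(G_1(\{0\})^{K}>0\) uniformly over \(m\leq K\), we have \(X_t=0\), and repeated zeros drive \(M\) along the deterministic map \(m\mapsto s_c(\omega+\beta m)\). This map is a contraction with factor \(|\beta|<1\) by Assumption~\ref{ass:invertibility}. Consequently, two copies started in \((0,K]\) can be run with identical observations, both zero for \(k\) steps, and end within \(|\beta|^kK\) of each other by \eqref{eq:filter-contraction}. Afterwards I would couple the observations via maximal coupling of \(G_{m}\) and \(G_{m'}\). Their total variation distance is at most \(C|m-m'|\), which follows for compound Poisson laws by thinning a Poisson count. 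Identical observations keep contracting the states, and the failure probabilities sum geometrically. Combined with the drift, which controls the time between visits to \((0,K]\), this gives a geometric coupling time in the style of a Harris/Hairer--Mattingly weak contraction argument.

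\emph{Step 3 (conclusion).} Geometric convergence of the joint chain \((M_t,X_t)\) in a \(V\)-weighted total variation (or Wasserstein) sense yields a unique invariant law. Since \(M_t\) is a measurable function of \(M_0\) and \(X_0,\dots,X_{t-1}\), stationary solutions are determined by their marginal invariant law, so the solution is unique in law and ergodic. For mixing, \(\beta_X^{\mathrm{mix}}(j)\) is bounded by the \(\beta\)-mixing coefficient of the Markov chain \((M_t,X_t)\), because \(\mathcal F_0^X\subset\sigma(M_s,X_s:s\leq0)\) and the future of \(X\) given that past depends only on \(M_1\). Under stationarity, that coefficient is \(\E\|P^{j}(M_1,\cdot)-\pi\|_{\mathrm{TV}}\). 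The coupling of Step~2 bounds this by \(C\varrho^j\E V(M_1)\), which is finite by Theorem~\ref{thm:affine-classification}.
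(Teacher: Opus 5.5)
Your drift step and your case~(b) argument match the paper's. That argument is density minorization away from zero plus a two-step bound near zero, then geometric ergodicity in \(V\)-weighted total variation, then \(\beta_X^{\mathrm{mix}}(j)\leq\beta_{(M,X)}^{\mathrm{mix}}(j)\leq\beta_M^{\mathrm{mix}}(j-1)\).

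The gap is Step~3 in case~(a). There you bound \(\beta_X^{\mathrm{mix}}(j)\) by \(\E\|P^{j}(M_1,\cdot)-\pi\|_{\mathrm{TV}}\) and say the observation coupling of Step~2 makes this small. It cannot. Under that coupling the two state paths never meet. They only share observations and approach each other at rate \(|\beta|^t\), which gives no total-variation control. In case~(a) the \(j\)-step state laws from distinct starting points are typically mutually singular; for Poisson counts, \(P^j(m,\cdot)\) is carried by a countable set that depends on \(m\). So the total-variation mixing coefficient of \((M_t)\), and hence of \(((M_t,X_t))\), need not decay at all. This is the situation of Neumann's Remark~3 cited in the paper: the conditional means fail to be strongly mixing while the counts are absolutely regular. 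For the same reason, ``\(V\)-weighted total variation'' and a ``coupling time'' are not available in case~(a); only the Wasserstein version is.

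The missing step is to transfer mixing through the law of the observation future rather than through the state chain. Let \(K(m,\cdot)\) be the law of \((X_0,X_1,\ldots)\) started from \(M_0=m\). Your permanent-agreement estimate gives \(\|K(m,\cdot)-K(m',\cdot)\|_{\mathrm{TV}}\leq C\,d(m,m')\) for a bounded metric such as \(d=1\wedge(|m-m'|/\varepsilon)\). That estimate combines maximal coupling with disagreement probability at most \(C|m-m'|\), states contracting by \(|\beta|\) while observations agree, and geometrically summable failure probabilities.

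The conditional law of \((X_i)_{i\geq j}\) given \(\mathcal F_0\) is \(P^{j-1}(M_1,\cdot)K\). Convexity of total variation then yields \(\beta_X^{\mathrm{mix}}(j)\leq C\int\mathcal W_d(P^{j-1}(m,\cdot),\pi)\,\pi(dm)\).

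Geometric decay of this Wasserstein quantity comes from the weak Harris theorem of Hairer, Mattingly and Scheutzow. You need to check two conditions for the pair kernel \(Q\) of your coupling. First, \(d\) must be contracting: \(Qd\leq d\), and \(Qd\leq a_0d\) with \(a_0<1\) wherever \(d<1\). Second, a large sublevel set of \(V\) must be \(d\)-small for some iterate \(P^N\); your ``zeros for \(k\) steps'' argument supplies this. Uniqueness of \(\pi\) in case~(a) must likewise come from this contraction or from an asymptotic-coupling argument, not from total-variation convergence.
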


{
{In the Poisson case, \citet[Theorem~1]{weiss2022softplus}
obtain a mixing bound of order \(\varrho_{\mathrm{mix}}^{\sqrt j}\), and
\citet{doukhan2022} establish geometric mixing for softplus models under
\(|\alpha|+|\beta|<1\). More general results for nonlinear count models
\citep{neumann2011absolute,davis2016theory} require the recursion to be
Lipschitz in the state and in the observation, with constants summing to
less than one. For the softplus recursion with \(\alpha>0\), these
constants are at least \(|\beta|\) and \(\alpha\), because softplus
approaches the identity at large positive arguments. The condition
therefore fails at \((\alpha,\beta)=(1.2,-0.3)\), which
Theorem~\ref{thm:affine-mixing} covers, and on the whole line
\(\phi=1\), where \(\alpha+|\beta|=1-\beta+|\beta|\geq1\).
\citet{doukhan2019absolute} require contraction in the state only, a
linear drift condition \(\E(M_{t+1}\mid M_t=m)\leq\lambda m+B\) with
\(\lambda<1\), and a total-variation similarity condition on the
conditional family, and obtain a mixing rate of order
\(\varrho^{\sqrt j}\). By
\eqref{eq:softplus-positive-part-bound} and
\(\Var(X_t\mid M_t=m)=\sigma^2m\),
\[
 \omega+\phi m\leq\E(M_{t+1}\mid M_t=m)
 \leq\max(\omega+\phi m,0)+|\alpha|\sigma\sqrt m+c\log 2.
\]
For conditional families with positive mass at zero, their conditions
therefore hold throughout the region \(\phi<1\), and
Theorem~\ref{thm:affine-mixing} improves the rate to a geometric one.
At \(\phi=1\) the lower bound rules out their drift condition. The
contraction conditions of \citet{debaly2021iterations} and
\citet{truquet2023strong} also exclude \(\phi=1\).
Section~\ref{sec:at-unit-root} treats this case with the theory of
Markov chains with asymptotically zero drift \citep{denisov2025markov},
applied to \(\sqrt{M_t}\).}
}

{
{The proof of Theorem~\ref{thm:affine-mixing} must handle
the singular transitions described in the introduction.
\citet[Remark~3]{neumann2011absolute} gives an observation-driven Poisson
example in which the conditional-mean process fails to be strongly
mixing although the counts are absolutely regular, so proving mixing
of the conditional means can be too restrictive.
The proof is in Appendix~\ref{app:proof-affine-mixing}.}
}

\subsection{At and above the unit root}
\label{sec:at-unit-root}
{
{We now consider \(\phi=1\), where Assumption~\ref{ass:invertibility}
gives \(\alpha=1-\beta>0\).}
Unlike the standard linear Poisson
INGARCH model with a positive intercept, the softplus model can admit
a stationary solution at this boundary.
}

\begin{theorem}
\label{thm:affine-boundary}
{Suppose Assumptions~\ref{ass:affine-seed} and
\ref{ass:invertibility} hold and \(\phi=1\).
\begin{enumerate}
\item If \(\omega<0\), a stationary solution exists and is unique in law.
 Under this solution,
 \begin{enumerate}
 \item[(a)] \(\Pr(M_0>y)\sim Cy^{-\kappa}\) as \(y\to\infty\) for some
 \(C>0\), where \(\kappa:=-2\omega/(\alpha^2\sigma^2)\) is the tail index;
 \item[(b)] for every \(s>0\), the moments \(\E M_0^s\) and \(\E X_0^s\) are
 finite if and only if \(s<\kappa\);
 \item[(c)] \((X_t)\) is not geometrically \(\beta\)-mixing, and
 \(j^r\beta_X^{\mathrm{mix}}(j)\to0\) as \(j\to\infty\)
 for every \(0<r<\kappa\).
 \end{enumerate}
\item If \(\omega\geq0\), no stationary solution exists.
\end{enumerate}}
\end{theorem}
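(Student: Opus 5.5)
The plan is to study the state chain through the transform \(Y_t:=\sqrt{M_t}\), as the discussion after Theorem~\ref{thm:affine-mixing} suggests, and to apply the theory of Markov chains with asymptotically zero drift \citep{denisov2025markov}. At \(\phi=1\) the recursion reads \(\eta_{t+1}=M_t+\omega+\alpha U_t\). By \eqref{eq:softplus-positive-part-bound}, \(M_{t+1}=\eta_{t+1}+O(e^{-\eta_{t+1}/c})\) when \(\eta_{t+1}>0\). Hence, given \(M_t=m\) large, the increment \(\Delta=M_{t+1}-m\) has conditional mean \(\omega+o(1)\) and conditional variance \(\alpha^2\sigma^2 m+O(1)\). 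The softplus correction is exponentially small on the event \(\eta_{t+1}>0\), and the complementary event has probability decaying faster than any power of \(m\), by Chebyshev with all moments of \(G_1\).

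A second-order Taylor expansion of \(\sqrt{m+\Delta}\) then gives the first two conditional moments of the increment of \(Y\) at level \(y=\sqrt m\):
\[
 \mu_1(y)=\frac{\omega}{2y}-\frac{\alpha^2\sigma^2}{8y}+o(y^{-1}),\qquad
 \mu_2(y)=\frac{\alpha^2\sigma^2}{4}+o(1).
\]
The higher-moment remainders are controlled by the finite moments of \(G_1\) from Assumption~\ref{ass:affine-seed}(i), since \(U_t/\sqrt m\) has uniformly bounded moments of all orders. This is exactly the Lamperti regime. The key ratio is
\[
 \frac{2y\mu_1(y)}{\mu_2(y)}=\frac{4\omega}{\alpha^2\sigma^2}-1,
\]
and Lamperti's criterion gives positive recurrence iff this ratio is \(<-1\), i.e.\ \(\omega<0\). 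The invariant tail is \(\Pr(Y>z)\asymp z^{-\rho}\) with \(\rho=-4\omega/(\alpha^2\sigma^2)\), and \(M=Y^2\) converts this to \(\Pr(M>y)\sim Cy^{-\kappa}\) with \(\kappa=\rho/2\), matching~(a). For the exact asymptotic \(\sim C y^{-\kappa}\), rather than just regular-variation bounds, I would invoke the sharp tail theorem of \citet{denisov2025markov}. That theorem requires the drift and variance expansions with error \(O(y^{-1-\delta})\) or \(O(y^{-\delta})\), so I must verify these rates. They follow because the \(o\)-terms above are in fact \(O(1/m)\) and exponentially small corrections.

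The remaining parts follow from this. Part~(b) for \(M_0\) is immediate from~(a). For \(X_0\), I would use \(\E(X_0^s\mid M_0)\asymp M_0^s\) for large \(M_0\), by concentration of \(G_m\) with relative deviation \(O(m^{-1/2})\), and \(\E(X_0^s\mid M_0)\le C\) on bounded sets. Existence needs positive recurrence plus \(\psi\)-irreducibility and aperiodicity. Here I would reuse the minorization/observation-coupling argument from the proof of Theorem~\ref{thm:affine-mixing} (case~(a) or~(b)) on a compact set, which only uses \(|\beta|<1\) and local structure, not \(\phi<1\). Uniqueness in law follows in the same way, and the joint stationary law of \((X_t,M_t)\) is then obtained from the invariant measure of \(M\). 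Part~2 is handled by the same criterion: for \(\omega\ge 0\) the ratio is \(\ge -1\), so the chain is null recurrent or transient, and no invariant probability exists. The boundary case \(\omega=0\) gives ratio exactly \(-1\), which is null recurrent in Lamperti's critical case; there I would check that the second-order error terms do not tip it, or use a direct argument that \(\E\log M_t\) or \(\sqrt{M_t}\) is a submartingale outside a compact set, which rules out stationarity.

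For~(c), polynomial mixing comes from polynomial ergodicity. Take the Lyapunov function \(V(m)=m^{r'}\) with \(r<r'<\kappa\); it satisfies a subgeometric drift condition \(PV\le V-cV^{1-1/r''}+b\mathbf 1_C\) for a suitable \(r''\), derived from the drift expansion above. Combined with the coupling from Theorem~\ref{thm:affine-mixing}, the Tuominen--Tweedie rates give \(j^r\beta^{\mathrm{mix}}(j)\to0\). Transferring this from the state to the observations works as in the proof of Theorem~\ref{thm:affine-mixing}. Non-geometric mixing follows because geometric \(\beta\)-mixing of \(X\), combined with the contraction~\eqref{eq:filter-contraction}, would force geometric ergodicity of the chain, which implies exponential moments of the return time. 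That contradicts the heavy tail: from level \(y\), the return time to a compact set is at least of order \(y\), because the drift is bounded. I expect this transfer (non-geometric mixing of \(X\)) and the sharp constant in~(a) to be the main obstacles. For the former I would try a direct lower bound, \(\Pr(M_j\text{ large}\mid\mathcal F_0^X)\), which stays large with polynomial rather than geometric decay on the event \(\{M_0>j\}\).
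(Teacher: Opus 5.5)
Your treatment of (a), (b) and part~2 follows the paper's route: the square-root chain with $\mu_1(x)=(4\omega-\alpha^2\sigma^2)/(8x)+O(x^{-2})$ and $\mu_2(x)=\alpha^2\sigma^2/4+O(x^{-1})$, and the sharp tail from Corollary~8.3 of \citet{denisov2025markov}. At $\omega=0$, the key point is that the $O(x^{-2})$ error is $o(1/(x\log x))$, which is what their Corollary~2.12 needs. Two corrections are needed there.
\begin{itemize}
\item Your $\omega=0$ fallback has the wrong sign. There $\mu_1(x)\sim-\alpha^2\sigma^2/(8x)<0$, so $\sqrt{M_t}$ and $\log M_t$ are supermartingales at large states. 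Only $M_t$ is a submartingale, and its increments are unbounded.
\item The state chain need not be $\psi$-irreducible (see below). So ``null recurrent or transient, hence no invariant law'' must be closed by a Kac-type return-time identity valid for any stationary process, as the paper does.
\end{itemize}

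The first genuine gap is that your polynomial rate in~(c) is off by one power.
\begin{itemize}
\item For $V(m)=m^{r'}$, \eqref{eq:affine-power-drift} gives a decrement of order $m^{r'-1}=V^{1-1/r'}$. So $r''=r'$ is forced, and the subgeometric rate attached to $\varphi(v)=cv^{1-1/r'}$ is $\varphi(H_\varphi^{-1}(j))\asymp j^{r'-1}$, not $j^{r'}$.
\item Because you keep $r'<\kappa$ in order to integrate $V$ against $\pi$, you obtain $j^r\beta_X^{\mathrm{mix}}(j)\to0$ only for $r<\kappa-1$. You get nothing when $\kappa\leq1$, where $m^{r'-1}$ vanishes and there is no usable drift.
\item Reaching every $r<\kappa$ requires $V_a=1+m^a$ with $r+1\leq a<\kappa+1$, for which $\pi(V_a)$ may be infinite.
\end{itemize}
The paper handles this in two ways. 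In case~(a) it uses the bound $C[V_a(m)n^{-a}+n^{-(a-1)}+(\log n/n)^a]$ of \citet{durmusFortMoulines2016}. Its state-dependent part decays one power faster, and after truncation at $1$ it integrates against $\pi$ using only a moment of order $s\in(r,\kappa)$. In case~(b) it uses the stationary result of \citet{meitz2021subgeometric}, which does not need $\pi(V_a)<\infty$.

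Moreover, Tuominen--Tweedie requires $\psi$-irreducibility. This fails in case~(a) for integer-valued families with $\beta\neq0$: each state reaches only countably many points, and each point is reached from only countably many states. There you need a coupling (Wasserstein) version of subgeometric ergodicity together with the transfer \eqref{eq:finite-activity-wasserstein-beta}. For the same reason, existence in case~(a) should come from the Feller property and tightness of Ces\`aro averages, not from Lamperti recurrence plus irreducibility.

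The second gap is the proof that geometric mixing fails. Geometric $\beta$-mixing of $(X_t)$ plus filter contraction does not imply geometric ergodicity of $(M_t)$. The reason is that $\sigma(M_s:s\geq j)$ is not contained in $\sigma(X_s:s\geq j)$, and approximating $M_s$ by a contracting filter is not a total-variation statement. In case~(a) the implication can fail outright: the paper notes after Theorem~\ref{thm:affine-mixing} that the conditional means can fail to be mixing while the counts are absolutely regular.

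Your fallback points in the right direction, but it is incomplete. Both events must be observation events. Also, ``the drift is bounded'' does not by itself keep the chain high, because one-step fluctuations are of order $\sqrt m$. The missing ingredient is a persistence estimate:
\begin{itemize}
\item Doob's inequality on the martingale part, whose variance over $a_*m$ steps is $O(a_*m^2)$, shows the following. From $M_0=m$, with probability at least $p_*>0$, the path stays in $[m/2,2m]$ for $j\leq a_*m$ steps, with $X_0\geq m/2$ and $X_j\geq m/4$.
\item Take $A_n=\{X_0\geq bn/2\}\in\mathcal F_0^X$ and $C_n=\{X_n\geq bn/4\}\in\mathcal F_n^{X,+}$. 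Persistence gives $\Pr(A_n\cap C_n)\geq p_*\Pr(M_0\geq bn)\asymp n^{-\kappa}$.
\item Moments of order $q\in(\kappa/2,\kappa)$ give $\Pr(A_n)\Pr(C_n)=o(n^{-\kappa})$, hence $\beta_X^{\mathrm{mix}}(n)\geq cn^{-\kappa}$.
\end{itemize}
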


{
{For a covariance-stationary process \((X_t)\) with positive
variance, write
\[
 \rho_X(j)=\frac{\Cov(X_0,X_j)}{\Var(X_0)},\qquad j\geq0,
\]
for its autocorrelations. We use} \emph{short memory} to mean that {these}
autocorrelations are absolutely summable.
When the autocorrelations are nonnegative, we use
\emph{long memory} to mean that
\[
 \sum_{j=0}^{\infty}\rho_X(j)=\infty.
\]
}

{By part~1(b) of Theorem~\ref{thm:affine-boundary} with \(s=2\),
\((X_t)\) is covariance stationary exactly when \(\kappa>2\).}
Within that region, the next result distinguishes a
short-memory sufficient condition from a long-memory regime.

\begin{theorem}
\label{thm:kappa-two-three-long-memory}
Suppose Assumptions~\ref{ass:affine-seed} and
\ref{ass:invertibility} hold, with \(\phi=1\) and \(\omega<0\), and let
\(\kappa\) be as in Theorem~\ref{thm:affine-boundary}.
\begin{enumerate}
\item If \(\kappa>3\), then \((\rho_X(j))\) is absolutely summable.
\item If \(2<\kappa\leq3\) and \(\beta\geq0\), then {\((X_t)\) has
long memory, with \(\rho_X(j)\geq0\) for all \(j\) and
\(\rho_X(j)\geq Cj^{-(\kappa-2)}\) for some \(C>0\) and all sufficiently
large \(j\).}
\end{enumerate}
\end{theorem}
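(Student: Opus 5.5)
Part~1 will follow from the mixing rate of Theorem~\ref{thm:affine-boundary}(c) combined with a covariance inequality for $\beta$-mixing sequences. Part~2 will be a direct lower bound that exploits stochastic monotonicity of the chain when $\beta\geq0$, together with the power-law tail in Theorem~\ref{thm:affine-boundary}(a).

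\emph{Part 1.} Since $\beta$-mixing dominates $\alpha$-mixing, Davydov's inequality gives, for $p>2$,
\[
 |\Cov(X_0,X_j)|\leq 8\,\beta_X^{\mathrm{mix}}(j)^{1-2/p}\,\|X_0\|_p^2 .
\]
By Theorem~\ref{thm:affine-boundary}(b), $\|X_0\|_p<\infty$ for every $p<\kappa$. By Theorem~\ref{thm:affine-boundary}(c), $\beta_X^{\mathrm{mix}}(j)=o(j^{-r})$ for every $r<\kappa$. Hence $|\Cov(X_0,X_j)|=O(j^{-r(p-2)/p})$. As $r,p\uparrow\kappa$, the exponent $r(p-2)/p$ tends to $\kappa-2$, which exceeds $1$ when $\kappa>3$. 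Choosing $r,p<\kappa$ close enough to $\kappa$ therefore makes the exponent exceed one, and $(\rho_X(j))$ is absolutely summable. No further work is needed here.

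\emph{Part 2, nonnegativity.} I would first reduce to the state. For $j\geq1$, $X_j=M_j+U_j$ with $\E(U_j\mid\mathcal F_{j-1})=0$ and $X_0\in\mathcal F_{j-1}$. Finiteness of second moments ($\kappa>2$) then gives
\[
 \Cov(X_0,X_j)=\Cov(X_0,M_j)=\Cov\bigl(X_0,\E(M_j\mid\mathcal F_0)\bigr).
\]
With $\beta\geq0$ and $\alpha=1-\beta>0$, the map $(x,m)\mapsto s_c(\omega+\alpha x+\beta m)$ is nondecreasing in both arguments. Moreover $G_{m'}=G_m*G_{m'-m}$ for $m'\geq m$, so $G_m$ is stochastically increasing in $m$. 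It follows that the chain $(M_t)$ is stochastically monotone, and hence $g_j(x,m):=\E(M_j\mid X_0=x,M_0=m)$ is nondecreasing in $(x,m)$ (the law of $M_j$ given $\mathcal F_0$ depends only on $(X_0,M_0)$). I would then decompose by conditioning on $M_0$:
\[
 \Cov(X_0,g_j)=\E\bigl[\Cov(X_0,g_j(X_0,M_0)\mid M_0)\bigr]+\Cov\bigl(M_0,h_j(M_0)\bigr),
\]
where $h_j(m)=\E(g_j\mid M_0=m)$ is nondecreasing. The first term is nonnegative because it is the conditional covariance of $X_0$ with a nondecreasing function of $X_0$. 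The second term is nonnegative by Chebyshev's association inequality for two nondecreasing functions of the scalar $M_0$. This gives $\rho_X(j)\geq0$.

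\emph{Part 2, lower bound.} I would drop the first term and bound the second from below. Set $\mu=\E M_0$. Pointwise $(m-\mu)(h_j(m)-h_j(\mu))\geq0$, so
\[
 \Cov(M_0,h_j(M_0))\geq \E\bigl[(M_0-\mu)(h_j(M_0)-h_j(\mu))\mathbf 1\{M_0>Kj\}\bigr].
\]
Two estimates are needed.
\begin{itemize}
\item[(i)] \emph{Lower bound on $h_j$.} At $\phi=1$, $\eta_{t+1}=\omega+M_t+\alpha U_t$, and $M_{t+1}=\eta_{t+1}+r_{t+1}$ with $r_{t+1}=s_c(\eta_{t+1})-\eta_{t+1}\geq0$. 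Telescoping and taking conditional expectations gives $h_j(m)\geq m+j\omega$, which is at least $m/2$ once $m>2|\omega|j$.
\item[(ii)] \emph{Uniform upper bound on $h_j(\mu)$.} By monotonicity and stationarity,
\[
 h_j(\mu)\leq\E(M_j\mid M_0\geq\mu)\leq \mu/\Pr(M_0\geq\mu),
\]
which does not depend on $j$.
\end{itemize}
Taking $K$ large, these give $\Cov(X_0,X_j)\geq c\,\E[M_0^2\mathbf 1\{M_0>Kj\}]$. The tail $\Pr(M_0>y)\sim Cy^{-\kappa}$ then yields $\E[M_0^2\mathbf 1\{M_0>y\}]\asymp y^{2-\kappa}$, so $\rho_X(j)\geq Cj^{-(\kappa-2)}$. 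Since $\kappa-2\leq1$, the series $\sum_j\rho_X(j)$ diverges.

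The step I expect to be the main obstacle is rigorously establishing stochastic monotonicity of the joint chain $(X_t,M_t)$ and justifying the conditional-covariance decomposition. This requires constructing a monotone coupling of $G_m$ and $G_{m'}$ via the convolution structure and propagating it through the recursion. Everything else is routine bookkeeping with the tail asymptotics of Theorem~\ref{thm:affine-boundary}.
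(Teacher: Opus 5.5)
Your Part~1 is the paper's argument: Davydov's inequality with the moment and mixing exponents both taken in $(3,\kappa)$. The paper simply uses a single $q\in(3,\kappa)$ for both.

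Your Part~2 is correct but takes a genuinely different route.

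\emph{The paper's route.} It first proves that every finite observation vector is associated, via the monotone representation $X_t=F^{\leftarrow}_{M_t}(\mathcal U_t)$ driven by independent uniforms. It then uses association a second time to get $\Cov(X_0,X_j)\geq\Cov(\max(X_0-j,0),\max(X_j-j,0))$. The product moment is bounded below by $Cj^2\Pr(M_0\geq Aj)$ through the persistence lemma, a Doob-inequality argument that keeps $M_k\in[m/2,2m]$ for $k\leq a_*m$ with probability at least $p_*$. Finally it subtracts $(\E X_0^2)^2j^{-2}$ to account for the product of the truncated means.

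\emph{Your route.} You use the martingale property of $U_j$ to reduce to $\Cov(X_0,\E(M_j\mid\mathcal F_0))$, then split by the law of total covariance given $M_0$. After that you need only three facts: monotonicity of $P^j$ applied to the identity, the one-line drift bound $\E(M_j\mid M_0=m)\geq m+j\omega$ from $s_c(x)\geq x$, and the $j$-free bound $h_j(\mu)\leq\mu/\Pr(M_0\geq\mu)$.

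\emph{What each buys.} Your route is more elementary: it needs neither multivariate association nor the persistence lemma. It gives $\Cov(X_0,X_j)\geq c\,\E[M_0^2\mathbf 1\{M_0>Kj\}]$ with no error term to absorb, valid for every $\kappa>2$ once $\beta\geq0$. The paper's route yields the stronger structural fact that all finite-dimensional vectors are associated. It costs little there, because the persistence lemma is already needed to rule out geometric $\beta$-mixing in Theorem~\ref{thm:affine-boundary}.

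The step you flag as the main obstacle is short. The convolution coupling $W_{m'}=W_m+V_{m'-m}$, together with $\alpha=1-\beta>0$ and $\beta\geq0$, makes $P$ stochastically monotone directly. You only need monotonicity of $x\mapsto g_j(x,m)$ and of $h_j$, not of the joint chain.
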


We now turn from the unit-root case to the region \(\phi>1\).
The next theorem shows that {no stationary solution exists} there, regardless of
the intercept.

\begin{theorem}
\label{thm:affine-supercritical}
Suppose Assumptions~\ref{ass:affine-seed} and
\ref{ass:invertibility} hold and \(\phi>1\).  Then, for every
\(\omega\in\mathbb R\), {the model has no stationary solution}.
\end{theorem}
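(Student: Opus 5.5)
We argue by contradiction. Suppose \(((X_t,M_t))\) is a stationary solution with \(\phi>1\), and let \(\pi\) be the stationary law of \(M_0\), a probability measure on \((0,\infty)\). The plan is to show that \(M_t\to\infty\) in probability under the chain started from \(\pi\), which contradicts tightness of the constant marginal \(\pi\).

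The natural tool is a bounded Lyapunov function that increases strictly in expectation at high levels. Take \(V(m)=1-m^{-\gamma}\) for small \(\gamma>0\), or \(V(m)=-\log(1+m)^{-1}\)-type variants. Write
\[
\eta_{t+1}=\phi M_t+\omega+\alpha U_t .
\]
Given \(M_t=m\), the relative deviation \(\alpha U_t/m\) has variance \(\alpha^2\sigma^2/m\to0\). Hence \(\eta_{t+1}/m\to\phi\) in \(L^2\), and by \eqref{eq:softplus-positive-part-bound} also \(M_{t+1}/m\to\phi>1\) in probability, uniformly over \(m\) above a threshold. We will check that this is the case \(\chi=1\) of the cancellation lemma used for the drift display. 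Combined with finite moments of \(G_1\), this gives
\[
\E\bigl(M_{t+1}^{-\gamma}\mid M_t=m\bigr)\le \theta\, m^{-\gamma}\quad\text{for } m\ge m_0,
\]
with some \(\theta<1\). The main difficulty is that \(M_{t+1}\) can be near zero with non-negligible probability, which makes \(M_{t+1}^{-\gamma}\) large. We control this using \(M_{t+1}\ge s_c(\eta_{t+1})\). Since \(s_c(x)\ge c e^{x/c}/2\) for \(x\le0\), it suffices to bound \(\E e^{-\gamma\eta_{t+1}/c}\). Because \(\eta_{t+1}=\phi m+\omega+\alpha(X_t-m)\), this exponential moment is explicit through the Laplace exponent. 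If \(\alpha\ge0\), it equals \(\exp\bigl(-\gamma(\phi m+\omega-\alpha m)/c - m\,a(\gamma\alpha/c)\bigr)\). Using \(a(u)=u-\sigma^2u^2/2+o(u^2)\), its exponent is \(-\gamma\beta m/c\) plus a term of order \(\gamma^2 m\). If \(\beta\) has the wrong sign, we instead work on the event \(\{\eta_{t+1}\le \phi m/2\}\), where Chebyshev gives probability \(O(1/m)\), and use a Cauchy--Schwarz split. If \(\alpha<0\), the condition \(\phi>1\) and \(|\beta|<1\) force \(\alpha>0\), so this case cannot occur. Together these pieces give the inequality above with an additive error \(o(m^{-\gamma})\).

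Next we need to rule out \(\pi\) concentrating on low levels. Define \(W(m)=\min(m,m_0)^{-\gamma}\), or work directly with \(f(m)=m^{-\gamma}\wedge K\), and use stationarity:
\[
\int \E\bigl(f(M_1)\mid M_0=m\bigr)\,\pi(dm)=\int f\,d\pi .
\]
The argument then has two parts.

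\emph{Escape from low levels.} From any level, the chain reaches \([m_0,\infty)\) with positive probability uniformly on compacts. This holds because large observations have positive probability: \(G_m\) has unbounded support for \(m>0\), and \(\alpha>0\) makes \(\eta\) large. The observation-driven minorization used in Theorem~\ref{thm:affine-mixing} can be reused for this.

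\emph{No return from high levels.} From high levels, \(M_t^{-\gamma}\) is a supermartingale up to the return time, so the probability of ever dropping below \(m_0\) from level \(m\) is at most \((m_0/m)^{\gamma}/(1-\theta')\)-type small, and \(M_t\to\infty\) on the non-return event.

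Combining these, the event \(\{M_t\to\infty\}\) has positive probability under \(\pi\) (in fact probability one by a standard zero-one argument). Hence \(\Pr(M_t>K)\to\) a positive limit for every \(K\), and in fact \(\liminf_t\Pr(M_t>K)\ge p>0\) uniformly in \(K\). This contradicts \(\Pr(M_t>K)=\pi(K,\infty)\to0\) as \(K\to\infty\).

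The step I expect to be the main obstacle is the uniform negative-moment drift bound near zero for general infinitely divisible seeds. In case (b), \(X_t\) has no atom but can still be small, so the cleanest route is the Laplace-transform computation above rather than tail estimates. The choice of \(\gamma\) must be small relative to \((\phi-1)\) and \(\sigma^2\alpha^2\).
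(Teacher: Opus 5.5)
Your argument is correct in outline and proves the theorem, but it takes a heavier route than the paper.

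\textbf{The paper's route.} The paper works directly on the invariant tail \(\bar F(x)=\pi((x,\infty))\) and never looks at paths or negative moments. Fix \(\lambda\in(1,\phi)\). Chebyshev's inequality gives \(\Pr(M_1<\lambda y\mid M_0=y)\le\sigma^2/(\epsilon^2y)\) for large \(y\). Invariance turns this into \(\bar F(\lambda x)\ge(1-\sigma^2/(\epsilon^2x))\bar F(x)\). Since \(\bar F(x)>0\) (unbounded support of \(G_m\) and \(\alpha>0\)), iterating along \(\lambda^nx\) keeps \(\bar F(\lambda^nx)\ge\bar F(x)/2\), which contradicts \(\bar F\to0\). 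This uses only the conditional variance. It never has to control \(M_{t+1}\) near zero, because it bounds the probability of a bad event rather than the expectation of a function that blows up there.

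\textbf{Comparison.} Your stopped supermartingale \(M_{t\wedge\tau}^{-\gamma}\) is a pathwise version of the same one-step escape. It buys a genuine transience statement: optional stopping and Fatou give \(\Pr(\tau<\infty\mid M_0=m)\le(m_0/m)^\gamma\) directly, so the factor \(1/(1-\theta')\) is not needed. The price is the negative-moment drift, which is where all the technical work lies. For the escape from low levels you need only \(P(m,(x,\infty))>0\), which is exactly what the paper uses; the mixing minorization plays no role.

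\textbf{A correction in the drift step.} The Laplace computation is mis-simplified. Since \(e^{-y}\le1-y+y^2/2\) for \(y\ge0\) and \(\E W_1=1\), one has \(a(u)\ge u-(1+\sigma^2)u^2/2\) for all \(u\ge0\). Hence, given \(M_t=m\),
\[
 \E e^{-\gamma\eta_{t+1}/c}
 \le e^{-\gamma\omega/c}
 \exp\Bigl(-\frac{\gamma m}{c}\Bigl[\phi-\frac{(1+\sigma^2)\gamma\alpha^2}{2c}\Bigr]\Bigr).
\]
The exponent is \(-\gamma\phi m/c+O(\gamma^2m)\), not \(-\gamma\beta m/c+O(\gamma^2m)\); the first-order part of \(m\,a(\gamma\alpha/c)\) contributes \(-\gamma\alpha m/c\). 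This bound is exponentially small for small \(\gamma\) whatever the sign of \(\beta\), so drop the case split on \(\beta\). Your proposed fallback would not have worked anyway: a Chebyshev \(O(1/m)\) probability combined with Cauchy--Schwarz cannot offset a factor growing like \(e^{\gamma|\beta|m/c}\).

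With the corrected bound, the drift follows by splitting on \(\eta_{t+1}\):
\begin{itemize}
\item On \(\{\eta_{t+1}\le0\}\), use \(M_{t+1}^{-\gamma}\le(2/c)^\gamma e^{-\gamma\eta_{t+1}/c}\); this contribution is exponentially small.
\item On \(\{0\le\eta_{t+1}<(\phi-\epsilon)m\}\), use \(M_{t+1}\ge c\log2\); this event has probability \(O(m^{-A})\) by Lemma~\ref{lem:affine-concentration}.
\item On the remaining event, \(M_{t+1}\ge(\phi-\epsilon)m\).
\end{itemize}
Choosing \(\epsilon<\phi-1\), this gives your inequality with \(\theta\) close to \((\phi-\epsilon)^{-\gamma}<1\).
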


{
Together, Theorems~\ref{thm:affine-classification},
\ref{thm:affine-boundary}, and~\ref{thm:affine-supercritical} show that,
under Assumptions~\ref{ass:affine-seed} and~\ref{ass:invertibility},
{a stationary solution exists} exactly when \(\phi<1\), or when
\(\phi=1\) and \(\omega<0\).
}

{
\subsection{Connections with related models}
\label{sec:literature-comparison}

{
}

\paragraph{Finite-order count autoregressions.}
\citet{costaalmost} study the stabilizing effect of negative autoregressive
coefficients ({negative feedback}).\footnote{\citet{costa2026stability}
extend this analysis to a similar third-order count model.}
Their two-lag model is
\[
 Y_t\mid(Y_{t-1},Y_{t-2})
 \sim\Pois\!\left(
   \max\{b_0+b_1Y_{t-1}+b_2Y_{t-2},0\}
 \right),\qquad b_0>0,
\]
where \(b_1,b_2\in\R\). They establish geometric ergodicity strictly
below the critical curve
\[
 b_2=
 \begin{cases}
  1, & b_1\leq0,\\
  1-b_1, & 0<b_1<2,\\
  -b_1^2/4, & b_1\geq2,
 \end{cases}
\]
and transience strictly above it, leaving the curve itself unclassified.
It is straightforward to check that this curve lies on or below the line
\(b_1+b_2=1\), with equality exactly when \(0\leq b_1\leq2\).
{Outside the unclassified boundary curve, stationarity in
their model requires \(b_1+b_2<1\), but this condition is not sufficient.}

The two models have different feedback structures. Their conditional mean
applies the positive-part link to a linear combination of two lagged
counts, whereas {recursion~\eqref{mrecursion}} combines a
lagged observation with a lagged conditional mean. {For this model,}
Theorem~\ref{thm:affine-classification} establishes stationarity whenever
\(\alpha+\beta<1\), under Assumption~\ref{ass:affine-seed}.

Their conditional mean is an explicit function of observed counts, so no
filter-invertibility condition is needed. In our model, \(|\beta|<1\)
ensures that the effect of initializing the conditional-mean filter
vanishes.

\paragraph{Dynamic Tobit models.}
{For nonnegative real-valued time series,
\citet{bykhovskaya2023networks} considers the scalar censored
autoregression}
\[
 Y_t=\max\{b_0+b_1Y_{t-1}+\varepsilon_t,0\},
\]
with iid mean-zero innovations.\footnote{
\citet{bykhovskayaDuffy2024tobit} study higher-order dynamic Tobit models
in a local-to-unity asymptotic regime.} At \(b_1=1\) and
\(b_0<0\), this is a reflected random walk with negative drift,
which is stationary under {their} innovation assumptions.
{That process is first-order Markov, as is \((M_t)\) in our model.} {The increment variances differ,
however.}
{Their} innovations have state-independent variance, whereas
{for recursion~\eqref{mrecursion} at \(\phi=1\), the conditional
variance of the increment satisfies}
\[
 \Var(M_{t+1}-M_t\mid M_t=m)
 =\alpha^2\sigma^2m+O(1),\qquad m\to\infty.
\]
The interaction between this growing conditional variance and the
negative drift determines the tail index \(\kappa\).
\par}

\paragraph{GARCH models.}
{
A comparison with the GARCH\((1,1)\) model
\(Y_t=\sqrt{h_t}\,\varepsilon_t\),
\(h_t=\omega^\ast+\alpha^\ast Y_{t-1}^2+\beta^\ast h_{t-1}\), is
also instructive.  Here \(\omega^\ast>0\), \(\alpha^\ast,\beta^\ast\geq0\),
and \((\varepsilon_t)\) is iid with \(\E\varepsilon_t^2=1\).
There, strict stationarity holds if and only if
\(\E\log(\alpha^\ast\varepsilon_0^2+\beta^\ast)<0\),
{and the boundary case where this expectation equals zero
admits no strictly stationary solution}
\citep{nelson1990stationarity,bougerolPicard1992garch}.  The integrated
case \(\alpha^\ast+\beta^\ast=1\), with \(\alpha^\ast>0\) and nondegenerate
\(\varepsilon_0^2\), lies strictly inside that region, so IGARCH is strictly
stationary with infinite variance, its heavy tails being generated
multiplicatively.  In the softplus affine\((1,1)\) model, by contrast,
{a stationary solution exists when \(\phi=1\) and
\(\omega<0\), and the marginal law of \(M_0\) under this stationary
solution has tail index
\(\kappa=-2\omega/(\alpha^2\sigma^2)\)}.
}

\paragraph{Recurrent neural networks.}
In recurrent-network terms, \(M_t\) is a scalar recurrent state, softplus is
the activation function, and the stochastic output \(X_t\) feeds back
into {recursion~\eqref{mrecursion}}. In the related literature,
\citet{leisch1998stationarity} establish stationarity and geometric
ergodicity for stable autoregressive neural networks.
\citet{zhao2020rnnlongmemory} show short memory under contraction and
motivate memory-augmented architectures.  For a constant-gate
memory-augmented gated recurrent unit, \citet{yang2025mgru} establish
polynomially decaying input--output coefficients.
{Theorem~\ref{thm:kappa-two-three-long-memory} establishes
nonsummability of the autocorrelations of the stationary observation process.}
{Filter contraction under \(|\beta|<1\) is compatible with
long memory in the endogenous output.}

\section{Inference}
\label{sec:likelihood-consequences}

\subsection{Poisson QMLE}
\label{sec:poisson-qmle}

Throughout this section, the softplus scale \(c>0\) is fixed and known.
The normalization \(a'(0)=1\) fixes the conditional-mean scale, while \(c\)
{sets the amount of smoothing around zero.}
The value \(c=1\) is the convention of
\citet{weiss2022softplus} and of the standard softplus activation in
neural-network work \citep{dugas2000incorporating}.  Moreover, estimating
\(c\) can be difficult in both limiting regimes.  As \(c\downarrow0\),
\[
 0\leq s_c(x)-\max(x,0)=c\log\bigl(1+e^{-|x|/c}\bigr)\leq c\,e^{-|x|/c},
 \qquad x\in\mathbb R,
\]
{so the link converges uniformly to the positive-part function.} For
{linear indices small in magnitude} relative to \(c\),
\(s_c(x)=c\log2+x/2+O(x^2/c)\), which confounds \(c\) with the intercept
\(\omega\). The simulations and applications examine sensitivity to
the fixed scale.

By \eqref{eq:affine-conditional-moments}, every affine family considered
here has conditional mean \(M_t\) and conditional variance
\(\sigma^2M_t\).  This common
conditional-mean specification motivates a single Poisson
quasi-likelihood.
{Write \(\theta=(\omega,\alpha,\beta)^\top\) for a generic dynamics
parameter, and let \(\theta_0=(\omega_0,\alpha_0,\beta_0)^\top\) and
\(\sigma_0^2\) denote the true dynamics parameter and conditional
dispersion parameter, respectively. {For some \(\rho\in(0,1)\), let
\(\Theta\subset\mathbb R\times\mathbb R\times[-\rho,\rho]\)
be compact.}
}
{Observe \(X_0,\ldots,X_n\), using \(X_0\) as the presample
observation for the criterion based on \(X_1,\ldots,X_n\).
Fix a positive initial value \(\widetilde M_0\) independent of
\(\theta\). For each \(\theta\in\Theta\), define
\[
 \widetilde M_t(\theta)
 =s_c(\omega+\alpha X_{t-1}+\beta\widetilde M_{t-1}(\theta)),
 \qquad t=1,\ldots,n.
\]
}
The Poisson quasi-log-likelihood criterion is
\begin{equation}
 Q_n(\theta)=\sum_{t=1}^n
 (X_t\log \widetilde M_t(\theta)-\widetilde M_t(\theta)).
 \label{eq:affine-qmle-criterion}
\end{equation}
{Let
\(\widehat\theta_n=(\widehat\omega_n,\widehat\alpha_n,\widehat\beta_n)^\top\)
be a maximizer of \eqref{eq:affine-qmle-criterion} over \(\Theta\).
We call this estimator the Poisson QMLE and define
\(\widehat\phi_n=\widehat\alpha_n+\widehat\beta_n\).
{The parameter set \(\Theta\) may contain points with
\(\phi>1\). The dispersion \(\sigma^2\) is estimated separately.}}

{
Write
\(\widetilde D_t(\theta)=\partial_\theta\widetilde M_t(\theta)\),
\(\widehat M_t=\widetilde M_t(\widehat\theta_n)\), and
\(\widehat D_t=\widetilde D_t(\widehat\theta_n)\), using the same fixed
initialization as the criterion, and define
\[
 \widehat\sigma_n^2
 =\frac{\sum_{t=1}^n(X_t-\widehat M_t)^2}
        {\sum_{t=1}^n\widehat M_t},
 \qquad
 \widehat I_{Q,n}
 =\frac1n\sum_{t=1}^n
 \frac{\widehat D_t\widehat D_t^\top}{\widehat M_t}.
\]
{When \(\widehat I_{Q,n}\) is nonsingular, we estimate the
covariance matrix of \(\widehat\theta_n\) by
\(\widehat\sigma_n^2\widehat I_{Q,n}^{-1}/n\).}
}

{%
{For \(\theta\in\Theta\), let \(M_t(\theta)\) be the value of
recursion~\eqref{mrecursion} at \(\theta\) computed from the observed past
\(X_{t-1},X_{t-2},\ldots\), so that \(M_t(\theta_0)=M_t\). Since
\(|\beta|<1\) on \(\Theta\) and \(\E X_0<\infty\),
bound~\eqref{eq:filter-contraction} makes \(M_t(\theta)\) well defined at
every \(\theta\in\Theta\), including points with \(\phi>1\), and
\(\widetilde M_t(\theta)\) converges to it.}
}
With expectation taken under the true stationary law, put
\[
 D_t(\theta)=\partial_\theta M_t(\theta),\qquad
 I_0:=
 \E\left[\frac{D_t(\theta_0)D_t(\theta_0)^\top}
                  {M_t(\theta_0)}\right].
\]
{We say that the conditional-mean filter is identified at
\(\theta_0\) over \(\Theta\) if, for every \(\theta\in\Theta\),
\(M_t(\theta)=M_t(\theta_0)\) almost surely implies \(\theta=\theta_0\).}

\begin{theorem}
\label{thm:affine-qmle}
{
Suppose Assumptions~\ref{ass:affine-seed} and \ref{ass:invertibility}
hold, the observations come from a {stationary solution of
\eqref{mrecursion} and \eqref{eq:affine-softplus-model}} at \(\theta_0\), and \(\theta_0\) is an
interior point of the compact set \(\Theta\) defined above.  Put
\(\phi_0=\alpha_0+\beta_0\), and suppose either
\begin{enumerate}
\item[(i)] \(\phi_0<1\) and \(\alpha_0\neq0\); or
\item[(ii)] \(\phi_0=1\), \(\omega_0<0\), and
\(\kappa_0:=-2\omega_0/(\alpha_0^2\sigma_0^2)>1\).
\end{enumerate}
Then the conditional-mean filter is identified at \(\theta_0\) over
\(\Theta\), \(I_0\) is positive definite,
\(\widehat\theta_n\to\theta_0\) almost surely,
\[
 \sqrt n(\widehat\theta_n-\theta_0)
 \Rightarrow N(0,\sigma_0^2I_0^{-1}),
\]
and \(\widehat\sigma_n^2\to\sigma_0^2\) and
\(\widehat I_{Q,n}\to I_0\) in probability.
}
\end{theorem}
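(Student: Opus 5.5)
The argument follows the standard QMLE template for observation-driven models (consistency via a uniform ergodic theorem, then a Taylor expansion of the score). It is adapted to two features: the moment condition available at the unit root is only \(\E X_0<\infty\) (\(\kappa_0>1\)), and \(M_t(\theta)\) may approach zero when coefficients are negative. Stationarity, uniqueness and ergodicity come from Theorem~\ref{thm:affine-mixing} in case~(i) and Theorem~\ref{thm:affine-boundary} in case~(ii), and both give \(\E X_0<\infty\). The plan has four steps.

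\emph{Step 1: filter bounds.} Iterating \eqref{eq:filter-contraction} and the one-Lipschitz property of \(s_c\) gives
\[
\sup_{\theta\in\Theta}M_t(\theta)\le C\Bigl(1+\sum_{k\ge1}\rho^{k-1}X_{t-k}\Bigr).
\]
The right side has finite mean. The same bound controls \(\sup_\theta|\widetilde M_t(\theta)-M_t(\theta)|\le C\rho^t V\) for an integrable \(V\). I then differentiate the recursion. Since \(0<s_c'<1\), the derivatives satisfy \(D_t=s_c'(\eta_t)\,(1,X_{t-1},M_{t-1})^\top+s_c'(\eta_t)\beta D_{t-1}\), so \(\sup_\theta\|D_t(\theta)\|\) and the second derivatives are bounded by geometric sums of lagged \(X\)'s. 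For the ratios, the key is a lower bound: \(M_t(\theta)\ge s_c(\eta_t)\) and \(s_c'(x)\le s_c(x)/c\). This gives \(s_c'(\eta_t)/s_c(\eta_t)\le 1/c\), so
\[
\frac{\|D_t\|}{M_t}\le \frac1c\sum_k\rho^{k-1}\bigl(1+X_{t-k}+M_{t-k}\bigr),
\]
up to products of \(s_c'\) factors, each at most one. This avoids needing moments of \(1/M_t\). The quantity \(\|D_t\|^2/M_t=(\|D_t\|/M_t)\,\|D_t\|\) then needs only second moments of lagged \(X\)'s. That is not available for \(1<\kappa_0\le2\), so this bound must be refined.

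\emph{Step 2: consistency.} Write the limit criterion as \(Q(\theta)=\E[X_t\log M_t(\theta)-M_t(\theta)]\). I will use \(\log M_t(\theta)\le \log(C(1+\sum\rho^kX))\) together with \(\log M_t(\theta)\ge\log s_c(\omega-|\alpha|\cdot\ldots)\). The upper bound needs only \(\E X\log X<\infty\), which holds since \(\kappa_0>1\) (Theorem~\ref{thm:affine-boundary}(b) with \(s\in(1,\kappa_0)\)). The lower bound may be \(-\infty\) in expectation, which is fine for the argmax argument: \(Q(\theta)\in[-\infty,\infty)\) is upper semicontinuous, and Pfanzagl's argument for stationary ergodic sequences gives almost sure consistency. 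The conditional-expectation inequality \(m\log\lambda-\lambda\le m\log m-m\) shows that \(\theta_0\) is the unique maximizer once the filter is identified.

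For identification, suppose \(M_t(\theta)=M_t(\theta_0)\) almost surely. Then \(\eta_t(\theta)=\eta_t(\theta_0)\) because \(s_c\) is injective, so \(\omega-\omega_0+(\alpha-\alpha_0)X_{t-1}+(\beta-\beta_0)M_{t-1}=0\). Conditional on \(\mathcal F_{t-2}\), \(X_{t-1}\) is nondegenerate (\(\sigma^2>0\)) while \(M_{t-1}\) is fixed, so \(\alpha=\alpha_0\). It remains to show that \(M_{t-1}\) is nondegenerate, which needs \(\alpha_0\ne0\) in case~(i) and holds automatically in case~(ii). Then \(\beta=\beta_0\) and \(\omega=\omega_0\). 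Positive definiteness of \(I_0\) follows the same way: \(\lambda^\top D_t=0\) almost surely reduces, after dividing by \(s_c'>0\) and using the recursion for \(D\), to the same linear relation.

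\emph{Step 3: asymptotic normality.} The score at \(\theta_0\) is \(\sum_t U_t D_t/M_t\), a stationary ergodic martingale difference sequence with \(\E[U_t^2\|D_t\|^2/M_t^2\mid\mathcal F_{t-1}]=\sigma_0^2\|D_t\|^2/M_t\). The martingale CLT therefore needs \(I_0<\infty\), and this is the step I expect to be the main obstacle at the unit root with \(1<\kappa_0\le2\), where \(\E X_0^2=\infty\). My approach is to bound the components of \(D_t\) by \(M_t\)-weighted quantities. At \(\theta_0\) the derivative is an exponentially weighted sum \(\sum_k\prod s_c'\,(1,X_{t-k},M_{t-k})\). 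I will compare this with \(M_t\) itself: the recursion gives \(M_t\gtrsim \alpha_0X_{t-1}+\beta_0M_{t-1}+\ldots\) when the \(\eta\)'s are positive. I aim to show \(\|D_t\|\le C(1+M_t)\cdot W_t\), where \(W_t\) is a ratio of exponentially weighted sums with all moments. This is a Francq--Zakoïan style ratio bound, available because the weights are geometric and the relevant terms are positive. If it holds, \(\|D_t\|^2/M_t\lesssim (1+M_t)W_t^2\), which is integrable since \(\E M_t<\infty\) and Hölder applies with \(\kappa_0>1\).

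With this in hand, the Hessian \(-\sum D_tD_t^\top/M_t+\sum U_t\partial^2M_t/M_t-\sum U_tD_tD_t^\top/M_t^2\) converges uniformly near \(\theta_0\) to \(-I_0\), again via the ratio bounds and the ergodic theorem. The initialization effects are \(O(\rho^t)\) times integrable variables and so are negligible after scaling by \(n^{-1/2}\). A mean value expansion then yields \(N(0,\sigma_0^2I_0^{-1})\).

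\emph{Step 4: variance estimators.} \(\widehat I_{Q,n}\to I_0\) follows from uniform convergence of \(n^{-1}\sum D_tD_t^\top/M_t\) in a neighbourhood, combined with consistency. For \(\widehat\sigma_n^2\), the ratio is \(\sum(U_t+M_t-\widehat M_t)^2/\sum\widehat M_t\). The numerator satisfies \(n^{-1}\sum U_t^2\to\sigma_0^2\E M_0\) by the ergodic theorem, using \(\E U_t^2=\sigma_0^2\E M_t<\infty\). The cross and remainder terms vanish in probability by the mean value bound \(|M_t-\widehat M_t|\le\|\widehat\theta_n-\theta_0\|\sup\|D_t\|\), with \(\|D_t\|\lesssim(1+M_t)W_t\) as in Step~3.
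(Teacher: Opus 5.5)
Your architecture matches the paper's: an integrable upper envelope with an argmax theorem for consistency, the conditional-variance identification argument, a martingale CLT, a local uniform law for the Hessian, and a mean-value expansion. Your upper envelope for $X_t\log M_t(\theta)$ is an acceptable substitute for the paper's contrast envelope $L_t(\theta)\le (X_t-M_t)^2/M_t$, since H\"older and $\E X_0^s<\infty$ for some $s\in(1,\kappa_0)$ make it integrable.

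The gap is at the step you flag yourself, and your proposed mechanism does not cover all of case~(ii). A Francq--Zako\"ian ratio bound rests on deterministic lower bounds such as $M_t(\theta)\ge -|\omega|/(1-\beta)+\alpha\beta^{k-1}X_{t-k}$ and $M_t(\theta)\ge -|\omega|/(1-\beta)+\beta^{k}M_{t-k}(\theta)$. These need $\beta$ to stay in a compact subset of $(0,1)$ near $\theta_0$. For $\beta_0>0$ your route can be made to work, provided you take the ratio at the same parameter, that is, show that $\sup_{\theta\in\mathcal N}\|A_t(\theta)\|/(1+M_t(\theta))$ has all moments.

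Case~(ii), however, allows every $\beta_0\in(-1,1)$ with $\alpha_0=1-\beta_0$, for example $(\alpha_0,\beta_0)=(1.3,-0.3)$. When $\beta_0<0$ and $X_{t-1}\le|\beta_0|M_{t-1}/\alpha_0$, the index $\eta_t=\omega_0+\alpha_0X_{t-1}+\beta_0M_{t-1}$ is at most $\omega_0<0$. Then $M_t<c\log 2$, while the $\beta$-component $M_{t-1}$ of $A_t$ is arbitrarily large. When $\beta_0=0$, nothing deterministic controls $M_{t-1}/(1+M_t)$ either. So your claim that ``the relevant terms are positive'' fails there, and no deterministic inequality $\|A_t\|\le C(1+M_t)W_t$ of this type is available.

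The bound you want is true, but only probabilistically, and it uses $\phi_0=1$ rather than positivity. Since $s_c(x)\ge x$, you have $M_{t-1}\le M_t+|\omega_0|+\alpha_0|U_{t-1}|$. Iterating gives $Z_t\le C(M_t+\mathsf V_t)$ with $\mathsf V_t=1+\sum_j\bar\rho^{\,j}|U_{t-1-j}|$, which lies in $L^2$ because $\E U_t^2=\sigma_0^2\E M_t$.

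The missing key step is Lemma~\ref{lem:qmle-rare-relative-innovation}: $\mathsf V_t$ exceeds $C_0(1+M_t)$ only on an event on which it has moments of all orders. Its proof is a backward good/bad decomposition with $B_t=\mathbf 1\{|U_t|>\epsilon(1+M_t)\}$, combined with the superpolynomial concentration of Lemma~\ref{lem:affine-concentration}. Your $W_t$ can then be taken as $C\bigl(C_0+\mathsf V_t\mathbf 1\{\mathsf V_t>C_0(1+M_t)\}\bigr)$.

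Uniformity over a neighbourhood needs a further step. The perturbation bound $\sup_{\mathcal N_\varepsilon}|\delta_t(\theta)|\le C\varepsilon(M_t+\mathsf V_t)$ shows that parameters with $M_t(\theta)<M_t/2$ occur only on that rare event. There, the globally bounded $(\log s_c)'$, $(\log s_c)''$, $s_c''$ and $(s_c')^2/s_c$ take over (Lemma~\ref{lem:boundary-qmle-hessian-envelope}).

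Finally, your inequality $s_c'\le s_c/c$ is too weak for the integrability you claim. From $\|D_t\|\lesssim(1+M_t)W_t$ it does not give $\|D_t\|^2/M_t\lesssim(1+M_t)W_t^2$. You need the large-argument decay $s_c'\le Cs_c/(1+s_c)$ and $|(\log s_c)''|\le Cs_c/(1+s_c)^3$ from Lemma~\ref{lem:softplus-calculus}. With these, $\|K_t(\theta)\|\le C\|A_t(\theta)\|^2/(1+M_t(\theta))$, and the $X_t$-weighted Hessian terms are damped by negative powers of $1+M_t(\theta)$.

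Without these ingredients, three things remain unproved for $\beta_0\le0$: finiteness of $I_0$ and the Lindeberg condition when $\kappa_0\le2$, and the Hessian uniform law when $\kappa_0\le3$.
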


{\citet{ahmad2016poisson} assume a positive lower bound on
\(M_t(\theta)\), uniform over \(\Theta\). Such a bound can fail when
coefficients are negative, and Theorem~\ref{thm:affine-qmle} dispenses
with it.}

{
{Theorem~\ref{thm:affine-qmle} also gives the limiting distribution
of the estimated coefficient sum
\(\widehat\phi_n=\widehat\alpha_n+\widehat\beta_n\).
Writing \(e_\phi=(0,1,1)^\top\), we have
\(\widehat\phi_n-\phi_0=e_\phi^\top(\widehat\theta_n-\theta_0)\),
and hence}
\[
 \sqrt n(\widehat\alpha_n+\widehat\beta_n-\phi_0)
 \Rightarrow
 N(0,\sigma_0^2e_\phi^\top I_0^{-1}e_\phi).
\]
{In case~(ii), \(\theta_0\) is an interior point of
\(\Theta\), so \(\Theta\) contains points with \(\phi>1\) and
\(\widehat\phi_n\) may exceed one.}
}

{
{In case~(ii), Theorem~\ref{thm:affine-boundary} and
\(\kappa_0>1\) imply \(\E M_0=\E X_0<\infty\).
This permits observations with infinite variance
(\(1<\kappa_0\leq2\)) or long memory
(\(2<\kappa_0\leq3\), \(\beta_0\geq0\)).}
{The innovation \(U_t\) in \eqref{eq:affine-innovation} is the prediction
error. {Since \(\E M_t<\infty\), the conditional moment
identities imply that \((U_t)\) is a square-integrable martingale
difference sequence with respect to \((\mathcal F_t)\), with}}
\[
 \E U_t^2=\sigma_0^2\E M_t<\infty.
\]
Thus the observations can have infinite variance while their prediction
errors remain square integrable. {The quasi-score is also a
square-integrable martingale difference.} This
structure allows a standard martingale central limit theorem for the
score and explains how the usual \(\sqrt n\) rate can coexist with long
memory in the observations.
}

{For Poisson observations, \(\sigma_0^2=1\) and the QMLE is
the conditional MLE. The exact conditional log-likelihood is
\(Q_n-\sum_{t=1}^n\log(X_t!)\), and the estimator has approximate
covariance matrix \(I_0^{-1}/n\), where \(I_0\) is the conditional Fisher
information.} {When \(\phi_0=1\), the tail index is
\(\kappa_0=-2\omega_0/\alpha_0^2\). Define its plug-in estimator by
\(\widehat\kappa_n=-2\widehat\omega_n/\widehat\alpha_n^2\).
With \(g_0=(-2/\alpha_0^2,4\omega_0/\alpha_0^3,0)^\top\), the
delta method gives
\[
 \sqrt n(\widehat\kappa_n-\kappa_0)
 \Rightarrow
 N(0,g_0^\top I_0^{-1}g_0).
\]
}

{Finally, the restriction \(\alpha_0\neq0\) in case~(i) is needed for
identification. If \(\alpha_0=0\), the stationary conditional mean is
constant, so \(\omega_0\) and \(\beta_0\) cannot be identified separately.
In case~(ii), \(\alpha_0=1-\beta_0>0\) follows from \(|\beta_0|<1\).}

\subsection{Unit-root test}
\label{sec:unit-root-test}

{
Under the assumptions of Theorem~\ref{thm:affine-qmle}, consider a
one-sided Wald test of
\[
 H_0:\ \phi_0=1,\quad \omega_0<0,\quad \kappa_0>1,
 \qquad\text{against}\qquad
 H_1:\ \phi_0<1.
\]
The null consists of stationary unit-root models with finite mean.
At \(\phi_0=1\), no stationary solution exists when \(\omega_0\geq0\)
(Theorem~\ref{thm:affine-boundary}); when \(\omega_0<0\) and
\(0<\kappa_0\leq1\), stationary solutions have infinite mean and are
not covered by Theorem~\ref{thm:affine-qmle}.

Let \(v_{\phi,0}:=\sigma_0^2e_\phi^\top I_0^{-1}e_\phi\)
denote the asymptotic variance of
\(\sqrt n(\widehat\phi_n-\phi_0)\), and let
\(\widehat v_{\phi,n}:=\widehat\sigma_n^2
e_\phi^\top\widehat I_{Q,n}^{-1}e_\phi\)
be its plug-in estimator.
The estimated standard error of \(\widehat\phi_n\) is
\(\sqrt{\widehat v_{\phi,n}/n}\). Define
\[
 T_n=\frac{\sqrt n(\widehat\phi_n-1)}
           {\sqrt{\widehat v_{\phi,n}}}.
\]
Under these null conditions,
\[
 T_n\Rightarrow N(0,1).
\]
{The one-sided \(p\)-value is therefore \(p_n=\Phi(T_n)\),
where \(\Phi\) is the standard normal distribution function. The test
with nominal level \(\zeta\in(0,1)\) rejects when \(p_n<\zeta\), that
is, when \(T_n<\Phi^{-1}(\zeta)\). At each fixed parameter in
\(H_0\), its rejection probability converges to \(\zeta\).}
}
{For every fixed parameter \(\theta_0\) with \(\phi_0<1\)
satisfying the assumptions of Theorem~\ref{thm:affine-qmle}(i),
\(T_n\to-\infty\) in probability, so the test is consistent.}

\subsection{Local alternatives to the unit root}
\label{sec:rootn-local-unit-root}
To study power near \(\phi=1\), we consider conditionally Poisson local
alternatives
\begin{equation}
 \theta_n=(\omega_0,\alpha_0,\beta_n)^\top,
 \qquad
 \beta_n=\beta_0-\frac{h}{\sqrt n},
 \qquad
 \phi_n=1-\frac{h}{\sqrt n},
 \label{eq:rootn-local-parameter}
\end{equation}
where \(h>0\) and
\[
\theta_0=(\omega_0,\alpha_0,\beta_0)^\top,
\qquad \omega_0<0,\qquad
\alpha_0+\beta_0=1.
\]
{For each sufficiently large \(n\), consider the stationary
solution \(((M_{n,t},X_{n,t}))_{t\in\mathbb Z}\) at \(\theta_n\).
Observe \(X_{n,0},\ldots,X_{n,n}\), using \(X_{n,0}\) as the presample
observation. Compute \(\widehat\theta_n\), \(\widehat I_{Q,n}\),
\(\widehat\sigma_n^2\), and \(T_n\) as in
Sections~\ref{sec:poisson-qmle}--\ref{sec:unit-root-test}, replacing
\(X_t\) by \(X_{n,t}\) and using the same fixed positive filter
initialization for every \(\theta\).}
The following theorem gives the corresponding local-power limit.

\begin{theorem}
\label{thm:rootn-local-power}
{Suppose that the assumptions of Theorem~\ref{thm:affine-qmle}
hold at \(\theta_0\) in case~(ii), and that the conditional distribution
is Poisson (so \(\sigma_0^2=1\)).} Then
\begin{equation}
 \sqrt n(\widehat\theta_n-\theta_n)
 \Rightarrow N(0,I_0^{-1}),
 \qquad
 \widehat I_{Q,n}\xrightarrow{p}I_0,
 \qquad
 \widehat\sigma_n^2\xrightarrow{p}1.
 \label{eq:rootn-local-estimator}
\end{equation}
{
The Wald statistic defined in Section~\ref{sec:unit-root-test} satisfies
\begin{equation}
 T_n\Rightarrow
 N\left(-\frac{h}{\sqrt{v_{\phi,0}}},1\right),
 \label{eq:rootn-local-wald}
\end{equation}
{with \(v_{\phi,0}=e_\phi^\top I_0^{-1}e_\phi\), and the test with nominal
level \(\zeta\) of Section~\ref{sec:unit-root-test} has limiting power}
\begin{equation}
 {\Phi\left(\Phi^{-1}(\zeta)+\frac{h}{\sqrt{v_{\phi,0}}}\right).}
 \label{eq:rootn-local-power}
\end{equation}
}
\end{theorem}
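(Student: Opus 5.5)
The plan is to treat the local alternatives \eqref{eq:rootn-local-parameter} as a triangular array and prove \eqref{eq:rootn-local-estimator} by a \emph{direct} uniform triangular-array analysis. Le Cam's third lemma is not available, because the data-generating laws at \(\theta_n\) and \(\theta_0\) are stationary laws with different invariant measures, and contiguity of the joint stationary laws would itself require proof. Instead I will redo the QMLE argument of Theorem~\ref{thm:affine-qmle}(ii) uniformly along \(\theta_n\to\theta_0\). The first step is a uniform stability estimate for the stationary laws. For \(n\) large, \(\phi_n<1\), so Theorem~\ref{thm:affine-classification} gives a stationary solution at \(\theta_n\), and Theorem~\ref{thm:affine-mixing} makes it unique. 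I will show that the invariant law of \(M_{n,0}\) converges weakly to that at \(\theta_0\), and that the moments of order \(s<\kappa_0\) are uniformly bounded, so that \(\sup_n\E M_{n,0}^{s}<\infty\) for some \(s\in(1,\kappa_0)\).

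To get these bounds I will reuse the Lyapunov function behind Theorem~\ref{thm:affine-boundary}, namely \(V(m)=(1+m)^{s}\) applied to \(\sqrt{M_t}\) in the asymptotically-zero-drift framework. The drift of \(V\) at \(\theta_n\) is that at \(\theta_0\) plus a term \(-(h/\sqrt n)sM^{s}\), which only helps. Hence the same inequality \(PV\le V-\delta M^{s-1}+b\) holds uniformly in \(n\), and it yields the uniform moment bound. Tightness and continuity of the transition kernel in \(\theta\) then identify the weak limit as the unique invariant law at \(\theta_0\). Uniform integrability of \(M_{n,0}^{s'}\) for \(s'<s\) upgrades this to convergence of expectations of continuous functions with polynomial growth of order below \(\kappa_0\).

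Next I will prove a triangular-array law of large numbers. For the Hessian and \(\widehat I_{Q,n}\), I need \(n^{-1}\sum_t f(X_{n,t},X_{n,t-1},\dots)\to\E_{\theta_0}f\) uniformly over a neighbourhood of \(\theta_0\), for the integrands \(D_tD_t^\top/M_t\), \(X_t D_tD_t^\top/M_t^2\) and similar terms. These are dominated by \(C(1+M)\), using the bounds on \(D_t\) from the proof of Theorem~\ref{thm:affine-qmle} together with the filter contraction \eqref{eq:filter-contraction}. I will obtain the law of large numbers by a variance/mixing argument for the truncated functions, using a uniform geometric ergodicity of the chain at each fixed truncation level; the tails are controlled by the uniform \(s\)-moment bound. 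Consistency \(\widehat\theta_n-\theta_n\to0\) then follows from the same uniform convergence of \(n^{-1}Q_n\) to a limit uniquely maximized at \(\theta_0\), by the identification part of Theorem~\ref{thm:affine-qmle}. The initialization effect is \(O(1)\) by \eqref{eq:filter-contraction}.

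For the score I will use the martingale difference structure. At \(\theta_n\), \(S_n=n^{-1/2}\sum_t (X_{n,t}-M_{n,t})D_{n,t}/M_{n,t}\) is a martingale array with conditional variance \(n^{-1}\sum_t D_{n,t}D_{n,t}^\top/M_{n,t}\to I_0\), since Poisson gives \(\sigma^2=1\). The conditional Lindeberg condition follows from uniform integrability of \(|U|^2|D|^2/M^2\), which is bounded by \(C\,U^2\), because \(|D_t|/M_t\) is controlled as in Theorem~\ref{thm:affine-qmle}, and \(\E U^2=\E M\) is uniformly bounded. The martingale central limit theorem then gives \(S_n\Rightarrow N(0,I_0)\). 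A Taylor expansion around \(\theta_n\) yields \(\sqrt n(\widehat\theta_n-\theta_n)\Rightarrow N(0,I_0^{-1})\), and the same law of large numbers gives \(\widehat I_{Q,n}\to I_0\) and \(\widehat\sigma_n^2\to1\). Finally, \(\sqrt n(\widehat\phi_n-1)=\sqrt n(\widehat\phi_n-\phi_n)-h\), so Slutsky's lemma gives \eqref{eq:rootn-local-wald}. The power \eqref{eq:rootn-local-power} is \(\Pr\bigl(N(-h/\sqrt{v_{\phi,0}},1)<\Phi^{-1}(\zeta)\bigr)\).

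I expect the main obstacle to be the uniform law of large numbers under the \(n\)-dependent stationary laws. Mixing at \(\theta_n\) is geometric with a rate that degenerates as \(\phi_n\to1\), and at \(\theta_0\) it is only polynomial by Theorem~\ref{thm:affine-boundary}(c). I will first try to use a coupling that is uniform over a compact neighbourhood: the regeneration or coupling times from the proof of Theorem~\ref{thm:affine-mixing}, when the chain is started in a fixed compact set, have moments of order \(r<\kappa_0\) bounded uniformly in \(n\), by the same uniform drift inequality. This gives \(\beta\)-mixing coefficients bounded by \(Cj^{-r}\) uniformly in \(n\), which suffices for \(L^1\) ergodic averages of truncated functionals.
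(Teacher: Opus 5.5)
Your first step is essentially the paper's Step~1, and it is sound once stated cleanly. The coupled update \eqref{eq:rootn-kernel-order} gives $P_nV\le P_0V$ for increasing $V$. The power drift \eqref{eq:affine-power-drift} for $M_t$ itself (not $\sqrt{M_t}$), applied to $V_{s+1}$, then gives $\sup_n\int m^s\,\pi_n(dm)<\infty$ for $s<\kappa_0$; with $V=(1+m)^s$ you only control order $s-1$.

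The gap is in the triangular-array step. You reduce the uniform law of large numbers and the Lindeberg condition to this moment bound via two claims, and both are false:
\begin{itemize}
\item that $D_tD_t^\top/M_t(\theta)$ and $X_tD_tD_t^\top/M_t(\theta)^2$ are dominated by $C(1+M_t)$;
\item that $\|D_t\|/M_t$ is bounded, so that $\|S_t\|^2\le CU_t^2$.
\end{itemize}
$D_t$ is a geometrically weighted sum of past observations and states; at large states its $\beta$-component is about $\sum_j\beta^jM_{t-1-j}$. So $\|D_t\|/(1+M_t)$ is unbounded after a drop in the state, and $M_t(\theta)$ can be close to zero at $\theta\neq\theta_0$ when coefficients are negative.

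This is precisely the unit-root difficulty. The generic envelopes of Lemma~\ref{lem:affine-qmle-envelopes} need $X_0\in L^2$ for the information and $X_0\in L^3$ for the Hessian. These fail for $\kappa_0\le2$ and $\kappa_0\le3$ respectively. The paper therefore builds envelopes from the past-innovation sum $\mathsf V_t$: the term $C(1+M_t+\mathsf V_t^2)$, the supremum $J_t^*$ of $\|J_t(\theta)\|$, and the restricted moment $\mathsf V_t^3\mathbf 1_{\{\mathsf V_t>C_0(1+M_t)\}}$ (Lemmas~\ref{lem:qmle-rare-relative-innovation} and~\ref{lem:boundary-qmle-hessian-envelope}). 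Their proofs use $\phi_0=1$. Your route needs these envelopes, and a $(2+\delta)$-moment bound for $S_t$, to be uniformly integrable under the stationary laws at $\theta_n$. The proposal provides nothing of the kind.

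Two further steps also fail as written:
\begin{itemize}
\item Consistency cannot come from uniform convergence of $n^{-1}Q_n$ on $\Theta$, because $\E L_t(\theta)$ may equal $-\infty$. You would need a one-sided argument with the upper envelope $\bar L_t$ of Lemma~\ref{lem:qmle-contrast-envelope}, uniformly integrable along $\theta_n$.
\item ``Uniform geometric ergodicity at each truncation level'' is not meaningful. Truncating a functional does not change the chain's convergence rate, which is only polynomial at $\theta_0$. Only your fallback could work: $n$-uniform polynomial $\beta$-mixing from the uniform drift and the coupling constants of Lemma~\ref{lem:finite-activity-wasserstein-coupling}. That too would have to be proved.
\end{itemize}

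The missing idea is that Le Cam's third lemma \emph{is} available, and it makes this uniform-in-$n$ analysis unnecessary. The paper uses an auxiliary experiment $\mathsf Q_{n,u}$. For every $u$, it draws $M_0^\circ\sim\pi_0$ and $X_0\mid M_0^\circ\sim\Pois(M_0^\circ)$, then generates conditionally Poisson observations with means $\check M_t(\vartheta_{n,u})$ started at $M_0^\circ$. The initial law cancels, so the log-likelihood ratio is the explicit sum $\sum_t[\check\ell_t^Q(\vartheta_{n,u})-\check\ell_t^Q(\theta_0)]$. Because $\mathsf Q_{n,0}$ is the stationary null law, LAN follows from results already proved for Theorem~\ref{thm:affine-qmle}(ii): the score central limit theorem, the local Hessian law of large numbers, and the initialization lemma. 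Le Cam's first and third lemmas then give contiguity and the shifted limits under $\mathsf Q_{n,u}$.

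The mismatch between $\pi_0$ and $\pi_n$ is removed by the Poisson observation coupling at $\theta_n$. Quantile-couple $A_n\sim\pi_n$ and $A\sim\pi_0$, so that $A_n\to A$ almost surely. The two observation paths then agree forever with probability at least $\exp\{-|A_n-A|/(1-\rho_\beta)\}$. Hence the total variation distance between the observation laws from $\pi_n$ and from $\pi_0$ at $\theta_n$ tends to zero uniformly over the horizon. Under the alternatives, only $\pi_n\Rightarrow\pi_0$ (your first step) is needed.

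A completed direct argument would not depend on the exact Poisson likelihood. However, it would have to reprove every unit-root envelope, mixing bound, and law of large numbers uniformly in $n$. Your final Slutsky step and power formula are correct.
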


The alternatives in \eqref{eq:rootn-local-parameter} approach the stationary
{unit root} from below.  Their \(n^{-1/2}\) distance matches the
QMLE rate and produces the mean shift
{\(-h/\sqrt{v_{\phi,0}}\)} in
\eqref{eq:rootn-local-wald}.

{Other localizations concern different limiting models.
For affine processes with a positive intercept, localizations of order
\(n^{-1}\) around a nonstationary unit root lead to diffusion limits
\citep{michel2020limiting,francq2026affine}.
Corresponding limit theory for the softplus model and the remaining
parameter regimes is left for future work.}

\section{Monte Carlo evidence}
\label{sec:inference-simulation}
In this section, we examine the finite-sample behavior of the Poisson QMLE and the
{one-sided Wald test}.
Unless otherwise stated, all Monte Carlo experiments in this section and
Appendix~\ref{app:affine-simulations} use 10,000 independent replications per
design and sample size.
{At this replication count, a rejection frequency near 5\%
has a Monte Carlo standard error of approximately 0.22 percentage points.}
For the remainder of the main text, we omit the subscript \(n\) from
estimators when the sample size is clear from context.

\subsection{Poisson experiments}

{We use three designs with \(\phi<1\) (S1--S3) and three unit-root
designs (B0--B2). S1 is a conventional benchmark,} S2 approaches the unit root,
and S3 lies in the {enlarged region}.  The {unit-root} designs cover
infinite variance (B0), long memory (B1), and short memory (B2).
Table~\ref{tab:mc-poisson-designs} records their coefficients and moment regimes.

\begin{table}[htbp]
\centering
\caption{Poisson simulation designs.}
\label{tab:mc-poisson-designs}
\small
\setlength{\tabcolsep}{4pt}
\begin{tabular}{lrrrrrl}
\toprule
Design & $\omega$ & $\alpha$ & $\beta$ & $\phi$ & {Tail index $\kappa$} & Role \\
\midrule
S1 & 1 & 0.2 & 0.7 & 0.90 & --- & {Conventional benchmark} \\
S2 & 0.1 & 0.2 & 0.79 & 0.99 & --- & {Near the unit root from below} \\
S3 & 0.5 & 1.2 & $-0.25$ & 0.95 & --- & {Enlarged region} \\
B0 & $-0.1875$ & 0.5 & 0.5 & 1 & 1.5 & Infinite variance \\
B1 & $-0.3125$ & 0.5 & 0.5 & 1 & 2.5 & Long memory \\
B2 & $-0.5$ & 0.5 & 0.5 & 1 & 4 & Short memory \\
\bottomrule
\end{tabular}
\begin{minipage}{0.98\textwidth}\footnotesize
\emph{Notes:} {S labels designs below the unit root, $\phi<1$;
B (for boundary) labels stationary unit-root designs, with $\phi=1$ and
$\omega<0$.}  S3 lies outside the condition
$\max(0,\alpha)+\max(0,\beta)<1$.  {For S1--S3, all positive-order moments are finite;
the tail index \(\kappa\) is reported only for the stationary unit-root
designs B0--B2.}
\end{minipage}
\end{table}

All designs use $c=1$. Each simulated path starts at $M_0=1$ before burn-in,
and its estimation filter is reset to one at the start of the retained sample.
Estimation uses the compact parameter set
\[
 \Theta_{\mathrm{MC}}=
 \left\{(\omega,\alpha,\beta)^\top:
 -10\leq\omega\leq10,\ -1.5\leq\alpha+\beta\leq1.5,
 \ |\beta|\leq0.999\right\},
\]
which contains parameters on both sides of $\phi=1$.
{
{Each simulated path is fitted using multistart optimization
followed by numerical refinement. For an interior solution, the largest
absolute component of the average quasi-score in \((\omega,\phi,\beta)\)
coordinates must not exceed \(10^{-4}\); a fit failing this criterion is
recorded as a fitting failure.}
\par}
{
Burn-in is 20,000 observations for $\phi<1$ and 250,000 for $\phi=1$.
Estimation and coverage use the same replications, with retained sample
sizes {$n\in\{500,\;2000,\;10000\}$} for S1--S3, B1, and B2, and
{$n\in\{500,\;2000,\;8000,\;32000\}$} for B0. {Additional B1--B2
estimation runs use \(n=8{,}000\) and \(32{,}000\); the latter provide QQ
comparisons across all three unit-root designs.} The Poisson {one-sided Wald test}
experiment uses separate replications at {$n\in\{500,\;2000,\;10000\}$}.
In the Poisson experiments, standard errors are computed from the
covariance estimate
\(\widehat\sigma^2\widehat I_Q^{-1}/n\) defined in
Section~\ref{sec:poisson-qmle}.
\par}

\begin{table}[!tb]
\centering
\setstretch{1}
\small
\setlength{\tabcolsep}{9pt}
\caption{Finite-sample performance of the Poisson QMLE of \(\phi\) under Poisson observations.}
\label{tab:mc-poisson-estimation}
\begin{tabular}{llrrrrr}
\toprule
Design & $n$ & Bias & RMSE & ESD & ASE & Cov. \\
\midrule
S1 & 500 & -0.085 & 0.113 & 0.074 & 0.052 & 0.748 \\
S1 & 2,000 & -0.021 & 0.030 & 0.022 & 0.020 & 0.874 \\
S1 & 10,000 & -0.004 & 0.009 & 0.008 & 0.008 & 0.938 \\
\addlinespace[2pt]
S2 & 500 & -0.024 & 0.034 & 0.024 & 0.015 & 0.761 \\
S2 & 2,000 & -0.004 & 0.007 & 0.005 & 0.005 & 0.902 \\
S2 & 10,000 & -0.001 & 0.002 & 0.002 & 0.002 & 0.938 \\
\addlinespace[2pt]
S3 & 500 & -0.009 & 0.025 & 0.023 & 0.022 & 0.948 \\
S3 & 2,000 & -0.002 & 0.011 & 0.011 & 0.010 & 0.949 \\
S3 & 10,000 & 0.000 & 0.005 & 0.005 & 0.005 & 0.948 \\
\addlinespace[2pt]
B0 & 500 & -0.019 & 0.040 & 0.035 & 0.032 & 0.969 \\
B0 & 2,000 & -0.007 & 0.014 & 0.013 & 0.012 & 0.968 \\
B0 & 8,000 & -0.002 & 0.006 & 0.005 & 0.005 & 0.970 \\
B0 & 32,000 & -0.001 & 0.002 & 0.002 & 0.002 & 0.968 \\
\addlinespace[2pt]
B1 & 500 & -0.025 & 0.063 & 0.058 & 0.054 & 0.971 \\
B1 & 2,000 & -0.008 & 0.024 & 0.023 & 0.023 & 0.966 \\
B1 & 10,000 & -0.002 & 0.009 & 0.009 & 0.009 & 0.961 \\
\addlinespace[2pt]
B2 & 500 & -0.036 & 0.116 & 0.111 & 0.105 & 0.956 \\
B2 & 2,000 & -0.011 & 0.047 & 0.045 & 0.046 & 0.961 \\
B2 & 10,000 & -0.003 & 0.019 & 0.019 & 0.019 & 0.955 \\
\bottomrule
\end{tabular}
\par\smallskip
\begin{minipage}{0.92\textwidth}\footnotesize
\emph{Notes:} Designs are defined in Table~\ref{tab:mc-poisson-designs}.
Bias and RMSE refer to \(\widehat\phi\). ESD is the empirical standard
deviation of \(\widehat\phi\), ASE is its mean estimated standard error,
and Cov. is the empirical coverage of the nominal 95\% Wald confidence
interval for \(\phi\). Estimation uses the compact parameter set stated
in the text, without imposing \(\phi\leq1\). Table~\ref{tab:mc-poisson-parameters}
in Appendix~\ref{app:affine-simulations} reports bias and RMSE for
\(\widehat\omega\), \(\widehat\alpha\), and \(\widehat\beta\).
\end{minipage}
\end{table}

{Table~\ref{tab:mc-poisson-estimation} shows that the bias
and RMSE of \(\widehat\phi\) decrease with the sample size in every
design, and that \(\phi\) is underestimated in small samples. Coverage in
S1 and S2 approaches the nominal level from below, while S3 remains close
to nominal throughout. Intervals for the unit-root designs are generally
conservative, particularly in B0, even at $n=32{,}000$.
Table~\ref{tab:mc-poisson-parameters} in
Appendix~\ref{app:affine-simulations} reports the bias and RMSE of
\(\widehat\omega\), \(\widehat\alpha\), and \(\widehat\beta\). In small
samples, the intercept tends to be overestimated.}

For S3, the studentized estimation errors broadly follow the standard
normal reference; the QQ plots are reported in
Figure~\ref{fig:mc-qq-s3} in Appendix~\ref{app:affine-simulations}.
Figure~\ref{fig:mc-qq-boundary} {compares
estimation errors divided by their estimated standard errors with
{standard normal} quantiles.}
{At $n=32{,}000$, the QQ curves for B1 ($\kappa=2.5$) and
B2 ($\kappa=4$) broadly follow the standard normal reference.
For B0 ($\kappa=1.5$),} the studentized intercept
errors retain a small positive shift, while the studentized
$\widehat\phi$ errors show a left shift and compression (mean {$-0.18$}
and standard deviation {0.91}). The latter pattern is consistent with the
conservative B0 coverage in Table~\ref{tab:mc-poisson-estimation}.

\begin{figure}[!tb]
\centering
\includegraphics[width=0.85\textwidth]{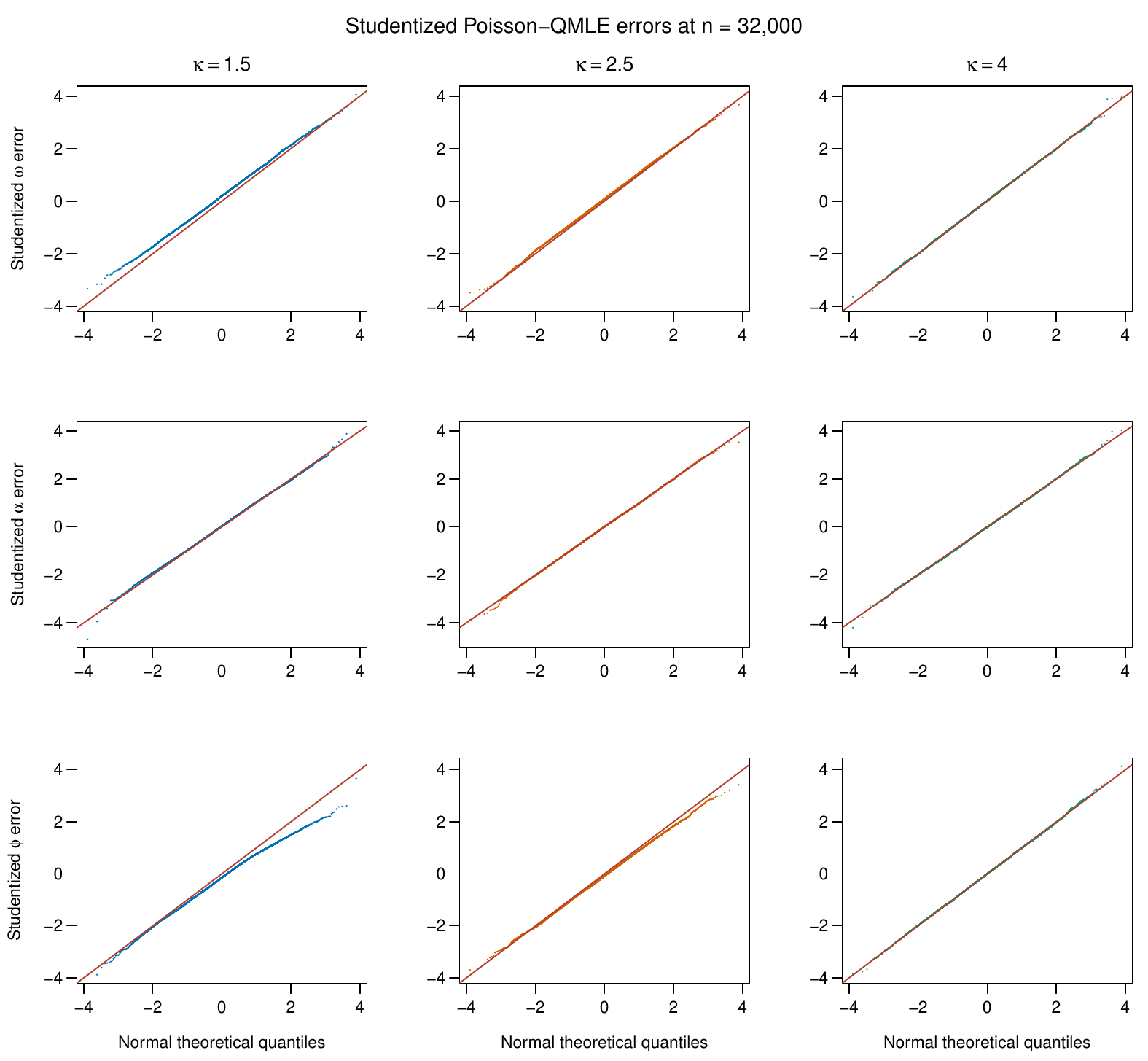}
\caption{Studentized {Poisson QMLE} errors at the stationary unit root for
$\kappa=1.5$, $2.5$, and $4$ at $n=32{,}000$.  Columns index the three
moment regimes and rows report $\omega$, $\alpha$, and the derived quantity
$\phi=\alpha+\beta$. Each panel uses the {standard normal} 45-degree
reference line.}
\label{fig:mc-qq-boundary}
\end{figure}

\subsection{Cross-family comparison}
\label{sec:mc-cross-family}

{
Table~\ref{tab:mc-family-comparison} reports the size and power of the
{one-sided Wald test} for Poisson observations at {\(n\in\{500,\;2000,\;10000\}\)} and for
{negative binomial} and {gamma} observations at {\(n\in\{2000,\;10000\}\)}.
The latter two sample sizes allow direct comparisons across all three
conditional families. {The negative binomial design uses
probability parameter \(p=0.5\), and the gamma design uses scale parameter
\(\tau=2\). Both therefore satisfy
\(\Var(X_t\mid\mathcal F_{t-1})=2M_t\).} Their {unit-root} intercepts
are chosen to match the Poisson values of \(\kappa\), so the comparison
holds the {tail index} fixed rather than the intercept.
{For each conditional family, we obtain alternatives from B2
by retaining its intercept and \(\alpha=0.5\), and lowering \(\beta\)
from \(0.5\) to \(0.49\), \(0.47\), or \(0.45\).}
\par}

\begin{table}[!htbp]
\centering
\caption{Size and power of the {one-sided Wald test at the
nominal 5\% level} across conditional families.}
\label{tab:mc-family-comparison}
\scriptsize
\setlength{\tabcolsep}{4pt}
\begin{tabular}{lrrrrrrr}
\toprule
& & \multicolumn{2}{c}{Size} & \multicolumn{4}{c}{Power} \\
\cmidrule(lr){3-4}\cmidrule(lr){5-8}
Family & \(n\)
& B1 & B2
& B2, \(\phi=.99\) & B2, \(\phi=.97\) & B2, \(\phi=.95\)
& S3, \(\phi=.95\) \\
\midrule
Poisson & 500 & 0.049 & 0.041 & 0.054 & 0.068 & 0.084 & 0.944 \\
& 2,000 & 0.055 & 0.042 & 0.067 & 0.124 & 0.187 & 1.000 \\
& 10,000 & 0.054 & 0.050 & 0.114 & 0.354 & 0.607 & 1.000 \\
\addlinespace
Negative binomial & 2,000 & 0.044 & 0.043 & 0.052 & 0.075 & 0.093 & 1.000 \\
& 10,000 & 0.048 & 0.046 & 0.072 & 0.139 & 0.242 & 1.000 \\
\addlinespace
Gamma & 2,000 & 0.043 & 0.041 & 0.051 & 0.071 & 0.095 & 1.000 \\
& 10,000 & 0.048 & 0.046 & 0.072 & 0.155 & 0.285 & 1.000 \\
\bottomrule
\end{tabular}
\begin{minipage}{0.96\textwidth}\footnotesize
\emph{Notes:} Each configuration uses 10,000 independent replications of a
series with \(n\) retained observations. The null is \(H_0:\phi=1\)
against \(H_1:\phi<1\). The size columns use B1 and B2, with
\(\kappa=2.5\) and \(4\), respectively{. The} power columns use fixed
alternatives. The {B2 alternatives} keep \(\alpha=0.5\) and set
\(\beta=0.49,0.47,0.45\), with \(\omega=-0.5\) for Poisson observations
and \(\omega=-1\) for the other families. S3 uses the coefficients in
Table~\ref{tab:mc-poisson-designs} for every family. Optimization is
unrestricted across \(\phi=1\), while retaining \(|\beta|<1\).
Standard errors are computed from \(\widehat\sigma^2\widehat I_Q^{-1}/n\).
\end{minipage}
\end{table}

{
Null rejection frequencies remain close to the nominal level across families
and sample sizes. Power against the {B2 alternatives} increases with
sample size and distance from the null. At \(\phi=0.95\), power is
substantially lower under the {B2 alternative} than under S3.
At the common sample sizes, power against the {B2 alternatives} is
lower for {negative binomial} and {gamma} observations than for Poisson
observations. Under S3, the Poisson test rejects in 94.4\% of replications
at \(n=500\), and all three families give rejection in every replication
at \(n=2{,}000\) and \(n=10{,}000\).
\par}
At \(n=10{,}000\), under the B2 alternatives with
\(\phi=0.95\), the non-Poisson samples have roughly half the average
sample mean and nearly twice the standard error of the Poisson samples.\footnote{Average
sample means are {0.97, 0.51, and 0.51} for Poisson,
{negative binomial}, and {gamma} observations, respectively. The corresponding
average estimated standard errors are {0.027, 0.052, and 0.047}.}
Their larger {dispersion parameter} and the more negative intercept required to
match \(\kappa\) help explain the lower power. {Table~\ref{tab:mc-affine-estimation}
in Appendix~\ref{app:affine-simulations} shows that estimated standard errors
track empirical standard deviations in the non-Poisson estimation
experiments. Coverage is slightly below nominal in S1 and above nominal
in B1.}

{
\subsection{Impact of \texorpdfstring{{misspecification}}{misspecification} of the softplus scale}
\label{sec:mc-c-misspecification}

The preceding experiments correctly fix \(c=1\) in both simulation and
estimation.  We now isolate what happens when the fitted scale is too large.
The data-generating process remains conditionally Poisson with true \(c_0=1\)
and
\[
 (\omega_0,\alpha_0,\beta_0)=(0.5,0.1,0.85),
 \qquad \phi_0=0.95.
\]
For the same data-generating process (DGP), we fit the model using different
values of \(c\in\{1,2,3,4\}\).  We conduct two experiments at different sample sizes.
{The long-path experiment uses one series with 200,000 retained
observations; the short-path experiment uses 10,000 independent series,
each with \(n=1{,}500\) retained observations.}  Table~\ref{tab:mc-c-misspecification} reports the estimates.

\begin{table}[H]
\centering
\caption{{QMLE and unit-root test for different fitted softplus
scales (true \(c_0=1\), \(\phi_0=0.95\)).}}
\label{tab:mc-c-misspecification}
\small
\setlength{\tabcolsep}{4pt}
\resizebox{\textwidth}{!}{%
\begin{tabular}{rrrrrrrr}
\toprule
Fitted \(c\) & Long-path \(\widehat\omega\) & Long-path \(\widehat\phi\)
 & Median \(\widehat\phi\) & {Share \(\widehat\phi>1\)}
 & {CI contains 1} & {CI above 1}
 & {Non-rejection rate}\\
\midrule
1 & {0.509} & 0.9490 & 0.9468 & 0.000 & 0.000 & 0.000 & 0.000 \\
2 & {0.424} & 0.9560 & 0.9536 & 0.000 & 0.021 & 0.000 & 0.003 \\
3 & {0.043} & 0.9840 & 0.9816 & 0.098 & 0.923 & 0.002 & 0.821 \\
4 & {$-0.658$} & 1.0303 & 1.0278 & 0.894 & 0.621 & 0.379 & 0.999 \\
\bottomrule
\end{tabular}%
}
\begin{minipage}{0.98\textwidth}\footnotesize
\emph{Notes:} {Long-path columns report QMLEs from one simulated
series with 200,000 retained observations. The remaining columns summarize
10,000 independent series with \(n=1{,}500\).} {The non-rejection rate is the proportion of
short-path replications in which the one-sided Wald test of
\(H_0:\phi=1\) against \(H_1:\phi<1\) does not reject at the nominal
5\% level.} {The columns ``Share \(\widehat\phi>1\)'', ``CI
contains 1'' and ``CI above 1'' give the proportions of short-path
replications in which \(\widehat\phi>1\), in which the nominal 95\%
Wald interval for \(\phi\) contains one, and in which the whole interval
lies above one.}
At $c=4$, 7 short-path fits reach the upper bound
{$\widehat\beta=0.999$.}
\end{minipage}
\end{table}

At \(c=1\), the long-path estimate closely recovers \(\phi_0=0.95\), and
the test rejects the unit-root null in every short-path replication.
Increasing the fitted scale lowers the estimated intercept and pushes
{the estimated coefficient sum \(\widehat\phi\) toward, then above, one.} At \(c=4\), {the non-rejection rate}
reaches 99.9\%. {A fitted scale that is too large can
therefore make the test retain the unit-root null when the true
coefficient sum is below one.}

}

\section{Empirical applications}
\label{sec:empirical-applications}

We apply the proposed estimation and testing procedures to monthly
recorded-crime counts in New South Wales and one-second TRX/USDT
transaction counts. {For crime counts, we examine whether
prolonged periods of elevated recorded incidents are compatible with
stationary unit-root dynamics. For transaction counts, we examine whether
the unit-root conclusion for trading activity is robust to the softplus
scale.}

Each specification fixes the softplus scale and estimates the dynamics
by Poisson quasi-maximum likelihood without imposing \(\phi\leq1\).
Standard errors use the covariance estimator from
Section~\ref{sec:poisson-qmle}, with estimated dispersion
\(\widehat\sigma^2\). Write \(Q_n(\theta;c)\) for the criterion in
\eqref{eq:affine-qmle-criterion} evaluated at dynamics parameter \(\theta\)
and fixed scale \(c\), and let \(\widehat\theta(c)\) denote the
corresponding fitted parameter.  Define
\[
 Q_n(c):=Q_n(\widehat\theta(c);c).
\]
For the scale grid \(\mathcal C\) displayed in the corresponding table,
we report \(\Delta Q_n(c)=Q_n(c)-\max_{d\in\mathcal C}Q_n(d)\).
{\(\Delta Q_n\) compares fit descriptively, and the
\(p\)-values are unadjusted for inspection of the scale grid.}

\subsection{NSW recorded-crime counts}
\label{sec:nsw-crime-application}

{We fix the softplus scale at \(c=1\) and apply the}
Poisson QMLE and the one-sided Wald test of
Sections~\ref{sec:poisson-qmle} and \ref{sec:unit-root-test} to the New
South Wales Bureau of Crime Statistics and Research (BOCSAR) workbook
\emph{Recorded Criminal Incidents by month--by LGA}{, which
tabulates incidents by local government area (LGA)}.
{\citet{gorgi2020betanegative} analyzes monthly Sydney
narcotics-trafficking counts from an earlier release of the BOCSAR data
using beta--negative binomial autoregressions.}
We use the March 2026 release from BOCSAR's {``Criminal incident
data'' page.}\footnote{\url{https://bocsar.nsw.gov.au/statistics-dashboards/open-datasets/criminal-offences-data.html}}
It contains monthly counts from January 1995 through March 2026, giving
$n=375$ observations for each of 62 {offense}-category or subcategory rows
and 131 LGA labels.  The initial panel therefore contains
$131\times62=8{,}122$ series.  Category totals and detailed subcategories
are both retained.  Some LGA labels are administrative rather than
geographical, and rare or recently introduced {offenses} can contain long
runs of zeros. {The data-quality conditions below screen for
sparse counts and possible gaps in coverage.}

{For each series, let \(\bar X=n^{-1}\sum_{t=1}^nX_t\)
denote its sample mean. Each filter is initialized at \(\bar X\).}
{Computation uses
$-1.5\leq\phi\leq1.5$, $|\beta|\leq0.999$, and
$|\omega|\leq200\max(\bar X,1)$.}  We call an output
\emph{finite} when the optimizer returns finite parameter estimates, a finite
criterion value, and {finite, strictly positive filtered means
at every observation used in the criterion.}  {We flag a finite
fit if any parameter estimate lies within \(10^{-5}\) of a computational
bound.  A \emph{clean numerical fit} is unflagged, and the largest absolute
component of its average quasi-score, evaluated in \((\omega,\phi,\beta)\)
coordinates, is at most \(10^{-4}\).}
A series passes the quality screen if it has a clean numerical fit and satisfies
\[
 \bar X\geq5,
 \qquad \frac1n\sum_t\mathbf 1\{X_t=0\}\leq0.20,
\]
with neither a leading nor a trailing zero run longer than 12 months.
{The conditions on the sample mean, zero frequency, and zero
runs screen for sparse counts and possible gaps in coverage.}

Of the 8,122 workbook series, 1,751 pass both numerical and data-quality
screening (Table~\ref{tab:crime-screening-funnel}).  {Two finite fits have singular estimated information matrices,
so their Wald standard errors cannot be computed.}  We use the 1,751
quality-screened series as the {analysis sample} below.
{Among the 1,751 quality-screened series, 1,082 (61.8\%)
have \(|\widehat\phi-1|\leq0.02\).}  At \(c=1\),
{the one-sided Wald test does not reject at the nominal
5\% level for 73.4\% of the screened series} (Table~\ref{tab:crime-near-unit-funnel}).
{The table reports two-sided 95\% confidence intervals and
non-rejection counts and percentages for \(H_0:\phi=1\) against
\(H_1:\phi<1\) at the nominal 5\% level.}

\begin{table}[htbp]
\centering
\caption{{Numbers of series by numerical and data-quality
status in the NSW crime panel.}}
\label{tab:crime-screening-funnel}
\begin{tabular}{lr}
\toprule
Outcome & Number of series\\
\midrule
Workbook rows & 8,122\\
Constant series, not fitted & 364\\
Finite optimizer outputs & 7,758\\
Valid Wald standard errors & 7,756\\
{Fits near computational bounds} & 3,952\\
Clean numerical fits & 3,806\\
Quality-screened series & 1,751\\
\bottomrule
\end{tabular}
\end{table}

\begin{table}[htbp]
\centering
\caption{{Estimates of \(\phi\) and unit-root test results
across NSW crime samples at \(c=1\).}}
\label{tab:crime-near-unit-funnel}
\small
\begin{tabular}{lrrrr}
\toprule
{Series group} & $N$ & 95\% CI contains 1
 & \shortstack{{Non-rejections,}\\{count (\%)}}
 & $|\widehat\phi-1|\leq0.02$\\
\midrule
All valid inference outputs & 7,756 & 6,361 (82.0\%)
 & 7,107 (91.6\%) & 1,468 (18.9\%)\\
Clean numerical fits & 3,806 & 2,780 (73.0\%)
 & 3,181 (83.6\%) & 1,461 (38.4\%)\\
Quality-screened series & 1,751 & 1,426 (81.4\%)
 & 1,285 (73.4\%) & 1,082 (61.8\%)\\
\bottomrule
\end{tabular}
\end{table}

{
{We illustrate scale sensitivity using Wollondilly / Theft /
Break and enter dwelling counts, selected from 44 series whose fits at
\(c=1\) were consistent with a stationary unit root in both the full and
pre-2020 samples. We examine how estimated persistence and its confidence
interval respond to \(c\). Appendix~\ref{app:crime-series-selection}
gives the selection criteria.}
Its 375 monthly counts show substantial serial dependence.\footnote{The
sample mean and variance are {11.17} and {53.89}{. Autocorrelations} at lags 1,
5, and 10 are {0.69, 0.60, and 0.59}.}
Table~\ref{tab:crime-wollondilly-c-grid} reports parameter estimates and
{one-sided Wald test} results across the fixed-scale grid.
{At \(c=1\), \(\widehat\sigma^2={1.90}\) gives
\(\widehat\kappa=-2\widehat\omega/(\widehat\alpha^2\widehat\sigma^2)={2.08}\).
Conditional on \(\phi=1\), this point estimate falls in the finite-variance,
long-memory regime{. We have not obtained the limiting
distribution of \(\widehat\sigma^2\), so we cannot compute an asymptotic
confidence interval for \(\kappa\).}}
{At the nominal 5\% level, the one-sided Wald test does not
reject at any displayed scale.}  The two-sided
interval also contains one for every \(c\leq2\), including all grid values
within seven quasi-log-likelihood units of the best fit at \(c=1\).
At \(c=4\), the interval lies above one{. The one-sided Wald test still does not reject},
but this fit loses 36.5 criterion units.
}

\begin{table}[htbp]
\centering
\caption{Fixed-scale sensitivity for Wollondilly / Theft / Break and enter dwelling counts.}
\label{tab:crime-wollondilly-c-grid}
\scriptsize
\setlength{\tabcolsep}{3.2pt}
\resizebox{\textwidth}{!}{%
\begin{tabular}{rrrrrrlrr}
\toprule
\(c\) & \(\widehat\omega\) & \(\widehat\alpha\) & \(\widehat\beta\)
 & \(\widehat\phi\) & {\(\mathrm{SE}(\widehat\phi)\)} & {95\% CI for \(\phi\)}
 & \shortstack{{One-sided}\\{\(p\)-value}} & \(\Delta Q_n\)\\
\midrule
0.125 & {0.010} & {0.127} & {0.870} & 0.9966 & 0.0053
 & [0.9862, 1.0070] & 0.260 &  $-0.811$\\
0.250 & {0.010} & {0.127} & {0.870} & 0.9966 & 0.0053
 & [0.9862, 1.0070] & 0.260 &  $-0.811$\\
0.500 & {0.009} & {0.127} & {0.870} & 0.9967 & 0.0053
 & [0.9863, 1.0071] & 0.268 &  $-0.760$\\
1.000 & {$-0.031$} & {0.125} & {0.874} & 0.9997 & 0.0055
 & [0.9889, 1.0104] & 0.476 &   0.000\\
1.500 & {$-0.115$} & {0.132} & {0.874} & 1.0056 & 0.0062
 & [0.9936, 1.0177] & 0.820 &  $-1.281$\\
2.000 & {$-0.234$} & {0.153} & {0.860} & 1.0131 & 0.0074
 & [0.9985, 1.0276] & 0.961 &  $-6.629$\\
4.000 & {$-1.246$} & {0.282} & {0.782} & 1.0640 & 0.0143
 & [1.0360, 1.0919] & 1.000 & $-36.481$\\
\bottomrule
\end{tabular}%
}
\begin{minipage}{0.97\textwidth}\footnotesize
\emph{Notes:} {One-sided \(p\)-values use the standard normal
approximation \(p_n=\Phi(T_n)\) for the test of \(H_0:\phi=1\) against
\(H_1:\phi<1\) (Section~\ref{sec:unit-root-test}).}
\end{minipage}
\end{table}

{
\subsection{Binance TRX/USDT transaction counts}
\label{sec:binance-application}

{We analyze the number of TRX/USDT trades executed on Binance
in each one-second interval on May 30, 2026.}\footnote{Counts
are taken from the number-of-trades column of Binance's one-second market
records: \url{https://data.binance.vision/data/spot/daily/klines/TRXUSDT/1s/TRXUSDT-1s-2026-05-30.zip}}
{TRX is the native token of the TRON network, and USDT is
Tether's US-dollar stablecoin.}
The sample contains 86,400 observations spanning a complete day, with a
sample variance-to-mean ratio of 22.35.\footnote{The sample mean and
variance are {0.87} and {19.36}{. Autocorrelations} at lags 1, 5, and 10 are
{0.16, 0.12, and 0.12}.}
Table~\ref{tab:binance-trx-c-grid} reports the QMLE and {one-sided Wald test}
over a grid of fixed softplus scales.

\begin{table}[htbp]
\centering
\caption{Fixed-scale sensitivity for one-second TRX/USDT transaction counts.}
\label{tab:binance-trx-c-grid}
\scriptsize
\setlength{\tabcolsep}{3.2pt}
\resizebox{\textwidth}{!}{%
\begin{tabular}{rrrrrrlrr}
\toprule
\(c\) & \(\widehat\omega\) & \(\widehat\alpha\) & \(\widehat\beta\)
 & \(\widehat\phi\) & {\(\mathrm{SE}(\widehat\phi)\)} & {95\% CI for \(\phi\)}
 & \shortstack{{One-sided}\\{\(p\)-value}} & \(\Delta Q_n\)\\
\midrule
0.0625 & {0.037} & {0.059} & {0.897} & 0.9568 & 0.0038
 & [0.9494, 0.9641] & $<10^{-12}$ &     0.0\\
0.125 & {0.036} & {0.060} & {0.897} & 0.9572 & 0.0039
 & [0.9496, 0.9647] & $<10^{-12}$ &   $-18.1$\\
0.250 & {0.016} & {0.066} & {0.895} & 0.9605 & 0.0043
 & [0.9520, 0.9690] & $<10^{-12}$ &  $-252.7$\\
0.500 & {$-0.111$} & {0.086} & {0.883} & 0.9696 & 0.0056
 & [0.9587, 0.9806] & $2.9\times10^{-8}$ &  $-928.5$\\
1.000 & {$-0.647$} & {0.119} & {0.892} & 1.0106 & 0.0056
 & [0.9996, 1.0216] & 0.971 & $-2020.6$\\
2.000 & {$-2.323$} & {0.190} & {0.894} & 1.0843 & 0.0057
 & [1.0731, 1.0956] & 1.000 & $-4357.1$\\
4.000 & {$-6.984$} & {0.319} & {0.894} & 1.2132 & 0.0072
 & [1.1990, 1.2273] & 1.000 & $-7660.7$\\
\bottomrule
\end{tabular}%
}
\begin{minipage}{0.97\textwidth}\footnotesize
\emph{Notes:} {One-sided \(p\)-values use the standard normal
approximation \(p_n=\Phi(T_n)\) for the test of \(H_0:\phi=1\) against
\(H_1:\phi<1\) (Section~\ref{sec:unit-root-test}).}
\end{minipage}
\end{table}

{At the nominal 5\% level, the one-sided Wald test rejects
for \(c\leq0.5\) across the displayed grid}, but not at \(c=1\){. Larger} scales place the entire confidence interval above
one.  The quasi-likelihood favors the smallest displayed scale,
\(c=0.0625\).  {Halving the smallest displayed scale leaves
\(\widehat\phi\) essentially unchanged.}\footnote{\interlinepenalty=10000
{At \(c=0.0625\), the fitted linear index has minimum {0.37},
so the softplus correction is at most approximately
\(1.7\times10^{-4}\) along the fitted path. The best fit on the displayed
grid therefore behaves almost like a linear INGARCH\((1,1)\) model.}
}
These small-scale fits give \(\widehat\phi\approx0.957\) and reject the
unit-root null, despite weak identification of the precise scale.  {The non-rejection at \(c=1\) occurs under a specification
with substantially lower quasi-log-likelihood than the small-scale fits.}
}

\section{Conclusion}
\label{sec:conclusion}
{
The central finding is that a unit root need not imply nonstationarity
in softplus affine models.
{For a large positive input, softplus is approximately the
identity, while it remains strictly positive for every real input.} A negative
intercept is therefore admissible, and it induces a negative drift that
returns the process from high levels. The resulting unit-root process is
stationary, yet it differs from a conventional short-memory process:
{the stationary distribution of \(M_t\)} has a power-law tail, and the observations can
exhibit long memory.

The same nonlinearity explains why conventional inference remains valid.
Because the process is stationary at the unit root, standard asymptotic
theory applies in place of nonstandard unit-root limit theory. A finite
mean suffices for the Poisson QMLE to be asymptotically normal at the usual
rate and for the {one-sided Wald test} to have a standard normal limiting
null distribution, even when the observations have infinite variance or
long memory.

The stationarity classification holds for every fixed \(c>0\), but the
simulations and applications show that {the estimated coefficient
sum \(\widehat\phi\) and the conclusions of unit-root tests} can be
sensitive to the chosen scale. We therefore recommend
reporting sensitivity across fixed scales. Joint estimation of the scale
\(c\) and \(\theta\) is left for future research. A further direction
is to extend the analysis to other strictly positive links used in
neural networks that are asymptotically linear with unit slope.
Studying these alternatives would clarify which features of stationary
unit roots and their power-law tails are shared across link functions
and which depend on the particular softplus specification.
\par}
{
\section*{Declaration of generative AI and AI-assisted technologies in the manuscript preparation process}
OpenAI Codex assisted the authors with language editing, manuscript
organization, notation clarification, discussion of mathematical arguments,
and modifications to the LaTeX source. The authors directed this work,
reviewed the proposed revisions, and edited them as appropriate. The authors
are fully responsible for the manuscript's content, including its
mathematical statements and proofs.
}

\bibliographystyle{apalike}
\bibliography{lib}

\clearpage
\appendix

\section[Additional simulation results]{Additional simulation results}
\label{app:affine-simulations}

{Table~\ref{tab:mc-poisson-parameters} complements
Table~\ref{tab:mc-poisson-estimation} with the bias and RMSE of
\(\widehat\omega\), \(\widehat\alpha\), and \(\widehat\beta\) in the
Poisson experiments of Section~\ref{sec:inference-simulation}. In small
samples, the intercept tends to be overestimated. For B0 at $n=8{,}000$,
the intercept RMSE of 0.042 is strongly influenced by one replication
exhibiting a prolonged period with large values of \(M_t\), which
accounts for 80.4\% of the total squared intercept error.}

\begin{table}[!htbp]
\centering
\setstretch{1}
\small
\setlength{\tabcolsep}{6pt}
\caption{Bias and RMSE of the Poisson QMLE of \(\omega\), \(\alpha\), and \(\beta\) under Poisson observations.}
\label{tab:mc-poisson-parameters}
\begin{tabular}{llrrrrrr}
\toprule
& & \multicolumn{2}{c}{$\widehat\omega$}
& \multicolumn{2}{c}{$\widehat\alpha$}
& \multicolumn{2}{c}{$\widehat\beta$} \\
\cmidrule(lr){3-4}\cmidrule(lr){5-6}\cmidrule(lr){7-8}
Design & $n$ & Bias & RMSE & Bias & RMSE & Bias & RMSE \\
\midrule
S1 & 500 & 0.858 & 1.133 & 0.039 & 0.053 & -0.124 & 0.149 \\
S1 & 2,000 & 0.211 & 0.304 & 0.014 & 0.022 & -0.035 & 0.047 \\
S1 & 10,000 & 0.041 & 0.093 & 0.003 & 0.008 & -0.007 & 0.015 \\
\addlinespace[2pt]
S2 & 500 & 0.251 & 0.372 & 0.049 & 0.063 & -0.072 & 0.087 \\
S2 & 2,000 & 0.043 & 0.067 & 0.015 & 0.023 & -0.019 & 0.027 \\
S2 & 10,000 & 0.008 & 0.019 & 0.003 & 0.007 & -0.004 & 0.008 \\
\addlinespace[2pt]
S3 & 500 & 0.047 & 0.182 & -0.011 & 0.048 & 0.002 & 0.046 \\
S3 & 2,000 & 0.010 & 0.085 & -0.003 & 0.023 & 0.000 & 0.023 \\
S3 & 10,000 & 0.002 & 0.038 & -0.001 & 0.010 & 0.000 & 0.010 \\
\addlinespace[2pt]
B0 & 500 & 0.055 & 0.273 & 0.001 & 0.052 & -0.021 & 0.065 \\
B0 & 2,000 & 0.016 & 0.065 & 0.001 & 0.025 & -0.007 & 0.029 \\
B0 & 8,000 & 0.006 & 0.042 & 0.000 & 0.013 & -0.003 & 0.014 \\
B0 & 32,000 & 0.002 & 0.009 & 0.000 & 0.006 & -0.001 & 0.007 \\
\addlinespace[2pt]
B1 & 500 & 0.038 & 0.110 & -0.002 & 0.058 & -0.023 & 0.088 \\
B1 & 2,000 & 0.013 & 0.047 & 0.000 & 0.028 & -0.008 & 0.038 \\
B1 & 10,000 & 0.003 & 0.019 & 0.000 & 0.013 & -0.002 & 0.016 \\
\addlinespace[2pt]
B2 & 500 & 0.037 & 0.134 & -0.004 & 0.069 & -0.031 & 0.142 \\
B2 & 2,000 & 0.011 & 0.058 & 0.000 & 0.035 & -0.011 & 0.063 \\
B2 & 10,000 & 0.002 & 0.025 & 0.000 & 0.015 & -0.002 & 0.027 \\
\bottomrule
\end{tabular}
\par\smallskip
\begin{minipage}{0.86\textwidth}\footnotesize
\emph{Notes:} Designs are defined in Table~\ref{tab:mc-poisson-designs}.
The replications and estimation procedure are those of
Table~\ref{tab:mc-poisson-estimation}.
\end{minipage}
\end{table}

\begin{figure}[p]
\centering
\includegraphics[width=0.82\textwidth]{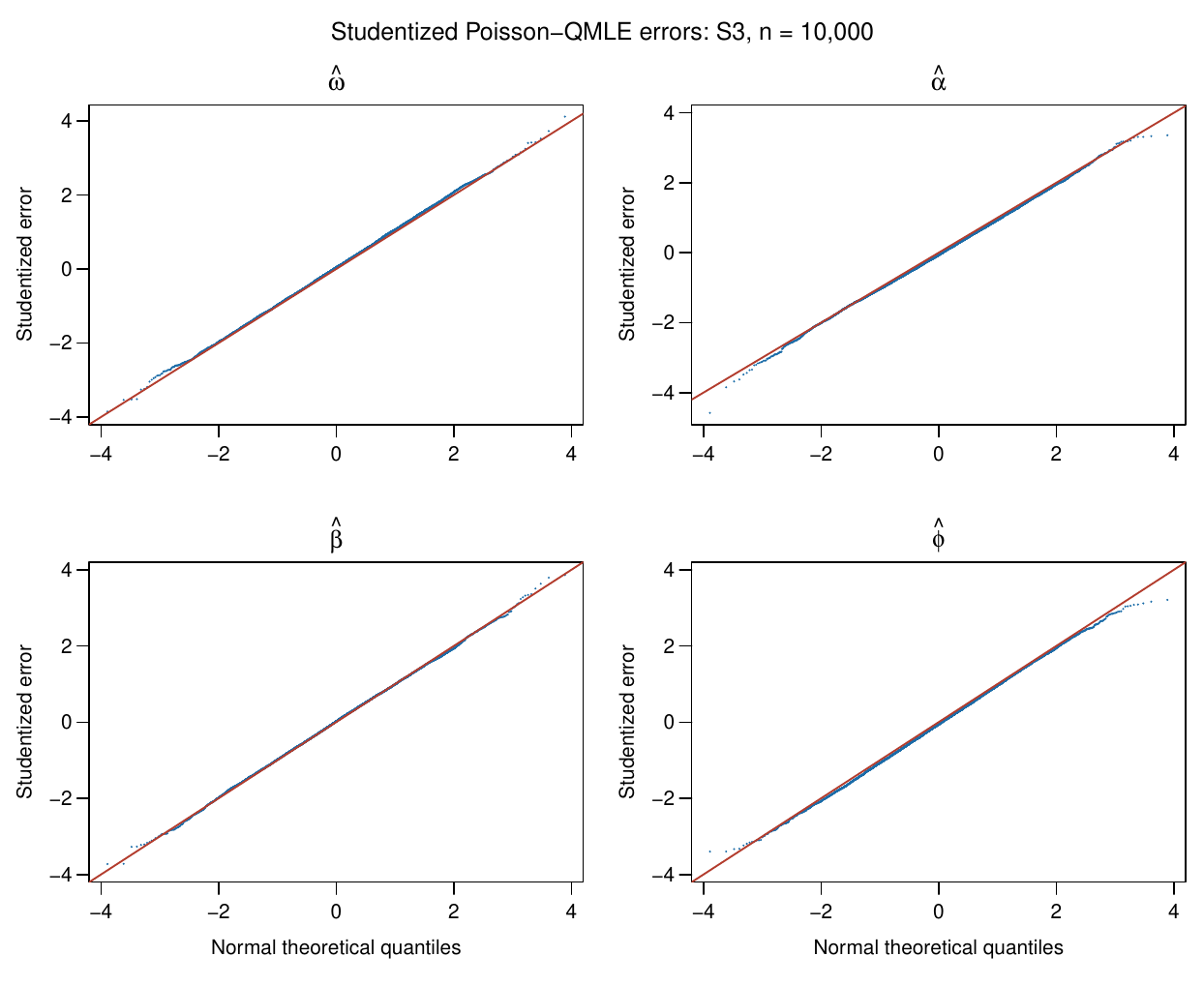}
\caption{Studentized {Poisson QMLE} errors for design S3 in the {enlarged region}
at $n=10{,}000$. Panels report $\omega$, $\alpha$, $\beta$, and
{the coefficient sum $\phi=\alpha+\beta$.} Each panel uses the
{standard normal} 45-degree reference line. {Each estimation
error is divided by its estimated standard error, computed as for the ASE
column of Table~\ref{tab:mc-poisson-estimation}.}
}
\label{fig:mc-qq-s3}
\end{figure}

{The rest of this appendix reports} additional inference results for two
non-Poisson affine conditional families. The non-Poisson designs use the negative binomial and gamma
specifications of Section~\ref{sec:mc-cross-family}. The unit-root
intercepts are $-0.625$ in B1 and $-1$ in B2, {which give the same
tail indices, $\kappa=2.5$ and $4$, as the Poisson designs.}
The B2 alternatives use \(\omega=-1\) and \(\alpha=0.5\),
with \(\beta=0.49,0.47,0.45\) for \(\phi=0.99,0.97,0.95\), respectively.
{The coefficient triples with \(\phi<1\)} are unchanged
from the corresponding Poisson designs.  We use $n=2{,}000$ and
$10{,}000$ for both estimation and the one-sided Wald test.
Table~\ref{tab:mc-naive-standardization} shows substantial overrejection
when the dispersion parameter is fixed at one.

\begin{table}[!htbp]
\centering
\setstretch{1}
\caption{Poisson QMLE inference for $\phi$ under negative binomial and gamma observations.}
\label{tab:mc-affine-estimation}
\small
\begin{tabular}{lllrrrrr}
\toprule
Family & Design & $n$ & Bias & RMSE & ESD & ASE & Cov. \\
\midrule
Negative binomial & S1 & 2,000 & -0.012 & 0.024 & 0.021 & 0.020 & 0.939 \\
Negative binomial & S1 & 10,000 & -0.002 & 0.009 & 0.008 & 0.008 & 0.945 \\
Negative binomial & S3 (enlarged region) & 2,000 & -0.003 & 0.013 & 0.013 & 0.013 & 0.947 \\
Negative binomial & S3 (enlarged region) & 10,000 & -0.001 & 0.006 & 0.006 & 0.006 & 0.956 \\
Negative binomial & B1 ($\kappa=2.5$) & 2,000 & -0.013 & 0.045 & 0.043 & 0.043 & 0.967 \\
Negative binomial & B1 ($\kappa=2.5$) & 10,000 & -0.004 & 0.018 & 0.017 & 0.017 & 0.958 \\
Negative binomial & B2 ($\kappa=4$) & 2,000 & -0.026 & 0.114 & 0.111 & 0.109 & 0.949 \\
Negative binomial & B2 ($\kappa=4$) & 10,000 & -0.006 & 0.044 & 0.043 & 0.043 & 0.949 \\
\addlinespace
Gamma & S1 & 2,000 & -0.013 & 0.024 & 0.021 & 0.020 & 0.937 \\
Gamma & S1 & 10,000 & -0.002 & 0.009 & 0.008 & 0.008 & 0.945 \\
Gamma & S3 (enlarged region) & 2,000 & -0.003 & 0.014 & 0.013 & 0.013 & 0.945 \\
Gamma & S3 (enlarged region) & 10,000 & -0.001 & 0.006 & 0.006 & 0.006 & 0.947 \\
Gamma & B1 ($\kappa=2.5$) & 2,000 & -0.012 & 0.043 & 0.041 & 0.041 & 0.968 \\
Gamma & B1 ($\kappa=2.5$) & 10,000 & -0.003 & 0.016 & 0.016 & 0.016 & 0.962 \\
Gamma & B2 ($\kappa=4$) & 2,000 & -0.022 & 0.098 & 0.096 & 0.096 & 0.957 \\
Gamma & B2 ($\kappa=4$) & 10,000 & -0.006 & 0.038 & 0.037 & 0.038 & 0.959 \\
\bottomrule
\end{tabular}
\begin{minipage}{0.94\textwidth}\footnotesize
\emph{Notes:} Design labels refer to
Table~\ref{tab:mc-poisson-designs}. Both conditional families
have dispersion parameter $\sigma^2=2$.
Unit-root intercepts are chosen to match $\kappa=2.5$ and $4$ with the
Poisson designs. ESD, ASE, and Cov. are defined in the notes to
Table~\ref{tab:mc-poisson-estimation}.
\end{minipage}
\end{table}

\begin{table}[!htbp]
\centering
\setstretch{1}
\caption{Effect of fixing the dispersion parameter at one.}
\label{tab:mc-naive-standardization}
\small
\begin{tabular}{lllrr}
\toprule
Family & Null design & $n$ & $\sigma^2$ estimated & $\sigma^2$ fixed at 1 \\
\midrule
Negative binomial & B1, $H_0$ ($\kappa=2.5$) & 2,000 & 0.044 & 0.130 \\
Negative binomial & B1, $H_0$ ($\kappa=2.5$) & 10,000 & 0.048 & 0.130 \\
Negative binomial & B2, $H_0$ ($\kappa=4$) & 2,000 & 0.043 & 0.121 \\
Negative binomial & B2, $H_0$ ($\kappa=4$) & 10,000 & 0.046 & 0.124 \\
\addlinespace
Gamma & B1, $H_0$ ($\kappa=2.5$) & 2,000 & 0.043 & 0.128 \\
Gamma & B1, $H_0$ ($\kappa=2.5$) & 10,000 & 0.048 & 0.126 \\
Gamma & B2, $H_0$ ($\kappa=4$) & 2,000 & 0.041 & 0.118 \\
Gamma & B2, $H_0$ ($\kappa=4$) & 10,000 & 0.046 & 0.124 \\
\bottomrule
\end{tabular}
\begin{minipage}{0.86\textwidth}\footnotesize
\emph{Notes:} Entries are rejection frequencies at the nominal 5\% level.
The last column fixes \(\widehat\sigma^2=1\) to illustrate the size distortion
caused by ignoring the non-Poisson dispersion parameter.
\end{minipage}
\end{table}

\begin{figure}[p]
\centering
\includegraphics[width=0.72\textwidth]{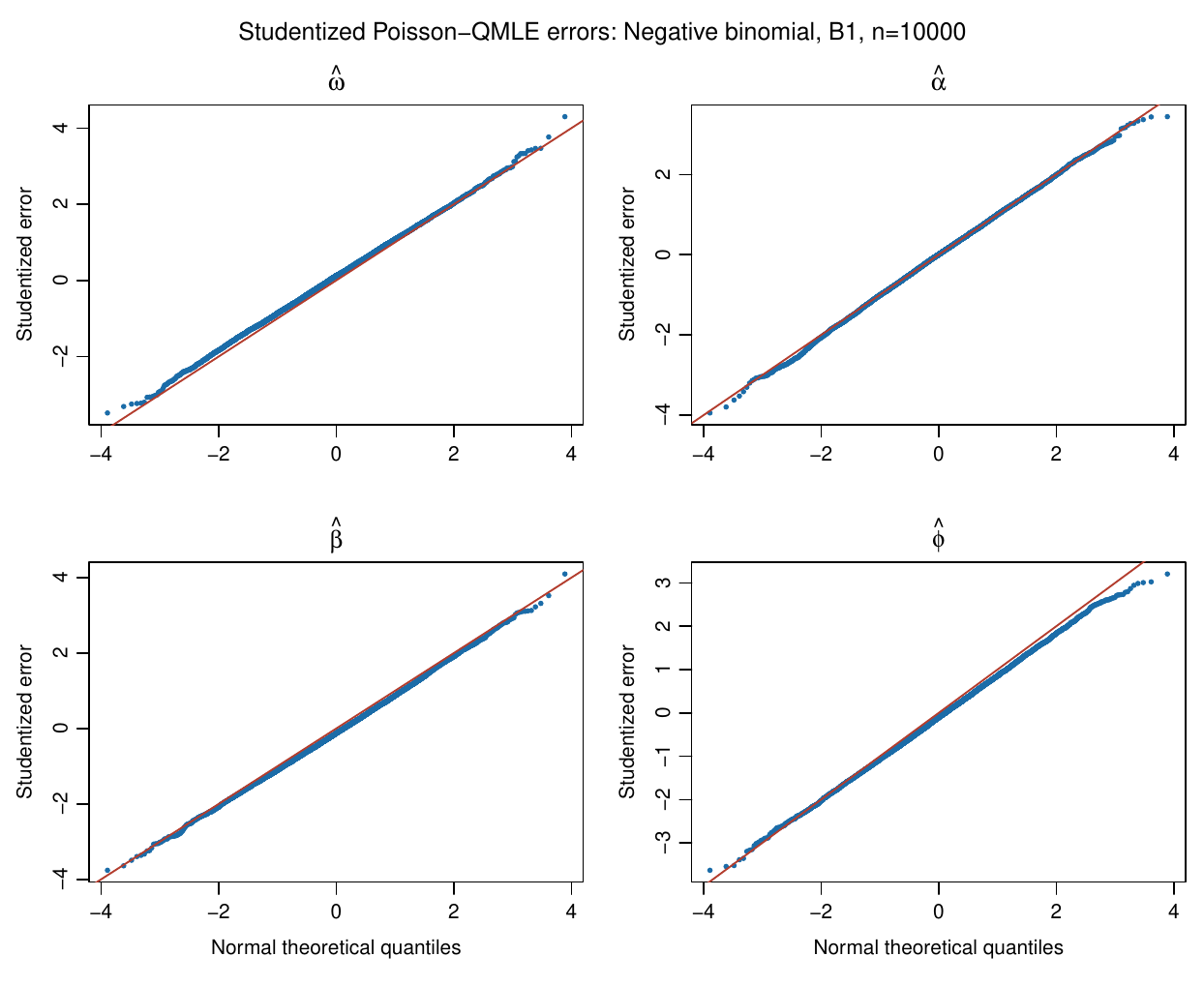}
\par\medskip
\includegraphics[width=0.72\textwidth]{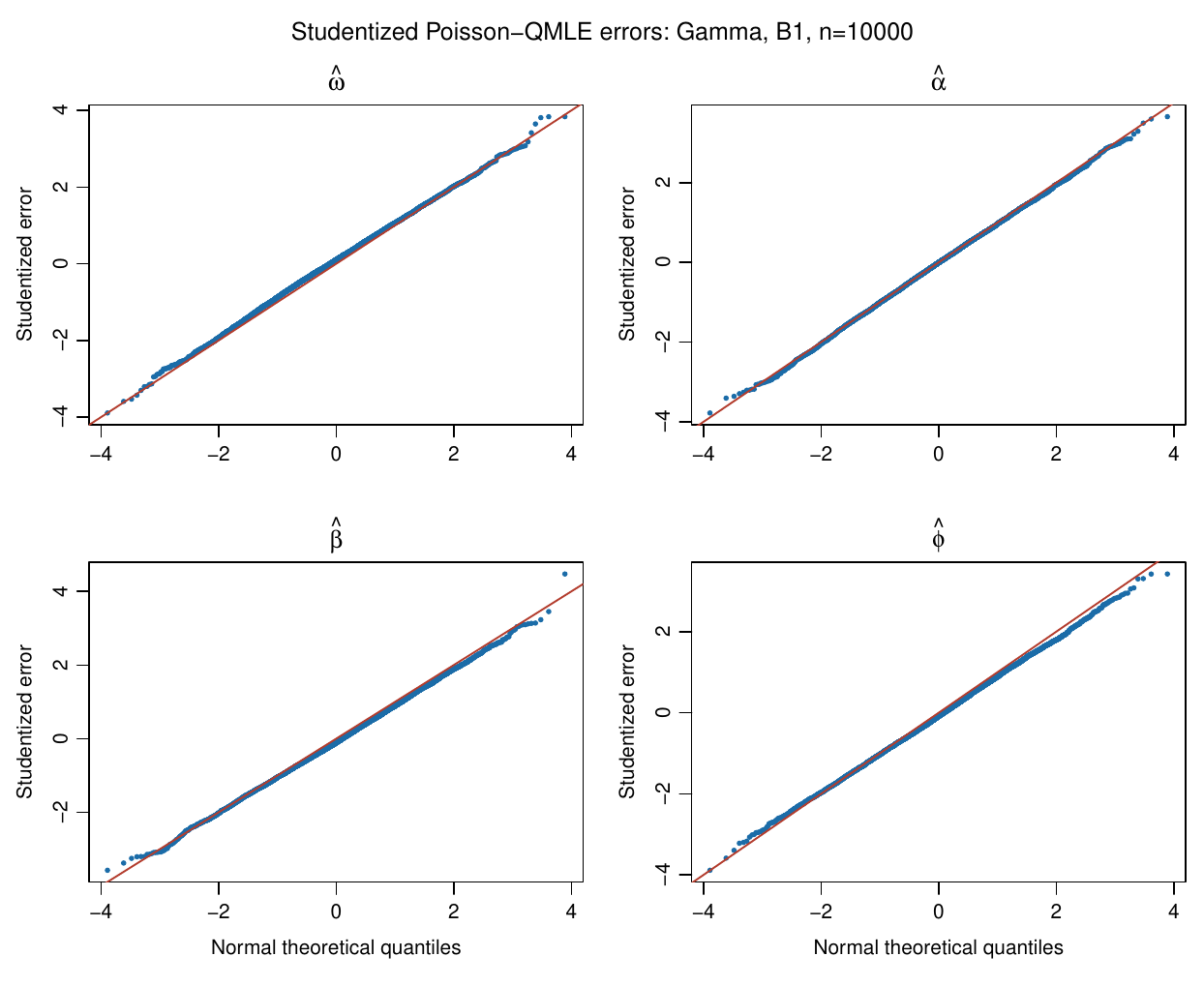}
\caption{Studentized Poisson QMLE errors in the B1 unit-root design under
negative binomial observations (upper) and gamma observations (lower), with
$n=10{,}000$. Within each four-panel block, panels show the
studentized errors for \(\omega,\alpha,\beta,\phi\), in reading order.
{Each panel uses the standard normal 45-degree reference line.
Each estimation error is divided by its estimated standard error, computed
as for the ASE column of Table~\ref{tab:mc-affine-estimation}.}
}
\label{fig:mc-qq-affine-b1}
\end{figure}

\section{\texorpdfstring{{Proofs for Section~\ref{sec:affine-extension}}}{Proofs for Section 2}}
\label{app:affine-extension}

\subsection{\texorpdfstring{{Common tools}}{Common tools}}
\label{app:auxiliary-lemmas}

{In this appendix, bounds are} for fixed model parameters
\((\omega,\alpha,\beta,c)\) and a fixed affine conditional family.
Constants may also depend on auxiliary quantities, such as
\(\epsilon,A,\chi\), fixed in the relevant statement.

Use the affine family \(G_m\) from Section~\ref{sec:affine-extension},
with the convention \(G_0=\delta_0\).  For \(m\geq0\), let
\(W_m\sim G_m\), and put
\[
 U_m=W_m-m.
\]
Thus \(\E U_m=0\) and \(\E U_m^2=\sigma^2m\).
For the observed process, use the innovation \(U_t\) defined
in \eqref{eq:affine-innovation}.  Since \(M_t\) is \(\mathcal F_{t-1}\)-measurable and
\(\mathcal L(X_t\mid\mathcal F_{t-1})=G_{M_t}\), we have
\(U_t\mid(M_t=m)\stackrel d=U_m\).
Thus a subscript \(m\) indexes a state-dependent conditional law, whereas
a subscript \(t\) indexes time.
For \(m\geq0\), we write \(\E[\cdot\mid M_0=m]\) for expectation under
the model initialized at \(M_0=m\), with \(X_0\sim G_m\).
We use analogous notation for probabilities and variances.

For fixed model parameters, let
\(P(m,A)=\Pr(M_1\in A\mid M_0=m)\) denote the transition kernel of the
Markov chain \((M_t)\), where \(A\) is any Borel subset of the state space.
Write \(P^k(m,A)=\Pr(M_k\in A\mid M_0=m)\) for its \(k\)-step
transition kernel.
For a bounded measurable or nonnegative function \(f\), write
\[
 (Pf)(m)=\int f(y)P(m,dy)
 =\E[f(M_1)\mid M_0=m],
 \qquad
 \mu f=\int f(m)\,\mu(dm),
\]
where \(\mu\) is a probability measure on the state space.  Thus
\(\mu Pf\) means \(\mu(Pf)\).  We use the same conventions for other
transition kernels and state distributions.

\begin{lemma}
\label{lem:affine-concentration}
Under Assumption~\ref{ass:affine-seed}, for every \(\epsilon>0\) and
\(A>0\), there is \(C_{\epsilon,A}<\infty\) such that, for \(m\geq1\),
\[
 \Pr(|U_m|>\epsilon m)\leq C_{\epsilon,A}m^{-A}.
\]
For every fixed \(\chi>0\),
\[
 \E(W_m+1)^\chi\leq C_\chi(1+m^\chi).
\]
For every fixed \(\chi\geq2\),
\[
 \E|U_m|^\chi\leq C_\chi(1+m^{\chi/2}).
\]
For every \(m>0\), \(G_m\) has unbounded support.
\end{lemma}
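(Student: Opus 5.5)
The four claims all follow from the Laplace-transform representation \(\E e^{-uW_m}=e^{-ma(u)}\) together with infinite divisibility. For the moment bounds I will use cumulants. Assumption~\ref{ass:affine-seed}(i) gives finite moments of all orders for \(G_1\), so its cumulants \(k_j\) are finite and \(G_m\) has cumulants \(mk_j\). Moments are polynomials in cumulants, so for each integer \(j\geq1\), \(\E W_m^j\) is a polynomial in \(m\) of degree \(j\). It has no constant term and leading term \(m^j\). Hence \(\E(W_m+1)^j\leq C_j(1+m^j)\) for integer \(j\), and Jensen (or Lyapunov) interpolation extends the bound to every real \(\chi>0\).

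For the centered moments I take an even integer \(j=2k\). The moment \(\E U_m^{2k}\) is a sum over partitions of \(\{1,\dots,2k\}\) into blocks of size at least two, and each partition contributes a product of cumulants \(\prod mk_{|B|}\). A partition with \(b\) blocks has \(b\leq k\), so each term is \(O(m^b)\) and \(\E U_m^{2k}\leq C(m+m^k)\leq C(1+m^{k})\). For a general real \(\chi\geq2\), I pick an even integer \(2k\geq\chi\) and apply Lyapunov. This gives \(\E|U_m|^\chi\leq(\E U_m^{2k})^{\chi/(2k)}\leq C(1+m^{k})^{\chi/(2k)}\leq C'(1+m^{\chi/2})\).

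The concentration bound then follows from Markov's inequality with a high even moment. For \(m\geq1\) and \(2k\geq 2A\), I get \(\Pr(|U_m|>\epsilon m)\leq \E U_m^{2k}/(\epsilon m)^{2k}\leq C(1+m^k)\epsilon^{-2k}m^{-2k}\leq C_{\epsilon,A}m^{-k}\leq C_{\epsilon,A}m^{-A}\). Choosing \(k\geq A\) finishes this part.

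Unbounded support is the only part needing care, and I expect it to be the main (though mild) obstacle. In case (b) it is immediate, since the density \(g(m,\cdot)\) is strictly positive on \((0,\infty)\). In case (a) I argue by contradiction and suppose \(G_m\) is supported in \([0,K]\). A nondegenerate infinitely divisible law cannot have bounded support; on \([0,\infty)\) the Lévy–Khintchine form is \(ma(u)=m\gamma u+m\int(1-e^{-ux})\nu(dx)\). I will show bounded support forces \(\nu=0\): if \(\nu\neq0\), pick \(x_0\) in the support of \(\nu\). Then \(G_m\) is the convolution of \(\delta_{m\gamma}\) with a compound-Poisson-type law, and the \(n\)-fold jump term \(\nu^{*n}/n!\) puts mass near \(nx_0\) for every \(n\), so the support is unbounded. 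Thus \(\nu=0\), \(G_m\) is degenerate, and \(\sigma^2=0\), contradicting Assumption~\ref{ass:affine-seed}(i). If \(\nu\) has infinite mass, I will restrict \(\nu\) to \([\delta,\infty)\) with positive finite mass and write \(G_m\) as that compound Poisson part convolved with another law on \([0,\infty)\). Adding a nonnegative independent variable cannot shrink the support's upper end, so the same conclusion holds.
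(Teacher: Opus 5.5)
Your argument is correct, but it takes a different route from the paper on two of the four claims.

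\emph{Moment bounds.} The paper writes \(m=\lfloor m\rfloor+r_m\) and represents \(U_m\) as a sum of \(\lfloor m\rfloor\) independent copies of \(U_1\) plus an independent \(U_{r_m}\). It controls the fractional piece uniformly through \(\E W_s^k\leq\E W_1^k\) for \(0\leq s\leq1\), which follows from \(G_s*G_{1-s}=G_1\), and then applies Rosenthal's inequality. The raw-moment bound follows from \(W_m\leq m+|U_m|\). You instead use that the cumulants of \(G_m\) are \(m\) times those of \(G_1\), together with the moment--cumulant formula. This avoids both Rosenthal and the integer/fractional decomposition. It also shows that \(\E U_m^{2k}\) is a polynomial in \(m\) of degree \(k\) with no constant term. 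The one point to state explicitly is why the cumulant scaling is legitimate for non-integer \(m\). Since \(W_m\) is dominated by \(W_{\lceil m\rceil}\), \(G_m\) has moments of all orders, and these are read off from the one-sided derivatives of \(e^{-ma(u)}\) at \(u=0\).

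\emph{Unbounded support.} You use the L\'evy--Khintchine representation \(a(u)=\gamma u+\int(1-e^{-ux})\,\nu(dx)\) of Laplace exponents on \([0,\infty)\) and extract a compound Poisson component. This is valid, but it rests on a structural theorem. The paper's argument is elementary and covers both cases of Assumption~\ref{ass:affine-seed}(ii) at once. If \(W_m\leq y\) almost surely, then each of the \(n\) independent summands \(W_{m/n}\) must satisfy \(W_{m/n}\leq y/n\) almost surely. Hence \(\sigma^2m/n=\Var(W_{m/n})\leq y^2/n^2\), which fails for large \(n\). Your case-(a) argument never uses the atom at zero, so your case split is unnecessary.
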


\begin{proof}
Fix an even integer \(k\geq2\).  Assumption~\ref{ass:affine-seed}(i)
gives \(\E W_1^k<\infty\).
For \(0\leq s\leq1\), the convolution identity
\(G_s*G_{1-s}=G_1\) implies that \(W_1\) has the law of
\(W_s+W_{1-s}'\), where the summands are independent and
nonnegative. Consequently,
\[
 \E W_s^k
 \leq \E(W_s+W_{1-s}')^k
 =\E W_1^k.
\]
Thus
\[
 \sup_{0\leq s\leq1}\E|W_s-s|^k
 \leq 2^{k-1}(\E W_1^k+1)<\infty.
\]

Write \(m=j_m+r_m\), where \(j_m=\lfloor m\rfloor\) and
\(r_m=m-j_m\in[0,1)\).  The convolution identity implies that
\(W_m-m\) has the law of the sum of \(j_m\) independent copies
of \(U_1\) and an
independent random variable distributed as \(U_{r_m}\).
Rosenthal's inequality gives, for every even integer \(k\geq2\),
\[
 \E|U_m|^k\leq C_k(1+m^{k/2}).
\]
For any real \(\chi\geq2\), choose an even integer \(k\geq\chi\).
Lyapunov's inequality then gives the stated moment bound of order \(\chi\).
The same decomposition, together with
\(W_m\leq m+|U_m|\), proves the stated raw-moment bound for every
\(\chi\geq2\). Lyapunov's inequality applied to the second raw moment covers
\(0<\chi<2\).

For any prescribed \(A>0\), choose \(\chi\geq2\) with \(\chi/2>A\).
Markov's inequality then gives
\[
 \Pr(|U_m|>\epsilon m)
 \leq \frac{C_\chi(1+m^{\chi/2})}{\epsilon^\chi m^\chi}
 \leq C_{\epsilon,A}m^{-A}.
\]
{Finally, suppose that \(W_m\leq y\) almost surely for some \(m>0\)
and \(y<\infty\). For every integer \(n\geq1\), the convolution identity
represents \(W_m\) as a sum of \(n\) independent nonnegative copies of
\(W_{m/n}\). If \(\Pr(W_{m/n}>y/n)>0\), all \(n\) copies would exceed
\(y/n\) with positive probability. Hence \(W_{m/n}\leq y/n\) almost surely,
and
\[
 \frac{\sigma^2m}{n}=\Var(W_{m/n})\leq\E W_{m/n}^2\leq\frac{y^2}{n^2}.
\]
This fails for large \(n\), so \(G_m\) has unbounded support.}
\end{proof}

\begin{lemma}
\label{lem:affine-state-observation-moments}
Under Assumption~\ref{ass:affine-seed}, if \(X\mid M=m\sim G_m\), then,
for every \(s>0\), \(\E X^s<\infty\) if and only if \(\E M^s<\infty\).
\end{lemma}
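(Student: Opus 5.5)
The plan is to compare the conditional moment \(\E(X^s\mid M=m)=\E W_m^s\) with \(m^s\) from above and from below, uniformly in \(m\). Integrating the two bounds against the law of \(M\) then gives the equivalence. The upper bound is already available. Lemma~\ref{lem:affine-concentration} gives \(\E W_m^s\leq\E(W_m+1)^s\leq C_s(1+m^s)\) for every \(s>0\). Hence \(\E X^s=\E[\E(W_M^s\mid M)]\leq C_s(1+\E M^s)\), so \(\E M^s<\infty\) implies \(\E X^s<\infty\).

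For the converse I will prove a matching lower bound \(\E W_m^s\geq c_s m^s\) for all \(m\geq m_0\), with some \(c_s>0\). I will use the concentration part of Lemma~\ref{lem:affine-concentration} with \(\epsilon=1/2\) and, say, \(A=1\). It gives \(\Pr(W_m\geq m/2)\geq\Pr(|U_m|\leq m/2)\geq1-C m^{-1}\), which is at least \(1/2\) for \(m\geq m_0\). Therefore \(\E W_m^s\geq (m/2)^s\Pr(W_m\geq m/2)\geq 2^{-s-1}m^s\) for \(m\geq m_0\). Integrating gives
\[
\E X^s\geq 2^{-s-1}\E\bigl[M^s\mathbf 1\{M\geq m_0\}\bigr].
\]
Since \(\E[M^s\mathbf 1\{M<m_0\}]\leq m_0^s\), finiteness of \(\E X^s\) implies finiteness of \(\E M^s\).

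The only point needing care is the region \(m<1\). Lemma~\ref{lem:affine-concentration} states its concentration bound only for \(m\geq1\), and \(W_m\) may place large mass at zero there. This does not matter, because on \(\{M<m_0\}\) the \(s\)-th moment of \(M\) is trivially bounded and no lower bound on \(W_m\) is needed. I do not expect a genuine obstacle. The main thing to check is that the conditional-moment identity \(\E(X^s\mid M=m)=\E W_m^s\) is legitimate for nonnegative integrands by Tonelli, which holds without any integrability assumption.
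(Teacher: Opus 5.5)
Your proof is correct and follows the paper's argument. The upper bound uses the raw-moment bound of Lemma~\ref{lem:affine-concentration}, and the lower bound uses $\Pr(X\geq m/2\mid M=m)\geq 1/2$ for large $m$ to get $\E X^s\geq 2^{-s-1}\E[M^s\mathbf 1_{\{M\geq m_0\}}]$; the paper obtains that probability bound from conditional Chebyshev with the explicit threshold $m_0=8\sigma^2$ rather than from the concentration part of the lemma, which is only a cosmetic difference.
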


\begin{proof}
If \(\E M^s<\infty\), the raw-moment bound in
Lemma~\ref{lem:affine-concentration} gives \(\E X^s<\infty\).
Conversely, when \(m\geq8\sigma^2\), conditional Chebyshev gives
\(\Pr(X\geq m/2\mid M=m)\geq1/2\), and therefore
\[
 \E X^s\geq2^{-s-1}
 \E[M^s\mathbf1_{\{M\geq8\sigma^2\}}],
\]
so \(\E X^s<\infty\) implies \(\E M^s<\infty\).
\end{proof}

\begin{lemma}
\label{lem:feller-invariant-limits}
Let \(E\) be a Polish state space and \(P\) a Feller kernel on \(E\).
(i) If \(\mu_N=N^{-1}\sum_{j<N}P^j(x,\cdot)\) is tight, each weak limit
is \(P\)-invariant.  (ii) If \(P_n\) has invariant law \(\pi_n\),
\((\pi_n)\) is tight, and \(P_nf\to Pf\) locally uniformly for every
bounded Lipschitz \(f\), each weak limit of \((\pi_n)\) is \(P\)-invariant.
In either case, if \(P(x,B)=0\) for every \(x\), every such
invariant limit \(\pi\) satisfies \(\pi(B)=0\).
\end{lemma}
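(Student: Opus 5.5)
For part (i), I will use the classical Krylov--Bogoliubov argument. Fix \(x\in E\) and write \(\mu_N P=N^{-1}\sum_{j=1}^{N}P^j(x,\cdot)\). Then
\[
 \mu_NP-\mu_N=N^{-1}\bigl(P^N(x,\cdot)-\delta_x\bigr),
\]
so \(|\mu_NPf-\mu_Nf|\leq2\|f\|_\infty/N\) for every bounded continuous \(f\). Suppose \(\mu_{N_k}\Rightarrow\pi\) along a subsequence. Because \(P\) is Feller, \(Pf\) is bounded and continuous. Hence \(\mu_{N_k}Pf\to\pi Pf\) and \(\mu_{N_k}f\to\pi f\), which gives \(\pi Pf=\pi f\). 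Bounded continuous functions determine probability measures on a Polish space, so \(\pi P=\pi\).

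For part (ii), let \(\pi_{n_k}\Rightarrow\pi\) and fix a bounded Lipschitz \(f\). I will split the difference as
\[
 \pi f-\pi Pf=(\pi f-\pi_{n_k}f)+\pi_{n_k}(P_{n_k}f-Pf)+(\pi_{n_k}Pf-\pi Pf),
\]
where I have used \(\pi_{n_k}f=\pi_{n_k}P_{n_k}f\). The first term vanishes by weak convergence. The third vanishes because \(Pf\) is bounded and continuous, by the Feller property.

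The middle term is the only delicate step. Given \(\epsilon>0\), tightness provides a compact \(K\) with \(\pi_n(K^c)<\epsilon\) for all \(n\). Locally uniform convergence of \(P_nf\) to \(Pf\) gives uniform convergence on \(K\). This is the reading of "locally uniformly" I will use, namely uniform on compacts; on a Polish space a locally compact neighbourhood need not exist, so uniform convergence on compacts is the natural interpretation. Since both \(P_nf\) and \(Pf\) are bounded by \(\|f\|_\infty\), the middle term is at most
\[
 \sup_K|P_{n_k}f-Pf|+2\|f\|_\infty\epsilon .
\]
Letting \(k\to\infty\) and then \(\epsilon\to0\) shows \(\pi f=\pi Pf\). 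Bounded Lipschitz functions are measure-determining, so \(\pi P=\pi\).

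For the final claim, suppose \(P(x,B)=0\) for every \(x\). Then any invariant \(\pi\) satisfies
\[
 \pi(B)=\int P(x,B)\,\pi(dx)=0 .
\]
This applies directly to every invariant limit from (i) or (ii), with no approximation needed. I expect no real obstacle in this lemma beyond making the notion of locally uniform convergence precise in (ii).
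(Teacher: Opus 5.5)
Your proposal is correct and follows essentially the same route as the paper: in (i), the telescoping identity \(\mu_N(Pf-f)=(P^Nf(x)-f(x))/N\) combined with the Feller property; in (ii), passing to the limit in \(\pi_nf=\pi_nP_nf\) via tightness plus uniform convergence on compacts; and finally the identity \(\pi(B)=\int P(x,B)\,\pi(dx)=0\). Reading ``locally uniformly'' as uniform on compacts is harmless, since it is implied by the neighbourhood reading. It is also exactly what the paper uses on \([0,\infty)\), where the bound holds uniformly on \([0,R]\).
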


\begin{proof}
For (i), \(\mu_N(Pf-f)=(P^Nf(x)-f(x))/N\to0\) for bounded continuous
\(f\), and the Feller property permits passage to a weak limit.  For (ii),
tightness and local uniform convergence give
\(\pi_n(P_nf-Pf)\to0\), while \(\pi_nPf\to\pi Pf\). Pass to the limit in
\(\pi_nf=\pi_nP_nf\).  Bounded Lipschitz functions determine probability
laws.  Finally,
\(\pi(B)=\int P(x,B)\,\pi(dx)=0\).
\end{proof}

\subsection{Proof of Theorem~\ref{thm:affine-classification}}
\label{app:proof-affine-classification}

\begin{lemma}
\label{lem:affine-cancellation}
Under Assumption~\ref{ass:affine-seed}, for every \(\chi>0\), as \(m\to\infty\),
\[
 \frac{\E\!\left[s_c(\omega+\alpha W_m+\beta m)^{\chi}\right]}{m^\chi}
 \longrightarrow\bigl[\max(\phi,0)\bigr]^{\chi}.
\]
\end{lemma}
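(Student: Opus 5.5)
Write \(\omega+\alpha W_m+\beta m=\phi m+\omega+\alpha U_m\) and set \(Y_m:=s_c(\phi m+\omega+\alpha U_m)/m\). The plan is to show that \(Y_m\to\max(\phi,0)\) in probability and that \(\sup_{m\geq1}\E Y_m^{\chi'}<\infty\) for some \(\chi'>\chi\). Uniform integrability then gives \(\E Y_m^\chi\to[\max(\phi,0)]^\chi\), which is the claim.

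\emph{Convergence in probability.} By \eqref{eq:softplus-positive-part-bound} and the fact that \(x\mapsto\max(x,0)\) is one-Lipschitz,
\[
\bigl|s_c(\phi m+\omega+\alpha U_m)-\max(\phi m,0)\bigr|\leq |\omega|+|\alpha||U_m|+c\log2 .
\]
Dividing by \(m\) gives \(|Y_m-\max(\phi,0)|\leq(|\omega|+c\log2)/m+|\alpha||U_m|/m\). The first part of Lemma~\ref{lem:affine-concentration} shows \(\Pr(|U_m|>\epsilon m)\to0\) for every \(\epsilon>0\), so \(U_m/m\to0\) in probability. Hence \(Y_m\to\max(\phi,0)\) in probability. (An even simpler route is \(\E U_m^2=\sigma^2m\), so \(\E|U_m/m|^2=\sigma^2/m\to0\).)

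\emph{Uniform moment bound.} The same inequality gives \(0\leq s_c(\cdot)\leq|\phi|m+|\omega|+c\log2+|\alpha||U_m|\). Take \(\chi'=\max(2\chi,2)\). By Minkowski and the bound \(\E|U_m|^{\chi'}\leq C(1+m^{\chi'/2})\) from Lemma~\ref{lem:affine-concentration},
\[
\E s_c(\cdot)^{\chi'}\leq C(1+m^{\chi'}).
\]
Therefore \(\sup_{m\geq1}\E Y_m^{\chi'}<\infty\), so the family \(Y_m^\chi\) is uniformly integrable.

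\emph{Conclusion.} Since \(y\mapsto y^\chi\) is continuous on \([0,\infty)\), the continuous mapping theorem gives \(Y_m^\chi\to[\max(\phi,0)]^\chi\) in probability. Combined with uniform integrability, this yields convergence of the expectations. When \(\phi\leq0\) the limit is \(0\), which is consistent with \(0^\chi=0\) for \(\chi>0\).

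The only delicate point is obtaining the moment bound in the right form, and the polynomial bound on \(\E|U_m|^{\chi'}\) from Lemma~\ref{lem:affine-concentration} handles it directly. No step is expected to pose a real obstacle.
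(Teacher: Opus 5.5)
Your proof is correct and uses the same key inequality as the paper, \(|s_c(\omega+\alpha W_m+\beta m)/m-\max(\phi,0)|\leq(|\omega|+|\alpha||U_m|+c\log2)/m\), together with the moment bounds of Lemma~\ref{lem:affine-concentration}. The paper is slightly more direct: it shows that this right-hand side tends to zero in \(L^\chi\) (using the exact second moment when \(0<\chi<2\)), so \(Y_m\to\max(\phi,0)\) in \(L^\chi\) and the moments converge, with no separate uniform-integrability step based on a moment of higher order \(\chi'\).
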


\begin{proof}
The elementary softplus bound
\[
 0\leq s_c(x)-\max(x,0)\leq c\log2,
 \qquad x\in\mathbb R,
\]
and the one-Lipschitz property of \(x\mapsto\max(x,0)\) give
\[
 \left|
 \frac{s_c(\omega+\alpha W_m+\beta m)}{m}-\max(\phi,0)
 \right|
 \leq\frac{|\omega|+|\alpha||U_m|+c\log2}{m}.
\]
By Lemma~\ref{lem:affine-concentration}, together with the exact second
moment when \(0<\chi<2\), the right-hand side converges to zero in
\(L^\chi\). Thus \(s_c(\omega+\alpha W_m+\beta m)/m\) converges to
\(\max(\phi,0)\) in \(L^\chi\).
Convergence of its moments of order \(\chi\) proves the claim.
\end{proof}

\begin{proof}[{Proof of Theorem~\ref{thm:affine-classification}}]
{Use the kernel \(P\) of Appendix~\ref{app:auxiliary-lemmas}, and set
\(V_\chi(m)=1+m^\chi\) for \(\chi>0\). Fix
\(\lambda_\chi\in(\max(\phi,0)^\chi,1)\).
Lemma~\ref{lem:affine-cancellation} gives
\(PV_\chi\leq\lambda_\chi V_\chi\) at all sufficiently large states. On the
remaining compact interval, the raw-moment bound of
Lemma~\ref{lem:affine-concentration} and the elementary softplus bound in the
proof of Lemma~\ref{lem:affine-cancellation} make \(PV_\chi\) bounded. Hence,
for some \(B_\chi<\infty\),
\[
 PV_\chi\leq\lambda_\chi V_\chi+B_\chi,\qquad \chi>0.
\]
Write \(\lambda=\lambda_1\) and \(B=B_1\).
We add the state \(0\), with \(G_0=\delta_0\), only to obtain the closed
state space \([0,\infty)\). Pointwise continuity of the conditional Laplace
transform in the state and the continuity theorem for Laplace transforms make
\(P\) Feller on \([0,\infty)\). The drift with \(\chi=1\) makes the Ces\`aro
laws tight, so Lemma~\ref{lem:feller-invariant-limits}(i) gives an invariant
probability \(\pi\). The softplus update is strictly positive, so
\(P(m,\{0\})=0\), and the same lemma gives \(\pi(\{0\})=0\). Started from
\(\pi(dm)G_m(dx)\), the joint chain \(((M_t,X_t))\) is strictly stationary.
By Kolmogorov's extension theorem, it has a two-sided version satisfying
\eqref{mrecursion} and~\eqref{eq:affine-softplus-model}, so a stationary
solution exists.
Under any stationary solution, the law of \(M_0\) is invariant for \(P\)
(Section~\ref{sec:affine-extension}).
For every invariant law \(\pi\) and every \(\chi>0\),
\citet[Lemma~7]{douc2013ergodicity} and the drift give
\(\pi(V_\chi)\leq B_\chi/(1-\lambda_\chi)<\infty\), without assuming
integrability in advance. Lemma~\ref{lem:affine-state-observation-moments}
transfers these moment bounds to the observations.}
\end{proof}

\subsection{Proof of Theorem~\ref{thm:affine-mixing}}
\label{app:proof-affine-mixing}

{A coupling kernel \(Q\) for \(P\) is a Markov kernel on pairs of
states whose marginals from \((m,m')\) are \(P(m,\cdot)\) and
\(P(m',\cdot)\). For a distance \(d\) and \(k\geq1\), write
\[
 (Q^k d)(m,m')=\E[d(M_k,M'_k)\mid M_0=m,M'_0=m'],
\]
where the pair \((M_t,M'_t)\) evolves with \(Q\). Inequalities between such
functions are understood pointwise. For probability measures \(\mu\) and
\(\nu\), \(\mathcal W_d(\mu,\nu)\) denotes the infimum of the expected
distance \(d\) over couplings of \(\mu\) and \(\nu\).}

\begin{lemma}
\label{lem:finite-activity-wasserstein-coupling}
Suppose Assumption~\ref{ass:affine-seed} holds with
case~(a) in part~(ii), and Assumption~\ref{ass:invertibility} holds.
On \(E=[0,\infty)\),
with \(G_0=\delta_0\), the state kernel \(P\) has at most one invariant
probability.  There are a bounded complete metric \(d\), generating the
usual topology, {a coupling kernel \(Q\) for \(P\), and a
constant \(a_0<1\) such that \(Qd\leq d\) everywhere and \(Qd\leq a_0d\)
wherever \(d<1\).}
For every
\(R<\infty\), there are \(N_R\geq1\) and \(\delta_R>0\) such that
\begin{equation}
 (Q^{N_R}d)(m,m')\leq(1-\delta_R)d(m,m'),
 \qquad (m,m')\in[0,R]^2.
 \label{eq:finite-activity-coupling-set}
\end{equation}
If \(\pi\) is invariant, the stationary observations satisfy
\begin{equation}
 \beta_X^{\rm mix}(n)
 \leq C\int\mathcal W_d(P^{n-1}(m,\cdot),\pi)\,\pi(dm),
 \qquad n\geq1.
 \label{eq:finite-activity-wasserstein-beta}
\end{equation}
\end{lemma}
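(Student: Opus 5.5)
The plan is to build everything from a synchronous coupling of the observations. It exploits case~(a), \(p_0:=G_1(\{0\})>0\), through the identity \(G_\delta(\{0\})=p_0^{\delta}\), which follows from \(G_\delta(\{0\})=\lim_{u\to\infty}e^{-\delta a(u)}\).

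\emph{The one-step coupling and the metric.} Given \((m,m')\) with \(m\leq m'\), realize \(X\sim G_m\) and \(X'=X+Z\), where \(Z\sim G_{m'-m}\) is independent of \(X\). This is legitimate by the convolution property. Set \(M_1=s_c(\omega+\alpha X+\beta m)\) and \(M_1'=s_c(\omega+\alpha X'+\beta m')\). This defines \(Q\), and \(Q\) is Feller on \([0,\infty)^2\).

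On the event \(\{Z=0\}\), which has probability \(p_0^{|m-m'|}\), the one-Lipschitz property of \(s_c\) gives \(|M_1-M_1'|\leq|\beta||m-m'|\). Put \(L=-\log p_0\) and \(d(m,m')=\min(1,|m-m'|/r)\). The radius \(r>0\) is chosen so small that \(a_0:=|\beta|+rL<1\), which is possible by Assumption~\ref{ass:invertibility}. This \(d\) is a bounded complete metric inducing the usual topology. Trivially \(Qd\leq1\). If \(d(m,m')=\delta/r<1\), then
\[
 Qd\leq\frac{|\beta|\delta}{r}+\bigl(1-p_0^{\delta}\bigr)\leq\frac{|\beta|\delta}{r}+L\delta=a_0\,d(m,m').
\]
Hence \(Qd\leq d\) everywhere, and \(Qd\leq a_0d\) on \(\{d<1\}\).

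\emph{Contraction on \([0,R]^2\).} From \((m,m')\in[0,R]^2\), both coupled observations vanish with probability \(p_0^{\max(m,m')}\geq p_0^{R}\). After a zero observation, the states become \(s_c(\omega+\beta m)\) and \(s_c(\omega+\beta m')\). These lie in a fixed interval \([0,R']\) with \(R'\) depending only on \(R\) and the parameters, and their distance shrinks by the factor \(|\beta|\).

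Consider \(N_R\) consecutive double zeros. They occur with probability \(q\geq p_0^{N_R\max(R,R')}>0\). On this event the distance is at most \(|\beta|^{N_R}\max(R,R')\leq r/2\), so \(d\leq1/2\). Therefore, if \(d(m,m')=1\), we get \(Q^{N_R}d\leq1-q/2\).

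If instead \(d(m,m')<1\), monotonicity of \(Q\) and \(Q^{N_R-1}d\leq d\) give
\[
 Q^{N_R}d=Q\bigl(Q^{N_R-1}d\bigr)\leq Qd\leq a_0d.
\]
Taking \(\delta_R=\min(1-a_0,q/2)\) yields \eqref{eq:finite-activity-coupling-set}.

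\emph{Uniqueness.} Let \(\pi,\pi'\) be invariant. By the proof of Theorem~\ref{thm:affine-classification} (which does not use \(\phi<1\) for this purpose, only the existence of invariant laws), or more generally by applying the abstract framework, I would invoke a weak Harris / asymptotic-coupling theorem, for example the Hairer--Mattingly--Scheutzow criterion or Douc et al.'s Wasserstein-coupling theorem. The ingredients are the following. The bounded metric is \(d\)-contracting in the sense \(Qd\leq d\). There is strict contraction \eqref{eq:finite-activity-coupling-set} on every \([0,R]^2\). The product chain under \(Q\) visits \([0,R]^2\) infinitely often for some \(R\). Together these give \(\mathcal W_d(\pi P^n,\pi'P^n)\to0\), hence \(\pi=\pi'\).

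The recurrence of the product chain is where I expect the main obstacle. Away from the drift regime \(\phi<1\) one cannot use a Lyapunov function directly. My first attempt would be to argue tightness of the coupled pair started from \(\pi\otimes\pi'\): each marginal stays stationary, so \(\Pr(M_n\vee M_n'>R)\leq\pi(R,\infty)+\pi'(R,\infty)\) uniformly in \(n\). Then I would combine this with the supermartingale property of \(d(M_n,M_n')\), which has an a.s.\ limit. The limit must be \(0\), because each visit to \([0,R]^2\) gives a uniform chance of a contraction by \(1-\delta_R\).

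\emph{The mixing bound.} Under invertibility and stationarity, \(M_1\) is an \(\mathcal F_0^X\)-measurable function of the past observations, by \eqref{eq:filter-contraction}. Moreover the future \((X_i)_{i\geq n}\) given \(\mathcal F_0^X\) has the law generated from \(M_n\sim P^{n-1}(M_1,\cdot)\).

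Couple \(M_n\) with a stationary \(M_n'\sim\pi\) through a \(\mathcal W_d\)-optimal coupling. Then run \(Q\) forward, reusing the observation coupling at each step. If \(|M_n-M_n'|=\delta<r\), all future observations coincide with probability at least
\[
 1-\sum_{k\geq0}L|\beta|^k\delta\geq1-C\,d(M_n,M_n').
\]
If \(d=1\), the trivial bound \(1\) applies. Thus the total variation distance between the two future laws is at most \(C\,\E d(M_n,M_n')=C\,\mathcal W_d(P^{n-1}(M_1,\cdot),\pi)\). Averaging over \(M_1\sim\pi\) gives \eqref{eq:finite-activity-wasserstein-beta}.
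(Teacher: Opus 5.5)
Your proposal is correct. The coupling, the metric $d=1\wedge(D/r)$, the bound $Qd\le a_0d$ on $\{d<1\}$, and the mixing bound all match the paper's proof. The genuine difference is the uniqueness step.

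The paper shows that, from every pair of states at distance $D$, the coupled observations agree forever with probability at least $p_0^{D/(1-|\beta|)}>0$. This gives a positive-probability asymptotic coupling of any two ergodic invariant laws, so Theorem~1.1 of Hairer, Mattingly and Scheutzow together with ergodic decomposition yields uniqueness. The same bound also gives $Q^{N_R}d\le1-p_0^{R/(1-|\beta|)}/2$ on $[0,R]^2$ without tracking where the states go.

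You instead run $Q$ from $\pi\otimes\pi'$ and obtain uniform tightness of the pair from the stationary marginals. The recurrence obstacle you anticipate does not arise, and no almost sure argument is needed. Since $Q^{N_R}d\le d$ everywhere,
\[
 \delta_R\,\E\bigl[d(M_n,M_n')\mathbf 1_{\{(M_n,M_n')\in[0,R]^2\}}\bigr]
 \leq\E d(M_n,M_n')-\E d(M_{n+N_R},M_{n+N_R}')\longrightarrow0,
\]
because $\E d(M_n,M_n')$ is nonincreasing. Hence $\mathcal W_d(\pi,\pi')\le\lim_n\E d(M_n,M_n')\le\pi((R,\infty))+\pi'((R,\infty))$ for every $R$, which forces $\pi=\pi'$.

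Your route is elementary and self-contained, needing neither ergodicity nor HMS. Like the paper's, it uses no drift, which matters because the lemma is also applied at $\phi=1$. The paper's route is shorter once HMS is available, and it reuses the permanent-agreement bound it needs anyway for \eqref{eq:finite-activity-wasserstein-beta}.

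Two details need tightening.

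First, in the double-zero argument, $R'$ must bound every iterate of $f_0(m)=s_c(\omega+\beta m)$ started in $[0,R]$, not just the first. When $\beta\geq0$ and $R$ lies below the fixed point of $f_0$, these iterates increase, so a one-step bound does not control later steps. Taking $R'=R+s_c(\omega)/(1-|\beta|)$ works, since $f_0(m)\le s_c(\omega)+|\beta|m$. Alternatively, use the permanent-agreement bound, which needs no localization.

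Second, the $\mathcal F_0^X$-measurability of $M_1$ does not follow from the forward bound \eqref{eq:filter-contraction} by itself. It follows from a time-shifted use of that bound together with stationarity. The iterate $M_{1,j}$ started from zero at time $1-j$ is a function of $X_{1-j},\ldots,X_0$ and satisfies $|M_1-M_{1,j}|\leq|\beta|^jM_{1-j}$, which tends to zero in probability. The paper sidesteps this point by conditioning on $\mathcal F_0^X$ and using convexity of total variation.
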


\begin{proof}
{Put \(p_0=G_1(\{0\})\). Assumption~\ref{ass:affine-seed}(ii)(a)
gives \(p_0>0\), and \(\sigma^2>0\) gives \(p_0<1\).
Since
\(e^{-a(u)}=\E e^{-uW_1}\to p_0\) as \(u\to\infty\), dominated convergence
gives, for \(m>0\),
\[
 G_m(\{0\})=\lim_{u\to\infty}\E e^{-uW_m}
 =\lim_{u\to\infty}e^{-ma(u)}=p_0^m.
\]
The convention \(G_0=\delta_0\) gives the same formula at \(m=0\).}
For current states \(m\leq m'\), draw
\(W_m\sim G_m\) and \(V_{m'-m}\sim G_{m'-m}\) independently, and set
\(W_{m'}=W_m+V_{m'-m}\).  The convolution identity gives
\(W_{m'}\sim G_{m'}\). {We call this pairing of \(W_m\) and
\(W_{m'}\) the observation coupling.}
Define the next states by
\[
 M^+=s_c(\omega+\alpha W_m+\beta m),
 \qquad
 M'^+=s_c(\omega+\alpha W_{m'}+\beta m').
\]
{Let \(Q((m,m'),\cdot)\) be the law of \((M^+,M'^+)\), whose
marginals are \(P(m,\cdot)\) and \(P(m',\cdot)\). For \(m>m'\), reverse the
coordinates.}
The observation coupling gives
\begin{equation}
 \lVert G_m-G_{m'}\rVert_{\rm TV}
 \leq\Pr(W_m\ne W_{m'})
 ={1-p_0^{|m-m'|}}.
 \label{eq:affine-tv-similarity}
\end{equation}
At each step, apply this construction to the current pair of states.
All probabilities for the paired process below refer to this coupling.
When observations
agree, the next state distance is at most \(|\beta|\) times the
current distance.  Thus, from initial distance \(D=|m-m'|\),
\begin{equation}
 \Pr(X_j=X'_j\text{ for all }j\geq0\mid M_0=m,M'_0=m')
 \geq{p_0^{D/(1-|\beta|)}}>0.
 \label{eq:finite-activity-permanent-agreement}
\end{equation}
On this event the state distance tends to zero.  Starting from any two
ergodic invariant laws and integrating this strictly positive bound gives
a positive-probability asymptotic coupling.  By
\mbox{\citet[Theorem~1.1]{hairerMattinglyScheutzow2011}} the laws coincide.
Ergodic decomposition proves uniqueness without any moment assumption.

{Fix \(\varepsilon>0\) with
\(a_0=|\beta|+\varepsilon\log(1/p_0)<1\), and set
\(d(m,m')=1\wedge(D/\varepsilon)\).
For \(D<\varepsilon\), the coupling and
\(1-p_0^D\leq D\log(1/p_0)=\varepsilon\log(1/p_0)\,d(m,m')\) give
\[
 (Qd)(m,m')\leq|\beta|\,d(m,m')+D\log(1/p_0)
 =a_0d(m,m'),
\]
whereas for \(D\geq\varepsilon\), \(Qd\leq1=d\).
On \([0,R]^2\), choose \(N_R\) with
\(|\beta|^{N_R}R/\varepsilon\leq1/2\).}
{For \((m,m')\in[0,R]^2\), the event in
\eqref{eq:finite-activity-permanent-agreement} has probability at least
\(p_R=p_0^{R/(1-|\beta|)}\), and on this event
\(d(M_{N_R},M'_{N_R})\leq1/2\). Hence \(Q^{N_R}d\leq1-p_R/2\) on
\([0,R]^2\).}
Combining this with \(Q^{N_R}d\leq Qd\leq a_0d\) when \(d<1\)
proves \eqref{eq:finite-activity-coupling-set}, with
\(\delta_R=\min(1-a_0,p_R/2)\).

{Let \(K(m,\cdot)\) be the law of the observation future
\((X_0,X_1,\ldots)\) from state \(m\), and
\(\mu K=\int K(m,\cdot)\,\mu(dm)\).}
Equation~\eqref{eq:finite-activity-permanent-agreement} implies
\[
 \|K(m,\cdot)-K(m',\cdot)\|_{\rm TV}
 \leq{1-p_0^{D/(1-|\beta|)}}\leq C d(m,m').
\]
Integrating over a coupling gives
\(\|\mu K-\nu K\|_{\rm TV}\leq C\mathcal W_d(\mu,\nu)\).
Given the observation past through time zero, the future starting at
\(n\) has conditional law
\(\E[P^{n-1}(M_1,\cdot)K\mid\mathcal F_0^X]\).
Convexity of total variation and stationarity of \(M_1\) now give
\eqref{eq:finite-activity-wasserstein-beta}.
\end{proof}

\begin{lemma}
\label{lem:infinite-activity-density-overlap}
{Suppose Assumption~\ref{ass:affine-seed} holds with case~(b) in part~(ii),
Assumption~\ref{ass:invertibility} holds, and \(\alpha>0\). Let
\(E_\beta=(0,\infty)\) if \(\beta<0\), and \(E_\beta=(m_\beta,\infty)\) if
\(0\leq\beta<1\), where \(m_\beta\) is the fixed point of
\(m\mapsto s_c(\omega+\beta m)\). From every initial state, the chain
enters \(E_\beta\) almost surely and remains there. On \(E_\beta\), the chain
is irreducible and strongly aperiodic, and every bounded set is petite.
Every invariant law is supported on \(E_\beta\).}
\end{lemma}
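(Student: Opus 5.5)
The plan is to work directly with the one-step transition law. Write \(f(m)=s_c(\omega+\beta m)\) and
\[
 M_1=s_c(\omega+\alpha W_m+\beta m),
\]
with \(W_m\sim G_m\) and \(\alpha>0\). In case~(b), \(W_m>0\) almost surely, and \(s_c\) is strictly increasing. Hence \(M_1>f(m)\) almost surely.

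\emph{Invariance and entry into \(E_\beta\).} If \(\beta<0\) there is nothing to prove. If \(0\le\beta<1\), then \(f\) is increasing with slope at most \(\beta<1\), so it has a unique fixed point \(m_\beta\). For \(m>m_\beta\) we get \(M_1>f(m)\ge f(m_\beta)=m_\beta\), so \(E_\beta\) is absorbing.

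For entry, note first that \(M_1\ge s_c(\omega)=:m_{\min}>0\) after one step, whatever the starting state. On the compact set \([m_{\min},m_\beta]\), the probability
\[
 \Pr\bigl(\alpha W_m>s_c^{-1}(m_\beta+1)-\omega-\beta m\bigr)
\]
is bounded below by some \(p>0\). This follows from joint continuity and strict positivity of \(g\): integrate \(g(m,x)\) over a fixed bounded \(x\)-interval lying above every threshold, which is possible because \(m\) stays bounded away from \(0\) on this set. Thus, at every step before entry, the chain enters \(E_\beta\) with conditional probability at least \(p\). Since \(E_\beta\) is absorbing, the probability of not having entered by time \(t\) is at most \((1-p)^{t-1}\), so entry is almost sure.

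\emph{Irreducibility, strong aperiodicity and petiteness.} Change variables \(y=s_c(\omega+\alpha x+\beta m)\). The law \(P(m,\cdot)\) then has density
\[
 g\bigl(m,x(y)\bigr)\,\frac{dx}{dy},
\]
which is jointly continuous and strictly positive on \(\{(m,y):y>f(m)\}\).

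Fix \(0<a<R\) and let \(y_0\) exceed \(\sup_{m\in[a,R]}f(m)\); this is finite because \(f\) is bounded on compacts, and when \(\beta<0\) one can use \(\sup_{m>0}f(m)=s_c(\omega)\). Let \(\nu\) be the uniform law on \([y_0,y_0+1]\). Continuity and positivity on the compact set \([a,R]\times[y_0,y_0+1]\) give the minorization
\[
 P(m,\cdot)\ge\varepsilon\,\nu,\qquad m\in[a,R].
\]
For any bounded set \((0,R]\cap E_\beta\), one step lands in a fixed compact \([a,R']\) with probability bounded below. Here \(a=\inf_{m\le R}f(m)>0\), and the upper bound \(R'\) comes from Markov's inequality together with the raw-moment bound of Lemma~\ref{lem:affine-concentration}. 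This gives a two-step minorization, so every bounded set is petite.

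Since \(\nu\) charges an open interval and every state reaches \([a,R']\) with positive probability in finitely many steps (by the entry argument above), the chain is \(\nu\)-irreducible on \(E_\beta\). Choosing \(R\) large enough that \([y_0,y_0+1]\subset[a,R]\), the set \(C=[a,R]\) is a small set with \(\nu(C)>0\) for the one-step minorization, which gives strong aperiodicity.

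\emph{Support of invariant laws.} If \(\pi\) is invariant, then \(\pi(E_\beta^c)=\pi P^t(E_\beta^c)\). By absorption and the geometric entry bound, \(P^t(m,E_\beta^c)\to0\) for each \(m\). Dominated convergence then gives \(\pi(E_\beta^c)=0\).

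The main obstacle is the degeneration of \(G_m\) as \(m\downarrow0\). The density \(g(m,\cdot)\) is not uniformly bounded below near the origin, so a one-step minorization fails there. I handle this by first pushing the chain into \([m_{\min},\infty)\) in one deterministic step, and then doing all uniform estimates on compacts bounded away from zero.
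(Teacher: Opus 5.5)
Most of your outline matches the paper's proof. You use the same change-of-variables transition density, positive on $\{y>f(m)\}$. Petiteness comes from a two-step minorization from $(0,R]$ via Markov's inequality, and strong aperiodicity from a one-step minorization on a compact interval above the graph of $f$. The support of invariant laws follows from the geometric entry bound and dominated convergence. Your entry step uses $M_1\ge s_c(\omega)$ when $\beta\ge0$, where the paper uses a two-step minorization with $R=m_\beta$; this variant is harmless.

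The gap is in irreducibility. Once $R$ is fixed, $\nu$ is fixed, and you need \emph{every} state of $E_\beta$ to reach $[a,R']$, or directly $[y_0,y_0+1]$, with positive probability. You justify this ``by the entry argument above''. That argument only moves the chain upward from $[m_{\min},m_\beta]$ into $E_\beta$, and it says nothing about states $m>R$.

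For $0<\beta<1$ and large $m$, $P(m,\cdot)$ is carried by $(f(m),\infty)$ with $f(m)\ge\omega+\beta m\to\infty$. This set can lie entirely above $y_0+1$, so you must show that the chain can come down. You cannot borrow a drift or recurrence bound here, because the lemma places no restriction on $\phi$. It is also invoked at $\phi=1$ in the proof of Theorem~\ref{thm:affine-boundary}.

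The repair uses only what you already have. Strict positivity of $g(m,\cdot)$ gives $\Pr(W_m\le\epsilon)>0$ for every $m>0$. On that event, monotonicity and the one-Lipschitz property give $M_1\le s_c(\omega)+\alpha\epsilon+\max(\beta,0)m$. Iterating with the Markov property gives $\Pr(M_n\le R\mid M_0=m)>0$ for some $n$, whenever $R>(s_c(\omega)+\alpha\epsilon)/(1-\max(\beta,0))$. Your two-step minorization then gives $P^{n+2}(m,\cdot)\ge\delta\,\Pr(M_n\le R\mid M_0=m)\,\nu>0$.

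The paper does the equivalent thing by showing inductively that $P^n(m,\cdot)$ has a density positive on $(f^{\circ n}(m),\infty)$, with $f^{\circ n}(m)\downarrow m_\beta$. For $\beta\le0$ the step is immediate as it stands. There $f\le s_c(\omega)$, so taking $y_0>s_c(\omega)$ lets every state reach $[y_0,y_0+1]$ in one step.
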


\begin{proof}
Using the conditional density \(g(m,x)\) from Assumption~\ref{ass:affine-seed}, write
\[
 h_c(y)=s_c^{-1}(y)=c\log(e^{y/c}-1),
 \qquad
 f_0(m)=s_c(\omega+\beta m).
\]
The transition kernel of the Markov chain \((M_t)\) has density
\begin{equation}
 p(m,y)=\frac{h_c'(y)}{\alpha}
 g\!\left(m,\frac{h_c(y)-\omega-\beta m}{\alpha}\right)
 \mathbf 1_{\{y>f_0(m)\}}.
 \label{eq:density-state-kernel}
\end{equation}
It is jointly continuous and strictly positive on compact subsets of
\(\{(m,y):m>0,\ y>f_0(m)\}\).

{We first establish a two-step minorization.}
Fix \(R>0\).
For \(0<m\leq R\), Markov's inequality gives
\[
 \Pr(X_0\leq2R\mid M_0=m)\geq\frac12.
\]
On this event, \(M_1\) lies in the fixed compact interval
\[
 K_R=[u_R,v_R],\qquad
 u_R=s_c(\omega+\min(\beta,0)R)>0,\quad
 v_R=s_c(\omega+(2\alpha+\max(\beta,0))R)<\infty.
\]
Choose a compact interval \(J_R=[a_R,b_R]\) with
\(a_R>\sup_{z\in K_R}f_0(z)\).  By
\eqref{eq:density-state-kernel},
\[
 d_R=\inf_{z\in K_R,\,y\in J_R}p(z,y)>0.
\]
If \(\nu_R\) is normalized Lebesgue measure on \(J_R\), the Markov
property therefore yields, for every Borel set \(A\subseteq(0,\infty)\),
\begin{equation}
 \Pr(M_2\in A\mid M_0=m)\geq\frac{d_R|J_R|}{2}\nu_R(A),
 \qquad 0<m\leq R.
 \label{eq:density-two-step-overlap}
\end{equation}

We next establish irreducibility on the absorbing set
\(E_\beta\) and show that every initial state enters it almost surely. If
\(\beta<0\), then
\(f_0(z)\downarrow0\) as \(z\to\infty\).  Positivity of
\eqref{eq:density-state-kernel} shows that any state can first reach a
neighborhood of a sufficiently large \(z\) and can then reach every
positive-Lebesgue-measure subset of \((0,\infty)\).

If \(0\leq\beta<1\), \(f_0\) is a contraction and has the unique fixed
point \(m_\beta\).  Write \(f_0^{\circ n}\) for its \(n\)-fold
composition.  For \(m>m_\beta\),
 \(m_\beta\leq f_0(m)<m\), and induction in
\eqref{eq:density-state-kernel} shows that \(P^n(m,\cdot)\) has a density
positive on \((f_0^{\circ n}(m),\infty)\).  Since
\(f_0^{\circ n}(m)\to m_\beta\), every positive-Lebesgue-measure subset of
\((m_\beta,\infty)\) can eventually be reached.  For this case, \(E_\beta=(m_\beta,\infty)\) is absorbing because the transition
support satisfies \(y>f_0(m)\geq m_\beta\) for \(m\in E_\beta\).  In the preceding two-step
minorization, take \(R=m_\beta\) and choose \(J_R\subset E_\beta\).  Then, for
some \(\epsilon>0\),
\[
 \inf_{0<m\leq m_\beta}P^2(m,E_\beta)\geq\epsilon.
\]
Consequently, if \(T_{E_\beta}=\inf\{t\geq0:M_t\in E_\beta\}\), the Markov property
gives
\[
 \Pr(T_{E_\beta}>2n\mid M_0=m)\leq(1-\epsilon)^n,
 \qquad 0<m\leq m_\beta.
\]
{From the state 0, which the proof of
Theorem~\ref{thm:affine-classification} adds to close the state space, the
chain moves deterministically to \(s_c(\omega)>0\).}
Thus every initial state enters \(E_\beta\) almost surely
and remains there, and
\(P^n(m,E_\beta)\to1\) for every initial state.  Hence, for any invariant law
\(\mu\), invariance and dominated convergence yield
\[
 \mu(E_\beta)=\int P^n(m,E_\beta)\,\mu(dm)\longrightarrow1.
\]
Thus every invariant law is supported on \(E_\beta\). For \(\beta<0\), \(E_\beta=(0,\infty)\) is the whole state space.

{In either case, \(\beta<0\) or \(0\leq\beta<1\), choose a
compact interval \(D=[a,b]\subset E_\beta\) such that
\(a>\sup_{m\in D}f_0(m)\). The density in
\eqref{eq:density-state-kernel} is bounded below on \(D^2\), so
\(P(m,\cdot)\geq\epsilon_D\nu_D\) for \(m\in D\), where \(\epsilon_D>0\) and
\(\nu_D\) is normalized Lebesgue measure on \(D\).
The reachability results give irreducibility on \(E_\beta\),
\eqref{eq:density-two-step-overlap} makes every bounded set petite, and the
one-step minorization on \(D\) gives strong aperiodicity.}
\end{proof}

For later use, let \(\beta_M^{\rm mix}\) and
\(\beta_{(M,X)}^{\rm mix}\) denote the \(\beta\)-mixing
(absolute-regularity) coefficients of
the stationary processes \((M_t)\) and \(((M_t,X_t))\),
respectively.  {The state \(M_1=s_c(\omega+\alpha X_0+\beta M_0)\)
is \(\mathcal F_0\)-measurable, where \(\mathcal F_0=\sigma(M_s,X_s:s\leq0)\).
Given \(\mathcal F_0\), the future path from time \(j\) is obtained by drawing
\(M_j\) from \(P^{j-1}(M_1,\cdot)\) and then running the model forward from
\(M_j\). This second step uses the same kernel for every past, and total
variation contracts under it.}
Passing to the observation
process replaces both the past and future sigma-fields by sub-sigma-fields,
which cannot increase the \(\beta\)-mixing coefficient. Thus the first
inequality below follows from restriction to sub-sigma-fields, and the
second from total-variation contraction:
\begin{equation}
 \beta_X^{\rm mix}(j)
 \leq\beta_{(M,X)}^{\rm mix}(j)
 \leq\beta_M^{\rm mix}(j-1),
 \qquad j\geq1,
 \qquad
 \beta_M^{\rm mix}(j)
 =\int\|P^j(m,\cdot)-\pi\|_{\rm TV}\,\pi(dm).
 \label{eq:state-to-observation-beta-transfer}
\end{equation}

\begin{proof}[{Proof of Theorem~\ref{thm:affine-mixing}}]
\proofstep{Uniqueness and geometric \(\beta\)-mixing}
We first treat \(\alpha=0\). The map
\(f_0(m)=s_c(\omega+\beta m)\) is a contraction on \([0,\infty)\)
with Lipschitz constant \(|\beta|<1\), so it has a unique fixed point
\(m_*>0\). The finite first moment from
Theorem~\ref{thm:affine-classification} and stationarity give
\[
 \E|M_0-m_*|=\E|M_n-m_*|
 \leq |\beta|^n\E|M_0-m_*|\longrightarrow0.
\]
Thus \(M_t=m_*\) almost surely. Since the conditional law of \(X_t\)
given the past is the fixed law \(G_{m_*}\), the observations are
independent and identically distributed. The stationary solution is
unique and \(\beta_X^{\mathrm{mix}}(j)=0\) for \(j\geq1\).

For the remaining cases, \(\alpha\neq0\). The geometric
drift inequality from the proof of
Theorem~\ref{thm:affine-classification} holds throughout the
{region \(\phi<1\)}.

\medskip\noindent\emph{Case (a): positive mass at zero.}
Use the coupling kernel \(Q\) and metric \(d\) from
Lemma~\ref{lem:finite-activity-wasserstein-coupling}.
The same lemma gives uniqueness of the invariant law supplied by
Theorem~\ref{thm:affine-classification}.
{For \(V=V_1\), which is continuous, the geometric drift gives
\(P^nV\leq\lambda^nV+K_V\), where \(K_V=B/(1-\lambda)\). The metric
\(d\leq1\) of Lemma~\ref{lem:finite-activity-wasserstein-coupling} is
distance-like in the sense of
\citet[Definition~4.3]{hairerMattinglyScheutzow2011}. Since \(Qd\leq d\),
also \(Q^Nd\leq Qd\leq a_0d\) wherever \(d<1\),
so \(d\) is contracting for every \(P^N\) in the sense of their
Definition~4.6. Bound
\eqref{eq:finite-activity-coupling-set} makes \(\{V\leq4K_V\}\)
\(d\)-small for some \(P^N\) in the sense of their Definition~4.4.
Increasing \(N\) preserves both properties and ensures
\(\lambda^N\leq1/8\). The proof of their Theorem~4.8 uses only these drift
and coupling estimates for one transition operator and its iterates.
Applied to \(P^N\), it gives an integer \(L\geq1\) and \(q_H\in(0,1)\) such
that, for all probability measures \(\mu,\nu\),
\begin{equation}
 \mathcal W_{\widetilde d}(\mu P^L,\nu P^L)
 \leq q_H\mathcal W_{\widetilde d}(\mu,\nu),
 \qquad
 \widetilde d(x,y)=\sqrt{d(x,y)(1+V(x)+V(y))},
 \label{eq:weak-harris-block-contraction}
\end{equation}
where \(\mathcal W_{\widetilde d}\) is the infimum of the expected cost
\(\widetilde d\) over couplings. Existence and \(\pi(V)<\infty\) have
already been proved.}
For \(n=kL+r\), \(0\leq r<L\), the inequality \(Qd\leq d\),
invariance of \(\pi\), and \eqref{eq:weak-harris-block-contraction} give
\[
 \begin{aligned}
 \mathcal W_d(P^n(m,\cdot),\pi)
 &\leq\mathcal W_d(P^{kL}(m,\cdot),\pi)\\
 &\leq q_H^k\mathcal W_{\widetilde d}(\delta_m,\pi)
 \leq q_H^k\sqrt{1+V(m)+\pi(V)}.
 \end{aligned}
\]
Thus, with \(r_*=q_H^{1/L}\in(0,1)\) and a suitable \(C<\infty\),
\[
 \mathcal W_d(P^n(m,\cdot),\pi)
 \leq Cr_*^n\sqrt{1+V(m)+\pi(V)}.
\]
Integration against \(\pi\) and
\eqref{eq:finite-activity-wasserstein-beta} give geometric
\(\beta\)-mixing of \((X_t)\).

\medskip\noindent\emph{Case (b), \(\alpha>0\): a positive continuous density.}
Lemma~\ref{lem:infinite-activity-density-overlap} shows that, on \(E_\beta\), the chain is irreducible and strongly aperiodic and
every bounded set is petite. It also shows that every invariant law is
supported there.
Set \(V=V_1\).  The geometric drift can be written, for a sufficiently
large bounded set \(C\), as
\[
 PV\leq\lambda_*V+B_*\mathbf1_C,
 \qquad \lambda_*<1.
\]
The geometric-drift theorem
\citep[Theorem~15.0.1, pp.~354--355]{meyn1993markov} therefore gives a unique invariant
law \(\pi\) and constants \(K<\infty\), \(r_*\in(0,1)\) such that
\[
 \|P^j(m,\cdot)-\pi\|_{\rm TV}\leq K V(m)r_*^j.
\]
Since \(\pi(V)<\infty\) by
Theorem~\ref{thm:affine-classification}, integration and
\eqref{eq:state-to-observation-beta-transfer} give geometric
\(\beta\)-mixing of the observations.

\medskip\noindent\emph{Case (b), \(\alpha<0\): a bounded absorbing interval.}
Put \(r=\max(\beta,0)<1\) and \(b=s_c(\omega)>0\).
Nonnegativity of \(X_t\), monotonicity of \(s_c\), and its
one-Lipschitz property give
\[
 M_{t+1}\leq s_c(\omega+rM_t)\leq b+rM_t.
\]
For any initial state \(M_0=m>0\), iteration yields
\[
 M_n\leq r^n m+\frac{b(1-r^n)}{1-r},\qquad n\geq1.
\]
Fix \(R>b/(1-r)\) and put \(S_R=(0,R]\).
Since \(b+rR<R\), this set is absorbing. The preceding bound shows
that every finite initial state enters \(S_R\) after finitely many steps.
Consequently, \(P^n(m,S_R)\to1\) for every \(m>0\), and every invariant
law \(\mu\) satisfies
\[
 \mu(S_R)=\int P^n(m,S_R)\,\mu(dm)\longrightarrow1.
\]
Theorem~\ref{thm:affine-classification} supplies at least one such law.

We next establish a minorization uniform over \(S_R\), including near
zero. For \(0<m\leq R\), Markov's inequality and
\(\E(X_0\mid M_0=m)=m\) give
\[
 \Pr(X_0\leq2R\mid M_0=m)\geq\frac12.
\]
On this event, \(M_1\) belongs to the compact interval \(K=[u,v]\), where
\[
 \begin{aligned}
 u&=s_c\bigl(\omega+2\alpha R+\min(\beta,0)R\bigr)>0,\\
 v&=s_c(\omega+rR)\leq b+rR<R.
 \end{aligned}
\]
In particular, \(K\subset S_R\) is bounded away from zero.
Write \(h_c=s_c^{-1}\) and \(f_0(z)=s_c(\omega+\beta z)\).
For \(\alpha<0\), the state transition density is
\[
 p(z,y)=\frac{h_c'(y)}{|\alpha|}
 g\!\left(z,\frac{\omega+\beta z-h_c(y)}{|\alpha|}\right),
 \qquad 0<y<f_0(z),
\]
and it is zero otherwise. Choose a compact interval
\(J=[a_J,b_J]\) with
\(0<a_J<b_J<\min_{z\in K}f_0(z)\).
Since \(f_0(z)\leq b+rz<R\) on \(K\), we have \(J\subset S_R\).
Joint continuity and strict positivity of \(g\) imply
\[
 d:=\inf_{z\in K,\,y\in J}p(z,y)>0.
\]
Let \(\nu\) be normalized Lebesgue measure on \(J\). For every Borel
set \(A\subseteq S_R\), the Markov property gives
\[
 \begin{aligned}
 P^2(m,A)
 &\geq\int_K P(z,A)\,P(m,dz)\\
 &\geq d|J|\nu(A)P(m,K)
 \geq\frac{d|J|}{2}\nu(A),\qquad m\in S_R.
 \end{aligned}
\]
{Choose \(0<\epsilon<1\) no larger than \(d|J|/2\). On \(S_R\), write
\(P^2(m,\cdot)=\epsilon\nu(\cdot)+(1-\epsilon)\mathcal R(m,\cdot)\) for a
Markov kernel \(\mathcal R\). Thus \(P^2\) contracts total variation on
\(S_R\) by the factor \(1-\epsilon\). All invariant laws are supported on
\(S_R\), so the invariant law \(\pi\) is unique, and
\[
 \sup_{m\in S_R}\|P^n(m,\cdot)-\pi\|_{\mathrm{TV}}
 \leq(1-\epsilon)^{\lfloor n/2\rfloor},\qquad n\geq0.
\]
Equation~\eqref{eq:state-to-observation-beta-transfer} then gives geometric
\(\beta\)-mixing of the observations.}

\proofstep{Ergodicity of the stationary solution}
{Step~1 gives a unique invariant state law \(\pi\). For \(\alpha=0\),
this uses that every invariant law yields a stationary solution, as in the
proof of Theorem~\ref{thm:affine-classification}. Let \(\bar\pi\) be an
invariant law of the joint chain \(((M_t,X_t))\), with first marginal
\(\bar\pi_M\), and let \((M_0,X_0)\sim\bar\pi\). Since
\(X_1\mid M_1\sim G_{M_1}\), \(\bar\pi(dm,dx)=\bar\pi_M(dm)G_m(dx)\). Then
\(M_1=s_c(\omega+\alpha X_0+\beta M_0)\) has law \(\bar\pi_MP\), and
invariance gives \(\bar\pi_MP=\bar\pi_M\), so \(\bar\pi_M=\pi\) and
\(\bar\pi=\pi(dm)G_m(dx)\), as in the proof of
\citet[Proposition~8]{douc2013ergodicity}. Their Theorems~30 and~31 make the
two-sided joint process ergodic, and hence so is its observation coordinate.
Every stationary solution has this law.}
\end{proof}

\subsection{Proof of Theorem~\ref{thm:affine-boundary}}
\label{app:proof-affine-boundary}

At \(\phi=1\), {Lemma~\ref{lem:affine-cancellation}} gives
\(\E[M_1\mid M_0=m]/m\to1\), which does not determine the
sign of \(\E[M_1-m\mid M_0=m]\).
We therefore introduce the square-root chain \((R_t)\) by
\begin{equation*}
 R_t:=\sqrt{M_t}.
\end{equation*}
Since \((M_t)\) is a time-homogeneous Markov chain on
\((0,\infty)\) and \(x\mapsto\sqrt x\) is a bijection of \((0,\infty)\), so
is \((R_t)\). Under a stationary solution, \(R_t\) is defined for every
\(t\in\mathbb Z\).
The notation \(\E[\cdot\mid R_0=x]\) means expectation for the model
initialized at \(M_0=x^2\), with \(X_0\sim G_{x^2}\).
{At \(\phi=1\), \(\omega+\alpha W_m+\beta m=\omega+m+\alpha U_m\).
Hence, given \(M_0=m\), the increment \(M_1-m\) has the law of
\[
 \Delta_m:=s_c(\omega+m+\alpha U_m)-m.
\]}%
The next two lemmas show that the square-root increments have moments of
every order bounded uniformly in the initial state, and that
\[
 \E[R_1-x\mid R_0=x]=O(x^{-1}),\qquad x\to\infty.
\]
Thus \((R_t)\) has asymptotically zero drift, which is the form needed
for the recurrence and invariant-tail arguments below.

\begin{lemma}
\label{lem:affine-boundary-expansions}
{Under Assumption~\ref{ass:affine-seed}, suppose \(\phi=1\). As
\(m\to\infty\), for every \(A>0\) and every fixed \(\chi>0\),
\begin{align}
 \E\Delta_m&=\omega+O(m^{-A}),\label{eq:affine-delta-mean}\\
 \E\Delta_m^2&=\alpha^2\sigma^2m+O(1),\label{eq:affine-delta-var}\\
 \E[(m+\Delta_m)^\chi-m^\chi]
 &=\chi m^{\chi-1}\Bigl[\omega+\frac{\chi-1}{2}\alpha^2\sigma^2\Bigr]
   +O(m^{\chi-3/2}),\label{eq:affine-power-drift}\\
 \E\bigl[(\sqrt{m+\Delta_m}-\sqrt m)^2\bigr]
 &=\frac{\alpha^2\sigma^2}{4}+O(m^{-1/2}).\label{eq:affine-sqrt-variance}
\end{align}}
\end{lemma}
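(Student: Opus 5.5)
The plan is to use a good event where softplus is close to the identity. Write \(Y_m:=\omega+m+\alpha U_m\), so that \(\Delta_m=s_c(Y_m)-m\). Let \(E_m=\{|U_m|\leq m/(2|\alpha|)\}\). For large \(m\), \(Y_m\geq m/2+\omega\) on \(E_m\). The exact identity \(s_c(x)-x=c\log(1+e^{-x/c})\in[0,c\,e^{-x/c}]\) for \(x\geq0\) then gives
\[
 \Delta_m=\omega+\alpha U_m+r_m,\qquad 0\leq r_m\leq c\,e^{-(m/2+\omega)/c},
\]
so \(r_m\) is exponentially small. Off \(E_m\), the first part of Lemma~\ref{lem:affine-concentration} gives \(\Pr(E_m^c)\leq C_Am^{-A}\) for every \(A\). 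We also have the crude bound \(|\Delta_m|\leq|\omega|+|\alpha||U_m|+c\log2\), obtained from \eqref{eq:softplus-positive-part-bound} and one-Lipschitzness. By Cauchy--Schwarz and the central-moment bound of Lemma~\ref{lem:affine-concentration}, every polynomial moment of \(\Delta_m\) restricted to \(E_m^c\) is \(O(m^{-A})\) for every \(A\).

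\textbf{Mean and second moment.} Since \(\E U_m=0\), we get \(\E\Delta_m=\omega-\alpha\E[U_m\mathbf1_{E_m^c}]+\E[r_m\mathbf1_{E_m}]+\E[\Delta_m\mathbf1_{E_m^c}]\). Every term after \(\omega\) is \(O(m^{-A})\), which proves \eqref{eq:affine-delta-mean}. Similarly, \(\E\Delta_m^2=\E(\omega+\alpha U_m)^2+O(m^{-A})=\omega^2+\alpha^2\sigma^2m+O(m^{-A})\), which gives \eqref{eq:affine-delta-var}.

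\textbf{Power drift.} For \eqref{eq:affine-power-drift}, I will Taylor expand \(g(y)=y^\chi\) around \(m\) on a smaller event \(F_m=\{|\Delta_m|\leq m/2\}\). The complement \(F_m^c\) has probability \(O(m^{-A})\): this follows from the crude bound and concentration. Its contribution is also negligible, because \(|(m+\Delta_m)^\chi-m^\chi|\leq C(m^\chi+|\Delta_m|^\chi)\) and all moments of \(\Delta_m\) are polynomially bounded. On \(F_m\), use
\[
 g(m+\Delta)-g(m)=\chi m^{\chi-1}\Delta+\tfrac{\chi(\chi-1)}2m^{\chi-2}\Delta^2+R,\qquad |R|\leq C m^{\chi-3}|\Delta|^3,
\]
which holds because \(m+\Delta\in[m/2,3m/2]\). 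Taking expectations and removing the indicator at cost \(O(m^{-A})\), the first two terms give \(\chi m^{\chi-1}\omega+\tfrac{\chi(\chi-1)}2m^{\chi-2}\alpha^2\sigma^2m+O(m^{\chi-2})\). The central-moment bound with \(\chi=3\) gives \(\E|U_m|^3\leq C m^{3/2}\), hence \(\E|\Delta_m|^3=O(m^{3/2})\) and \(\E|R|=O(m^{\chi-3/2})\). Since \(m^{\chi-2}=o(m^{\chi-3/2})\), this yields \eqref{eq:affine-power-drift}.

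\textbf{Square-root increment.} The main point needing care is \eqref{eq:affine-sqrt-variance}. I will use the exact identity
\[
 (\sqrt{m+\Delta}-\sqrt m)^2=\frac{\Delta^2}{(\sqrt{m+\Delta}+\sqrt m)^2},
\]
which avoids the Taylor expansion. On \(F_m\), the denominator is \(4m+2m\,u+O(m u^2)\) with \(u=\Delta/m\), to first order. Hence \(\Delta^2/(\sqrt{m+\Delta}+\sqrt m)^2=\Delta^2/(4m)\cdot(1+O(|\Delta|/m))\). Taking expectations gives \(\E\Delta_m^2/(4m)+O(\E|\Delta_m|^3/m^2)\). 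By \eqref{eq:affine-delta-var}, the first term is \(\alpha^2\sigma^2/4+O(1/m)\), and the error is \(O(m^{3/2}/m^2)=O(m^{-1/2})\). On \(F_m^c\), the left side is at most \(2(m+\Delta_m)+2m\) in absolute value, which has polynomially bounded moments, so it contributes \(O(m^{-A})\) by Cauchy--Schwarz.

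The only delicate point throughout is checking that each error on the bad events is uniform and polynomially controlled. That control is supplied by the crude softplus bound together with the moment bounds of Lemma~\ref{lem:affine-concentration}.
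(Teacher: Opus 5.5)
Your proof is correct and follows essentially the paper's argument. It uses a good event on which the softplus remainder is exponentially small, Cauchy--Schwarz together with the superpolynomial bound \(\Pr(|U_m|>\epsilon m)=O(m^{-A})\) on the complement, and a third-order Taylor expansion of \(y^\chi\) on \(\{|\Delta_m|\le m/2\}\) with \(\E|\Delta_m|^3=O(m^{3/2})\). The only real difference is in \eqref{eq:affine-sqrt-variance}: you expand the conjugate form \(\Delta_m^2/(\sqrt{m+\Delta_m}+\sqrt m)^2\) directly, whereas the paper writes \((\sqrt{m+\Delta_m}-\sqrt m)^2=\Delta_m-2\sqrt m(\sqrt{m+\Delta_m}-\sqrt m)\) and reads off the result from \eqref{eq:affine-delta-mean} and \eqref{eq:affine-power-drift} at \(\chi=1/2\). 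Both give the \(O(m^{-1/2})\) error. Two small slips are harmless: your mean decomposition omits the term \(-\omega\Pr(E_m^c)\), and when \(\alpha=0\) the event \(E_m\) should be taken as the whole space.
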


\begin{proof}
{Write \(Y_m=\omega+m+\alpha U_m\), so that \(\Delta_m=s_c(Y_m)-m\).}
Choose \(\epsilon>0\) so small that \(1-|\alpha|\epsilon>0\).  On
\(\{|U_m|\leq\epsilon m\}\),
\[
 Y_m=\omega+m+\alpha U_m
 \geq \omega+(1-|\alpha|\epsilon)m.
\]
Hence, for all sufficiently large \(m\), \(Y_m\) is positive and bounded
above and below by positive multiples of \(m\), so the softplus remainder
\({\mathcal E_m}=s_c(Y_m)-Y_m\) is exponentially small.  Globally,
\[
 0\leq {\mathcal E_m}\leq c\log2+\max(-Y_m,0)
       \leq C(1+m+|U_m|).
\]
Consequently, Lemma~\ref{lem:affine-concentration} and H\"older's
inequality imply that, for every fixed \(j\geq0\), \(\ell\geq1\), and
\(A>0\),
\begin{equation}
 \E[(1+|U_m|)^j{\mathcal E_m}^\ell]=O(m^{-A}).
 \label{eq:softplus-remainder-superpolynomial}
\end{equation}
{To bound expectations over events of small probability,}
put
\(T_m=1+m+|U_m|\). Lemma~\ref{lem:affine-concentration} gives
\(\E T_m^{2d}=O(m^{2d})\) for every fixed \(d\geq0\). For any events
\(B_m\) with \(\Pr(B_m)\leq C_Nm^{-N}\), H\"older's inequality gives
\begin{equation}
 \E[T_m^d\mathbf1_{B_m}]
 \leq (\E T_m^{2d})^{1/2}\Pr(B_m)^{1/2}
 \leq C_{d,N}m^{d-N/2},\qquad m\geq1.
 \label{eq:affine-holder-exceptional-event}
\end{equation}
Choosing \(N\geq2(d+A)\) makes this \(O(m^{-A})\). Apply this bound
with \(B_m=\{|U_m|>\epsilon m\}\) and \(d=j+\ell\), since
\((1+|U_m|)^j\mathcal E_m^\ell\leq CT_m^{j+\ell}\).
On \(\{|U_m|\leq\epsilon m\}\), the same integrand is bounded by an
exponential times a polynomial. These two estimates prove
\eqref{eq:softplus-remainder-superpolynomial}.

Since
\(\Delta_m=\omega+\alpha U_m+{\mathcal E_m}\),
\eqref{eq:softplus-remainder-superpolynomial} and {the centered-moment
bound of Lemma~\ref{lem:affine-concentration}}
prove \eqref{eq:affine-delta-mean}--\eqref{eq:affine-delta-var}.
They also give, for every fixed \(p\geq2\),
\begin{equation}
 \E|\Delta_m|^p\leq C_p(1+m^{p/2}).
 \label{eq:affine-delta-absolute-moments}
\end{equation}

{Fix \(\chi>0\) and \(\eta\in(0,1/2]\). By Markov's inequality
and~\eqref{eq:affine-delta-absolute-moments},
\(\Pr\{|\Delta_m|>\eta m\}\leq C_p(1+m^{p/2})(\eta m)^{-p}=O(m^{-p/2})\)
for every \(p\geq2\). On \(\{|\Delta_m|\leq\eta m\}\), Taylor's formula
gives
\[
 (m+\Delta_m)^\chi-m^\chi
 =\chi m^{\chi-1}\Delta_m
  +\frac{\chi(\chi-1)}2m^{\chi-2}\Delta_m^2
  +O(m^{\chi-3}|\Delta_m|^3),
\]
and \eqref{eq:affine-delta-absolute-moments} with \(p=3\) bounds the
expected remainder by \(O(m^{\chi-3/2})\).
On \(B_m=\{|\Delta_m|>\eta m\}\), the power increment and its quadratic
Taylor polynomial are bounded in absolute value by
\(CT_m^{\max\{\chi,2\}}\) for \(m\geq1\), since
\(0\leq m+\Delta_m\leq CT_m\) and \(|\Delta_m|\leq CT_m\).
{Since the preceding Markov bound holds for every \(p\),
\(\Pr(B_m)\leq C_Nm^{-N}\) for every \(N\), and
\eqref{eq:affine-holder-exceptional-event} with \(d=\max\{\chi,2\}\) makes
these contributions \(O(m^{-A})\) for every \(A>0\). Substituting
\eqref{eq:affine-delta-mean}--\eqref{eq:affine-delta-var} gives
\eqref{eq:affine-power-drift}.}
Finally, \(m+\Delta_m=s_c(Y_m)>0\), and the
identity
\[
 (\sqrt{m+\Delta_m}-\sqrt m)^2
 =\Delta_m-2\sqrt m\,(\sqrt{m+\Delta_m}-\sqrt m),
\]
together with \eqref{eq:affine-delta-mean} and
\eqref{eq:affine-power-drift} with \(\chi=1/2\), gives
\[
 \E(\sqrt{m+\Delta_m}-\sqrt m)^2
 =\omega-\frac{4\omega-\alpha^2\sigma^2}{4}+O(m^{-1/2})
 =\frac{\alpha^2\sigma^2}4+O(m^{-1/2}),
\]
which proves \eqref{eq:affine-sqrt-variance}.}
\end{proof}

The uniform moment bounds below control the truncation errors and
large-jump probabilities in the proofs of
Lemmas~\ref{lem:affine-general-state-boundary}
and~\ref{lem:affine-boundary-invariant-tail}.

\begin{lemma}
\label{lem:affine-uniform-root-increments}
Under Assumption~\ref{ass:affine-seed}, suppose \(\phi=1\), and define
\[
 \Delta_m^{(1/2)}=\sqrt{s_c(\omega+m+\alpha U_m)}-\sqrt m.
\]
For every fixed \(p>0\),
\[
 \sup_{m\geq0}\E|\Delta_m^{(1/2)}|^p<\infty.
\]
\end{lemma}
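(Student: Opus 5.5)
The key identity is \(\sqrt{y}-\sqrt m=(y-m)/(\sqrt y+\sqrt m)\), applied with \(y=s_c(\omega+m+\alpha U_m)=m+\Delta_m>0\). It gives the two elementary bounds
\[
 |\Delta_m^{(1/2)}|\leq\frac{|\Delta_m|}{\sqrt m},
 \qquad
 |\Delta_m^{(1/2)}|\leq\sqrt{|\Delta_m|}.
\]
The second follows from \(|\sqrt y-\sqrt m|\leq\sqrt{|y-m|}\). It suffices to treat \(p\geq2\), since smaller \(p\) follow by Lyapunov's inequality. I split the supremum into \(m\geq1\) and \(0\leq m<1\).

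For \(m\geq1\), I use the first bound together with the absolute-moment estimate \eqref{eq:affine-delta-absolute-moments} from the proof of Lemma~\ref{lem:affine-boundary-expansions}. That estimate gives \(\E|\Delta_m|^p\leq C_p(1+m^{p/2})\). Hence
\[
 \E|\Delta_m^{(1/2)}|^p\leq C_p(1+m^{p/2})m^{-p/2}\leq 2C_p.
\]
This estimate was derived from \(\Delta_m=\omega+\alpha U_m+\mathcal E_m\), the centered-moment bound of Lemma~\ref{lem:affine-concentration}, and \eqref{eq:softplus-remainder-superpolynomial}. I would check that it holds uniformly for all \(m\geq1\) and not only asymptotically. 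It does, since the remainder \(\mathcal E_m\leq c\log2+\max(-Y_m,0)\leq C(1+m+|U_m|)\) has polynomially bounded moments for every \(m\), and the superpolynomial decay is only needed for large \(m\). On any bounded range \(1\leq m\leq m_0\), the moments are bounded directly by \(C(1+m+\E|U_m|^p)\).

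For \(0\leq m<1\), the first bound blows up, so I use the second. It gives
\[
 \E|\Delta_m^{(1/2)}|^p\leq\E|\Delta_m|^{p/2}.
\]
Now \(|\Delta_m|\leq s_c(\omega+m+\alpha U_m)+m\leq c\log2+|\omega|+1+|\alpha||U_m|+1\). Lemma~\ref{lem:affine-concentration} bounds \(\E|U_m|^{p/2}\) by \(C(1+m^{p/4})\) when \(p/2\geq2\), and by Lyapunov otherwise. This is uniformly bounded for \(m<1\). At \(m=0\), \(U_0=0\) and the quantity is deterministic.

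The only mildly delicate point is the uniformity of \eqref{eq:affine-delta-absolute-moments} over all \(m\geq1\) rather than as \(m\to\infty\). I would handle this by restating that bound directly from the decomposition \(\Delta_m=\omega+\alpha U_m+\mathcal E_m\) with the crude global bound on \(\mathcal E_m\), which holds for every \(m\). Combining the two ranges gives the claimed bound for every \(p>0\).
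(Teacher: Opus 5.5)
Your proposal is correct and follows essentially the paper's argument: the identity \(\sqrt y-\sqrt m=(y-m)/(\sqrt y+\sqrt m)\) handles \(m\geq1\), a crude direct bound handles \(0\leq m<1\), and Lyapunov's inequality covers small \(p\). The one difference is that you import \eqref{eq:affine-delta-absolute-moments} and then have to check that it holds uniformly over \(m\geq1\); the paper instead gets the needed bound for every \(m\geq0\) at once from the one-Lipschitz property, \(|s_c(\omega+m+\alpha U_m)-m|\leq|s_c(\omega+m+\alpha U_m)-s_c(m)|+|s_c(m)-m|\leq|\omega|+|\alpha||U_m|+c\log2\), so only Lemma~\ref{lem:affine-concentration} is needed.
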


\begin{proof}
For \(m\geq0\), the one-Lipschitz property of \(s_c\) and
\(0\leq s_c(m)-m\leq c\log2\) give
\[
 |s_c(\omega+m+\alpha U_m)-m|
 \leq |\omega|+|\alpha||U_m|+c\log2.
\]
Thus, for \(m\geq1\),
\[
 |\Delta_m^{(1/2)}|
 =\frac{|s_c(\omega+m+\alpha U_m)-m|}
 {\sqrt{s_c(\omega+m+\alpha U_m)}+\sqrt m}
 \leq\frac{|\omega|+|\alpha||U_m|+c\log2}{\sqrt m}.
\]
For \(p\geq2\), the centered-moment bound in
Lemma~\ref{lem:affine-concentration} now gives
\(\sup_{m\geq1}\E|\Delta_m^{(1/2)}|^p<\infty\).
For \(0<p<2\), the same conclusion follows from
\(\E|\Delta_m^{(1/2)}|^p
\leq(\E|\Delta_m^{(1/2)}|^2)^{p/2}\).
For \(0\leq m\leq1\), the elementary softplus bound gives
\(|\Delta_m^{(1/2)}|^p\leq C_p(1+W_m^{p/2})\),
whose expectation is uniformly bounded by the raw-moment bound in
Lemma~\ref{lem:affine-concentration}, for every \(p>0\).
\end{proof}

For the square-root calculations at \(\phi=1\), we use the following
notation and bounds. For \(x>0\), let \(\xi(x)\) have the conditional law of
\(R_1-x\) given \(R_0=x\). Then \(\xi(x)\overset d=\Delta_{x^2}^{(1/2)}\),
and Lemma~\ref{lem:affine-uniform-root-increments} gives
\[
 \bar\mu_q:=\sup_{x>0}\E|\xi(x)|^q<\infty\qquad\text{for every }q>0.
\]
As in \citet{denisov2025markov}, we truncate the increments at the level
\(s(x):=\min\{\sqrt x,x/2\}\), which is increasing, satisfies
\(s(x)\leq x/2\), and equals \(\sqrt x\) for \(x\geq4\). For
\(j\in\{1,2\}\), write
\[
 \mu_j(x)=\E\xi(x)^j,\qquad
 \mu_j^{[s]}(x)=\E\!\left[\xi(x)^j\mathbf1_{\{|\xi(x)|\leq s(x)\}}\right].
\]
For \(x\geq4\), on the event \(\{|\xi(x)|>\sqrt x\}\) we have
\(|\xi(x)|^j\leq|\xi(x)|^{j+4}x^{-2}\). Hence
\begin{equation}
 |\mu_j(x)-\mu_j^{[s]}(x)|
 \leq\E\!\left[|\xi(x)|^j\mathbf1_{\{|\xi(x)|>\sqrt x\}}\right]
 \leq\bar\mu_{j+4}\,x^{-2},
 \qquad j\in\{1,2\},\ x\geq4,
 \label{eq:sqrt-truncation}
\end{equation}
and Markov's inequality gives, for every \(q>0\),
\begin{equation}
 \Pr\{|\xi(x)|\geq\sqrt x\}\leq\bar\mu_q\,x^{-q/2},\qquad x>0.
 \label{eq:sqrt-large-jump}
\end{equation}

The next lemma shows that the chain visits arbitrarily large states
almost surely, as required by Theorem~8.2 of \citet{denisov2025markov}
in the proof of Lemma~\ref{lem:affine-boundary-invariant-tail}.

\begin{lemma}
\label{lem:affine-boundary-nonconfinement}
Under Assumptions~\ref{ass:affine-seed} and \ref{ass:invertibility}, suppose
\(\phi=1\).  For every initial state \(m\geq0\) and every intercept \(\omega\),
\[
 \Pr\!\left(\limsup_{t\to\infty}M_t=\infty\,\middle|\,M_0=m\right)=1.
\]
\end{lemma}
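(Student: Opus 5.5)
The plan is to show that for every level \(L\), the chain started from any state exceeds \(L\) within a bounded number of steps with probability bounded below uniformly over states in \([0,L]\). The conditional Borel--Cantelli lemma (or a geometric-trials argument via the strong Markov property) then gives \(\limsup_t M_t\geq L\) almost surely. Letting \(L\to\infty\) along the integers proves the claim. Concretely, we will show that for each \(L\) there are \(k_L\geq1\) and \(p_L>0\) with
\[
 \inf_{0\leq m\leq L}\Pr(M_{k_L}>L\mid M_0=m)\geq p_L .
\]
Then \(\Pr(M_t\leq L\ \text{for all}\ t\geq T)=0\) for every \(T\), by the Markov property applied at the times \(T,T+k_L,T+2k_L,\dots\).

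At \(\phi=1\) and under Assumption~\ref{ass:invertibility}, we have \(\alpha=1-\beta>0\), which is the key structural fact. Given \(M_0=m\), the next state is \(s_c(\omega+\alpha X_0+\beta m)\geq s_c(\omega-|\beta|m+\alpha X_0)\). Since \(s_c\) is increasing and \(\alpha>0\), the event \(\{X_0>y\}\) with \(y=(L+|\omega|+|\beta|L)/\alpha\) forces \(M_1>L\) whenever \(m\leq L\), because \(s_c(x)\geq x\). So it suffices to bound \(\inf_{0< m\leq L}G_m((y,\infty))\) from below. The case \(m=0\) is handled separately: from \(0\) the chain moves deterministically to \(s_c(\omega)>0\), which costs one extra step, so we take \(k_L=2\) or use a two-step argument.

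The main obstacle is the lower bound on \(G_m((y,\infty))\) uniformly in \(m\), since \(G_m\to\delta_0\) as \(m\downarrow0\). Here I would not insist on one step. Instead, note that from any \(m\in(0,L]\), with probability bounded below the chain first moves into a fixed compact interval \([u,v]\subset(0,\infty)\). For example, \(\Pr(X_0\leq 2L)\geq1/2\) by Markov's inequality, and on that event \(M_1\geq s_c(\omega-|\beta|L)=:u>0\), with \(M_1\leq v\) as in the proof of Lemma~\ref{lem:infinite-activity-density-overlap}. For \(m\in[u,v]\), the identity \(G_m=G_{u}*G_{m-u}\) and nonnegativity give \(G_m((y,\infty))\geq G_u((y,\infty))\). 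The latter is positive because \(G_u\) has unbounded support (Lemma~\ref{lem:affine-concentration}). This yields
\[
 \Pr(M_2>L\mid M_0=m)\geq \tfrac12\,G_u((y,\infty))=:p_L>0,
 \qquad 0<m\leq L,
\]
and \(m=0\) is covered with one more step, so \(k_L=3\) suffices uniformly.

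Finally, I would assemble the pieces. Let \(\tau_j\) be successive times \(T+jk_L\), and let \(A_j=\{M_{\tau_{j+1}}>L\}\). On \(\{M_{\tau_j}\leq L\}\), the conditional probability of \(A_j\) given \(\mathcal F_{\tau_j}\) is at least \(p_L\). Hence
\[
 \Pr(M_{\tau_j}\leq L,\ j=0,\dots,J)\leq(1-p_L)^J\to0 .
\]
Consequently \(M_t>L\) for some \(t\geq T\) almost surely, for all \(T\) and \(L\), which is the statement. The sign of \(\omega\) never enters, consistent with the claim holding for every intercept.
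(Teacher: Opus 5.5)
Your argument is essentially the paper's. It uses a uniform positive lower bound on the next state from \([0,L]\), the domination \(G_m\geq G_u\) for \(m\geq u\) from the convolution identity, unbounded support from Lemma~\ref{lem:affine-concentration}, and geometric trials via the Markov property.

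One step in your fixed-time version fails as written, though the repair is easy.

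\textbf{The threshold \(y\).} Your \(y=(L+|\omega|+|\beta|L)/\alpha\) is calibrated for states at most \(L\). In the second step, however, \(M_1\) ranges over \([u,v]\), and \(v=s_c(\omega+(2\alpha+\max(\beta,0))L)\) can exceed \(L\). When \(\beta<0\), the term \(\beta M_1\) can be as low as \(-|\beta|v\), so \(X_1>y\) no longer forces \(M_2>L\). For example, take \(\beta=-0.5\), \(\omega=-1\), \(c=1\), \(L=10\). Then \(y\approx10.67\) and \(v\approx29\). With \(M_1=29\) and \(X_1=11\) one gets \(M_2=s_1(1)\approx1.3\). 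Replacing \(L\) by \(v\) in the definition of \(y\) fixes this.

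\textbf{The case \(m=0\).} The extra step here is unnecessary. Also, ``\(k_L=3\) uniformly'' does not follow from your time-2 bound for \(m>0\). From \(m=0\) you have \(X_0=0\) and \(M_1=s_c(\omega)\in[u,v]\). So the same two-step bound gives \(k_L=2\) on all of \([0,L]\).

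The paper's proof avoids both issues by asking only for \(\max(M_1,M_2)>K\). Since \(M_1\geq\ell_K=s_c(\omega+\min(\beta,0)K)\) always, and the second step is needed only when \(M_1\leq K\), it requires neither an upper bound on \(M_1\) nor the Markov-inequality step.
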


\begin{proof}
Here \(\alpha=1-\beta>0\).  Fix \(K>0\) and put
\(\ell_K=s_c(\omega+\min(\beta,0)K)>0\).
If \(M_t\leq K\), then \(M_{t+1}\geq\ell_K\).  For \(m\geq\ell_K\),
the convolution identity makes \(G_m\) stochastically dominate
\(G_{\ell_K}\).  Choose \(z_K\) with
\(s_c(\omega+\alpha z_K+\min(\beta,0)K)>K\).
The unbounded support of \(G_{\ell_K}\) then gives, uniformly over
\(0\leq m\leq K\),
\[
 \Pr(\max(M_1,M_2)>K\mid M_0=m)
 \geq G_{\ell_K}([z_K,\infty))=:\delta_K>0.
\]
The Markov property bounds the probability of remaining in \([0,K]\)
for the next \(2n\) steps by \((1-\delta_K)^n\).
For each deterministic integer \(r\geq0\), apply this bound
conditionally on \(M_r\) and let the number of two-step blocks tend to
infinity to obtain
\[
 \Pr(M_t\leq K\text{ for every }t\geq r\mid M_0=m)=0.
\]
Taking the countable intersection over \(r\geq0\) and
\(K\in\mathbb N\), almost surely every tail of the path exceeds every
integer threshold, and hence \(\limsup_{t\to\infty}M_t=\infty\).
\end{proof}

\begin{lemma}
\label{lem:affine-general-state-boundary}
Suppose Assumptions~\ref{ass:affine-seed} and
\ref{ass:invertibility} hold, \(\phi=1\), and \(\omega\geq0\).  Then the Markov chain \((M_t)\) has
no invariant probability measure.
\end{lemma}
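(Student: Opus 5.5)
Suppose, for contradiction, that \(\pi\) is an invariant probability for \((M_t)\) when \(\phi=1\) and \(\omega\geq0\). The plan is to run the stationary chain from \(M_0\sim\pi\) and contradict invariance with a Lyapunov function. The natural choice is \(f(m)=\log(1+m)\), or a power \(m^\chi\) with small \(\chi\in(0,1)\). Expansion \eqref{eq:affine-power-drift} gives
\[
 \E[M_1^\chi-M_0^\chi\mid M_0=m]=\chi m^{\chi-1}\Bigl[\omega-\tfrac{1-\chi}{2}\alpha^2\sigma^2\Bigr]+O(m^{\chi-3/2}),
\]
and this is negative for \(\omega\geq0\), so powers below one are the wrong tool. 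Instead I will use the square-root chain \(R_t=\sqrt{M_t}\). By \eqref{eq:affine-power-drift} with \(\chi=1/2\),
\[
 \mu_1(x)=\frac{1}{2x}\Bigl[\omega-\frac{\alpha^2\sigma^2}{4}\Bigr]+O(x^{-2}).
\]
By \eqref{eq:affine-sqrt-variance}, \(\mu_2(x)\to b:=\alpha^2\sigma^2/4\). Hence
\[
 2x\mu_1(x)-\mu_2(x)\to 2\omega-2b+b-b=2\omega\geq0.
\]
This is the Lamperti regime of null recurrence or transience: \(2x\mu_1\leq\mu_2\) would be needed for positive recurrence with a strict margin, and here even the critical inequality fails.

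Concretely, I will apply the positive-recurrence criterion of \citet{denisov2025markov} (a Lamperti-type classification), or derive it directly. For a direct argument, take \(g(x)=\log x\) (for \(\omega>0\)) or \(g(x)=\log\log x\) (for \(\omega=0\)) on large \(x\). Then
\[
 \E[g(R_1)-g(x)\mid R_0=x]=\frac{\mu_1}{x}-\frac{\mu_2}{2x^2}+O(x^{-3})
 =\frac{2x\mu_1-\mu_2}{2x^2}+O(x^{-3}).
\]
For \(\omega>0\) this is \(\geq \omega/(2x^2)>0\) at large \(x\). The Taylor remainders are controlled by the truncation bounds \eqref{eq:sqrt-truncation}--\eqref{eq:sqrt-large-jump} and the uniform moments of Lemma~\ref{lem:affine-uniform-root-increments}. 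So \(g(R_t)\) is a submartingale outside a compact set.

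To turn this into the absence of an invariant law, the cleanest route is a hitting-time argument. Positive recurrence (an invariant probability plus the irreducibility structure from the earlier lemmas) would force finite expected return times to some compact set \([0,K]\). The submartingale property of a bounded-below increasing function outside \([0,K]\), combined with Lemma~\ref{lem:affine-boundary-nonconfinement} (the chain escapes every level), gives, by optional stopping, a positive probability of never returning from large states when \(\omega>0\), or infinite expected return time when \(\omega=0\). Kac's formula applied to \(\pi\) restricted to \([0,K]\) then contradicts \(\pi([0,K])>0\) for \(K\) large. To avoid irreducibility issues in the singular case~(a), I would instead work directly with stationarity and test functions. Take bounded truncations \(g_L=\min(g,L)\) modified so that the drift \(Pg_L-g_L\) is nonnegative off a compact set and bounded below on it. Invariance forces \(\pi(Pg_L-g_L)=0\), and mass escaping to infinity yields a contradiction.

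The main obstacle is the critical case \(\omega=0\), where \(2x\mu_1-\mu_2\to0\). There the sign of the drift of \(\log\log x\) depends on the second-order term \(O(x^{-1/2})\) in \eqref{eq:affine-sqrt-variance} and the \(O(x^{-2})\) term in \(\mu_1\). These remainders are only shown to be small, not signed, so I need the sharper form \(2x\mu_1-\mu_2\geq -\varepsilon/\log x\) required by Denisov--Korshunov--Wachtel's null-recurrence criterion. Since the errors are \(O(x^{-1/2})=o(1/\log x)\), that criterion should apply, giving null recurrence and hence no invariant probability. Citing their theorem on the absence of stationary distributions for \(2x\mu_1-\mu_2\geq -o(1/\log x)\) with uniformly bounded higher moments is what I would try first.
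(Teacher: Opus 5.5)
There is a genuine gap in the drift criterion. Your own expansions are $\mu_1(x)=(\omega-b)/(2x)+O(x^{-2})$ and $\mu_2(x)=b+O(x^{-1})$ with $b=\alpha^2\sigma^2/4$. They give $2x\mu_1(x)-\mu_2(x)\to\omega-2b=\omega-\alpha^2\sigma^2/2$, not $2\omega$.

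More importantly, comparing $2x\mu_1$ with $+\mu_2$ is Lamperti's recurrence--transience test. The threshold for positive recurrence compares $2x\mu_1$ with $-\mu_2$, and here $2x\mu_1(x)+\mu_2(x)\to\omega\geq0$. The errors follow from this:
\begin{itemize}
\item The drift of $\log R_t$ is $(\omega-\alpha^2\sigma^2/2)/(2x^2)+O(x^{-3})$, not $\geq\omega/(2x^2)$. It is negative for $0<\omega<\alpha^2\sigma^2/2$.
\item In that range $\limsup_x(2x\mu_1-\mu_2)<0$, so the chain is recurrent. The claimed positive probability of never returning from large states is therefore false.
\item Even where transience does hold, a submartingale property of the unbounded function $\log R_t$ would not give escape by optional stopping. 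You would need a bounded increasing test function.
\item $\omega=0$ is not where $2x\mu_1-\mu_2$ vanishes; it tends to $-\alpha^2\sigma^2/2$ there.
\item The criterion you plan to cite, $2x\mu_1-\mu_2\geq -o(1/\log x)$, is not a non-positive-recurrence criterion. It also fails for every $\omega<\alpha^2\sigma^2/2$.
\end{itemize}

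The repair is to drop transience and the case split, since infinite mean passage times are all that is needed, uniformly in $\omega\geq0$. For every $\omega\geq0$ the correct ratio satisfies, for large $x$,
\[
 \frac{2\mu_1^{[s]}(x)}{\mu_2^{[s]}(x)}=\frac{4\omega/(\alpha^2\sigma^2)-1}{x}+O(x^{-2})\geq-\frac1x-\frac{1-\delta}{x\log x}.
\]
The $O(x^{-2})=o(1/(x\log x))$ error is what makes the logarithmic correction harmless at $\omega=0$. This ratio bound, together with the truncation and large-jump bounds from $\bar\mu_q<\infty$, is the hypothesis of \citet[Corollary~2.12]{denisov2025markov}. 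That corollary gives $\E[T_r\mid R_0=x]=\infty$ for $x>r\geq r_0$, where $T_r=\inf\{t\geq0:R_t\leq r\}$.

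The paper then finishes with the Kac-type step you sketch, and it needs no irreducibility. Take $A=(0,r]$ with $\pi_R(A)>0$ and $T_A^+=\inf\{t\geq1:R_t\in A\}$. Unbounded support of $G_{x^2}$ and $\alpha>0$ give $\E[T_A^+\mid R_0=x]=\infty$ for every $x\in A$. The last-exit decomposition of the two-sided stationary process gives $\int_A\E[T_A^+\mid R_0=x]\,\pi_R(dx)\leq1$, a contradiction. Your closing idea is right, and the truncated test-function alternative is unnecessary; what is missing is the correct drift input feeding into it.
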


\begin{proof}
Use the notation \(\xi(x)\), \(s(x)\), \(\mu_j\) and \(\mu_j^{[s]}\)
introduced after Lemma~\ref{lem:affine-uniform-root-increments}.
{With \(m=x^2\), taking \(\chi=1/2\) in~\eqref{eq:affine-power-drift}
and using~\eqref{eq:affine-sqrt-variance}, we obtain}
\(\mu_1(x)=(4\omega-\alpha^2\sigma^2)/(8x)+O(x^{-2})\) and
\(\mu_2(x)=\alpha^2\sigma^2/4+O(x^{-1})\), and by~\eqref{eq:sqrt-truncation}
the same expansions hold for \(\mu_1^{[s]}\) and \(\mu_2^{[s]}\). Therefore,
for any fixed \(\delta\in(0,1)\) and all large \(x\),
\begin{equation*}
 \frac{2\mu_1^{[s]}(x)}{\mu_2^{[s]}(x)}
 =\frac{4\omega/(\alpha^2\sigma^2)-1}{x}+O(x^{-2})
 \geq-\frac1x-\frac{1-\delta}{x\log x}.
\end{equation*}
The same estimates imply
\(\mu_1(x)\geq-C/x\) for large \(x\),
\(\sup_{x\geq0}\mu_2(x)<\infty\), and
\(\liminf_{x\to\infty}\mu_2^{[s]}(x)>0\).
Moreover, \(\bar\mu_3<\infty\) and~\eqref{eq:sqrt-large-jump}
with \(q=6\) give, since \(\{\xi(x)\leq-s(x)\}\subseteq\{|\xi(x)|\geq\sqrt x\}\)
for \(x\geq4\),
\[
 \E[|\xi(x)|^3\mathbf1_{\{-s(x)\leq\xi(x)\leq0\}}]=O(1)=o(x/\log x),
 \qquad
 \Pr\{\xi(x)\leq-s(x)\}=o((x^2\log x)^{-1}).
\]
{The ratio bound is the drift condition of
\citet[Corollary~2.12]{denisov2025markov} with a single logarithmic
correction. The remaining estimates are their conditions (2.60)--(2.62),
(2.65) and (2.66), with truncation level \(s(x)\leq x/2\).}
That
corollary yields an \(r_0<\infty\) such that
\begin{equation}
 \E[T_r\mid R_0=x]=\infty\quad(x>r\geq r_0),
 \qquad T_r=\inf\{t\geq0:R_t\leq r\}.
 \label{eq:affine-infinite-mean-passage}
\end{equation}

If \(\pi_R\) were invariant, strict positivity of softplus would give
\(\pi_R(\{0\})=0\).  Choose \(r\geq r_0\) with
\(\pi_R(A)>0\), where \(A=(0,r]\), and let
\(T_A^+=\inf\{t\geq1:R_t\in A\}\).
For every \(x\in A\), the unbounded support of \(G_{x^2}\) and
\(\alpha>0\) give \(\Pr(R_1>r\mid R_0=x)>0\).  The Markov property and
\eqref{eq:affine-infinite-mean-passage} imply
\(\E[T_A^+\mid R_0=x]=\infty\).
Let \(\Pr_{\pi_R}\) denote probability under the two-sided stationary law
with marginal distribution \(\pi_R\).  Then
\[
\begin{split}
 \int_A\E[T_A^+\mid R_0=x]\,\pi_R(dx)
 &=\sum_{j\geq0}\Pr\nolimits_{\pi_R}\{R_0\in A,R_1,\ldots,R_j\notin A\}\\
 &=\sum_{j\geq0}\Pr\nolimits_{\pi_R}\{R_{-j}\in A,R_{-j+1},\ldots,R_0\notin A\}
 \leq1.
\end{split}
\]
For \(j=0\) the event is \(\{R_0\in A\}\). The events in the last sum
are disjoint, specifying the most recent visit to \(A\) at or before
time zero.  This contradicts \(\pi_R(A)>0\) and proves the claim.
\end{proof}

Lemma~\ref{lem:affine-boundary-invariant-tail} gives the stationary
tail and moment classification in
Theorem~\ref{thm:affine-boundary}. Together with
Lemma~\ref{lem:kappa-two-three-persistence}, it also rules out
geometric \(\beta\)-mixing.

\begin{lemma}
\label{lem:affine-boundary-invariant-tail}
Suppose Assumptions~\ref{ass:affine-seed} and
\ref{ass:invertibility} hold, with \(\phi=1\), \(\omega<0\), and
\(\kappa=-2\omega/(\alpha^2\sigma^2)\).
Under any stationary solution, there is a constant \(C>0\) such that
\begin{equation}
 \Pr(M_0>y)\sim Cy^{-\kappa},\qquad y\to\infty.
 \label{eq:affine-boundary-state-tail}
\end{equation}
{For every \(q>0\), the moments \(\E M_0^q\) and
\(\E X_0^q\) are finite if and only if \(q<\kappa\).}
\end{lemma}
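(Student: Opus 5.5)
We work with the square-root chain \(R_t=\sqrt{M_t}\) and apply the invariant-tail theory for Markov chains with asymptotically zero drift of \citet{denisov2025markov}, specifically their Theorem~8.2, whose non-confinement hypothesis is supplied by Lemma~\ref{lem:affine-boundary-nonconfinement}. The first step is to compute the drift and second moment of \(\xi(x)\). With \(m=x^2\), \eqref{eq:affine-power-drift} at \(\chi=1/2\) and \eqref{eq:affine-sqrt-variance} give
\[
 \mu_1(x)=\frac{4\omega-\alpha^2\sigma^2}{8x}+O(x^{-2}),\qquad
 \mu_2(x)=\frac{\alpha^2\sigma^2}{4}+O(x^{-1}).
\]
By \eqref{eq:sqrt-truncation}, the same expansions hold for the truncated moments \(\mu_1^{[s]},\mu_2^{[s]}\). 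Hence
\[
 \frac{2\mu_1^{[s]}(x)}{\mu_2^{[s]}(x)}=\frac{4\omega/(\alpha^2\sigma^2)-1}{x}+O(x^{-2})=-\frac{2\kappa+1}{x}+O(x^{-2}).
\]
In Lamperti's notation, \(2x\mu_1/\mu_2\to-(2\kappa+1)<-1\). This puts the chain in the positive recurrent regime, and Denisov--Korshunov--Wachtel predict an invariant tail \(\Pr(R_0>y)\sim Cy^{-2\kappa}\). The exponent follows from \(\rho=-\lim 2x\mu_1/\mu_2-1=2\kappa\), and it translates to \(\Pr(M_0>y)=\Pr(R_0>\sqrt y)\sim Cy^{-\kappa}\).

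The second step checks the technical hypotheses of their theorem: uniform moment bounds, the truncation conditions, and the large-jump conditions. Lemma~\ref{lem:affine-uniform-root-increments} gives \(\bar\mu_q<\infty\) for all \(q\). Bound \eqref{eq:sqrt-large-jump} then makes jumps of size at least \(\sqrt x\) superpolynomially rare, which covers both the downward-jump condition and the requirement that upward jumps be lighter than the target tail \(x^{-2\kappa}\). The \(O(x^{-2})\) remainders are exactly the regular-variation-type error terms needed to obtain an exact constant \(C\) rather than merely bounds up to slowly varying factors, since \(\int^\infty x\cdot x^{-2}\,dx/x<\infty\). Positive recurrence and the existence of the invariant law are already available from Theorem~\ref{thm:affine-boundary}'s existence part, or can be obtained from the drift via a Lyapunov function \(m^\chi\) with \(0<\chi<\kappa\). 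Indeed, \eqref{eq:affine-power-drift} gives drift \(\chi m^{\chi-1}\alpha^2\sigma^2(\chi-\kappa)/2<0\) for large \(m\). The statement is phrased as holding under any stationary solution, so we use that the law of \(M_0\) is invariant and that the Denisov--Korshunov--Wachtel result applies to every invariant law. Alternatively, one can invoke uniqueness if it has already been proved. Positivity of \(C\) comes from their theorem together with non-confinement.

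The third step gives the moment classification. From the tail, \(\E M_0^q=\int q y^{q-1}\Pr(M_0>y)\,dy\) is finite if and only if \(q<\kappa\). Lemma~\ref{lem:affine-state-observation-moments} then transfers this to \(X_0\).

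The main obstacle is matching the paper's estimates precisely to the hypotheses of Theorem~8.2 of \citet{denisov2025markov}. That theorem is stated for chains on the half-line with truncated moments and conditions on the error terms of \(\mu_1,\mu_2\) and on the truncation level \(s(x)\). The key points to verify are that the \(O(x^{-2})\) error in \(2\mu_1/\mu_2\) is summable in their sense and that the required bound on higher truncated moments, such as \(\E|\xi|^3\mathbf 1\{|\xi|\le s(x)\}=O(1)\), is compatible with their assumptions. If the exact-asymptotic version requires a slightly faster error decay, I would first try to sharpen the expansion \eqref{eq:affine-power-drift} by carrying the Taylor expansion one order further. Exact cancellation of the odd central moment is not available, but the extra term is \(O(x^{-2})\) after rescaling, which should suffice.
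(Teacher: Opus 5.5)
Your proposal matches the paper's proof: the square-root chain, the drift ratio $2\mu_1^{[s]}/\mu_2^{[s]}=-(2\kappa+1)/x+O(x^{-2})$, non-confinement, uniform increment moments, Theorem~8.2 with its Corollary~8.3 of \citet{denisov2025markov} for the probability tail, and the moment transfer. The $O(x^{-2})$ error is already $o(p(x))$ for $p(x)=(1+x)^{-1-\eta}$, so no sharper expansion is needed. One correction: by \eqref{eq:affine-power-drift} the drift of $m^\chi$ is proportional to $\chi-1-\kappa$, not $\chi-\kappa$, and the recurrence hypothesis (reaching a bounded set from every initial state, which an invariant law alone does not give) is best obtained at $\chi=1$, where the drift is eventually at most $\omega/2<0$; the paper does this via the stopped-supermartingale bound $\E[\tau_K\mid M_0=m]\le 2m/|\omega|$.
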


\begin{proof}
In this proof we use the notation of \citet[Section~8.2]{denisov2025markov}
for the truncation level \(s\), the functions
\(r,p,\mathcal R,\mathcal U\) and the constant \(b\). Their \(\mu\) is our
\(\mu_R\).

\proofstep{Drift and variance of \((R_t)\)}
Put
\[
 b=\frac{\alpha^2\sigma^2}{4},\qquad
 \mu_R=\frac{\alpha^2\sigma^2-4\omega}{8}.
\]
{With \(m=x^2\), taking \(\chi=1/2\) in~\eqref{eq:affine-power-drift}
and using~\eqref{eq:affine-sqrt-variance}, we obtain}
\(\mu_1(x)=-\mu_R/x+O(x^{-2})\) and \(\mu_2(x)=b+O(x^{-1})\).
By~\eqref{eq:sqrt-truncation}, the same expansions hold for
\(\mu_1^{[s]}\) and \(\mu_2^{[s]}\). Since
\(\kappa=-2\omega/(\alpha^2\sigma^2)\), we have
\(2\mu_R/b=1-4\omega/(\alpha^2\sigma^2)=2\kappa+1\), and therefore
\begin{equation}
 \frac{2\mu_1^{[s]}(x)}{\mu_2^{[s]}(x)}=-\frac{2\kappa+1}{x}+O(x^{-2}),
 \qquad x\to\infty.
 \label{eq:sqrt-drift-ratio}
\end{equation}

\proofstep{The functions \(r\), \(p\), \(\mathcal R\) and \(\mathcal U\)}
Fix \(\eta\in(0,\min(1,2\kappa))\) and set, for \(x\geq0\),
\[
 r(x)=\frac{2\kappa+1}{1+x},\qquad p(x)=(1+x)^{-1-\eta}.
\]
The function \(r\) is decreasing, \(r(x)\to0\) and
\(xr(x)\to2\kappa+1=2\mu_R/b\). The function \(p\) is positive, decreasing
and integrable on \([0,\infty)\), and \(r'(x)=O(x^{-2})\) and
\(p'(x)=O(x^{-2})\). Since \((2\kappa+1)/x-r(x)=(2\kappa+1)/\{x(1+x)\}\)
and \(\eta<1\), \eqref{eq:sqrt-drift-ratio} gives
\[
 \frac{2\mu_1^{[s]}(x)}{\mu_2^{[s]}(x)}=-r(x)+O(x^{-2})=-r(x)+o(p(x)).
\]
{This is condition (8.5) of \citet{denisov2025markov}, and the
bounds on \(r'\) and \(p'\) above are their (8.6).}
The functions defined in {\citet[eqs.~(8.8)--(8.9)]{denisov2025markov}} are then explicit.
For \(x>0\),
\[
 \mathcal R(x)=\int_0^x r(z)\,dz=(2\kappa+1)\log(1+x),
 \qquad
 \mathcal U(x)=\int_0^x e^{\mathcal R(z)}\,dz
 =\frac{(1+x)^{2\kappa+2}-1}{2\kappa+2},
\]
and \(\mathcal R(x)=\mathcal U(x)=0\) for \(x\leq0\). In particular,
\(0\leq\mathcal U(y)\leq(1+|y|)^{2\kappa+2}\) for every real \(y\), and
\(p(x)e^{\mathcal R(x)}=(1+x)^{2\kappa-\eta}\).

\proofstep{Hypotheses of Theorem~8.2 and Corollary~8.3 of \citet{denisov2025markov}}
{Theorem~8.2 of \citet{denisov2025markov} concerns a recurrent
chain \((R_t)\) whose stationary measure \(\pi_R\) satisfies
\(\pi_R(-\infty,x]<\infty\) for every \(x\) and which, from every initial
state, has \(\limsup_tR_t=\infty\) almost surely, their (8.10). It requires
the truncated moments to satisfy \(\mu_2^{[s]}(x)\to b>0\) and
\(x\mu_1^{[s]}(x)\to-\mu_R\) with \(\mu_R>-b/2\), for an increasing
truncation level \(s(x)=o(x)\), their (8.4). It also requires the expansion
(8.5) of the ratio \(2\mu_1^{[s]}/\mu_2^{[s]}\), the regularity condition
(8.6), and the four tail and integrability conditions (8.11)--(8.14) on the
increments. Corollary~8.3 adds \(2\mu_R>b\) and positive recurrence.}
\begin{enumerate}
\item[(i)] \emph{Recurrence.} {\citet[eq.~(7.33)]{denisov2025markov}} call a chain recurrent if, for some
\(\hat x\), it enters \((-\infty,\hat x]\) almost surely from every initial
state \(x>\hat x\).
By~\eqref{eq:affine-delta-mean}, there is \(m_1\) such that
\(\E[M_1-m\mid M_0=m]\leq\omega/2\) for all \(m\geq m_1\). Fix
\(K\geq m_1\), let \(\tau_K=\inf\{t\geq0:M_t\leq K\}\), and start from
\(M_0=m>K\). Each \(M_t\) is integrable by
Lemma~\ref{lem:affine-concentration} and induction, and
\(\bigl(M_{t\wedge\tau_K}-(\omega/2)(t\wedge\tau_K)\bigr)_{t\geq0}\) is a supermartingale. As
\(M_t\geq0\) and \(\omega<0\),
\[
 \frac{|\omega|}2\,\E[n\wedge\tau_K\mid M_0=m]
 \leq m-\E[M_{n\wedge\tau_K}\mid M_0=m]\leq m.
\]
Letting \(n\to\infty\) gives \(\E[\tau_K\mid M_0=m]\leq2m/|\omega|<\infty\).
Hence \((R_t)\) is recurrent in this sense, with \(\hat x=\sqrt K\).
\item[(ii)] \emph{Invariant measure.} The law \(\pi_R\) of \(R_0\) under the
stationary solution is an invariant probability measure of \((R_t)\), so
\(\pi_R(-\infty,x]\leq1\) for every \(x\). By (i) and (ii), \((R_t)\) is
recurrent with a finite invariant measure, which is positive recurrence in
the sense of \citet[Section~7.4]{denisov2025markov}.
\item[(iii)] \emph{Condition (8.10).}
Lemma~\ref{lem:affine-boundary-nonconfinement} gives
\(\limsup_{t\to\infty}R_t=\infty\) almost surely from every initial state.
\item[(iv)] \emph{Conditions (8.4)--(8.6).} The truncation level \(s\) is
increasing and \(s(x)=o(x)\), \(\mu_2^{[s]}(x)\to b>0\),
\(x\mu_1^{[s]}(x)\to-\mu_R\), and \(\mu_R-b/2=-\omega/2>0\). Conditions
(8.5)--(8.6) were verified in {Step~2}.
{\item[(v)] \emph{Conditions (8.11)--(8.14).} Since
\(0\leq\mathcal U(y)\leq(1+|y|)^{2\kappa+2}\leq2^{2\kappa+1}(1+|y|^{2\kappa+2})\),
and by \eqref{eq:sqrt-large-jump} with \(q=6\) for \(x\geq4\),
\[
 \begin{aligned}
 \sup_{x>0}\E\,\mathcal U(\xi(x))
 &\leq2^{2\kappa+1}(1+\bar\mu_{2\kappa+2}),\\
 \Pr\{|\xi(x)|>s(x)\}&\leq\bar\mu_6x^{-3}=o(p(x)/x),\\
 \E[|\xi(x)|^3\mathbf1_{\{|\xi(x)|\leq s(x)\}}]&\leq\bar\mu_3=o(x^2p(x)),\\
 \E[\mathcal U(\xi(x))\mathbf1_{\{\xi(x)>s(x)\}}]
 &\leq\sup_{y>0}\E\,\mathcal U(\xi(y))=o(p(x)e^{\mathcal R(x)}).
 \end{aligned}
\]
The first bound makes the ratio in (8.11) bounded, and the other three are
(8.12)--(8.14). The comparisons hold because
\(p(x)/x\asymp x^{-2-\eta}\), \(x^2p(x)\asymp x^{1-\eta}\), and
\(p(x)e^{\mathcal R(x)}=(1+x)^{2\kappa-\eta}\), with \(\eta<1\) and
\(\eta<2\kappa\).}
\end{enumerate}

\proofstep{Conclusion}
Since \(2\mu_R>b\) and \((R_t)\) is positive recurrent,
Corollary~8.3 of {\citet{denisov2025markov}} applies with \(\rho_R=2\mu_R/b+1=2\kappa+2\). It gives a
constant \(C_R>0\) such that
\[
 \Pr(R_0>x)\sim\frac{C_R}{\rho_R-2}\,\frac{x^2}{\mathcal U(x)}
 \sim\frac{(\kappa+1)C_R}{\kappa}\,x^{-2\kappa},
 \qquad x\to\infty.
\]
Since \(M_0=R_0^2\), \(\Pr(M_0>y)=\Pr(R_0>\sqrt y)\), which
proves~\eqref{eq:affine-boundary-state-tail} with
\(C=(\kappa+1)C_R/\kappa\). For \(q>0\),
\(\E M_0^q=\int_0^\infty qy^{q-1}\Pr(M_0>y)\,dy\), which is finite if and
only if \(q<\kappa\). By Lemma~\ref{lem:affine-state-observation-moments},
the same holds for \(\E X_0^q\).
\end{proof}

\begin{lemma}
\label{lem:kappa-two-three-persistence}
{Suppose Assumptions~\ref{ass:affine-seed} and
\ref{ass:invertibility} hold, with \(\phi=1\) and \(\omega<0\).}
There are \(a_*,p_*,m_0>0\) such that, for every
\(m\geq m_0\) and every integer \(0\leq j\leq a_*m\),
\begin{equation}
 \Pr(M_k\in[m/2,2m]\text{ for all }0\leq k\leq j,\,
             X_0\geq m/2,\ X_j\geq m/4\mid M_0=m)\geq p_*.
 \label{eq:kappa-two-three-persistence}
\end{equation}
\end{lemma}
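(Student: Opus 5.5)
We will control the path over a time window of length \(j\leq a_*m\) through a martingale decomposition of \((M_k)\) started at \(M_0=m\). At \(\phi=1\), \(M_{k+1}=M_k+\Delta_{M_k}\), where, conditionally on \(M_k\), \(\Delta_{M_k}=\omega+\alpha U_k+\mathcal E_k\). Here \(U_k=X_k-M_k\) and \(\mathcal E_k=s_c(Y_k)-Y_k\in[0,c\log2+\max(-Y_k,0)]\). We therefore write
\[
 M_k=m+\omega k+\alpha S_k+\sum_{i<k}\mathcal E_i,\qquad S_k=\sum_{i<k}U_i,
\]
where \((S_k)\) is a martingale with \(\E(U_i^2\mid\mathcal F_{i-1})=\sigma^2M_i\). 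To handle the unbounded conditional variance, we stop at \(\tau=\inf\{k:M_k\notin[m/2,2m]\}\). The stopped martingale \(S_{k\wedge\tau}\) has \(\E S_{j\wedge\tau}^2\leq 2\sigma^2 m j\leq2\sigma^2a_*m^2\). Doob's \(L^2\) maximal inequality then gives
\[
 \Pr\Bigl(\max_{k\leq j}|\alpha S_{k\wedge\tau}|>m/8\Bigr)\leq C\alpha^2\sigma^2a_*,
\]
and this is at most \(1/10\) once \(a_*\) is small. The drift term satisfies \(|\omega|k\leq|\omega|a_*m\leq m/8\) for small \(a_*\). While \(M_i\geq m/2\) is large, the softplus remainders satisfy \(\Pr(|U_i|>\epsilon M_i)\leq C_A M_i^{-A}\) by Lemma~\ref{lem:affine-concentration}. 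Hence, off an event of probability \(O(j\,m^{-A})=O(m^{1-A})\), each \(\mathcal E_i\) is exponentially small, and their sum over \(j\leq a_*m\) steps is at most \(m/8\). On the intersection of the good events, \(|M_k-m|\leq3m/8\) for all \(k\leq j\wedge\tau\). This forces \(\tau>j\), so the whole path stays in \([m/2,2m]\) with probability at least \(0.9-o(1)\).

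The two observation conditions are handled separately. For \(X_0\), conditionally on \(M_0=m\) we have \(X_0\sim G_m\), and Chebyshev gives \(\Pr(X_0<m/2)\leq4\sigma^2/m\to0\). For \(X_j\), given \(\mathcal F_{j-1}\) on the event \(M_j\in[m/2,2m]\), Chebyshev applied to \(G_{M_j}\) gives \(\Pr(X_j<m/4\mid\mathcal F_{j-1})\leq\Pr(|U_j|>M_j/2\mid\mathcal F_{j-1})\leq 4\sigma^2/M_j\leq8\sigma^2/m\). Note that the event \(\{M_k\in[m/2,2m],\,k\leq j\}\) is \(\mathcal F_{j-1}\)-measurable, since \(M_j\) is determined by \(X_{j-1},M_{j-1}\). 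We can therefore condition on it and multiply.

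Combining these with a union bound, the probability in \eqref{eq:kappa-two-three-persistence} is at least
\[
 0.9-O(m^{1-A})-O(1/m)\geq p_*:=1/2
\]
for \(m\geq m_0\) with \(m_0\) large. The constants \(a_*,m_0\) are chosen independently of \(j\), and the bound is uniform in \(0\leq j\leq a_*m\).

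The main obstacle is the first step: making the maximal inequality rigorous when the conditional variances \(\sigma^2M_i\) are themselves random and potentially large. The stopping at \(\tau\) is what resolves this. A secondary point is the remainder sum. Bounding it requires pairing the per-step concentration of Lemma~\ref{lem:affine-concentration} with a union bound over at most \(a_*m\) steps. This is why the exponent \(A\) must be taken larger than \(1\). Alternatively, we can bound \(\E\mathcal E_i\mathbf1_{\{M_i\geq m/2\}}=O(m^{-A})\) via \eqref{eq:softplus-remainder-superpolynomial} and apply Markov's inequality to the sum.
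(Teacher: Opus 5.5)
Your proposal is correct and follows the paper's proof closely. Both stop at the first exit $\tau$ from $[m/2,2m]$, control the stopped martingale by Doob's $L^2$ inequality with second moment of order $mj\le a_*m^2$, and rule out an exit $\tau\le j$ by contradiction; both then finish with Chebyshev bounds for $X_0$ and $X_j$, using the $\mathcal F_{j-1}$-measurability of the path event, to get $p_*=1/2$.

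The one difference is that the paper centers the full increments $M_{k+1}-M_k$ rather than the innovations. It thereby absorbs the softplus remainder into the drift bounds $\omega\le\E(M_{t+1}-M_t\mid M_t=x)\le\omega/2$ and into the Lipschitz bound $\Var(M_{t+1}\mid M_t=x)\le\alpha^2\sigma^2x$. Your separate union-bound control of the $\mathcal E_i$ is valid as you state it, but that centering makes it unnecessary.
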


\begin{proof}
Equation~\eqref{eq:affine-delta-mean} and positivity of the softplus
remainder imply, for all sufficiently large \(x\),
\[
 \omega\leq\E(M_{t+1}-M_t\mid M_t=x)\leq\omega/2<0.
\]
Since \(s_c\) is one-Lipschitz and
\(\Var(X_t\mid M_t=x)=\sigma^2x\),
\[
 \Var(M_{t+1}\mid M_t=x)\leq\alpha^2\sigma^2x.
\]

{Choose \(a_*>0\) so small that}
\[
 |\omega|a_*<\frac18,\qquad
 32\alpha^2\sigma^2a_*<\frac14.
\]
Starting from \(M_0=m\), let
\[
 \tau_m=\inf\{k\geq0:M_k\notin[m/2,2m]\},
 \qquad k_m=\lfloor a_*m\rfloor,
\]
and define the stopped martingale \((\mathcal M_\ell)_{\ell\geq0}\) by
\[
 \mathcal M_\ell=\sum_{k=0}^{\ell-1}
 [M_{k+1}-M_k-\E(M_{k+1}-M_k\mid M_k)]
 \mathbf1_{\{k<\tau_m\}}.
\]
For all large \(m\),
\[
 \E[\mathcal M_{k_m}^2\mid M_0=m]\leq2\alpha^2\sigma^2mk_m.
\]
Enlarge \(m_0\), if necessary, so that every point of
\([m/2,2m]\) lies in the region where the preceding drift bounds hold.
Doob's inequality then gives
\[
 \Pr\!\left(\max_{\ell\leq k_m}|\mathcal M_\ell|\geq m/4\,\middle|\,M_0=m\right)\leq\frac14.
\]
Put
\(b(x)=\E(M_{t+1}-M_t\mid M_t=x)\). On
\(\{\max_{\ell\leq k_m}|\mathcal M_\ell|<m/4\}\), a first exit
\(\tau_m\leq k_m\) would satisfy, by {the definition of
\(\mathcal M_\ell\)}
and
the drift bounds before exit,
\[
 \begin{aligned}
 M_{\tau_m}&=m+\mathcal M_{\tau_m}+\sum_{k<\tau_m}b(M_k),\\
 \frac{5m}{8}&<m-\frac m4-|\omega|k_m
 \leq M_{\tau_m}<m+\frac m4=\frac{5m}{4}.
 \end{aligned}
\]
This contradicts \(M_{\tau_m}\notin[m/2,2m]\). Define
\[
 E_{m,j}:=\{\tau_m>j\}
 =\{M_k\in[m/2,2m]\text{ for every }0\leq k\leq j\}.
\]
We have proved
\(\Pr(E_{m,k_m}\mid M_0=m)\geq3/4\), and hence
\(\Pr(E_{m,j}\mid M_0=m)\geq3/4\) for every \(0\leq j\leq k_m\).

Chebyshev's inequality and
\(\Var(X_0\mid M_0=m)=\sigma^2m\) give
\[
 \Pr(X_0<m/2\mid M_0=m)\leq\frac{4\sigma^2}{m}.
\]
For \(1\leq j\leq k_m\), the event \(E_{m,j}\) is
\(\mathcal F_{j-1}\)-measurable because \(M_j\) is determined before
\(X_j\) is drawn. On this event, \(M_j\geq m/2\), so
\[
 \Pr(X_j<m/4\mid\mathcal F_{j-1})
 \leq\Pr(|X_j-M_j|\geq M_j/2\mid\mathcal F_{j-1})
 \leq\frac{4\sigma^2}{M_j}\leq\frac{8\sigma^2}{m}.
\]
Taking expectations after multiplying by \(\mathbf1_{E_{m,j}}\) gives
\[
 \Pr(E_{m,j},X_j<m/4\mid M_0=m)\leq\frac{8\sigma^2}{m}.
\]
A union bound therefore yields
\[
 \Pr(E_{m,j},X_0\geq m/2,X_j\geq m/4\mid M_0=m)
 \geq\frac34-\frac{12\sigma^2}{m}.
\]
For \(j=0\), the initial Chebyshev bound suffices. Increasing \(m_0\)
so that \(m_0\geq48\sigma^2\) proves
\eqref{eq:kappa-two-three-persistence} with \(p_*=1/2\).
\end{proof}

\begin{proof}[{Proof of Theorem~\ref{thm:affine-boundary}}]
Use the transition kernel \(P\) and {the functions
\(V_\chi(m)=1+m^\chi\)}
from the proof of Theorem~\ref{thm:affine-classification}.

\proofstep{Existence and moment classification when \(\omega<0\)}
{Fix \(0<s<\kappa\) and take \(\chi=s+1\). Since \(s<\kappa\) is
equivalent to \(\omega+s\alpha^2\sigma^2/2<0\),
equation~\eqref{eq:affine-power-drift} yields}
\[
 (PV_\chi)(m)\leq V_\chi(m)-C_sm^s+B_s
\]
for suitable \(B_s,C_s>0\).  Summing this inequality along the chain and
dividing by the number of steps bounds the Ces\`aro average of
\(M_t^s\).  As in the proof of
Theorem~\ref{thm:affine-classification}, extend \(G_0=\delta_0\){, so
that the kernel is Feller on \([0,\infty)\) and}
\(P(m,\{0\})=0\).  The moment bound makes the Ces\`aro laws tight, so
Lemma~\ref{lem:feller-invariant-limits} gives an invariant law on
\((0,\infty)\). As in the proof of
Theorem~\ref{thm:affine-classification}, this invariant law yields a
stationary solution. Lemma~\ref{lem:affine-boundary-invariant-tail} applies to
this invariant law and gives both the stated tail asymptotic and the exact
moment classification, including the critical order \(s=\kappa\).

\proofstep{Nonnegative intercepts}
Lemma~\ref{lem:affine-general-state-boundary} proves part~2.

\proofstep{Uniqueness and polynomial mixing}
Suppose first that Assumption~\ref{ass:affine-seed}(ii)(a) holds.
Lemma~\ref{lem:finite-activity-wasserstein-coupling} gives uniqueness
for every \(\kappa>0\).  For the mixing assertion, fix
{\(0<r<\kappa\)}, choose \(r<s<\kappa\), and then choose
\(s+1<a<\kappa+1\).  The power drift at exponent \(a\) gives
\[
 PV_a\leq V_a-c_aV_a^{1-1/a}+B_a.
\]
{Set \(V=V_a\geq1\) and \(\psi_a(v)=c_av^{1-1/a}\), which is
increasing and concave with \(\psi_a(0)=0\) and \(\psi_a'(v)\to0\). The
preceding drift is \(PV\leq V-\psi_a(V)+B_a\). Choose \(u_a\geq1\) so that
\(\psi_a(u_a)>2B_a\), and put
\[
 \Delta_a=\{V_a\leq u_a\}^2,\qquad
 \varphi_a(v)=\left(1-\frac{2B_a}{\psi_a(u_a)}\right)\psi_a(v).
\]
By \citet[Theorem~4]{durmusFortMoulines2016}, this single-chain drift gives
their pair-drift condition H2 on the coupling set \(\Delta_a\), with rate
function \(\varphi_a\). Choose \(R<\infty\) with
\(\{V_a\leq u_a\}\subset[0,R]\).
Lemma~\ref{lem:finite-activity-wasserstein-coupling} then gives their
coupling condition H1 on \(\Delta_a\), namely \(Qd\leq d\) everywhere and
\(Q^{N_R}d\leq(1-\delta_R)d\) on \(\Delta_a\). Since}
\(H_{\varphi_a}^{-1}(n)\asymp n^a\), where
\(H_{\varphi_a}(v)=\int_1^vdu/\varphi_a(u)\), their Theorem~3(i) gives
\[
 \mathcal W_d(P^n(m,\cdot),\pi)
 \leq C_a\left[\frac{V_a(m)}{n^a}
       +\frac1{n^{a-1}}+\left(\frac{\log n}{n}\right)^a\right],
 \qquad n\geq3.
\]
Because \(d\leq1\), the first term can be replaced by
\(1\wedge(C_aV_a(m)/n^a)\).  Consequently,
\[
 \int\mathcal W_d(P^n(m,\cdot),\pi)\,\pi(dm)
 \leq C\left[n^{-s}\pi(V_a^{s/a})+n^{-(a-1)}
                    +(\log n/n)^a\right]=O(n^{-s}).
\]
Here \(\pi(V_a^{s/a})<\infty\) follows from the {stationary moment
of order \(s<\kappa\) proved above.}
Equation~\eqref{eq:finite-activity-wasserstein-beta}
and \(s>r\) now give \(n^r\beta_X^{\rm mix}(n)\to0\).

Suppose now that Assumption~\ref{ass:affine-seed}(ii)(b) holds.  Here
\(\alpha=1-\beta>0\).  Lemma~\ref{lem:infinite-activity-density-overlap}
shows that the chain is irreducible and strongly aperiodic on \(E_\beta\), that bounded sets are petite, and that every invariant law is
supported there.  Fix any \(0<r<\kappa\), put
\(\chi=r+1\), and take a sufficiently large bounded set \(C\).  The power
drift has the form
\[
 PV_\chi\leq V_\chi-C_\chi V_\chi^{1-1/\chi}
             +B_\chi\mathbf1_C.
\]
The positive Harris recurrence criterion
\citep[Theorem~11.3.4, p.~265]{meyn1993markov} gives positive Harris recurrence
and hence uniqueness.  With
\(\varphi(v)=C_\chi v^{1-1/\chi}\), the rate associated with this drift is
determined by
\[
 H_\varphi(v)=\int_1^v\frac{du}{\varphi(u)}
 =\frac{\chi}{C_\chi}(v^{1/\chi}-1),
 \qquad
 r_\varphi(j)=\varphi(H_\varphi^{-1}(j))
 \asymp j^{\chi-1}=j^r.
\]
{With \(V=V_\chi\), the bounded petite set \(C\), and this
\(\varphi\), the preceding drift is the subgeometric drift condition of
\citet{meitz2021subgeometric}. For the \(\psi\)-irreducible, aperiodic
chain, their Theorem~2(d) gives, in the stationary case and without
requiring \(\pi(V_\chi)<\infty\), that
\(r_\varphi(j)\beta_M^{\rm mix}(j)\to0\). The rate calculation therefore
gives \(j^r\beta_M^{\rm mix}(j)\to0\), even when \(\pi(V_\chi)=\infty\).}
By \eqref{eq:state-to-observation-beta-transfer},
\(j^r\beta_X^{\rm mix}(j)\to0\) as well.
{In both cases, therefore, \(j^r\beta_X^{\rm mix}(j)\to0\) for every
\(0<r<\kappa\).}

\proofstep{Lack of geometric \(\beta\)-mixing}
{For the stationary observations, write
\[
 \alpha_X^{\rm mix}(j)
 =\sup_{\substack{A\in\mathcal F_0^X\\ B\in\mathcal F_j^{X,+}}}
   |\Pr(A\cap B)-\Pr(A)\Pr(B)|
\]
for the strong-mixing coefficient.  It satisfies
\(\alpha_X^{\rm mix}(j)\leq\beta_X^{\rm mix}(j)\).
Let \(a_*,p_*,m_0\) be supplied by
Lemma~\ref{lem:kappa-two-three-persistence}.  Fix \(b>1/a_*\) and
\(q\in(\kappa/2,\kappa)\), and, for integers \(n\geq1\), set
\[
 A_n=\{X_0\geq bn/2\},\qquad C_n=\{X_n\geq bn/4\}.
\]
For all large \(n\), every \(m\geq bn\) satisfies \(m\geq m_0\) and
\(n\leq a_*m\). Given \(M_0=m\), the observations
\(X_0,X_1,\ldots\) follow the model initialized at \(M_0=m\), and for
\(m\geq bn\) the event in Lemma~\ref{lem:kappa-two-three-persistence} at
horizon \(j=n\) is contained in \(A_n\cap C_n\). Writing \(C>0\) for the
constant in the exact stationary tail, we obtain
\[
 \Pr(A_n\cap C_n)\geq p_*\Pr(M_0\geq bn)
 \sim p_*Cb^{-\kappa}n^{-\kappa}.
\]
Lemma~\ref{lem:affine-boundary-invariant-tail} gives \(\E X_0^q<\infty\), so
Markov's inequality and stationarity give
\[
 \Pr(A_n)\Pr(C_n)\leq(\E X_0^q)^2\left(\frac{b^2n^2}8\right)^{-q}
 =o(n^{-\kappa}),
\]
because \(2q>\kappa\). Since \(A_n\in\mathcal F_0^X\) and
\(C_n\in\mathcal F_n^{X,+}\), there is a constant
\(C_{\mathrm{mix}}>0\) such that, for all large \(n\),
\[
 \beta_X^{\rm mix}(n)\geq\alpha_X^{\rm mix}(n)
 \geq\Pr(A_n\cap C_n)-\Pr(A_n)\Pr(C_n)
 \geq C_{\mathrm{mix}}n^{-\kappa}.
\]}
This polynomial lower bound rules out geometric \(\beta\)-mixing
of \((X_t)\).
In particular, \(e^{\lambda_{\rm mix}n}\beta_X^{\rm mix}(n)\to\infty\)
for every \(\lambda_{\rm mix}>0\).
\end{proof}

\subsection{Proof of Theorem~\ref{thm:kappa-two-three-long-memory}}
\label{app:kappa-two-three}

\leavevmode{Part~1 follows from Davydov's inequality. For part~2,
association bounds the covariance below by that of the upper-tail parts of
\(X_0\) and \(X_j\). The stationary tail and persistence at large states make
this lower bound of order \(j^{2-\kappa}\), which is not summable when
\(2<\kappa\leq3\).}

{A random vector \(Y\) is associated if \(\Cov(f(Y),g(Y))\geq0\)
for all coordinatewise nondecreasing \(f\) and \(g\) for which the covariance
exists \citep{esary1967association}.}

\begin{lemma}
\label{lem:kappa-two-three-association}
{Under the assumptions of part~2 of
Theorem~\ref{thm:kappa-two-three-long-memory}, every finite vector of the
stationary observation process is associated.}
\end{lemma}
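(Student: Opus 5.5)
Since part~2 of Theorem~\ref{thm:kappa-two-three-long-memory} assumes \(\phi=1\) and \(\beta\geq0\), Assumption~\ref{ass:invertibility} gives \(\alpha=1-\beta>0\). Both coefficients in recursion~\eqref{mrecursion} are therefore nonnegative. The plan is to write any finite block of observations as a coordinatewise nondecreasing function of independent inputs, and then apply the closure properties of association from \citet{esary1967association}.

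\emph{Step 1: monotone quantile representation.} For \(m\geq0\) and \(v\in(0,1)\), let \(q(m,v)=\inf\{x\geq0:G_m([0,x])\geq v\}\). This is nondecreasing in \(v\). It is also nondecreasing in \(m\): for \(m\leq m'\), the convolution identity \(G_{m'}=G_m*G_{m'-m}\) adds a nonnegative independent summand, so \(G_{m'}\) stochastically dominates \(G_m\), and \(G_{m'}([0,x])\leq G_m([0,x])\) for every \(x\). Joint measurability follows from monotonicity in each argument. If \(V\) is uniform on \((0,1)\), then \(q(m,V)\sim G_m\).

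\emph{Step 2: forward construction.} By stationarity, it suffices to treat blocks \((X_0,\ldots,X_n)\). Theorem~\ref{thm:affine-boundary} gives uniqueness in law of the stationary solution, with invariant state law \(\pi\). Take \(M_0\sim\pi\) and \(V_0,\ldots,V_n\) iid uniform, independent of \(M_0\), and set
\[
X_t=q(M_t,V_t),\qquad M_{t+1}=s_c(\omega+\alpha X_t+\beta M_t),\qquad 0\leq t\leq n.
\]
Then \(X_t\mid(M_0,V_0,\ldots,V_{t-1})\sim G_{M_t}\). As in the proof of Theorem~\ref{thm:affine-classification}, this yields the joint law of \((M_t,X_t)_{0\leq t\leq n}\) under the stationary solution. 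Uniqueness in law, together with the fact that the marginal of \(M_0\) is invariant, identifies this with the law under every stationary solution.

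\emph{Step 3: monotonicity.} Because \(s_c\) is increasing, \(\alpha>0\) and \(\beta\geq0\), the map \((m,x)\mapsto s_c(\omega+\alpha x+\beta m)\) is nondecreasing in each argument. By Step~1, \(q\) is also nondecreasing in each argument. Induction on \(t\) then shows that each \(M_t\) and each \(X_t\) is a coordinatewise nondecreasing function of \((M_0,V_0,\ldots,V_t)\).

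\emph{Step 4: association.} A single real random variable is associated, and so is each \(V_t\). Since \(M_0,V_0,\ldots,V_n\) are independent, the vector \((M_0,V_0,\ldots,V_n)\) is associated, because independent unions of associated families are associated \citep{esary1967association}. Nondecreasing functions of an associated vector form an associated vector, so \((X_0,\ldots,X_n)\) is associated. Any finite subvector inherits association, which proves the lemma.

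I expect no serious obstacle. The only delicate points are the stochastic monotonicity of \(m\mapsto G_m\) and the fact that the representation reproduces the law of the stationary solution. The first is immediate from infinite divisibility with nonnegative support. The second relies on uniqueness in law from Theorem~\ref{thm:affine-boundary}, and on \(\beta\geq0\) and \(\alpha>0\) for the monotonicity of the recursion.
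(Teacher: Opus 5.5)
Your proposal is correct and follows the paper's proof essentially step for step: a quantile representation $X_t=q(M_t,V_t)$ driven by $M_0\sim\pi$ and independent uniforms, monotonicity of the recursion from $\alpha=1-\beta>0$ and $\beta\geq0$, and the closure properties of association from Esary, Proschan and Walkup. One small repair is needed, because monotonicity in each argument separately does not by itself give joint Borel measurability (indicators of upper sets in $\mathbb R^2$ can fail to be Borel); here it does hold, since $\{(m,v):q(m,v)\leq x\}=\{(m,v):G_m([0,x])\geq v\}$ and $m\mapsto G_m([0,x])$ is monotone, hence Borel, which plays the role of the paper's appeal to weak continuity of $m\mapsto G_m$.
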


\begin{proof}
The convolution identity \(G_{m'}=G_m*G_{m'-m}\), \(m'\geq m\), shows
that \(G_m\) is stochastically increasing in \(m\).  Hence its
generalized quantile \(F_m^{\leftarrow}(u)\), \(0<u<1\), may be chosen jointly
measurable and nondecreasing in both arguments.  Indeed, weak continuity
of \(m\mapsto G_m\) makes \((m,x)\mapsto G_m([0,x])\) a Borel
distribution function, and one may take
\[
 F_m^{\leftarrow}(u)=\inf\{x\geq0:G_m([0,x])\geq u\}.
\]
The definition makes \(F_m^{\leftarrow}\) nondecreasing in \(u\), while stochastic
ordering makes it nondecreasing in \(m\).  Let
\(\mathcal U_0,\mathcal U_1,\ldots\) be
independent uniform random
variables, independent of an invariant initial state \(M_0\), and put
\(X_t=F_{M_t}^{\leftarrow}(\mathcal U_t)\) and
\(M_{t+1}=s_c(\omega+\alpha X_t+\beta M_t)\).
Because \(\alpha>0\), \(\beta\geq0\), and \(s_c\) is increasing, every
finite observation vector is a coordinatewise nondecreasing function of
the independent variables \(M_0,\mathcal U_0,\mathcal U_1,\ldots\).  Independent variables
are associated, and association is preserved by coordinatewise
nondecreasing transformations. See \citet{esary1967association}.
\end{proof}

\begin{proof}[{Proof of Theorem~\ref{thm:kappa-two-three-long-memory}}]
For part~1, suppose \(\kappa>3\) and choose \(q\in(3,\kappa)\).
Theorem~\ref{thm:affine-boundary} gives \(\E|X_0|^q<\infty\) and
\(j^q\beta_X^{\rm mix}(j)\to0\).  Since the strong-mixing coefficient
satisfies \(\alpha_X^{\rm mix}(j)\leq\beta_X^{\rm mix}(j)\), Davydov's
covariance inequality \citep{davydov1968convergence} yields
\[
 |\Cov(X_0,X_j)|
 \leq12\|X_0\|_q^2(\alpha_X^{\rm mix}(j))^{1-2/q}
 =o(j^{-(q-2)}).
\]
Since \(q-2>1\) and \(X_0\) has positive finite
variance, division by \(\Var(X_0)\) and summation over \(j\) prove
part~1.

For part~2, Theorem~\ref{thm:affine-boundary} gives a stationary solution
that is unique in law {with}
\(\E X_0^2<\infty\){, and}
\[
 \Var(X_0)\geq\E[\Var(X_0\mid\mathcal F_{-1})]
 =\sigma^2\E M_0>0,
\]
{so \(\rho_X(j)\) is well defined. By
Lemma~\ref{lem:kappa-two-three-association},}
\(\Cov(X_0,X_j)\geq0\), and hence \(\rho_X(j)\geq0\).

The maps \(x\mapsto\max(x-j,0)\) and
\(x\mapsto\min(x,j)\) are increasing and satisfy
\(x=\max(x-j,0)+\min(x,j)\). Expanding the covariance and
using association to show that the other three terms are nonnegative gives
\begin{equation}
 \Cov(X_0,X_j)
 \geq\Cov(\max(X_0-j,0),\max(X_j-j,0)).
 \label{eq:kappa-two-three-truncation-covariance}
\end{equation}

Choose \(A\geq8\) such that \(A^{-1}\leq a_*\), where
\(a_*\) is from
Lemma~\ref{lem:kappa-two-three-persistence}.
For all sufficiently large \(j\), conditional on
\(M_0=m\geq Aj\), that lemma applies at horizon \(j\).  On its event,
\[
 \max(X_0-j,0)\geq\frac{3m}{8},
 \qquad
 \max(X_j-j,0)\geq\frac m8.
\]
Integrating this conditional bound over \(M_0\geq Aj\) gives
\[
 \E\!\left[\max(X_0-j,0)\max(X_j-j,0)\right]
 \geq
 C_*j^2\Pr(M_0\geq Aj),
\]
where \(C_*=3p_*A^2/64>0\).  The finite second moment gives
\[
 \E\max(X_0-j,0)
 \leq\E[X_0\mathbf1_{\{X_0>j\}}]
 \leq j^{-1}\E X_0^2.
\]
By stationarity, substitution in
\eqref{eq:kappa-two-three-truncation-covariance} gives, for all
sufficiently large \(j\),
\begin{equation}
 \Cov(X_0,X_j)
 \geq
 C_*j^2\Pr(M_0\geq Aj)
 -(\E X_0^2)^2j^{-2}.
 \label{eq:kappa-two-three-integrated-lower-bound}
\end{equation}

Lemma~\ref{lem:affine-boundary-invariant-tail} gives
\[
 j^2\Pr(M_0\geq Aj)
 \asymp j^{2-\kappa}.
\]
Since \(2<\kappa\leq3\), we have \(j^{-2}=o(j^{2-\kappa})\).
Hence
\eqref{eq:kappa-two-three-integrated-lower-bound}, followed by division by
the positive finite variance of \(X_0\), gives
\[
 \rho_X(j)\geq Cj^{-(\kappa-2)}
\]
for all sufficiently large \(j\).  This lower bound is nonsummable when
\(2<\kappa\leq3\), establishing the long-memory conclusion
in part~2.
\end{proof}

\subsection{Proof of Theorem~\ref{thm:affine-supercritical}}
\label{app:proof-affine-supercritical}

{Chebyshev's inequality shows that, from large states, one step
multiplies the state by at least a fixed factor \(\lambda>1\) with
probability close to one. Under an invariant law, the tail probabilities
at the points \(\lambda^nx\) would then stay bounded away from zero, which is
impossible.}

\begin{proof}[{Proof of Theorem~\ref{thm:affine-supercritical}}]
Since \(\phi>1\) and \(\beta<1\),
\(\alpha=\phi-\beta>0\).  Choose \(\lambda\in(1,\phi)\) and put
\[
 \epsilon=\frac{\phi-\lambda}{2\alpha}>0.
\]
Conditional on \(M_t=y\),
\[
 X_t-y\stackrel{d}{=}U_y.
\]
On the event \(\{X_t-y\geq-\epsilon y\}\), the inequality
\(s_c(x)\geq x\) gives
\[
 M_{t+1}
 =s_c(\omega+\phi y+\alpha(X_t-y))
 \geq \omega+(\phi-\alpha\epsilon)y
 =\omega+\frac{\phi+\lambda}{2}y.
\]
Consequently, there is \(m_{\rm esc}<\infty\) such that, for
\(y\geq m_{\rm esc}\),
\[
 X_t-y\geq-\epsilon y
 \quad\Longrightarrow\quad M_{t+1}\geq\lambda y.
\]
{Chebyshev's inequality and \(\Var(U_y)=\sigma^2y\) give
\[
 \Pr(M_1<\lambda y\mid M_0=y)
 \leq \Pr(U_y<-\epsilon y)
 \leq \frac{\sigma^2}{\epsilon^2y},
 \qquad y\geq m_{\rm esc}.
\]
Suppose that an invariant probability \(\pi\) existed, and put
\(\bar F(x)=\pi((x,\infty))\). Lemma~\ref{lem:affine-concentration} gives
unbounded support of \(G_m\). Because \(\alpha>0\) and \(s_c\) is
increasing and unbounded, \(P(m,(x,\infty))>0\) for every \(m>0\) and
\(x<\infty\), so invariance gives \(\bar F(x)>0\). For \(x\geq m_{\rm esc}\)
and \(m>x\), the event \(\{M_1\geq\lambda m\}\) is contained in
\(\{M_1>\lambda x\}\). Invariance and the preceding bound therefore give
\[
 \bar F(\lambda x)
 \geq\int_{(x,\infty)}\left(1-\frac{\sigma^2}{\epsilon^2m}\right)\pi(dm)
 \geq\left(1-\frac{\sigma^2}{\epsilon^2x}\right)\bar F(x).
\]
Fix \(x\geq m_{\rm esc}\) with \(\sigma^2\leq\epsilon^2x(1-\lambda^{-1})/2\).
Iteration gives, for every \(n\geq1\),
\[
 \bar F(\lambda^nx)
 \geq\bar F(x)\prod_{k=0}^{n-1}\left(1-\frac{\sigma^2}{\epsilon^2\lambda^kx}\right)
 \geq\bar F(x)\left(1-\sum_{k=0}^{n-1}\frac{\sigma^2}{\epsilon^2\lambda^kx}\right)
 \geq\frac{\bar F(x)}2.
\]
The left-hand side tends to zero as \(n\to\infty\), a contradiction. Hence
no invariant probability exists. A strictly stationary solution of the full
model would make the marginal law of \(M_0\) invariant for the Markov chain
\((M_t)\), so no such solution exists either.}
\end{proof}

\section{\texorpdfstring{{Proofs for Section~\ref{sec:likelihood-consequences}}}{Proofs for Section 3}}
\label{app:inference-proofs}

\subsection{\texorpdfstring{{Notation and auxiliary lemmas}}{Notation and auxiliary lemmas}}
\label{app:inference-notation}

\leavevmode{The following lemma constructs the filter \(M_t(\theta)\) of
Section~\ref{sec:poisson-qmle} at every \(\theta\in\Theta\), including
points with \(\phi>1\). A solution of recursion~\eqref{mrecursion} at
\(\theta\), computed from \((X_t)\), is causal if \(M_t(\theta)\) is the same
measurable function of \(X_{t-1},X_{t-2},\ldots\) for every \(t\).}

\begin{lemma}
\label{lem:unrestricted-candidate-filter}
Let \((X_t)\) be any stationary nonnegative process with
\(\E X_0<\infty\).  {Suppose that
\(\Theta\subset\mathbb R\times\mathbb R\times[-\rho,\rho]\) is compact
for some \(\rho\in(0,1)\).}
\begin{enumerate}
\item[(i)]
For every \(\theta\in\Theta\), recursion~\eqref{mrecursion},
evaluated at \(\theta\) using the observations \((X_t)\), has a unique stationary causal solution
\((M_t(\theta))\).
For each \(t\), \(M_t(\theta)\) is continuous in
\(\theta\) almost surely and satisfies
\[
 \sup_{\theta\in\Theta}M_t(\theta)
 \leq C\sum_{j=0}^\infty\rho^j(1+X_{t-1-j})<\infty
 \quad\text{a.s.}
\]
\item[(ii)]
For two solutions \((M_t^{(1)}(\theta))\) and \((M_t^{(2)}(\theta))\) of recursion~\eqref{mrecursion} using the same sequence \((X_t)\) and arbitrary finite initial values,
\[
 \sup_{\theta\in\Theta}
 |M_t^{(1)}(\theta)-M_t^{(2)}(\theta)|
 \leq\rho^t\sup_{\theta\in\Theta}
 |M_0^{(1)}(\theta)-M_0^{(2)}(\theta)|.
\]
\item[(iii)]
If, for some \(\theta\in\Theta\), a jointly stationary process
\(((X_t,M_t^*))\) with \(\E M_0^*<\infty\) satisfies
recursion~\eqref{mrecursion} at \(\theta\), then
\(M_t^*=M_t(\theta)\) almost surely, {and}
\(M_t^*\) is measurable with respect to
{\(\mathcal F_{t-1}^{X}=\sigma(X_s:s\leq t-1)\)}.
\end{enumerate}
\end{lemma}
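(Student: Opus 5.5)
The plan is to work with the map \(F_\theta(x,m)=s_c(\omega+\alpha x+\beta m)\), which is \(|\beta|\)-Lipschitz in \(m\), hence \(\rho\)-Lipschitz uniformly over \(\Theta\). I would prove (ii) first, since it is immediate. Two solutions driven by the same \(X_{t-1}\) satisfy
\[
 |M_t^{(1)}(\theta)-M_t^{(2)}(\theta)|\leq|\beta|\,|M_{t-1}^{(1)}(\theta)-M_{t-1}^{(2)}(\theta)|,
\]
because \(s_c\) is one-Lipschitz and the \(\omega+\alpha X_{t-1}\) parts cancel. Iterating \(t\) times and taking the supremum over \(\theta\), with \(|\beta|\leq\rho\), gives the claim.

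For (i), I would build the solution as a backward (Bougerol-type) limit. Fix a positive constant \(m_0\). For each \(k\geq1\), let \(M_t^{(k)}(\theta)\) be the recursion started at time \(t-k\) from \(m_0\) and run forward with \(X_{t-k},\ldots,X_{t-1}\). Then \(M_t^{(k+1)}(\theta)\) and \(M_t^{(k)}(\theta)\) are two runs over the last \(k\) steps from the starting values \(F_\theta(X_{t-k-1},m_0)\) and \(m_0\). By (ii), their uniform difference is at most
\[
 \rho^k\sup_\theta|F_\theta(X_{t-k-1},m_0)-m_0|\leq\rho^k C(1+X_{t-k-1}),
\]
where \(C\) uses \(|\omega|,|\alpha|\) bounded on the compact \(\Theta\) and \(s_c(y)\leq\max(y,0)+c\log2\). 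Since \(\E X_0<\infty\), we have \(\sum_k\rho^k\E(1+X_{t-k-1})<\infty\), so \(\sum_k\rho^k(1+X_{t-k-1})<\infty\) almost surely. The sequence \(M_t^{(k)}\) is therefore uniformly Cauchy in \(\theta\) almost surely. Each \(M_t^{(k)}\) is continuous in \(\theta\), so the uniform limit \(M_t(\theta)\) is continuous as well.

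The limit is a fixed measurable function of the past of \(X\), the same for every \(t\), so it is causal and stationary. It satisfies the recursion by passing to the limit in \(M_{t+1}^{(k+1)}=F_\theta(X_t,M_t^{(k)})\). For the envelope, I would apply \(s_c(y)\leq|y|+c\log2\) repeatedly, which gives
\[
 M_t\leq C(1+X_{t-1})+\rho M_{t-1}
\]
along the approximating chain, hence \(\sup_\theta M_t(\theta)\leq C\sum_j\rho^j(1+X_{t-1-j})\). For uniqueness among stationary causal solutions, take a second one \(M'\) and compare it with \(M\) at a lag-\(k\) start. By (ii), \(|M_t-M'_t|\leq\rho^k|M_{t-k}-M'_{t-k}|\). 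The quantity \(|M_{t-k}-M'_{t-k}|\) is tight in \(k\) by stationarity, so the difference tends to \(0\) in probability and therefore vanishes.

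Part (iii) follows the same route without the causality assumption. Given a jointly stationary pair \((X_t,M_t^*)\) solving the recursion at \(\theta\), (ii) gives
\[
 |M_t^*-M_t(\theta)|\leq\rho^k|M_{t-k}^*-M_{t-k}(\theta)|.
\]
The expectation of the right side is at most \(\rho^k(\E M_0^*+\E M_0(\theta))\); this uses \(\E M_0(\theta)<\infty\), which follows from the envelope and \(\E X_0<\infty\). Letting \(k\to\infty\) gives \(M_t^*=M_t(\theta)\) almost surely, and measurability with respect to \(\mathcal F^X_{t-1}\) is inherited from \(M_t(\theta)\). The only delicate point is making the almost sure convergence uniform over \(\theta\) while preserving continuity; the uniform Lipschitz constant \(\rho<1\) on the compact \(\Theta\) settles it, so I expect no serious obstacle.
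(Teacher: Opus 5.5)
Your proposal is correct and follows the paper's own argument. The paper likewise builds the solution from backward iterates made uniformly Cauchy by the $\rho$-contraction, with increments of order $C\rho^k(1+X_{t-k-1})$, and gets continuity from the finite iterates and the envelope from iterating $F_\theta(m)\leq F_\theta(0)+\rho m$. It obtains uniqueness and part~(iii) from $|\bar M_t-M_t(\theta)|\leq\rho^k|\bar M_{t-k}-M_{t-k}(\theta)|$, using a fixed marginal law for causal solutions and expectations when $\E M_0^*<\infty$. The only differences are cosmetic: you start the iterates from a positive constant rather than zero, and you prove (ii) first to drive the Cauchy estimate.
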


\begin{proof}
This is the uniformly contractive case of the stochastic-recurrence
construction in \citet{bougerol1993} and
\citet[Theorem~2.8]{straumannMikosch2006}.  Because those results assume
ergodic driving maps, we give a direct proof for merely stationary \((X_t)\)
that also records the explicit uniform bounds.
For \(F_{t,\theta}(m)=s_c(\omega+\alpha X_{t-1}+\beta m)\), the
one-Lipschitz property of \(s_c\) gives, uniformly in \(\theta\),
\[
 |F_{t,\theta}(m)-F_{t,\theta}(m')|
 \leq\rho|m-m'|,
 \qquad
 F_{t,\theta}(0)\leq C(1+X_{t-1}).
\]
Successive backward iterates from zero differ by at most
\(C\rho^j(1+X_{t-1-j})\), whose sum has finite expectation.  Hence
they converge uniformly in \(\theta\), almost surely, to a
{causal solution}
\((M_t(\theta))\).  The tail error is bounded by
\(C\sum_{j\geq k}\rho^j(1+X_{t-1-j})\to0\) almost surely.  Each finite
iterate is continuous in \(\theta\), so \(M_t(\cdot)\) is continuous.
Iteration of \(F_{t,\theta}(m)\leq F_{t,\theta}(0)+\rho m\) gives the
stated envelope, and forward contraction gives the initialization bound.

Let \((\bar M_t)\) be either \((M_t^*)\) or another stationary causal solution at the
same parameter.  {The pair \((\bar M_t,M_t(\theta))\)}
is jointly stationary, and for every
\(j\geq1\),
\[
 |\bar M_t-M_t(\theta)|
 \leq\rho^j
 |\bar M_{t-j}-M_{t-j}(\theta)|.
\]
For \((\bar M_t)=(M_t^*)\), take expectations and let \(j\to\infty\).  For an
arbitrary stationary causal solution \((\bar M_t)\), the right side converges to zero in
probability because the factor following \(\rho^j\) has a fixed finite
distribution.  Thus \(\bar M_t=M_t(\theta)\) almost surely in both cases.
\end{proof}

For the model-specific lemmas and inference proofs, fix \(c>0\) and
work under {the stationary law} at
\(\theta_0=(\omega_0,\alpha_0,\beta_0)^\top\) in
Theorem~\ref{thm:affine-qmle}.  Expectations are under this law, and
\(\sigma_0^2\) denotes the true dispersion parameter from
Section~\ref{sec:poisson-qmle}.  Write \(M_t\) for the true conditional
mean and {\(M_t(\theta)\) for the filter of
Lemma~\ref{lem:unrestricted-candidate-filter}, computed from the same
observations \((X_t)\). We call it the stationary filter, to distinguish it
from the initialized filter \(\widetilde M_t(\theta)\).}
In this setting, \(\E X_0=\E M_0<\infty\), so
Lemma~\ref{lem:unrestricted-candidate-filter} gives
\(M_t(\theta_0)=M_t\) almost surely.
All derivatives are with respect to \(\theta=(\omega,\alpha,\beta)^\top\).
Define {the linear index \(\eta_t(\theta)\) and the filter
error \(\delta_t(\theta)\) by}
\[
 \eta_t(\theta)=\omega+\alpha X_{t-1}+\beta M_{t-1}(\theta),
 \qquad \delta_t(\theta)=M_t(\theta)-M_t,
\]
\[
 \begin{aligned}
 D_t(\theta)&=\partial_\theta M_t(\theta),&
 D_t^{(2)}(\theta)&=\partial_{\theta\theta^\top}^2M_t(\theta),\\
 A_t(\theta)&=\partial_\theta\eta_t(\theta),&
 J_t(\theta)&=\partial_{\theta\theta^\top}^2\eta_t(\theta),
 \end{aligned}
\]
\[
 e_t(\theta)=(1,X_{t-1},M_{t-1}(\theta))^\top,
 \qquad e_\beta=(0,0,1)^\top.
\]
For the quasi-log-likelihood and its derivatives, use
\[
 \ell_t^Q(\theta)=X_t\log M_t(\theta)-M_t(\theta),
 \qquad S_t(\theta)=\partial_\theta\ell_t^Q(\theta),
\]
\[
 \mathcal H_t(\theta)=-\partial_{\theta\theta^\top}^2\ell_t^Q(\theta),
 \qquad K_t(\theta)=\frac{D_t(\theta)D_t(\theta)^\top}{M_t(\theta)}.
\]
Tildes denote the corresponding quantities computed from a fixed,
positive, parameter-independent initial value \(\widetilde M_0\).

{The inference proofs use the following elementary bounds on the
softplus function.}

\begin{lemma}
\label{lem:softplus-calculus}
Fix \(c>0\), and set
\(p_c=s_c'\), \(r_c=(\log s_c)'\), \(r_c'=(\log s_c)''\), and
\(j_c=p_c^2/s_c\).  For a constant \(C=C(c)\) and all \(x\),
\begin{align}
&0<p_c\leq1,\quad 0\leq s_c''\leq(4c)^{-1},\quad
 p_c/s_c\leq c^{-1},\quad s_c''/s_c\leq c^{-2},
 \label{eq:softplus-self-regularization}\\
&p_c\leq C\frac{s_c}{1+s_c},\quad
 r_c\leq\frac C{1+s_c},\quad
 s_c''+|r_c'|\leq C\frac{s_c}{(1+s_c)^3},
 \label{eq:qmle-sharp-softplus-derivatives}\\
&0\leq j_c\leq c^{-1},\qquad |j_c'|\leq3c^{-2},
 \label{eq:softplus-information-calculus}
\end{align}
{and \(|\log s_c(x)|\leq C(1+|x|)\).}
\end{lemma}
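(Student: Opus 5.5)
The inequalities follow from explicit formulas for the derivatives of \(s_c\), checked separately as \(x\to-\infty\) and \(x\to+\infty\). We have \(s_c(x)=c\log(1+e^{x/c})\) and \(p_c(x)=s_c'(x)=\sigma_L(x/c)\), where \(\sigma_L(z)=1/(1+e^{-z})\) is the logistic function. Differentiating again gives \(s_c''=c^{-1}\sigma_L(1-\sigma_L)\). This immediately yields \(0<p_c\leq1\) and \(0\leq s_c''\leq(4c)^{-1}\).

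For the ratio bounds, write \(z=x/c\) and \(\ell(z)=\log(1+e^z)\), so that \(s_c=c\,\ell\).

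\emph{Ratio \(p_c/s_c\).} We have \(p_c/s_c=\sigma_L(z)/(c\,\ell(z))\). The elementary inequality \(\ell(z)\geq\sigma_L(z)\) holds for all \(z\). Indeed, \(\ell-\sigma_L\) tends to \(0\) at \(-\infty\), and its derivative is \(\sigma_L-\sigma_L(1-\sigma_L)=\sigma_L^2\geq0\). Hence \(p_c/s_c\leq c^{-1}\).

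\emph{Ratio \(s_c''/s_c\).} Similarly, \(s_c''/s_c=\sigma_L(1-\sigma_L)/(c^2\ell)\leq c^{-2}\), because \(1-\sigma_L\leq1\) and \(\sigma_L\leq\ell\).

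For \eqref{eq:qmle-sharp-softplus-derivatives}, both sides are continuous and positive, so only the two tails need checking.

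\emph{Left tail, \(x\to-\infty\).} Here \(s_c\sim c e^{z}\), \(p_c\sim e^z\), and \(s_c''\sim c^{-1}e^z\). Since \(s_c/(1+s_c)\sim ce^z\) and \(s_c/(1+s_c)^3\sim ce^z\), all the claimed bounds hold with constants depending on \(c\).

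\emph{Right tail, \(x\to+\infty\).} Here \(s_c\sim x\) and \(p_c\to1\), matching \(s_c/(1+s_c)\to1\). Also \(r_c=p_c/s_c\sim1/x\), which matches \(C/(1+s_c)\). The second derivative \(s_c''\sim c^{-1}e^{-z}\) is exponentially small. Finally, \(r_c'=s_c''/s_c-p_c^2/s_c^2\), which behaves like \(-1/x^2\). This is not bounded by \(C x/(1+x)^3\sim Cx^{-2}\) in general, but it is of that order, so the bound holds.

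\emph{Bounded region.} Between the tails, compactness gives the constants. The bound \(r_c\leq C/(1+s_c)\) on the left follows from \(p_c/s_c\leq c^{-1}\) together with boundedness of \(s_c\) there.

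For \eqref{eq:softplus-information-calculus}, set \(j_c=p_c^2/s_c=p_c\cdot(p_c/s_c)\leq1\cdot c^{-1}\). Differentiating gives \(j_c'=2p_cs_c''/s_c-p_c^3/s_c^2\). The first term is at most \(2\cdot c^{-2}\) by the earlier ratio bound. The second term equals \(p_c(p_c/s_c)^2\leq c^{-2}\). The total is therefore at most \(3c^{-2}\).

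Finally, for \(|\log s_c(x)|\leq C(1+|x|)\):

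\emph{Upper side.} We have \(s_c\leq c\log2+\max(x,0)\), so \(\log s_c\leq C(1+|x|)\).

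\emph{Lower side.} For \(x\leq0\), use \(\log(1+u)\geq u/2\) for \(u\in[0,1]\). This gives \(s_c\geq (c/2)e^{x/c}\), hence \(\log s_c\geq\log(c/2)-|x|/c\). For \(x\geq0\), \(s_c\geq c\log 2\), which is bounded below by a positive constant.

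The only step needing care is the tail of \(r_c'\), where the leading terms \(s_c''/s_c\) and \(p_c^2/s_c^2\) must be estimated separately rather than expected to cancel. Everything else is direct from the logistic formulas.
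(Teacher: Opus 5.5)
\textbf{Gap: the left tail of \(|r_c'|\).} Most of your proposal is sound and follows essentially the paper's route: explicit derivative formulas in \(u=e^{x/c}\), elementary inequalities, the two tails, and compactness in between. The gap is in the bound \(|r_c'|\leq C s_c/(1+s_c)^3\) as \(x\to-\infty\).

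Your left-tail paragraph records only first-order asymptotics of \(s_c\), \(p_c\) and \(s_c''\), and then asserts that ``all the claimed bounds hold.'' For \(r_c'\) this does not follow. With \(u=e^{x/c}\to0\) and \(v=\log(1+u)\), both
\(s_c''/s_c=u/\{c^2(1+u)^2v\}\) and \(p_c^2/s_c^2=u^2/\{c^2(1+u)^2v^2\}\) tend to \(c^{-2}\). The target \(C s_c/(1+s_c)^3\), by contrast, tends to zero like \(cu\). Your closing prescription, to estimate the two terms separately rather than rely on cancellation, is therefore exactly what fails here: separately you only get \(|r_c'|\leq 2c^{-2}\). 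At the right tail, separate estimation does work, as you say.

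\textbf{The missing step.} Combine the two terms before estimating:
\[
 r_c'=\frac{s_c''}{s_c}-\frac{p_c^2}{s_c^2}
 =\frac{u(v-u)}{c^2(1+u)^2v^2}.
\]
For \(u\leq1\), use the second-order inequalities \(0\leq u-v\leq u^2/2\) and \(v\geq u/2\). They give \(|r_c'|\leq 2u/c^2\). On that range \(s_c=cv\geq cu/2\) and \(s_c\leq c\log2\), so \(2u/c^2\leq C s_c/(1+s_c)^3\) with \(C=4(1+c\log2)^3/c^3\). This is the same cancellation the paper exploits through \(0\leq u-v\leq u^2/2\).

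With this step added, the rest of your argument goes through: the ratio bounds via \(\log(1+e^z)\geq\sigma_L(z)\), the \(j_c'\) bound, the right tail, compactness on the middle range, and the log bound.
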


\begin{proof}
Set \(u=e^{x/c}\) and \(v=\log(1+u)\).  Direct differentiation gives
\[
 s_c=cv,\qquad p_c=\frac{u}{1+u},\qquad
 s_c''=\frac{u}{c(1+u)^2},\qquad
 r_c'=\frac{u(v-u)}{c^2(1+u)^2v^2}.
\]
Use \(u/(1+u)\leq v\).  For \(u\leq1\), use
\(u/2\leq v\leq u\) and \(0\leq u-v\leq u^2/2\). For \(u\geq1\), use
\(v\geq\log2\) and \(\sup_{u\geq1}v^2/u<\infty\).  These prove the derivative
bounds.  The comparisons \(ce^{x/c}/2\leq s_c(x)\leq ce^{x/c}\) for
\(x\leq0\) and \(c\log2\leq s_c(x)\leq x+c\log2\) for \(x\geq0\)
give the log bound.  Finally,
\(j_c'=2p_c s_c''/s_c-p_c^3/s_c^2\), which gives
\eqref{eq:softplus-information-calculus}.
\end{proof}

The consistency argument uses an integrable upper bound on the likelihood
contrast, allowing its negative part to have infinite expectation.
The next lemma gives the model-specific envelope.
{We use the convention \(0\log0=0\).}

\begin{lemma}
\label{lem:qmle-contrast-envelope}
{Let \(M_t\) be the true conditional mean and, for
\(\theta\in\Theta\), let \(M_t(\theta)\) be the filter of
Lemma~\ref{lem:unrestricted-candidate-filter}.}
For
\[
 L_t(\theta)
 =X_t\log\frac{{M_t(\theta)}}{M_t}
  -({M_t(\theta)}-M_t),
\]
one has
\begin{equation}
 L_t(\theta)\leq
 \bar L_t:=X_t\log\frac{X_t}{M_t}-X_t+M_t
 \leq\frac{(X_t-M_t)^2}{M_t},
 \qquad \E \bar L_t\leq\sigma_0^2.
 \label{eq:qmle-contrast-upper-envelope}
\end{equation}
{The conditional expectation of \(L_t(\theta)\) is}
\begin{equation}
 \E[L_t(\theta)\mid\mathcal F_{t-1}]
 =-M_t \mathfrak h\!\left(\frac{{M_t(\theta)}}{M_t}\right),
 \qquad \mathfrak h(r)=r-1-\log r.
 \label{eq:qmle-conditional-contrast}
\end{equation}
\end{lemma}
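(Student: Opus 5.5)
The inequality \(L_t(\theta)\le\bar L_t\) is a pointwise statement about the concave function \(g(y)=X_t\log y-y\) on \((0,\infty)\), for fixed \(X_t\ge0\). Write \(L_t(\theta)=g(M_t(\theta))-g(M_t)\). When \(X_t>0\), \(g\) is maximized at \(y=X_t\). Hence
\[
 L_t(\theta)\le g(X_t)-g(M_t)=X_t\log\frac{X_t}{M_t}-X_t+M_t=\bar L_t .
\]
When \(X_t=0\), we have \(g(y)=-y\) and \(L_t(\theta)=M_t-M_t(\theta)\le M_t=\bar L_t\), using the convention \(0\log0=0\) and \(M_t(\theta)>0\). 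Here \(M_t(\theta)>0\) because softplus is strictly positive, and Lemma~\ref{lem:unrestricted-candidate-filter} guarantees that \(M_t(\theta)\) is finite.

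For the second inequality, write \(r=X_t/M_t\ge0\), so that \(\bar L_t=M_t(r\log r-r+1)\). The elementary bound \(\log r\le r-1\) gives
\[
 r\log r-r+1\le r(r-1)-r+1=(r-1)^2,
\]
and this also holds at \(r=0\). Multiplying by \(M_t\) yields \(\bar L_t\le (X_t-M_t)^2/M_t\). Taking conditional expectations and using \eqref{eq:affine-conditional-moments} gives
\[
 \E[(X_t-M_t)^2/M_t\mid\mathcal F_{t-1}]=\sigma_0^2 .
\]
Therefore \(\E\bar L_t\le\sigma_0^2\). Nonnegativity of \(\bar L_t\) makes the tower property valid without any integrability assumption.

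For \eqref{eq:qmle-conditional-contrast}, note that \(M_t(\theta)\) is \(\mathcal F_{t-1}\)-measurable, being a function of \(X_{t-1},X_{t-2},\ldots\) by Lemma~\ref{lem:unrestricted-candidate-filter}, and \(M_t\) is \(\mathcal F_{t-1}\)-measurable as well. With \(\E(X_t\mid\mathcal F_{t-1})=M_t\), we get
\[
 \E[L_t\mid\mathcal F_{t-1}]=M_t\log(M_t(\theta)/M_t)-M_t(\theta)+M_t=-M_t\mathfrak h(M_t(\theta)/M_t).
\]
The only delicate point is justifying this conditional expectation when \(L_t\) may not be integrable. Conditionally on \(\mathcal F_{t-1}\), however, \(L_t\) is affine in \(X_t\) with \(\mathcal F_{t-1}\)-measurable finite coefficients. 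Since \(X_t\) is conditionally integrable, the generalized conditional expectation is well defined and the formula holds. This measurability and conditional-integrability check is the only potential obstacle in the argument.
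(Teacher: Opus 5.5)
Your proof is correct and follows the paper's argument. Like the paper, you maximize \(y\mapsto X_t\log y-y\) pointwise at \(y=X_t\), use the elementary bound \(y\log y-y+1\le(y-1)^2\), apply the conditional variance identity to get \(\E\bar L_t\le\sigma_0^2\), and take the conditional expectation directly for \eqref{eq:qmle-conditional-contrast}. Your extra details only spell out what the paper leaves implicit: you derive the elementary bound from \(\log r\le r-1\), and you note that \(L_t\) is affine in \(X_t\) with \(\mathcal F_{t-1}\)-measurable finite coefficients, so the generalized conditional expectation is well defined.
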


\begin{proof}
For fixed \(x\geq0\) and \(m>0\), maximizing
\(x\log(\mu/m)-(\mu-m)\) over \(\mu>0\) gives its value at \(\mu=x\), with the
usual boundary interpretation at \(x=0\).  Also
\(y\log y-y+1\leq(y-1)^2\) for \(y\geq0\).  These two observations give
the inequalities in \eqref{eq:qmle-contrast-upper-envelope}. Conditional
expectation and
\(\E[(X_t-M_t)^2\mid\mathcal F_{t-1}]=\sigma_0^2M_t\) give its final
claim.  Taking conditional expectation of the contrast gives
\eqref{eq:qmle-conditional-contrast}.
\end{proof}

\begin{lemma}
\label{lem:affine-qmle-envelopes}
Under {the stationary law at \(\theta_0\)} in
Theorem~\ref{thm:affine-qmle}, let the parameters range over its compact
set {\(\Theta\subset\mathbb R\times\mathbb R\times[-\rho,\rho]\), where \(\rho\in(0,1)\).}
For the stationary filters of
Lemma~\ref{lem:unrestricted-candidate-filter}, choose
\(\bar\rho\in(\sqrt{\rho},1)\) and define
\[
 Z_t=\sum_{j\geq0}\bar\rho^j(1+X_{t-1-j}).
\]
There is a deterministic \(C<\infty\), independent of \(t\) and
\(\theta\), such that almost surely
\begin{equation}
 \sup_{\theta\in\Theta}M_t(\theta)\leq CZ_t,
 \label{eq:qmle-level-envelope}
\end{equation}
\begin{equation}
 \begin{aligned}
 \sup_{\theta\in\Theta}
 [\|D_t(\theta)\|+\|A_t(\theta)\|]&\leq CZ_t,\\
 \sup_{\theta\in\Theta}
 [\|D_t^{(2)}(\theta)\|+\|J_t(\theta)\|]&\leq CZ_t^2,
 \end{aligned}
 \label{eq:qmle-filter-derivative-envelopes}
\end{equation}
and
\begin{align}
 \sup_{\theta\in\Theta}\|K_t(\theta)\|
 &\leq CZ_t^2,\label{eq:qmle-full-information-envelope}\\
 \sup_{\theta\in\Theta}
 \|\partial_{\theta\theta^\top}^2\ell_t^Q(\theta)\|
 &\leq C(1+X_t)(1+Z_t^2).
 \label{eq:qmle-full-hessian-envelope}
\end{align}
{If \(X_0\in L^2\), the right sides of
\eqref{eq:qmle-level-envelope}--\eqref{eq:qmle-full-information-envelope} are
integrable. If \(X_0\in L^3\), so is the right side of
\eqref{eq:qmle-full-hessian-envelope}.}
\end{lemma}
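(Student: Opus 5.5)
The plan is to work pathwise from the backward-iteration representation of Lemma~\ref{lem:unrestricted-candidate-filter}, differentiating the recursion and bounding each derivative by a geometric sum of lagged observations. For \eqref{eq:qmle-level-envelope}, use the bound already recorded in Lemma~\ref{lem:unrestricted-candidate-filter}(i), \(\sup_\theta M_t(\theta)\leq C\sum_j\rho^j(1+X_{t-1-j})\), together with \(\rho\leq\bar\rho\); compactness of \(\Theta\) supplies a uniform bound on \(|\omega|,|\alpha|\) and \(c\log2\).

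For the first derivatives, differentiate the recursion. This gives \(A_t(\theta)=e_t(\theta)+\beta D_{t-1}(\theta)\) and \(D_t(\theta)=p_c(\eta_t(\theta))A_t(\theta)\). Since \(0<p_c\leq1\) by Lemma~\ref{lem:softplus-calculus}, we get \(\|D_t\|\leq\|e_t\|+\rho\|D_{t-1}\|\). Unrolling this, and using \(\|e_t\|\leq C(1+X_{t-1}+Z_{t-1})\), gives \(\|D_t\|\leq C\sum_j\rho^j(1+X_{t-1-j}+Z_{t-1-j})\). The \(Z\)-terms form a double geometric sum, \(\sum_j\rho^jZ_{t-1-j}\leq C\sum_k\bar\rho^{k}(k+1)(\rho/\bar\rho)^{\cdot}\ldots\), which is bounded by \(CZ_t\) because \(\rho<\bar\rho\); absorbing the extra lag shift costs only a constant. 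The same bound then holds for \(A_t\).

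Justifying differentiation under the infinite backward limit is routine. The finite iterates are smooth, and the derivative recursions are uniformly contractive with factor \(\rho\), so the derivatives of the iterates converge uniformly on \(\Theta\).

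For the second derivatives, \(J_t=e_\beta D_{t-1}^\top+D_{t-1}e_\beta^\top+\beta D^{(2)}_{t-1}\) and \(D^{(2)}_t=s_c''(\eta_t)A_tA_t^\top+p_cJ_t\). With \(s_c''\leq(4c)^{-1}\), this gives \(\|D^{(2)}_t\|\leq CZ_t^2+C Z_{t-1}+\rho\|D^{(2)}_{t-1}\|\). Unrolling produces \(\sum_j\rho^jZ_{t-j}^2\). This is where \(\bar\rho>\sqrt\rho\) is needed: \(Z_{t-j}\leq\bar\rho^{-j}Z_t\) up to constants, so \(\rho^jZ_{t-j}^2\leq(\rho/\bar\rho^2)^jCZ_t^2\), which is summable.

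For \eqref{eq:qmle-full-information-envelope}, write \(K_t=p_c^2A_tA_t^\top/s_c=j_c(\eta_t)A_tA_t^\top\). Since \(j_c\leq c^{-1}\), this is bounded by \(CZ_t^2\). This step avoids any lower bound on \(M_t(\theta)\), which is the point of using the softplus self-regularization.

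For the Hessian,
\[\partial^2\ell^Q_t=(X_t/M_t-1)D^{(2)}_t-X_tD_tD_t^\top/M_t^2.\]
Rewrite \(D^{(2)}_t/M_t=(s_c''/s_c)A_tA_t^\top+(p_c/s_c)J_t\) and \(D_tD_t^\top/M_t^2=(p_c/s_c)^2A_tA_t^\top\). Lemma~\ref{lem:softplus-calculus} bounds \(s_c''/s_c\leq c^{-2}\) and \(p_c/s_c\leq c^{-1}\), so every term is at most \(C(1+X_t)Z_t^2\). The \(-D^{(2)}_t\) term is bounded by \(CZ_t^2\) directly.

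For integrability, note \(\|Z_t\|_{L^2}\leq\sum_j\bar\rho^j(1+\|X_0\|_{L^2})<\infty\) by Minkowski's inequality, which gives integrability of \(Z_t\) and \(Z_t^2\) when \(X_0\in L^2\). For the Hessian envelope, apply Hölder with exponents \(3\) and \(3/2\): \(\E(1+X_t)Z_t^2\leq\|1+X_t\|_3\|Z_t\|_3^2\), and \(\|Z_t\|_3<\infty\) again by Minkowski.

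The main obstacle I expect is the lag bookkeeping, namely showing that the geometric convolutions of \(Z_{t-j}\) and \(Z^2_{t-j}\) collapse back to \(CZ_t\) and \(CZ_t^2\). This is precisely why \(\bar\rho\) is chosen strictly between \(\sqrt\rho\) and \(1\).
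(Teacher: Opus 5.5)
Your proposal is correct and follows essentially the same route as the paper: you iterate the first- and second-derivative recursions backward, use $Z_{t-j}\le\bar\rho^{-j}Z_t$ with $\bar\rho^2>\rho$ to collapse the quadratic forcing, apply the self-regularizing softplus bounds ($j_c\le c^{-1}$, $p_c/s_c\le c^{-1}$, $s_c''/s_c\le c^{-2}$) to $K_t$ and the Hessian, and use Minkowski and H\"older for integrability. The one step the paper treats more carefully is the one you call routine: it proves twice continuous differentiability on a slightly larger box with $|\beta|\le\rho'\in(\rho,\bar\rho^2)$, so that $\Theta$ lies in an open set, and it controls successive backward iterates and their first two derivatives by $C(k+1)^2(\rho'/\bar\rho^2)^k(1+X_{t-k-1})^2Z_t^2$, made almost surely summable by a log-moment lemma, rather than by contraction with factor $\rho$ alone.
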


\begin{proof}
The calculus bounds used throughout this proof are collected in
Lemma~\ref{lem:softplus-calculus}.
Lemma~\ref{lem:unrestricted-candidate-filter} proves existence, uniqueness, and
initialization stability of \(M_t(\theta)\) for every \(\theta\in\Theta\),
including parameters with \(\alpha+\beta\geq1\).
Its filter bound gives
\eqref{eq:qmle-level-envelope} for the \(Z_t\) defined in the statement.
For filters with two starting values, one of which may be the stationary
random initialization, the lemma gives
\begin{equation*}
 \sup_\theta|M_t(\theta)-\widetilde M_t(\theta)|
 \leq\rho^t\sup_\theta|M_0(\theta)-\widetilde M_0(\theta)|.
\end{equation*}

{We next show that \(M_t(\theta)\) is twice continuously
differentiable. Since \(\bar\rho^2>\rho\), we may fix
\(\rho'\in(\rho,\bar\rho^2)\) and a compact box \(\Theta_{\rm box}\) whose
interior contains \(\Theta\) and whose \(\beta\)-coordinates lie in
\([-\rho',\rho']\). Lemma~\ref{lem:unrestricted-candidate-filter} applies to
\(\Theta_{\rm box}\) with \(\rho'\) in place of \(\rho\).
For \(k\geq1\), let \(M_{t,k}(\theta)\) be the value at
time \(t\) of recursion~\eqref{mrecursion}, evaluated at \(\theta\) and
started from zero at time \(t-k\). These backward iterates are infinitely
differentiable in \(\theta\), and their first and second derivatives satisfy
the recursions displayed below, with zero initial values. The iteration
argument given below for \(D_t\), \(J_t\), and \(D_t^{(2)}\), with \(\rho'\)
in place of \(\rho\), bounds these derivatives on \(\Theta_{\rm box}\) by
\(CZ_t\) and \(CZ_t^2\), uniformly in \(k\).
The iterates \(M_{t,k}\) and \(M_{t,k+1}\) solve the same recursion from
time \(t-k\) onward. At that time their values differ by at most
\(C(1+X_{t-k-1})\), and their first and second derivatives differ by at
most \(C(1+X_{t-k-1})\) and \(C(1+X_{t-k-1})^2\). Subtract the two sets of
recursions, use that \(s_c'\) and \(s_c''\) are bounded and Lipschitz
(\(|s_c'''|\leq c^{-2}\)), and iterate with \(Z_{t-j}\leq\bar\rho^{-j}Z_t\).
This gives
\[
 \begin{aligned}
 &\sup_{\theta\in\Theta_{\rm box}}
 \bigl[|M_{t,k+1}-M_{t,k}|
 +\|\partial_\theta M_{t,k+1}-\partial_\theta M_{t,k}\|
 +\|\partial_{\theta\theta^\top}^2M_{t,k+1}
   -\partial_{\theta\theta^\top}^2M_{t,k}\|\bigr]\\
 &\qquad\leq C(k+1)^2\left(\frac{\rho'}{\bar\rho^2}\right)^k
 (1+X_{t-k-1})^2Z_t^2.
 \end{aligned}
\]
Since \(\E\log(1+X_0)<\infty\) and \(\bar\rho^2>\rho'\),
\citet[Lemma~2.1]{straumannMikosch2006} makes the right-hand side summable
in \(k\), almost surely. Hence the iterates and their first two derivatives
converge uniformly on \(\Theta_{\rm box}\). The limit \(M_t(\theta)\) is
therefore twice continuously differentiable on the interior of
\(\Theta_{\rm box}\), which contains \(\Theta\), and passing to the limit in
the recursions for the iterates gives}
\[
 D_t(\theta)=s_c'(\eta_t(\theta))
 [e_t(\theta)+\beta D_{t-1}(\theta)]
\]
and
\[
 J_t(\theta)=\beta D_{t-1}^{(2)}(\theta)
  +e_\beta D_{t-1}(\theta)^\top+D_{t-1}(\theta)e_\beta^\top,
\]
\[
 D_t^{(2)}(\theta)=s_c''(\eta_t(\theta))
 A_t(\theta)A_t(\theta)^\top
 +s_c'(\eta_t(\theta))J_t(\theta),
 \qquad A_t(\theta)=e_t(\theta)+\beta D_{t-1}(\theta).
\]
{Since \(0<s_c'\leq1\), \(0\leq s_c''\leq1/(4c)\), and
\(|\beta|\leq\rho\), iteration of the first recursion yields
\(\sup_\theta\|D_t(\theta)\|\leq CZ_t\). In the recursion for
\(D_t^{(2)}\), the coefficient of \(D_{t-1}^{(2)}\) is at most \(\rho\) in
absolute value, and the remaining terms are bounded by \(CZ_t^2\). Dropping
the first \(j\) terms of \(Z_t\) gives \(Z_{t-j}\leq\bar\rho^{-j}Z_t\), hence
\[
 \sup_\theta\|D_t^{(2)}(\theta)\|
 \leq C\sum_{j\geq0}\rho^jZ_{t-j}^2
 \leq CZ_t^2\sum_{j\geq0}\Bigl(\frac{\rho}{\bar\rho^2}\Bigr)^j,
\]
and the last series converges because \(\bar\rho^2>\rho\). The recursion for
\(J_t\) then gives \(\sup_\theta\|J_t(\theta)\|\leq CZ_t^2\).}
The definition of \(A_t\) also gives
\(\sup_\theta\|A_t(\theta)\|\leq CZ_t\), completing
\eqref{eq:qmle-filter-derivative-envelopes}. For every \(p\geq1\)
with \(X_0\in L^p\), Minkowski's inequality gives
\[
 \|Z_t\|_{L^p}\leq\frac{1+\|X_0\|_{L^p}}{1-\bar\rho}.
\]
Moreover, \(K_t(\theta)=j_c(\eta_t(\theta))A_t(\theta)A_t(\theta)^\top\)
and \(0\leq j_c\leq c^{-1}\) give
\eqref{eq:qmle-full-information-envelope}.

{Using
\eqref{eq:softplus-self-regularization} in
\(\mathcal H_t(\theta)\), the preceding bounds yield
\eqref{eq:qmle-full-hessian-envelope}.}
For example, before taking suprema over \(\theta\), the two Hessian terms are
bounded by constant multiples of \(X_t\|A_t(\theta)\|^2\) and
\((1+X_t)[\|A_t(\theta)\|^2+\|J_t(\theta)\|]\), respectively.
{Second moments make \(Z_t^2\) integrable.} If \(X_0\in L^3\),
H\"older's inequality also gives
\(\E[X_tZ_t^2]\leq\|X_t\|_{L^3}\|Z_t\|_{L^3}^2<\infty\),
so the full Hessian bound is integrable.

\end{proof}

\begin{lemma}
\label{lem:qmle-initialization}
Under the assumptions of Theorem~\ref{thm:affine-qmle}, let
\(M_t(\theta)\) and \(\widetilde M_t(\theta)\) be the stationary and
initialized filters computed from the same observations \((X_t)\),
{where the initial value \(\widetilde M_0>0\) is a constant or an
almost surely finite random variable that does not depend on \(\theta\).}
On
every compact subset \(\mathcal K\subseteq\Theta\),
\[
 \sum_{t=1}^\infty\sup_{\theta\in \mathcal K}
 \left[
 |\widetilde\ell_t^Q(\theta)-\ell_t^Q(\theta)|
 +\|\widetilde S_t(\theta)-S_t(\theta)\|
 +\|\widetilde K_t(\theta)-K_t(\theta)\|
 \right]<\infty
 \quad\text{a.s.}
\]
\end{lemma}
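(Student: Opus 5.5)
The plan is to bound each of the three differences pointwise by a product of a geometrically decaying initialization factor and a random envelope with at most logarithmic growth in \(t\). Summability then follows from \citet[Lemma~2.1]{straumannMikosch2006}.

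Fix a compact \(\mathcal K\subseteq\Theta\) and put \(\Xi=\sup_{\theta\in\Theta}|M_0(\theta)-\widetilde M_0|\). This is almost surely finite by Lemma~\ref{lem:unrestricted-candidate-filter}(i). Part~(ii) of that lemma gives \(\sup_\theta|\widetilde M_t(\theta)-M_t(\theta)|\leq\rho^t\Xi\).

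For the derivatives, subtract the recursion \(D_t=s_c'(\eta_t)[e_t+\beta D_{t-1}]\) from its tilde version. Since \(|s_c'|\leq1\) and \(s_c''\leq(4c)^{-1}\), and \(|\widetilde\eta_t-\eta_t|\leq\rho|\widetilde M_{t-1}-M_{t-1}|\), the difference \(\widetilde D_t-D_t\) satisfies a linear recursion with contraction coefficient \(\rho\). Its forcing term is bounded by \(C\rho^{t}\Xi(1+\|D_{t-1}\|)\). The initial derivative difference is \(D_0(\theta)\), because \(\widetilde D_0=0\) as \(\widetilde M_0\) does not depend on \(\theta\). Iterating, and using \(\|D_{t-j}\|\leq CZ_{t-j}\leq C\bar\rho^{-j}Z_t\) from Lemma~\ref{lem:affine-qmle-envelopes}, gives \(\sup_\theta\|\widetilde D_t-D_t\|\leq Ct\,\bar\rho^{\,t}\,\Xi'(1+Z_t)\) for some almost surely finite \(\Xi'\), after possibly enlarging \(\bar\rho<1\). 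The same envelopes show \(\sup_\theta\widetilde M_t\) and \(\sup_\theta\|\widetilde D_t\|\) are bounded by \(C(Z_t+\rho^t\Xi)\).

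Next, insert these bounds into \(\ell^Q_t\), \(S_t=(X_t/M_t-1)D_t\) and \(K_t=j_c(\eta_t)A_tA_t^\top\). The main obstacle is that \(M_t(\theta)\) can approach zero when coefficients are negative, so \(\log M_t\) and \(1/M_t\) are not Lipschitz in \(M_t\). I would therefore never differentiate in \(M\) directly. Instead I would work through the linear index \(\eta\), using Lemma~\ref{lem:softplus-calculus}:
\begin{itemize}
\item \(\log s_c\) has derivative \(r_c\leq C\), so \(|\log\widetilde M_t-\log M_t|\leq C|\widetilde\eta_t-\eta_t|\leq C\rho^t\Xi\);
\item \(S_t=(X_t-M_t)\,r_c(\eta_t)A_t\), with \(r_c\) bounded and Lipschitz (\(|r_c'|\leq C\));
\item \(K_t\) uses \(j_c\), which is bounded with \(|j_c'|\leq3c^{-2}\).
\end{itemize}
Each difference is then bounded by \(Ct^2\bar\rho^{\,t}\,\Xi''(1+X_t)(1+Z_t+Z_{t-1})^2\). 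I would also need the analogous bound for \(\widetilde A_t-A_t\), which follows from the derivative step.

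Finally, \(\E\log(1+X_0)<\infty\) and stationarity imply \(\E\log^+\{(1+X_t)(1+Z_t)^2\}<\infty\), because \(\log(1+Z_t)\leq C+\log\sum_j\bar\rho^{j}(1+X_{t-1-j})\) has finite expectation. Lemma~2.1 of \citet{straumannMikosch2006} then gives \(\sum_t t^2\bar\rho^{\,t}(1+X_t)(1+Z_t)^2<\infty\) almost surely. This proves the claim for each of the three terms.
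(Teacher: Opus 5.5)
Your proposal is correct and follows essentially the same route as the paper: geometric contraction of the level and linear-index discrepancies, routing everything through $\eta_t$ via the bounded softplus ratios $r_c$, $r_c'$ and $j_c$ to avoid the small-$M_t(\theta)$ problem, iterating the differentiated recursion from $\widetilde D_0=0$, and closing with Lemma~2.1 of Straumann and Mikosch. Your comparison $Z_{t-j}\leq\bar\rho^{-j}Z_t$, which collapses the paper's cumulative factor $\rho^t\sum_{j\leq t}(1+Z_j)$ into a single stationary envelope, is a harmless simplification. Two small points need fixing. First, the forcing term in the derivative recursion also contains $(s_c'(\widetilde\eta_t)-s_c'(\eta_t))e_t$, so it is of order $\rho^t\Xi Z_t$ rather than $\rho^t\Xi(1+\|D_{t-1}\|)$. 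Second, the log moment of $Z_t$ should be justified by $\E Z_t<\infty$, which follows from the assumed first moment of $X_0$; a log moment of $X_0$ alone does not in general give one for the weighted sum $Z_t$.
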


\begin{proof}
{The proof follows \citet[Section~2]{straumannMikosch2006} and
keeps explicit bounds for all three sums.}
In this proof, filter quantities are evaluated at a generic
\(\theta\in\mathcal K\), whose argument is suppressed.
Untilded quantities use \(M_t(\theta)\), and tilded quantities use \(\widetilde M_t(\theta)\).  Since \(\widetilde M_0>0\) is
independent of \(\theta\), \(\widetilde D_0=0\). Subsequent derivatives
are obtained by differentiating
recursion~\eqref{mrecursion}, evaluated at \(\theta\).
Let
\[
 B_0=1+\sup_{\theta\in \mathcal K}\left[
 |\widetilde M_0(\theta)-M_0(\theta)|
 +\|\widetilde D_0(\theta)-D_0(\theta)\|\right].
\]
The stationary envelopes and fixed initialization give \(B_0<\infty\) almost
surely.
Lemma~\ref{lem:unrestricted-candidate-filter} gives
\[
 \sup_{\theta\in \mathcal K}|\widetilde M_t-M_t|\leq B_0\rho^t,
 \qquad
 \sup_{\theta\in \mathcal K}|\widetilde\eta_t-\eta_t|
 \leq B_0\rho^t.
\]
{By \eqref{eq:softplus-self-regularization}, \(\log s_c\) is
\(1/c\)-Lipschitz and \(s_c\) is one-Lipschitz, which gives}
\[
 \sup_{\theta\in \mathcal K}|\widetilde\ell_t^Q-\ell_t^Q|
 \leq CB_0\rho^t(1+X_t).
\]

Use \(Z_t\) from Lemma~\ref{lem:affine-qmle-envelopes}, with
\(\bar\rho\in(\sqrt{\rho},1)\).  Since
\(\E Z_0=(1+\E X_0)/(1-\bar\rho)<\infty\), this envelope requires only
the assumed first moment.
Subtracting their recursions, using boundedness of \(s_c''\), and iterating
gives
\begin{equation*}
 \sup_{\theta\in \mathcal K}
 [\|\widetilde A_t-A_t\|+\|\widetilde D_t-D_t\|]
 \leq CB_0\rho^t\sum_{j=1}^t(1+Z_j).
\end{equation*}
Use \(p_c,r_c,j_c\) as defined in Lemma~\ref{lem:softplus-calculus}.  Since
\[
 S_t(\theta)=(X_t r_c(\eta_t)-p_c(\eta_t))A_t,
 \qquad
 K_t(\theta)=j_c(\eta_t)A_tA_t^\top,
\]
the preceding bounds and \(\sup_{\theta\in \mathcal K}\|A_t(\theta)\|\leq CZ_t\)
give
\begin{align}
 \sup_{\theta\in \mathcal K}\|\widetilde S_t-S_t\|
 &\leq CB_0\rho^t(1+X_t)
       \left[1+Z_t+\sum_{j=1}^t(1+Z_j)\right],
 \label{eq:qmle-score-initialization-bound}\\
 \sup_{\theta\in \mathcal K}\|\widetilde K_t-K_t\|
 &\leq C\left[
 B_0\rho^t(1+Z_t)
 \left(1+Z_t+\sum_{j=1}^t(1+Z_j)\right)
 +B_0^2\rho^{2t}
   \left(\sum_{j=1}^t(1+Z_j)\right)^2
 \right].
 \label{eq:qmle-information-initialization-bound}
\end{align}
{Since \(X_t\) and \(Z_t\) are integrable, the stationary
sequences \((1+X_t)\) and \((1+Z_t)\) have finite logarithmic
moments. By \citet[Lemma~2.1]{straumannMikosch2006}, the sequences
\(\rho^{t/2}(1+Z_t)\) and \(\rho^{t/2}\sum_{j=1}^t(1+Z_j)\) converge to
zero exponentially fast almost surely, and hence so does
\(\rho^t[1+Z_t+\sum_{j=1}^t(1+Z_j)]\). A further application of the same
lemma, with this factor or with \(\rho^t\) multiplying \(1+X_t\) or
\(1+Z_t\), shows that the criterion bound and the majorants in
\eqref{eq:qmle-score-initialization-bound}--
\eqref{eq:qmle-information-initialization-bound} are almost surely
summable. The last term in
\eqref{eq:qmle-information-initialization-bound} is the square of an
exponentially vanishing sequence. The factors \(B_0,B_0^2\) are almost
surely finite, also when \(\widetilde M_0\) is random. This proves all three
assertions under a first moment.}
\end{proof}

The generic Hessian bound in Lemma~\ref{lem:affine-qmle-envelopes}
uses a third observation moment, which need not exist at the stationary
unit root. The next two lemmas obtain the local Hessian and information
envelopes needed there using only a first stationary moment.

\begin{lemma}
\label{lem:qmle-rare-relative-innovation}
Let \(((M_t,X_t))\) be the stationary solution under
Assumptions~\ref{ass:affine-seed} and \ref{ass:invertibility}, with
\(\phi_0=1\), \(\omega_0<0\), and \(\E M_t<\infty\).
For the innovations \((U_t)\) in \eqref{eq:affine-innovation},
define the weighted innovation sum
\[
 \mathsf V_t(r)=1+\sum_{j=0}^\infty r^j|U_{t-1-j}|,\qquad 0<r<1.
\]
Fix \(\varrho\in(0,1)\), and write \(\mathsf V_t=\mathsf V_t(\varrho)\).
Then \(\mathsf V_t\in L^2\){, and there is \(C_0<\infty\) such that,
for every \(\chi\geq1\),}
\begin{equation}
 \E\left[\mathsf V_t^{\chi}
 \mathbf1_{\{\mathsf V_t>{C_0}(1+M_t)\}}\right]<\infty.
 \label{eq:qmle-rare-relative-moment}
\end{equation}
\end{lemma}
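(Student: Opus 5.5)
The plan has two parts: square integrability follows from the martingale structure, and the rare-event moment bound follows by splitting each innovation into a part proportional to its own state plus an excess with uniformly bounded conditional moments.

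\emph{Square integrability.} Since \(\E M_t<\infty\), the remark after Theorem~\ref{thm:affine-qmle} gives \(\E U_t^2=\sigma_0^2\E M_t<\infty\). By stationarity and Minkowski's inequality,
\[
\|\mathsf V_t\|_{L^2}\leq 1+\frac{\|U_0\|_{L^2}}{1-\varrho}<\infty .
\]

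\emph{Splitting the innovations.} Fix a small \(\epsilon>0\), to be chosen below. For each \(s\), write
\[
|U_s|\leq\epsilon(1+M_s)+E_s,\qquad
E_s:=|U_s|\mathbf 1_{\{|U_s|>\epsilon(1+M_s)\}} .
\]
First I would show that, for every \(\chi\geq1\),
\[
\sup_s\E[E_s^{\chi}\mid\mathcal F_{s-1}]\leq C_\chi .
\]
Given \(M_s=m\), Cauchy--Schwarz bounds the left side by \((\E|U_m|^{2\chi})^{1/2}\Pr(|U_m|>\epsilon(1+m))^{1/2}\). For \(m\geq1\), Lemma~\ref{lem:affine-concentration} bounds this by \(C(1+m^{\chi/2})\,m^{-A/2}\), which is bounded once \(A\geq\chi\). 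For \(m\leq1\), the centered-moment bound of the same lemma is already uniform. Since every \(E_s\) has uniformly bounded moments of every order, Minkowski's inequality then gives \(\sum_j\varrho'^{\,j}E_{t-1-j}\in L^\chi\) for any \(\varrho'<1\) and every \(\chi\).

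\emph{Backward control of the states.} The key step is to bound the past states by the current one, using the unit-root structure. Since \(\phi_0=1\) and \(s_c(x)\geq x\),
\[
M_{s+1}\geq\omega_0+M_s+\alpha_0U_s\geq M_s-|\omega_0|-\alpha_0\epsilon(1+M_s)-\alpha_0E_s .
\]
Hence, with \(\lambda=(1-\alpha_0\epsilon)^{-1}\),
\[
M_s\leq\lambda\bigl(M_{s+1}+|\omega_0|+\alpha_0\epsilon+\alpha_0E_s\bigr).
\]
Iterating from \(s=t-1\) backwards gives
\[
M_{t-1-j}\leq\lambda^{j+1}(M_t+C)+C\sum_{i=0}^{j}\lambda^{i+1}E_{t-1-i}.
\]
I choose \(\epsilon\) so small that \(\varrho\lambda<1\). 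Substituting into \(\mathsf V_t\leq1+\sum_j\varrho^j[\epsilon(1+M_{t-1-j})+E_{t-1-j}]\) and exchanging the order of summation in the double sum yields
\[
\mathsf V_t\leq C_1(1+M_t)+C_1\sum_{i\geq0}(\varrho\lambda)^iE_{t-1-i}.
\]

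\emph{Conclusion.} Take \(C_0=2C_1\). On \(\{\mathsf V_t>C_0(1+M_t)\}\), the first term is at most \(\mathsf V_t/2\), so
\[
\mathsf V_t\leq 2C_1\sum_i(\varrho\lambda)^iE_{t-1-i}.
\]
The moment bound on the weighted excess sum then gives \eqref{eq:qmle-rare-relative-moment}.

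\emph{Main obstacle.} The delicate point is the backward iteration. Negative innovations can lower the state, and for \(\beta_0<0\) a single observation near zero can send \(M_{s+1}\) close to zero. This is exactly why the lower bound must be charged to the excess \(E_s\), rather than using the crude bound \(U_s\geq-M_s\). If that lower bound fails, I would fall back on the inequality \(M_{s+1}\geq\omega_0+M_s+\alpha_0U_s\) with the full \(|U_s|\) moved into \(E_s\) on the exceptional event. The other point to check is that the choice of \(\epsilon\) depends only on \(\varrho\) and \(\alpha_0\), so that \(C_0\) does not depend on \(\chi\), as the statement requires.
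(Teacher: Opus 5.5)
Your proof is correct and follows essentially the same route as the paper. It uses the same good/bad split of \(U_s\) at level \(\epsilon(1+M_s)\), the same backward unit-root inequality from \(s_c(x)\geq x\) with \(\varrho/(1-\alpha_0\epsilon)<1\), the same reduction of \(\mathsf V_t\) to \(C(1+M_t)\) plus a weighted sum of bad parts, and the same rare-event moment bound from Lemma~\ref{lem:affine-concentration}. There are two differences: you charge only the excess \(E_s=|U_s|\mathbf 1_{\{|U_s|>\epsilon(1+M_s)\}}\), whose moments are uniformly bounded, where the paper charges \((1+M_s+|U_s|)B_s\); and you iterate explicitly instead of solving the summed inequality for \(\Sigma_t\). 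One small indexing slip: in the iterated bound, the coefficient of \(E_{t-1-i}\) in the bound for \(M_{t-1-j}\) is \(\lambda^{j+1-i}\), not \(\lambda^{i+1}\), but both are at most \(\lambda^{j+1}\), so your final bound on \(\mathsf V_t\) is unaffected.
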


\begin{proof}
At the unit root, \(\alpha_0=1-\beta_0>0\) and
\(M_{t+1}=s_c(\omega_0+M_t+\alpha_0U_t)\).
The affine conditional variance gives
\(\E U_t^2=\sigma_0^2\E M_t<\infty\), so Minkowski's inequality gives
\(\mathsf V_t\in L^2\).

Choose \(\varrho_0\in(\varrho,1)\) and then \(\epsilon>0\) so that
\(1-\alpha_0\epsilon>\varrho_0\).  Set
\[
 B_t=\mathbf1_{\{|U_t|>\epsilon(1+M_t)\}}.
\]
Since \(s_c(x)\geq x\), on \(\{B_t=0\}\),
\[
 1+M_{t+1}\geq(1-\alpha_0\epsilon)(1+M_t)+\omega_0
             \geq \varrho_0(1+M_t)+\omega_0.
\]
{Put \(K_0=-\omega_0/\varrho_0>0\). The following inequality
holds on both \(\{B_t=0\}\) and \(\{B_t=1\}\).}
\begin{equation}
 1+M_t\leq \varrho_0^{-1}(1+M_{t+1})+{K_0
 +(1+M_t+|U_t|)B_t}.
 \label{eq:qmle-backward-good-bad}
\end{equation}
Define
\[
 \Sigma_t=\sum_{j\geq0}\varrho^j(1+M_{t-1-j}),
 \qquad
 \Xi_t=\sum_{j\geq0}\varrho^j
 (1+M_{t-1-j}+|U_{t-1-j}|)B_{t-1-j}.
\]
Multiplying \eqref{eq:qmle-backward-good-bad} at time \(t-1-j\) by
\(\varrho^j\) and summing gives
\[
 \Sigma_t\leq \varrho_0^{-1}(1+M_t+\varrho\Sigma_t)
 +{\frac{K_0}{1-\varrho}+\Xi_t},
\]
and hence \(\Sigma_t\leq C_\Sigma(1+M_t+\Xi_t)\) for some finite
positive deterministic constant \(C_\Sigma\), depending only on the fixed
model parameters and the choices of \(\varrho,\varrho_0,\epsilon\).
Set \(C_{\rm rel}=1+\epsilon C_\Sigma\).  Splitting the innovations according
to \(B_{t-1-j}\) yields
\begin{equation*}
 \mathsf V_t\leq1+\epsilon\Sigma_t+\Xi_t
 \leq C_{\rm rel}(1+M_t+\Xi_t).
\end{equation*}
If \(C_0>2C_{\rm rel}\), then on
\(\{\mathsf V_t>C_0(1+M_t)\}\),
\begin{equation}
 \mathsf V_t\leq2C_{\rm rel}\Xi_t.
 \label{eq:qmle-V-by-Xi}
\end{equation}

{Fix \(\chi\geq1\).}
Since \(\mathcal L(U_t\mid M_t=m)=\mathcal L(U_m)\),
Lemma~\ref{lem:affine-concentration} gives, for arbitrary \(p\geq2\),
\[
 \E(|U_t|^p\mid M_t=m)\leq C_p(1+m^{p/2}).
\]
Choose \(p>\chi\) and \(p\geq2(\chi-1)\).  Markov's inequality gives, uniformly
for \(m\geq1\),
\[
 (1+m)^{\chi}\Pr(B_t=1\mid M_t=m)
 +\E(|U_t|^{\chi}B_t\mid M_t=m)
 \leq C_{\chi}(1+m).
\]
The uniform small-state conditional moments in
Lemma~\ref{lem:affine-concentration} give the same bound for \(0<m<1\).
Consequently,
\begin{equation*}
 \E[(1+M_t+|U_t|)^{\chi}B_t\mid M_t=m]
 \leq C_{\chi}(1+m).
\end{equation*}
{Taking expectations and using \(\E M_t<\infty\) gives
\((1+M_t+|U_t|)B_t\in L^\chi\). Minkowski's inequality and stationarity
then give
\[
 \|\Xi_t\|_{L^\chi}
 \leq\sum_{j\geq0}\varrho^j
 \|(1+M_{t-1-j}+|U_{t-1-j}|)B_{t-1-j}\|_{L^\chi}
 =\frac{\|(1+M_t+|U_t|)B_t\|_{L^\chi}}{1-\varrho}<\infty.
\]}
Equation~\eqref{eq:qmle-V-by-Xi} proves
\eqref{eq:qmle-rare-relative-moment}.
\end{proof}

\begin{lemma}
\label{lem:boundary-qmle-hessian-envelope}
Under the assumptions of Theorem~\ref{thm:affine-qmle}, suppose that
\(\phi_0=1\) and \(\E M_t<\infty\).  Use the inference notation from
Section~\ref{app:inference-notation}.  There is a compact convex
neighborhood \(\mathcal N\subseteq\operatorname{int}(\Theta)\)
of \(\theta_0\) such that
\begin{equation}
 \E\sup_{\theta\in \mathcal N}
 \|\partial_{\theta\theta^\top}^2\ell_t^Q(\theta)\|<\infty,
 \qquad
 \E\sup_{\theta\in \mathcal N}
 \frac{\|D_t(\theta)\|^2}{M_t(\theta)}<\infty.
 \label{eq:qmle-boundary-local-envelopes}
\end{equation}
\end{lemma}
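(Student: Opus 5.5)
The plan is to avoid the generic envelope \(Z_t\) of Lemma~\ref{lem:affine-qmle-envelopes}. Instead, every filter quantity will be measured relative to the current level \(1+M_t\), using the sharp softplus bounds \eqref{eq:qmle-sharp-softplus-derivatives}. Each Hessian term then carries inverse powers of \(1+M_t(\theta)\) that cancel the growth of \(A_t\), \(D_t\) and \(J_t\). Only a single factor \(X_t\) is left, and \(\E[X_t\mid\mathcal F_{t-1}]=M_t\) is integrable.

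\emph{Envelopes in terms of \(M_t\).} Write \(X_{t-1-j}=M_{t-1-j}+U_{t-1-j}\). Then any weighted sum \(\sum_j\bar\rho^j(1+X_{t-1-j})\) is at most \(\Sigma_t(\bar\rho)+\mathsf V_t(\bar\rho)\). For this I take \(\varrho=\bar\rho\) in Lemma~\ref{lem:qmle-rare-relative-innovation}, whose proof gives \(\Sigma_t\le C(1+M_t+\Xi_t)\). Since \(\Xi_t\le C\,\mathsf V_t\) up to constants, this yields \(Z_t\le C(1+M_t+\mathsf V_t)\). Note that backward recursion at the unit root only allows bounded downward drift per step, which is what makes \(\Sigma_t\) comparable to \(M_t\).

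\emph{Bounds on the good event.} Consider \(G_t=\{\mathsf V_t\le C_0(1+M_t)\}\). There \(Z_t\le C(1+M_t)\). From \(M_t(\theta)-M_t=M_t(\theta)-M_t(\theta_0)\) and the derivative envelope, I get \(|M_t(\theta)-M_t|\le C|\theta-\theta_0|(1+M_t)\) on \(G_t\). Choosing \(\mathcal N\) a small enough ball then gives \(1+M_t(\theta)\ge\tfrac12(1+M_t)\) uniformly on \(\mathcal N\). Lower bounds on \(M_t(\theta)\) near zero are never needed, because \eqref{eq:qmle-sharp-softplus-derivatives} already puts \(1+s_c\) in the denominators.

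\emph{Refined second-derivative bound.} The crude bound \(\|J_t\|\le CZ_t^2\) is too weak: multiplied by \(X_t/(1+M_t(\theta))\), it would leave \(X_t(1+M_t)\), which is not integrable when \(\kappa_0\le2\). So I redo the recursion for \(D_t^{(2)}\) using \(s_c''(\eta)\le C/(1+s_c(\eta))^2\). This makes \(s_c''A_{t-j}A_{t-j}^\top\) bounded on \(G\)-type events. The cross terms \(e_\beta D^\top\) are \(O(1+M)\). Hence \(\|J_t\|+\|D_t^{(2)}\|\le C(1+M_t+\mathsf V_t)^{\gamma}\) globally, and \(\le C(1+M_t)\) on \(G_t\).

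\emph{Bounding the Hessian.} I expand
\[
-\partial^2_{\theta\theta^\top}\ell_t^Q=-(X_tr_c'-s_c'')A_tA_t^\top-(X_tr_c-p_c)J_t .
\]
On \(G_t\) the bounds above give \(\|\partial^2\ell^Q_t\|\le C(1+X_t)\). For the information term, \(\|D_t\|^2/M_t(\theta)=j_c\|A_t\|^2\le C\|A_t\|^2/(1+M_t(\theta))\le C(1+M_t)\), since \(j_c\le Cp_c/(1+s_c)\) by \eqref{eq:qmle-sharp-softplus-derivatives}. Both bounds are integrable because \(\E M_t<\infty\).

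On \(G_t^c\) I use the crude polynomial envelopes, which are at most \(C(1+X_t)(1+M_t+\mathsf V_t)^k\le C(1+X_t)\mathsf V_t^k\) because \(\mathsf V_t>C_0(1+M_t)\) there. The variables \(\mathsf V_t\) and \(\mathbf 1_{G_t^c}\) are \(\mathcal F_{t-1}\)-measurable, so conditioning on \(\mathcal F_{t-1}\) replaces \(X_t\) by \(M_t\le\mathsf V_t/C_0\). Then \eqref{eq:qmle-rare-relative-moment} with \(\chi=k+1\) gives finiteness.

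The main obstacle I expect is the refined recursion for \(J_t\) and \(D_t^{(2)}\) on the good event. Past states \(M_{t-j}(\theta)\) enter there, and the good event only controls the current weighted sum. My first approach is to bound each lagged ratio \(\|A_{t-j}\|^2/(1+M_{t-j}(\theta))^2\) crudely by \(C\). This crude bound holds deterministically, since \(\|A_s\|\le C(1+M_s(\theta))+C\sum\rho^i(1+X_{s-1-i})\) and the softplus recursion gives \(1+M_s(\theta)\) a lower bound of the same order up to innovations. If that fails, I will absorb the lagged innovation excess into \(\mathsf V_t\) and handle it through the bad-event argument.
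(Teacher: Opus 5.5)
Most of your plan works. You have \(\Xi_t\le C\mathsf V_t\), because \(1+M_s<|U_s|/\epsilon\) on \(\{B_s=1\}\), hence \(Z_t\le C(1+M_t+\mathsf V_t)\). On \(G_t\), the lower bound \(1+M_t(\theta)\ge\tfrac12(1+M_t)\) holds uniformly on a small ball. The \(A_tA_t^\top\) terms and \(K_t\) on \(G_t\) are handled correctly, and the crude-envelope argument on \(G_t^c\) with \eqref{eq:qmle-rare-relative-moment} at \(\chi=3\) is fine.

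\textbf{The gap.} It is exactly the step you flagged. The claim \(\|J_t\|\le C(1+M_t)\) on \(G_t\) is false. So is the ``deterministic'' bound on \(\|A_s\|^2/(1+M_s(\theta))^2\) meant to justify it: that ratio is controlled only by \(1+\mathsf V_s/(1+M_s)\), and \(G_t\) constrains only the current weighted innovation sum, not each lag.

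Here is a concrete configuration. Take \(\beta_0<0\), \(M_{t-2}\) large, and \(X_{t-2}\approx|\beta_0|M_{t-2}/\alpha_0\).
\begin{itemize}
\item Then \(\eta_{t-1}\approx\omega_0\) stays bounded, so \(s_c''(\eta_{t-1})\) is bounded below and \(M_{t-1}=O(1)\).
\item Meanwhile \(e_{t-1}=(1,X_{t-2},M_{t-2})^\top\), and generically \(A_{t-1}\), has size of order \(M_{t-2}\).
\item Now let \(X_{t-1}\) lie between \(M_{t-2}\) and \(2M_{t-2}\), with moderate earlier innovations. Then \(M_t\) and \(\mathsf V_t\) are both of order \(M_{t-2}\), and \(\mathsf V_t\le C_0(1+M_t)\), so \(G_t\) holds.
\item Yet \(J_t\) contains, through \(\beta D^{(2)}_{t-1}\), the term \(\beta s_c''(\eta_{t-1})A_{t-1}A_{t-1}^\top\), generically of norm of order \(M_t^2\).
\end{itemize}
So on \(G_t\) you only get \(\|J_t\|\le C\mathsf V_t^2\le C(1+M_t)^2\). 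Then \(X_tr_c(\eta_t)\|J_t\|\) is of order \(X_t(1+M_t)\), precisely the non-integrable product you wanted to avoid. Your fallback of absorbing the lagged excess into \(\mathsf V_t\) reproduces this bound. The bad-event argument does not help, because the configuration lies in \(G_t\).

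\textbf{The missing idea.} You do not need a pointwise bound on \(J_t\); integrability of \(J_t^*=\sup_{\theta\in\mathcal N}\|J_t(\theta)\|\) suffices.
\begin{itemize}
\item Both \(J_t^*\) and \(G_t\) are \(\mathcal F_{t-1}\)-measurable. On \(G_t\) your lower bound gives \(p_c+X_tr_c(\eta_t)\le 1+CX_t/(1+M_t)\) uniformly on \(\mathcal N\). Hence \(\E[\sup_{\mathcal N}(p_c+X_tr_c)\|J_t\|\mathbf 1_{G_t}\mid\mathcal F_{t-1}]\le CJ_t^*\).
\item To get \(\E J_t^*<\infty\), apply your own split at every time \(s\). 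On \(G_s\), the sharp bound \(s_c''\le C/(1+s_c)^2\) gives \(\sup_{\mathcal N}s_c''(\eta_s)\|A_s\|^2\le C\). On \(G_s^c\), boundedness of \(s_c''\) and \(Z_s\le C\mathsf V_s\) give \(\le C\mathsf V_s^2\). So \(\sup_{\mathcal N}s_c''(\eta_s)\|A_s\|^2\le C\mathsf V_s^2\in L^1\) everywhere, using only \(\E M_t<\infty\).
\item The recursion for \(J_t\) then gives \(J_t^*\le\rho J_{t-1}^*+F_{t-1}\), where \(F_s=2\sup_{\mathcal N}\|D_s\|+C\mathsf V_s^2\) is integrable because \(\sup\|D_s\|\le CZ_s\in L^1\).
\item Backward iteration and stationarity (so that \(\rho^kJ^*_{t-k}\to0\) in probability) yield \(\E J_t^*\le(1-\rho)^{-1}\E F_0<\infty\).
\end{itemize}
This is how the paper closes the argument. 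It splits the parameter set into \(\{M_t(\theta)\ge M_t/2\}\) and its complement rather than splitting on the data event \(G_t\), which is an analogous device. With this replacement, the rest of your proof goes through.
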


\begin{proof}
Use the true state \(M_t\) and the innovations \(U_t\) defined above.
Choose \(\bar\rho\in(\sqrt{\rho},1)\), {where
\(\Theta\subset\mathbb R\times\mathbb R\times[-\rho,\rho]\),}
and put
\(\mathsf V_t=\mathsf V_t(\bar\rho)\). {Lemma~\ref{lem:qmle-rare-relative-innovation} gives}
\(\mathsf V_t\in L^2\) and, for each \(\chi\geq1\), the restricted
moment bound \eqref{eq:qmle-rare-relative-moment} on
\(\{\mathsf V_t>C_0(1+M_t)\}\). Since \(s_c(x)\geq x\),
recursion~\eqref{mrecursion} at the true unit-root parameter gives
\[
 M_{t-1}\leq M_t+|\omega_0|+\alpha_0|U_{t-1}|.
\]
Iteration yields, for every \(k\geq1\),
\begin{equation}
 M_{t-k}\leq M_t+k|\omega_0|
       +\alpha_0\sum_{\ell=1}^k|U_{t-\ell}|.
 \label{eq:qmle-backward-level-bound}
\end{equation}
Moreover, \(X_{t-k}\leq M_{t-k}+|U_{t-k}|\).
Recalling \(Z_t\) from Lemma~\ref{lem:affine-qmle-envelopes}, substitute
\eqref{eq:qmle-backward-level-bound} with \(k=j+1\) and interchange the two
nonnegative sums.  Because
\[
 \sum_{j\geq0}\bar\rho^j
       \sum_{\ell=1}^{j+1}|U_{t-\ell}|
 =\frac1{1-\bar\rho}
   \sum_{\ell\geq1}\bar\rho^{\ell-1}|U_{t-\ell}|,
\]
and \(\sum_{j\geq0}(j+1)\bar\rho^j<\infty\), this gives
\begin{equation}
 Z_t\leq C(M_t+\mathsf V_t).
 \label{eq:qmle-boundary-current-state-envelope}
\end{equation}

Using \(X_{t-1}=M_{t-1}+U_{t-1}\) and \(\phi_0=1\), we have
\[
 \eta_t(\theta)-\eta_t(\theta_0)
 = (\omega-\omega_0)+(\alpha-\alpha_0)U_{t-1}
   +(\phi-1)M_{t-1}+\beta \delta_{t-1}(\theta).
\]
For sufficiently small \(\varepsilon>0\), define the closed ball
\[
 \mathcal N_\varepsilon
 =\{\theta\in\mathbb R^3:\|\theta-\theta_0\|\leq\varepsilon\}
 \subseteq\operatorname{int}(\Theta).
\]
The one-Lipschitz property of softplus and the uniform bound on \(|\beta|\)
then give
\begin{align}
 \sup_{\theta\in \mathcal N_\varepsilon}|\delta_t(\theta)|
 &\leq C\varepsilon(M_t+\mathsf V_t),
 \label{eq:qmle-local-filter-boundary}\\
 \sup_{\theta\in \mathcal N_\varepsilon}\|A_t(\theta)\|
 &\leq C(M_t+\mathsf V_t),\qquad
 \sup_{\theta\in \mathcal N_\varepsilon}\|J_t(\theta)\|
 \leq C(M_t+\mathsf V_t)^2.
 \label{eq:qmle-local-derivatives-boundary}
\end{align}
Indeed, the preceding identity gives the one-step bound
\[
 \sup_{\theta\in \mathcal N_\varepsilon}|\delta_t(\theta)|
 \leq C\varepsilon(1+M_{t-1}+|U_{t-1}|)
 +\rho\sup_{\theta\in \mathcal N_\varepsilon}|\delta_{t-1}(\theta)|.
\]
Iteration and \eqref{eq:qmle-backward-level-bound} prove
\eqref{eq:qmle-local-filter-boundary}.  Iterating the derivative recursions
in the proof of Lemma~\ref{lem:affine-qmle-envelopes}, using
\eqref{eq:qmle-boundary-current-state-envelope}, proves
\eqref{eq:qmle-local-derivatives-boundary}. Here
\(\bar\rho^2>\rho\) controls the quadratic forcing.

We next prove that
\(\sup_{\theta\in \mathcal N_\varepsilon}\|J_t(\theta)\|\) is integrable.
The chain rule gives
\(D_t(\theta)=s_c'(\eta_t(\theta))A_t(\theta)\).  Define
\[
 D_t^*=\sup_{\theta\in \mathcal N_\varepsilon}\|D_t(\theta)\|,
 \qquad
 J_t^*=\sup_{\theta\in \mathcal N_\varepsilon}\|J_t(\theta)\|.
\]
{By Lemma~\ref{lem:affine-qmle-envelopes}, \(D_t^*\leq CZ_t\), and
\(\E Z_t<\infty\) because \(\E X_0<\infty\). Hence \(\E D_t^*<\infty\).}
Split \(\mathcal N_\varepsilon\) according as
\(M_t(\theta)\geq M_t/2\).  On this set,
\eqref{eq:qmle-sharp-softplus-derivatives} and
\eqref{eq:qmle-local-derivatives-boundary} give the bound below. On its
complement, \eqref{eq:qmle-local-filter-boundary} gives
\(M_t+\mathsf V_t\leq C\mathsf V_t\), and boundedness of \(s_c''\) gives the same bound:
\[
 \sup_{\theta\in \mathcal N_\varepsilon}
 s_c''(\eta_t(\theta))\|A_t(\theta)\|^2
 \leq C(1+\mathsf V_t^2).
\]
The recursion for \(J_t(\theta)\) becomes
\[
 J_t(\theta)=e_\beta D_{t-1}(\theta)^\top
 +D_{t-1}(\theta)e_\beta^\top
 +\beta\bigl[s_c''(\eta_{t-1}(\theta))
 A_{t-1}(\theta)A_{t-1}(\theta)^\top
 +s_c'(\eta_{t-1}(\theta))J_{t-1}(\theta)\bigr].
\]
Taking suprema gives \(J_t^*\leq\rho J_{t-1}^*+F_{t-1}\) for an integrable stationary
\(F_t\).  Backward iteration gives
\[
 J_t^*\leq\rho^kJ_{t-k}^*
 +\sum_{j=0}^{k-1}\rho^jF_{t-1-j}.
\]
Stationarity makes the first term converge to zero in probability, hence
almost surely along a deterministic subsequence.  Passing to that
subsequence gives
\(J_t^*\leq\sum_{j\geq0}\rho^jF_{t-1-j}\) almost surely.  The right side
has expectation \((1-\rho)^{-1}\E F_0\), and therefore
\begin{equation}
 \E J_t^*<\infty.
 \label{eq:qmle-integrable-J}
\end{equation}

It remains to transfer these bounds to {\(\mathcal H_t(\theta)\)
and \(K_t(\theta)\).}
Use \(p_c,r_c,r_c'\) from
Lemma~\ref{lem:softplus-calculus}.  Suppressing the parameter argument \(\theta\) in \(\mathcal H_t,K_t,\eta_t,A_t,J_t\), direct
differentiation gives
\[
 \mathcal H_t=-X_t r_c'(\eta_t)A_tA_t^\top
     +s_c''(\eta_t)A_tA_t^\top
     +(p_c(\eta_t)-X_t r_c(\eta_t))J_t,
 \qquad K_t=\frac{p_c(\eta_t)^2}{M_t(\theta)}A_tA_t^\top.
\]
Split \(\mathcal N_\varepsilon\) into
\[
 \mathcal C_t=\{\theta\in \mathcal N_\varepsilon:M_t(\theta)\geq M_t/2\},
 \qquad \mathcal D_t=\mathcal N_\varepsilon\setminus\mathcal C_t.
\]
For \(\mathcal A_t\in\{\mathcal C_t,\mathcal D_t\}\), let
\(\mathcal H_{t,\mathcal A}^*=\sup_{\theta\in\mathcal A_t}
\|\mathcal H_t(\theta)\|\) and define \(K_{t,\mathcal A}^*\) analogously,
with an empty supremum equal to zero.  {These restricted
suprema are measurable.}\footnote{On the relatively open set
\(\mathcal D_t\), the supremum of a continuous function equals its supremum
over a fixed countable dense subset. On the closed set \(\mathcal C_t\), a
measurable maximum theorem applies.}
On \(\mathcal C_t\),
Lemma~\ref{lem:softplus-calculus},
\eqref{eq:qmle-local-derivatives-boundary}, and
\(\E(1+X_t\mid\mathcal F_{t-1})=1+M_t\) give
\[
 \E(\mathcal H_{t,\mathcal C}^*\mid\mathcal F_{t-1})
 +K_{t,\mathcal C}^*
 \leq C\left[\frac{(M_t+\mathsf V_t)^2}{1+M_t}+J_t^*\right].
\]
If \(\mathcal D_t\ne\varnothing\),
\eqref{eq:qmle-local-filter-boundary} gives
\(M_t\leq C\varepsilon\mathsf V_t\) and
\(M_t+\mathsf V_t\leq C\mathsf V_t\).  The globally bounded quantities
\(r_c,r_c',s_c'',p_c\), and \(p_c^2/s_c\) therefore give
\[
 \E(\mathcal H_{t,\mathcal D}^*\mid\mathcal F_{t-1})
 +K_{t,\mathcal D}^*
 \leq C(1+M_t)\mathsf V_t^2
       \mathbf1_{\{\mathcal D_t\ne\varnothing\}}.
\]
{Take \(C_0\) from Lemma~\ref{lem:qmle-rare-relative-innovation}}
and then decrease
\(\varepsilon\), if necessary, so that, on \(\{M_t>1\}\),
\(\{\mathcal D_t\ne\varnothing\}\subseteq
\{\mathsf V_t>C_0(1+M_t)\}\).  Hence the last display is bounded by
\[
 C\left[\mathsf V_t^2+
 \mathsf V_t^3\mathbf1_{\{\mathsf V_t>C_0(1+M_t)\}}\right].
\]
Finally,
\((M_t+\mathsf V_t)^2/(1+M_t)
 \leq C(1+M_t+\mathsf V_t^2)\).  Taking expectations and using
\(\E M_t<\infty\), \(\mathsf V_t\in L^2\),
\eqref{eq:qmle-integrable-J}, and
\eqref{eq:qmle-rare-relative-moment} {with \(\chi=3\)} proves both envelopes in
\eqref{eq:qmle-boundary-local-envelopes} with
\(\mathcal N=\mathcal N_\varepsilon\) for this fixed \(\varepsilon\).
\end{proof}

\subsection{Proof of Theorem~\ref{thm:affine-qmle}}
\label{app:proof-affine-qmle}

\begin{proof}[{Proof of Theorem~\ref{thm:affine-qmle}}]
We use the notation of Section~\ref{app:inference-notation}.
{In the region \(\phi<1\)}, Theorem~\ref{thm:affine-classification} supplies all
moments.  At the unit root, Theorem~\ref{thm:affine-boundary} and
\(\kappa_0>1\) give \(\E X_t=\E M_t<\infty\), which is the only
stationary level moment used in the unit-root argument.
{Theorems~\ref{thm:affine-mixing} and
\ref{thm:affine-boundary} give a unique invariant state law in cases~(i)
and~(ii), respectively, so the ergodicity step in the proof of
Theorem~\ref{thm:affine-mixing} makes the joint stationary process ergodic
in both cases.}
Lemmas~\ref{lem:affine-qmle-envelopes} and
\ref{lem:boundary-qmle-hessian-envelope} provide a compact convex
neighborhood \(\mathcal N\subseteq\operatorname{int}(\Theta)\)
of \(\theta_0\) such that
\begin{equation}
 \E\sup_{\theta\in\mathcal N}[\|\mathcal H_t(\theta)\|+\|K_t(\theta)\|]<\infty.
 \label{eq:qmle-common-local-envelopes}
\end{equation}

\proofstep{Consistency and identification}
Set \(L_t(\theta)=\ell_t^Q(\theta)-\ell_t^Q(\theta_0)\).
Lemma~\ref{lem:qmle-contrast-envelope} gives an integrable upper envelope
for \(\sup_{\theta\in\Theta}L_t(\theta)\), even when the negative part of
the contrast is not integrable for some \(\theta\in\Theta\).  Conditionally,
\begin{equation*}
 \E[L_t(\theta)\mid\mathcal F_{t-1}]
 =-M_t(\theta_0)
 \mathfrak h\!\left(\frac{M_t(\theta)}{M_t(\theta_0)}\right)\leq0,
\end{equation*}
with equality only when the two conditional means agree.  Equality \(M_t(\theta)=M_t(\theta_0)\) almost surely, together with injectivity of \(s_c\), gives
\[
 (\omega-\omega_0)+(\alpha-\alpha_0)X_{t-1}
 +(\beta-\beta_0)M_{t-1}(\theta_0)=0\quad\text{a.s.}
\]
The regressors \(1,X_{t-1},M_{t-1}(\theta_0)\) are linearly independent.
Indeed, since \(M_{t-1}(\theta_0)\) is \(\mathcal F_{t-2}\)-measurable,
conditional variance given \(\mathcal F_{t-2}\) forces the coefficient on
\(X_{t-1}\) in any almost-sure linear relation to vanish, because
\(\Var(X_{t-1}\mid\mathcal F_{t-2})
=\sigma_0^2M_{t-1}(\theta_0)>0\).
Moreover, \(M_{t-1}(\theta_0)\) is nonconstant: otherwise injectivity of
\(s_c\), stationarity, and \(\alpha_0\ne0\) would make \(X_{t-2}\)
constant, contradicting the same positive conditional variance.  Hence the remaining two
coefficients also vanish, and the preceding relation gives
\(\omega=\omega_0\), \(\alpha=\alpha_0\), and \(\beta=\beta_0\).  Thus
\(\E L_t(\theta)<0\), possibly with value \(-\infty\), at every
\(\theta\ne\theta_0\).  Continuity of \(M_t(\theta)\) in \(\theta\) makes \((L_t)\) a
stationary and ergodic sequence of random continuous functions on
\(\Theta\).
To apply \citet[Theorem~33(ii)]{douc2013ergodicity}, define
the stationary and initialized averaged contrasts by
\[
 \mathcal L_n(\theta)=\frac1n\sum_{t=1}^nL_t(\theta),\qquad
 \widetilde{\mathcal L}_n(\theta)
 =\frac1n\left[Q_n(\theta)-\sum_{t=1}^n\ell_t^Q(\theta_0)\right].
\]
{Its conditions (C3)--(C5) require an integrable upper envelope,
almost sure upper semicontinuity of \(L_0\), and almost sure uniform
convergence of \(\widetilde{\mathcal L}_n-\mathcal L_n\) to zero, with
upper semicontinuous initialized criteria. With \(x^+=\max\{x,0\}\),
Lemma~\ref{lem:qmle-contrast-envelope} gives
\[
 \sup_{\theta\in\Theta}(L_0(\theta))^+
 \leq\frac{U_0^2}{M_0},\qquad
 \E\!\left[\frac{U_0^2}{M_0}\right]=\sigma_0^2,
\]
which is (C3), without requiring integrability of the negative part.
Continuity of the stationary and initialized filters gives (C4) and the
upper semicontinuity of the initialized criteria.
Lemma~\ref{lem:qmle-initialization} bounds
\(\sup_{\theta\in\Theta}|\widetilde{\mathcal L}_n-\mathcal L_n|\) by
\(O(n^{-1})\) almost surely, which is (C5).}
{Theorem~33(ii) of \citet{douc2013ergodicity} then states}
that the distance of any measurable maximizing
selection to \(\operatorname*{argmax}_{\theta\in\Theta}\E L_0(\theta)\)
converges to zero almost surely. The initialized contrast has the same
maximizers as \(Q_n\), since the subtracted criterion is parameter
independent, and continuity and compactness ensure that maximizers exist.
The identification argument above makes the population maximizer set
\(\{\theta_0\}\). Thus \(\widehat\theta_n\to\theta_0\) almost surely
for any measurable maximizing selection.

\proofstep{Score central limit theorem}
At the truth, abbreviate \(D_t=D_t(\theta_0)\) and
\(S_t=S_t(\theta_0)\).  The quasi-score
\[
 S_t=\left(\frac{X_t}{M_t}-1\right)D_t
\]
is a martingale difference, and the affine conditional variance gives
\begin{equation*}
 \E(S_tS_t^\top\mid\mathcal F_{t-1})
 =\sigma_0^2\frac{D_tD_t^\top}{M_t}.
\end{equation*}
The information part of \eqref{eq:qmle-common-local-envelopes} makes
\(S_t\) square integrable and \(I_0=\E K_t(\theta_0)\) finite.

The information matrix is also automatically positive definite.  If
\(v^\top I_0v=0\), then
\(v^\top D_t=0\) almost surely.  Since \(s_c'(\eta_t)>0\), the derivative
recursion gives
\[
 v_\omega+v_\alpha X_{t-1}+v_\beta M_{t-1}
 +\beta_0v^\top D_{t-1}=0\quad\text{a.s.}
\]
Since \(v^\top D_{t-1}=0\) almost surely, the regressor linear
independence established in the identification step gives \(v=0\).

{Fix \(v\in\mathbb R^3\setminus\{0\}\). The variables
\(n^{-1/2}v^\top S_t\), \(1\leq t\leq n\), form a square-integrable
martingale-difference array with respect to \((\mathcal F_t)\). Because the
array comes from a single martingale, the nesting condition of
\citet{hallheyde1980} holds automatically. Its conditional variances sum to
\[
 \sigma_0^2v^\top\left[
 \frac1n\sum_{t=1}^n\frac{D_tD_t^\top}{M_t}
 \right]v
 \longrightarrow\sigma_0^2v^\top I_0v
 \quad\text{a.s.}
\]
by ergodicity. For the conditional Lindeberg condition, fix
\(\varepsilon>0\) and use
\((v^\top S_t)^2\mathbf1_{\{|v^\top S_t|>\varepsilon\sqrt n\}}
\leq\|v\|^2\|S_t\|^2\mathbf1_{\{\|S_t\|>\varepsilon\sqrt n/\|v\|\}}\).
By stationarity, the conditional expectations of the right-hand side,
averaged over \(t\leq n\), have expectation
\(\|v\|^2\E[\|S_0\|^2\mathbf1_{\{\|S_0\|>\varepsilon\sqrt n/\|v\|\}}]\to0\),
so Markov's inequality gives convergence to zero in probability.
Corollary~3.1 (p.~58) of \citet{hallheyde1980} then gives the scalar normal
limit for every \(v\), and the Cram\'er--Wold device gives}
\begin{equation}
 n^{-1/2}\sum_{t=1}^nS_t
 \Rightarrow N(0,\sigma_0^2I_0).
 \label{eq:qmle-score-clt}
\end{equation}

\proofstep{Uniform laws of large numbers for the Hessian and information matrix}
Let \(C(\mathcal N,\mathbb R^{3\times3})\) denote the space of continuous
matrix-valued functions on \(\mathcal N\), equipped with the supremum norm
\(\|F\|_\infty=\sup_{\theta\in\mathcal N}\|F(\theta)\|\).
The process \(((\mathcal H_t,K_t))\) is stationary and ergodic as a sequence of
random elements in the Cartesian product
\(C(\mathcal N,\mathbb R^{3\times3})^2\).  Therefore,
\eqref{eq:qmle-common-local-envelopes} and {the uniform strong law of large numbers for
matrix-valued functions \citep[Theorem~2.7 and Section~2.4]{straumannMikosch2006}} give
\begin{equation}
 \sup_{\theta\in\mathcal N}\left\|
 n^{-1}\sum_{t=1}^n\mathcal H_t(\theta)-\E \mathcal H_t(\theta)
 \right\|
 +\sup_{\theta\in\mathcal N}\left\|
 n^{-1}\sum_{t=1}^nK_t(\theta)-\E K_t(\theta)
 \right\|\longrightarrow0\quad\text{a.s.}
 \label{eq:qmle-uniform-local-hessian}
\end{equation}
Because the convergence takes place in \(C(\mathcal N,\mathbb R^{3\times3})^2\), both
mean functions \(\E \mathcal H_t(\theta)\) and \(\E K_t(\theta)\) are continuous
on \(\mathcal N\).  Differentiating the quasi-score at the truth and taking conditional
expectations gives \(\E \mathcal H_t(\theta_0)=\E K_t(\theta_0)=I_0\).  Hence, whenever \(d_n=o_p(1)\) and the segment
\(\{\theta_0+u d_n:0\leq u\leq1\}\) lies in \(\mathcal N\) with probability tending
to one,
\begin{equation}
 \int_0^1 n^{-1}\sum_{t=1}^n
 \mathcal H_t(\theta_0+u d_n)\,du\ \xrightarrow{p}\ I_0.
 \label{eq:qmle-integrated-hessian}
\end{equation}

\proofstep{Initialization and the score expansion}
{By consistency, \(\widehat\theta_n\in\mathcal N\subseteq\operatorname{int}(\Theta)\)
with probability tending to one. Lemma~\ref{lem:qmle-initialization}, applied
with \(\mathcal K=\mathcal N\), bounds the difference between the initialized
and stationary score sums by \(O_p(1)\), uniformly on \(\mathcal N\). On this
event, put \(d_n=\widehat\theta_n-\theta_0\) and
\[
 \bar{\mathcal H}_n=\int_0^1\frac1n\sum_{t=1}^n
 \mathcal H_t(\theta_0+u d_n)\,du.
\]
The initialized first-order condition and the fundamental theorem of calculus,
applied to the stationary vector score, give
\[
 0=n^{-1/2}\sum_{t=1}^nS_t(\theta_0)-\bar{\mathcal H}_n\sqrt n\,d_n+o_p(1).
\]
By \eqref{eq:qmle-integrated-hessian}, \(\bar{\mathcal H}_n\xrightarrow{p}I_0\),
which is positive definite. Together with \eqref{eq:qmle-score-clt}, this gives
\[
 \sqrt n\,d_n=I_0^{-1}n^{-1/2}\sum_{t=1}^nS_t(\theta_0)+o_p(1)
\]
and hence \(\sqrt n(\widehat\theta_n-\theta_0)\Rightarrow N(0,\sigma_0^2I_0^{-1})\).}

\proofstep{Feasible information and dispersion}
The uniform law \eqref{eq:qmle-uniform-local-hessian} for \(K_t\), the
continuity of \(\E K_t(\theta)\) on \(\mathcal N\), and consistency show that the
stationary sample information evaluated at the estimator converges in
probability to \(I_0\).
Lemma~\ref{lem:qmle-initialization} makes the total difference between
the initialized and stationary sample-information sums \(O_p(1)\).
Division by \(n\) proves consistency of \(\widehat I_{Q,n}\).

It remains to estimate \(\sigma_0^2\).  Evaluate the stationary-filter
discrepancy \(\delta_t(\theta)\) at \(\widehat\theta_n\).  On the event \(\widehat\theta_n\in\mathcal N\), convexity of
\(\mathcal N\) and the fundamental theorem of calculus applied to the scalar path
\(u\mapsto M_t(\theta_0+u d_n)\) give
\begin{equation}
 \delta_t(\widehat\theta_n)
 =\left[\int_0^1D_t(\theta_0+u d_n)\,du\right]^{\!\top}d_n.
 \label{eq:qmle-filter-difference-integral}
\end{equation}
{Lemma~\ref{lem:affine-qmle-envelopes} gives
\(\sup_{\theta\in\Theta}\|D_t(\theta)\|\leq CZ_t\), and
\(\E Z_0=(1+\E X_0)/(1-\bar\rho)<\infty\) in both cases. Thus
\eqref{eq:qmle-filter-difference-integral} gives
\(|\delta_t(\widehat\theta_n)|\leq C\|d_n\|Z_t\). For every
\(\varepsilon>0\), stationarity gives
\[
 \Pr\Bigl(\max_{t\leq n}Z_t>\varepsilon n\Bigr)
 \leq n\Pr(Z_0>\varepsilon n)
 \leq\varepsilon^{-1}\E[Z_0\mathbf1_{\{Z_0>\varepsilon n\}}]
 \longrightarrow0,
\]
and the ergodic theorem gives \(n^{-1}\sum_{t\leq n}Z_t\to\E Z_0\) almost
surely. Hence
\[
 \frac1{n^2}\sum_{t=1}^nZ_t^2
 \leq\left(\frac1n\max_{t\leq n}Z_t\right)
       \left(\frac1n\sum_{t=1}^nZ_t\right)\xrightarrow{p}0.
\]
Since the already proved root-\(n\) rate gives \(n\|d_n\|^2=O_p(1)\),
\[
 \frac1n\sum_{t=1}^n\delta_t(\widehat\theta_n)^2
 \leq C(n\|d_n\|^2)
       \left[\frac1{n^2}\sum_{t=1}^nZ_t^2\right]=o_p(1).
\]
This uses only the first observation moment, in both cases.}
Using \(U_t\) from \eqref{eq:affine-innovation}, the Cauchy--Schwarz inequality and the ergodic
theorem then give
\[
 n^{-1}\sum_{t=1}^n|U_t\delta_t(\widehat\theta_n)|=o_p(1).
\]
Only the innovation's second moment is used here, and it follows from the
state's first moment:
\[
 \E(U_t^2\mid\mathcal F_{t-1})=\sigma_0^2M_t,\qquad
 \E M_t<\infty.
\]
Thus
\[
 \frac{n^{-1}\sum_{t=1}^n(X_t-M_t)^2}
      {n^{-1}\sum_{t=1}^nM_t}\longrightarrow\sigma_0^2
\quad\text{a.s.},
\]
where \(\E M_t>0\) because \(M_t>0\) almost surely.  Replacing \(M_t\) by
the fitted stationary filter changes neither average asymptotically. In
particular, \(n^{-1}\sum_t|\delta_t(\widehat\theta_n)|=o_p(1)\).
For the initialized fitted filter, on
\(\{\widehat\theta_n\in\mathcal N\}\), the bounds in the proof
of Lemma~\ref{lem:qmle-initialization} give
\[
 \sum_{t\geq1}
 \left|(X_t-\widetilde M_t(\widehat\theta_n))^2
       -(X_t-M_t(\widehat\theta_n))^2\right|
 \leq CB_0\sum_{t\geq1}\rho^t
       (X_t+Z_t+B_0\rho^t)<\infty\quad\text{a.s.},
\]
and also
\[
 \sum_{t\geq1}|\widetilde M_t(\widehat\theta_n)-M_t(\widehat\theta_n)|
 \leq B_0\sum_{t\geq1}\rho^t<\infty.
\]
This proves \(\widehat\sigma_n^2\xrightarrow{p}\sigma_0^2\)
and completes the proof.
\end{proof}
\subsection{Proof of Theorem~\ref{thm:rootn-local-power}}
\label{app:proof-rootn-local-power}

\leavevmode{Let \(\pi_n\) and \(\pi_0\) be the invariant state laws at
\(\theta_n\) and \(\theta_0\). The stationary laws at the two parameters start
from different initial laws, and \(\pi_n\) has no closed form. After proving
\(\pi_n\Rightarrow\pi_0\), we therefore establish LAN for an auxiliary
experiment that starts from \(\pi_0\) at every parameter, so that the initial
law cancels from the likelihood ratio. A coupling of the two initial laws then
transfers the limits to the stationary solution at \(\theta_n\).}

\begin{proof}[{Proof of Theorem~\ref{thm:rootn-local-power}}]
\proofstep{Invariant laws}
Let \(P_n\) and \(P_0\) be the state transition kernels at \(\theta_n\) and
\(\theta_0\).  Given the current state \(m\), use the same
\(W_m\sim G_m\) in both transitions.  Monotonicity of softplus gives
\begin{equation}
 s_c\!\left(\omega_0+\alpha_0W_m+
       \left(\beta_0-\frac h{\sqrt n}\right)m\right)
 \leq s_c(\omega_0+\alpha_0W_m+\beta_0m).
 \label{eq:rootn-kernel-order}
\end{equation}
Thus \(P_nV\leq P_0V\) for every increasing \(V\), and
Theorems~\ref{thm:affine-classification} and
\ref{thm:affine-mixing} give a unique invariant law \(\pi_n\) for all
sufficiently large \(n\).  Fix \(n_0\) so that these conclusions hold for
every \(n\geq n_0\).

Fix \(p\) with \(1<p<\kappa_0\).  The power-drift inequality at \(\theta_0\), applied
to the function \(V_{p+1}\) from the proof of
Theorem~\ref{thm:affine-classification}, gives
\[
 P_0V_{p+1}(m)\leq V_{p+1}(m)-C_pm^p+B_p.
\]
For every sufficiently large \(n\), \(\phi_n<1\), so \(\pi_n\) has moments of all positive orders.  Integrating the preceding inequality
under \(\pi_n\) and using \eqref{eq:rootn-kernel-order} yields
\begin{equation*}
 \sup_{n\geq n_0}\int m^p\,\pi_n(dm)<\infty.
\end{equation*}
Hence \((\pi_n)\) is tight on \([0,\infty)\), where the Poisson kernel is
extended at zero by taking the conditional observation to be zero.  For
bounded Lipschitz \(f\),
\[
 \sup_{m\leq R}|P_nf(m)-P_0f(m)|
 \leq \operatorname{Lip}(f)hR/\sqrt n.
\]
If \(\pi_{n_k}\Rightarrow\pi\), invariance, this locally uniform kernel
convergence, tightness, and the Feller property satisfy the hypotheses of
part~(ii) of Lemma~\ref{lem:feller-invariant-limits}. Hence
\(\pi P_0=\pi\).  Moreover,
softplus is strictly positive, so \(P_0(m,\{0\})=0\) for every \(m\geq0\),
and the same lemma gives \(\pi(\{0\})=0\). By uniqueness in
Theorem~\ref{thm:affine-boundary}, \(\pi=\pi_0\). Thus
\begin{equation}
 \pi_n\Rightarrow\pi_0.
 \label{eq:rootn-invariant-convergence-proof}
\end{equation}
Let \(F_n^{-1}\) and \(F_0^{-1}\) be the generalized quantile functions of
\(\pi_n\) and \(\pi_0\), and let \(U\) be uniform on \((0,1)\).  The quantile
coupling
\[
 A_n=F_n^{-1}(U),\qquad A=F_0^{-1}(U)
\]
then satisfies \(A_n\sim\pi_n\), \(A\sim\pi_0\), and
\(A_n\to A\) almost surely by \eqref{eq:rootn-invariant-convergence-proof}.

\proofstep{LAN under a common initial law}
For fixed \(u\in\mathbb R^3\), put
\(\vartheta_{n,u}=\theta_0+u/\sqrt n\). This belongs to the interior
neighborhood used in the proof of Theorem~\ref{thm:affine-qmle} for all
large \(n\).  Define an auxiliary law \(\mathsf Q_{n,u}\) as follows.
Draw
\[
 M_0^\circ\sim\pi_0,
 \qquad X_0\mid M_0^\circ\sim\operatorname{Pois}(M_0^\circ),
\]
with this joint initial law fixed for every \(u\).  Define
\((\check M_t(\theta))\) by recursion~\eqref{mrecursion} at parameter
\(\theta\), starting from \(\check M_0(\theta)=M_0^\circ\) for every
\(\theta\). Thus \(\partial_\theta\check M_0(\theta)=0\).
Under \(\mathsf Q_{n,u}\), conditionally on
\(M_0^\circ,X_0,\ldots,X_{t-1}\), let
\(X_t\sim\operatorname{Pois}(\check M_t(\vartheta_{n,u}))\).
For every \(\theta\), \(\check M_t(\theta)>0\), so these
conditional Poisson laws are mutually absolutely continuous.  The initial
law is the same under \(\mathsf Q_{n,u}\) and \(\mathsf Q_{n,0}\), and
therefore cancels from their joint likelihood ratio.  With
\[
 \check\ell_t^Q(\theta)
 =X_t\log\check M_t(\theta)-\check M_t(\theta),
\]
that exact ratio is
\begin{equation*}
 \Lambda_n(u)
 :=\log\frac{d\mathsf Q_{n,u}}{d\mathsf Q_{n,0}}
       (M_0^\circ,X_0,\ldots,X_n)
 =\sum_{t=1}^n
   [\check\ell_t^Q(\vartheta_{n,u})
     -\check\ell_t^Q(\theta_0)].
\end{equation*}

{Realize \(\mathsf Q_{n,0}\) on the two-sided stationary null process,
and compute \(\ell_t^Q\), \(S_t\), and \(\mathcal H_t\) from its stationary
filter \((M_t(\theta))\), with the notation of
Section~\ref{app:inference-notation}. Lemma~\ref{lem:qmle-initialization},
applied with the random initial value \(\widetilde M_0=M_0^\circ\), gives
\[
 B_{\rm init}:=\sum_{t=1}^\infty\sup_{\theta\in\mathcal N}
 \|\check S_t(\theta)-S_t(\theta)\|<\infty
 \quad\mathsf Q_{n,0}\text{-a.s.},
 \qquad \check S_t=\partial_\theta\check\ell_t^Q,
\]
where \(\mathcal N\) is the compact neighborhood used in the proof of
Theorem~\ref{thm:affine-qmle}. Define the initialized central sequence
\(\Delta_n^{\mathrm{LAN}}=n^{-1/2}\sum_{t=1}^n\check S_t(\theta_0)\).
For all large \(n\), the segment from \(\theta_0\) to \(\vartheta_{n,u}\)
lies in \(\mathcal N\), and the fundamental theorem of calculus gives
\[
 \begin{aligned}
 \Bigl|\Lambda_n(u)-\sum_{t=1}^n
 [\ell_t^Q(\vartheta_{n,u})-\ell_t^Q(\theta_0)]\Bigr|
 &\leq\|u\|B_{\rm init}/\sqrt n,\\
 \Bigl\|\Delta_n^{\mathrm{LAN}}-n^{-1/2}\sum_{t=1}^nS_t(\theta_0)\Bigr\|
 &\leq B_{\rm init}/\sqrt n.
 \end{aligned}
\]
Taylor's formula with integral remainder,
\eqref{eq:qmle-uniform-local-hessian}, continuity of
\(\E\mathcal H_t(\theta)\) at \(\theta_0\), and
\(\E\mathcal H_t(\theta_0)=I_0\) give
\[
 \sum_{t=1}^n[\ell_t^Q(\vartheta_{n,u})-\ell_t^Q(\theta_0)]
 =\frac{u^\top}{\sqrt n}\sum_{t=1}^nS_t(\theta_0)-\tfrac12u^\top I_0u+o_p(1).
\]
Since \(\sigma_0^2=1\), these displays and \eqref{eq:qmle-score-clt} give the
LAN expansion}
\begin{equation}
 \Lambda_n(u)=u^\top\Delta_n^{\mathrm{LAN}}-\tfrac12u^\top I_0u+o_p(1),
 \qquad \Delta_n^{\mathrm{LAN}}\Rightarrow N(0,I_0),
 \label{eq:rootn-common-lan}
\end{equation}
under \(\mathsf Q_{n,0}\).

The exponential of the Gaussian limit in
\eqref{eq:rootn-common-lan} has expectation one.  Le Cam's first lemma
therefore makes \(\mathsf Q_{n,u}\) contiguous with respect to
\(\mathsf Q_{n,0}\), and Le Cam's third lemma gives the following shift (see
\citet[Chapter~6]{vanDerVaart1998} for both results):
\begin{equation}
 \Delta_n^{\mathrm{LAN}}\Rightarrow N(I_0u,I_0)
 \quad\text{under }\mathsf Q_{n,u}.
 \label{eq:rootn-common-score-shift}
\end{equation}
{Under \(\mathsf Q_{n,0}\), the score expansion in the proof of
Theorem~\ref{thm:affine-qmle} and the preceding comparison give}
\[
 \sqrt n(\widehat\theta_n-\theta_0)
 =I_0^{-1}\Delta_n^{\mathrm{LAN}}+o_p(1).
\]
{The \(o_p(1)\) remainder and the convergences
\(\widehat I_{Q,n}\to I_0\) and \(\widehat\sigma_n^2\to1\), which hold under
\(\mathsf Q_{n,0}\), carry over to \(\mathsf Q_{n,u}\) by contiguity.}
Combining this fact with
\eqref{eq:rootn-common-score-shift} yields, under \(\mathsf Q_{n,u}\),
\begin{equation}
 \sqrt n(\widehat\theta_n-\vartheta_{n,u})
 \Rightarrow N(0,I_0^{-1}),
 \qquad
 \widehat I_{Q,n}\xrightarrow{p}I_0,
 \qquad
 \widehat\sigma_n^2\xrightarrow{p}1.
 \label{eq:rootn-common-estimator}
\end{equation}
Although \(M_0^\circ\) was included to write the exact likelihood ratio,
the QMLE, the estimated information matrix \(\widehat I_{Q,n}\), and the
dispersion estimator \(\widehat\sigma_n^2\) are functions only of
\((X_0,\ldots,X_n)\). Hence \eqref{eq:rootn-common-estimator} also holds
under the observation marginal of \(\mathsf Q_{n,u}\).

\proofstep{Transfer to stationary solutions at \(\theta_n\)}
Let \(u_h=(0,0,-h)^\top\), so
\(\vartheta_{n,u_h}=\theta_n\).
Initialize the two chains with \(M_0=A_n\) and \(M_0^\circ=A\) from the
quantile coupling above, {evolve both at \(\theta_n\), and write
\((X_t)\) and \((X_t^\circ)\) for their observations.}
Choose \(\rho_\beta\in(|\beta_0|,1)\).  Since \(\beta_n\to\beta_0\),
we have \(|\beta_n|\leq\rho_\beta\) for all sufficiently large \(n\).
{Evolve the pair with the coupling kernel of
Lemma~\ref{lem:finite-activity-wasserstein-coupling} at \(\theta_n\). For
Poisson observations, \(p_0=G_1(\{0\})=e^{-1}\). Let
\(\tau=\inf\{t\geq0:X_t\ne X_t^\circ\}\). Bound
\eqref{eq:finite-activity-permanent-agreement}, which holds at \(\theta_n\)
with \(|\beta_n|\) in the role of \(|\beta|\), and \(|\beta_n|\leq\rho_\beta\)
give
\[
 \Pr(\tau<\infty\mid A_n,A)
 \leq1-\exp\left(-\frac{|A_n-A|}{1-\rho_\beta}\right)
 \leq1\wedge\frac{|A_n-A|}{1-\rho_\beta}.
\]}
{For a law \(\mu\) of the initial state, write
\(\mathcal L_{\mu,\theta}(X_0,\ldots,X_k)\) for the law of
\((X_0,\ldots,X_k)\) when \(M_0\sim\mu\) and the model evolves at
\(\theta\).}
The coupling inequality now gives, uniformly over the sample horizon,
\begin{equation*}
 \sup_{k\geq0}
 \left\|\mathcal L_{\pi_n,\theta_n}(X_0,\ldots,X_k)
       -\mathcal L_{\pi_0,\theta_n}(X_0,\ldots,X_k)
 \right\|_{\rm TV}
 \leq\E\left[1\wedge\frac{|A_n-A|}{1-\rho_\beta}\right]
 \longrightarrow0.
\end{equation*}
The convergence follows from \(A_n\to A\) almost surely and bounded
convergence.
For \(k=n\), the second law is precisely the observation marginal of
\(\mathsf Q_{n,u_h}\), whereas the first is the observation law of the stationary solution at \(\theta_n\).  Thus \eqref{eq:rootn-common-estimator} and the coupling
inequality prove \eqref{eq:rootn-local-estimator} for the stationary solutions at \(\theta_n\).

Finally, \(e_\phi^\top u_h=-h\), and hence
\[
 \sqrt n(\widehat\phi_n-1)
 =e_\phi^\top\sqrt n(\widehat\theta_n-\theta_n)-h.
\]
Equation~\eqref{eq:rootn-local-estimator} and Slutsky's lemma yield
\(T_n\Rightarrow N(-h/\sqrt{v_{\phi,0}},1)\).
Evaluating its lower tail gives
\eqref{eq:rootn-local-power}.
\end{proof}

\section{\texorpdfstring{{Selection of the illustrative crime series}}{Selection of the illustrative crime series}}
\label{app:crime-series-selection}

{The 44-series shortlist used to select the Wollondilly / Theft /
Break and enter dwelling series in Section~\ref{sec:nsw-crime-application}
was formed at \(c=1\). Admission required the full-sample numerical and
data-quality criteria stated in that section and a lag-one sample
autocorrelation of at least 0.4. In both the full sample (January
1995--March 2026) and the January 1995--December 2019 subsample, fits had
to pass the numerical screen, have a 95\% confidence interval for \(\phi\)
containing one, and yield a negative intercept and a tail index greater
than two when fitted under the restriction \(\phi=1\).}

\end{document}